\documentclass{article}

\newif\ifshownavigationpage
\newif\ifshowreminders
\newif\ifshownotationindex
\newif\ifshowtheoremlinks
\newif\ifshowtheoremtree
\newif\ifshowtheoremlist
\newif\ifshowequationlist

\newif\ifshowcomments

\newif\ifshowaddressedcomments
\newif\ifshowrvin
\newif\ifshowrvout

\showtheoremtreetrue








\usepackage{amsthm, amsmath, amssymb}
\usepackage{thmtools}
\usepackage{soul}
\usepackage{authblk}
\usepackage{framed}
\usepackage{mathrsfs}
\usepackage{hyperref}
\hypersetup{
    colorlinks,
    linkcolor={red!50!black},
    citecolor={blue!50!black},
    urlcolor={blue!80!black}
}

\usepackage[dvipsnames]{xcolor}
\usepackage{graphicx, multicol}
\usepackage{marvosym, wasysym}
\usepackage{fancyhdr}
\usepackage{stackengine}
\usepackage{algorithm}
\usepackage{bm}
\usepackage[noend]{algpseudocode}
\usepackage{bbm}
\usepackage{verbatim}
\usepackage{mdframed}
\usepackage{needspace}
\makeatletter
\renewcommand{\ALG@beginalgorithmic}{\scriptsize}
\makeatother
\usepackage{xcolor}
\allowdisplaybreaks

\DeclareFontFamily{U}{mathx}{}
\DeclareFontShape{U}{mathx}{m}{n}{ <-> mathx10 }{}
\DeclareSymbolFont{mathx}{U}{mathx}{m}{n}
\DeclareFontSubstitution{U}{mathx}{m}{n}

\newcommand{\bcdot}{\boldsymbol{\cdot}}




\ifshowcomments
    \newcommand{\summ}[1]{{\color{blue}[summary: #1]} } 
    \newcommand{\bz}[1]{{\color{PineGreen}[BZ: #1]} } 
    \newcommand{\rl}[1]{{\color{Periwinkle}[RL: #1]} } 
    \newcommand{\jb}[1]{{\color{Tan}[JB: #1]} } 
    \newcommand{\xw}[1]{{\color{RoyalBlue}[XW: #1]} } 
    \ifshowaddressedcomments
        \newcommand{\bza}[1]{{\color{PineGreen}\sout{[BZ: #1]}} } 
        \newcommand{\rla}[1]{{\color{Periwinkle}\sout{[RL: #1]}} } 
        \newcommand{\jba}[1]{{\color{Tan}\sout{[JB: #1]}} } 
        \newcommand{\xwa}[1]{{\color{RoyalBlue}\sout{[XW: #1]}} } 
    \else
        \newcommand{\bza}[1]{} 
        \newcommand{\rla}[1]{} 
        \newcommand{\jba}[1]{} 
        \newcommand{\xwa}[1]{} 
    \fi
\else
    \newcommand{\summ}[1]{} 
    \newcommand{\bz}[1]{} 
    \newcommand{\rl}[1]{} 
    \newcommand{\jb}[1]{} 
    \newcommand{\bza}[1]{} 
    \newcommand{\rla}[1]{} 
    \newcommand{\jba}[1]{} 
    \newcommand{\xw}[1]{} 
    \newcommand{\xwa}[1]{} 
\fi


\usepackage[shortlabels]{enumitem}

\newlist{thmdependence}{itemize}{10}
\setlist[thmdependence]{nosep,label=-}

\newcommand{\thmtreenode}[4]{\item[#1] {#2}~\ref{#3} {#4}}

\newcommand{\thmtreeref}[2]{\item {{{#1}}}~\ref{#2}}

\ifshowtheoremlinks
    \newcommand{\linksinthm}[1]{\emph{\linkdest{location, #1}\linktopf{#1} \linktothmtree{location, thm tree #1} }}
    \newcommand{\linksinthmwopf}[1]{\emph{\linkdest{location, #1} \linktothmtree{location, thm tree #1} }}
    \newcommand{\linksinpf}[1]{\linkdest{location, proof of #1}\linktothm{#1} \linktothmtree{location, thm tree #1} }
\else
    \newcommand{\linksinthm}[1]{}
    \newcommand{\linksinthmwopf}[1]{}
    \newcommand{\linksinpf}[1]{}
\fi

\ifshownotationindex
    \newcommand{\notationdef}[2]{\linkdest{location, notation definition of #1}\hyperlink{location, notation index of #1}{#2}}
    
\else
    \newcommand{\notationdef}[2]{#2}
\fi
    
\newcommand{\linktopf}[1]{\hyperlink{location, proof of #1}{\pflinksymbol}}
\newcommand{\linktothm}[1]{\hyperlink{location, #1}{\thmlinksymbol}}
\newcommand{\linktothmtree}[1]{\hyperlink{#1}{\thmtreelinksymbol}}
\newcommand{\thmlinksymbol}{{\tiny [Theorem]}}
\newcommand{\pflinksymbol}{{\tiny [Proof]}}
\newcommand{\thmtreelinksymbol}{{\tiny [ThmTree]}}

\makeatletter
\newcommand{\linkdest}[1]{\Hy@raisedlink{\hypertarget{#1}{}}}
\makeatother


\usepackage[margin=1.2in]{geometry}


\newtheorem{theorem}{Theorem}
\newtheorem{lemma}[theorem]{Lemma}

\newtheorem{proposition}[theorem]{Proposition}

\newtheorem{definition}[theorem]{Definition}

\newtheorem{assumption}{Assumption}
\newtheorem{remark}{Remark}

\newtheorem*{theorem-nonumber}{Theorem}
\newtheorem*{condition-nonumber}{Condition}
\newtheorem*{proposition-nonumber}{Proposition}


\usepackage{mathtools}
\DeclarePairedDelimiter{\ceil}{\lceil}{\rceil}
\DeclarePairedDelimiter\floor{\lfloor}{\rfloor}

\newcommand{\D}{\mathbb D}

\newcommand{\cmt}[1]{#1} 
\renewcommand{\cmt}[1]{} 
\renewcommand{\P}{\mathbf{P}}

\newcommand{\E}{\mathbf{E}}
\newcommand{\RV}{\mathcal{RV}}
\newcommand{\MRV}{\mathcal{MHRV}}
\newcommand{\R}{\mathbb{R}}
\newcommand{\Z}{\mathbb{Z}}
\renewcommand{\S}{\mathbb{S}}
\newcommand{\C}{\mathbb{C}}
\newcommand{\M}{\mathbb{M}}

\renewcommand{\complement}{c}

\newcommand{\powerset}[1]{ \widetilde{\mathcal P}_{#1}}
\newcommand{\powersetTilde}[1]{ {\mathcal P}_{#1} }

\newcommand{\stleq}{\underset{ \text{s.t.} }{\leq}}

\newcommand{\lo}{\mathit{o}}
\newcommand{\bo}{\mathcal{O}}

\usepackage[normalem]{ulem}

\usepackage{multirow}
\usepackage{longtable}

\usepackage{array}
\def\delequal{\mathrel{\ensurestackMath{\stackon[1pt]{=}{\scriptscriptstyle\text{def}}}}}
\def\distequal{\mathrel{\ensurestackMath{\stackon[1pt]{=}{\scriptstyle\mathcal{D}}}}}
\newcommand{\norm}[1]{\left\lVert#1\right\rVert}
\usepackage{mathtools}




\usepackage{subfigure}
\usepackage{float}
\usepackage{subfiles}
\title{Tail Asymptotics of Cluster Sizes in \\ Multivariate Heavy-Tailed Hawkes Processes}

\author[1]{Jose Blanchet} 
\author[2]{Roger J.~A. Laeven}
\author[2]{Xingyu Wang}
\author[3]{Bert Zwart\footnote{Corresponding Author}}
\affil[1]{Department of Management Science and Engineering, Stanford University}
\affil[2]{Department of Quantitative Economics, University of Amsterdam}
\affil[3]{Centrum Wiskunde \& Informatica (CWI)}

\begin{document}

\maketitle 

\begin{abstract}
We examine a distributional fixed-point equation related to a multi-type branching process that is key in the cluster sizes analysis of multivariate heavy-tailed Hawkes processes. 
Specifically, we explore the tail behavior of its solution and demonstrate the emergence of a form of multivariate hidden regular variation. 
Large values of the cluster size vector result from one or several significant jumps. 
A discrete optimization problem involving any given rare event set of interest determines the exact configuration of these large jumps and the degree of hidden regular variation. 
Our proofs rely on a detailed probabilistic analysis of the spatiotemporal structure of multiple large jumps in multi-type branching processes.
\end{abstract}

\counterwithin{equation}{section}
\counterwithin{lemma}{section}
\counterwithin{corollary}{section}
\counterwithin{theorem}{section}
\counterwithin{definition}{section}
\counterwithin{proposition}{section}
\counterwithin{figure}{section}
\counterwithin{table}{section}

\tableofcontents

\section{Introduction}

Understanding and managing the interplay of risks and uncertainties is central to many scientific, engineering, and business endeavors. 
In particular, the amplification of risks and uncertainties through feedback across space and time presents modeling challenges in contexts such as pandemics, clustering of financial shocks, earthquake aftershocks, and cascades of information. 
Mutually exciting processes, or multivariate Hawkes processes (\cite{496a8ed0-5e76-38ab-8d67-eaa3d8cd00d7}),
provide a natural formalism to address such challenges by capturing dependencies and clustering effects. Hawkes processes have found applications spanning across finance \cite{AITSAHALIA2015585,Bacry01072014,Hawkes01022018},
neuroscience \cite{LAMBERT20189,10.1214/10-AOS806}, 
seismology \cite{Ikefuji02012022,Ogata01031988}, 
biology \cite{Xu2005},
epidemiology \cite{chiang2022hawkes}, 
criminology \cite{9378017}, 
social science \cite{doi:10.1073/pnas.0803685105,10.1145/2808797.2814178,rizoiu2017tutorialhawkesprocessesevents},
queueing systems \cite{doi:10.1287/stsy.2021.0070,doi:10.1287/stsy.2018.0014,Koops_Saxena_Boxma_Mandjes_2018,selvamuthu2022infinite},
and cyber security \cite{baldwin2017contagion,Bessy-Roland_Boumezoued_Hillairet_2021}.
Lately,
the estimation and inference of Hawkes processes have also become active topics in machine learning \cite{JOSEPH2024115889,doi:10.1137/21M1396927,pmlr-v119-zhang20q,pmlr-v119-zuo20a}.

The cluster representation of Hawkes processes introduced in \cite{209288c5-6a29-3263-8490-c192a9031603}
reveals the branching (i.e., Bienayme-Galton-Watson) processes structure embedded in clusters induced by immigrant events of Hawkes processes.
The analysis of such branching processes 
plays a foundational role in many of the aforementioned works on Hawkes processes,
and is the focus of this paper.
More precisely, we examine a class of fixed-point equations that represents multi-type branching processes in general, and captures the size of Hawkes process clusters in particular.
Let $(\bm S_j)_{j \in [d]}$ be a set of non-negative random vectors that solves (with $[d] = \{1,2,\ldots,d\}$)
\begin{align}
    \bm S_j \distequal
    \bm e_j + \sum_{i \in [d]}\sum_{ m = 1 }^{ B_{i \leftarrow j}  }\bm S_i^{(m)},
    \qquad j \in [d],
     \label{def: fixed point equation for cluster S i}
\end{align}
where 
$\bm e_j$ is the $j^\text{th}$ unit vector in $\R^d$ (i.e., with the $j^\text{th}$ entry equal to 1 and all other entries equal to 0),
$(\bm S_i^{(m)})_{i \in [d],\ m\geq 1}$ are independent across $i$ and $m$ with each $\bm S_i^{(m)}$ being an independent copy of $\bm S_i$,
and the random vector $\bm B_{\bcdot \leftarrow j} = (B_{i \leftarrow j})_{i \in [d]}$ is independent of the $\bm S_i^{(m)}$'s.
The canonical representation of $\bm S_j$ in \eqref{def: fixed point equation for cluster S i} describes the total progeny of
a branching process across the $d$ dimensions, with $B_{i \leftarrow j}$ being the count of a type-$i$ child in one generation from a type-$j$ parent.
Throughout this paper, we consider the sub-critical case regarding the offspring distributions $(B_{i\leftarrow j})_{i,j \in [d]}$,
which ensures the existence, uniqueness, and (almost sure) finiteness of the $\bm S_j$'s;
see, e.g., \cite{JOFFE1967409}.
Variations of Equation~\eqref{def: fixed point equation for cluster S i} have also been studied under the name of multivariate smoothing transforms and are closely related to weighted branching processes;
see, e.g., \cite{BURACZEWSKI20131947,Mentemeier2016}.
In the specific context of Hawkes processes,
$\bm S_j$ represents the size of a cluster induced by a type-$j$ immigrant event, with the law of $\bm B_{\bcdot \leftarrow j}$ admitting a specific (conditional) Poissonian form;
see Remark~\ref{remark, applied to hawkes process clusters} and \cite{daley2003introduction,20.500.11850/151886} for more details.

In this paper, we study the tail asymptotics of $\bm S_j$ under the presence of power-law heavy tails in the  distribution of the offsprings $B_{i \leftarrow j}$.
This research problem: (i) is motivated by the firm relevance and prevalent use of heavy-tailed branching processes and Hawkes processes in
queueing systems \cite{Asmussen_Foss_2018,ernst2018stability},
network evolution \cite{markovich2024extremal},
PageRank algorithms \cite{Jelenković_Olvera-Cravioto_2010,10.1214/20-AAP1623},
and finance \cite{Bacry02082016,hardiman2013critical,10.1214/15-AAP1164};
(ii) fits in the vibrant research area of limit theorems for Hawkes processes \cite{BACRY20132475,baeriswyl2023tail,karim2021exact,karim2023compound,Zhu_2013,ZHU2013885,10.3150/20-BEJ1235,10.1214/14-AAP1005,10.1214/15-AAP1141,10.1214/14-AAP1003,10.1214/22-AAP1796} and branching processes \cite{Basrak02102013,Asmussen_Foss_2018,foss2018tails};
and, more importantly, (iii) addresses significant gaps in the existing literature on the heavy-tailed setting
(see, e.g., \cite{karim2021exact,baeriswyl2023tail,Basrak02102013,Asmussen_Foss_2018,Jelenković_Olvera-Cravioto_2010,BURACZEWSKI20131947}). 


More specifically,
existing asymptotic analyses of heavy-tailed branching processes (possibly with immigration) and Hawkes process clusters \cite{Basrak02102013,Asmussen_Foss_2018,foss2018tails,baeriswyl2023tail,karim2021exact,guo2025precise}
feature manifestations of the \emph{principle of a single big jump}.
In the context of heavy-tailed branching processes,
this well-known phenomenon states that rare events 
are typically caused by a large value of a single component within the system,
such as a specific node 
giving birth to a disproportionately large number of offspring in one generation.
The limitation of this perspective becomes apparent in the multivariate setting, as it addresses only a special class of rare events and ignores 
the hidden regular variation (see, e.g., \cite{resnick2002hidden,10.1214/14-PS231}) in $\bm S_j$.
In our setting, we show that hidden regular variation {emerges} if $\P(\norm{\bm S_j} > n) \sim b(n)$  for some regularly varying $b(\cdot)$, whereas, for some set $A$, $\P(n^{-1}\bm S_j \in A) \sim a(n)$ exhibits a significantly faster (and also regularly varying) rate of decay $a(n) = \lo(b(n))$.
For such $A$, the results in \cite{Asmussen_Foss_2018} verify only $\P(n^{-1}\bm S_j \in A) = \lo\big(\P(\norm{\bm S_j} > n)\big)$ and do not provide a further characterization of the precise rate of decay $a(n)$ or the leading coefficient under the $a(n)$-asymptotic regime.
Likewise, \cite{karim2021exact}  addresses tail asymptotics for Hawkes processes and the induced population processes (i.e., with departure) by focusing on target sets of the form $A = \{ \bm x \in \R^d : \bm c^\top \bm x > 1 \}$.
The corresponding rare events for such $A$ are also driven by the dominating large jump in the clusters of Hawkes processes.
In the context of marked Hawkes processes, Proposition~7.1 of \cite{baeriswyl2023tail} characterizes the extremal behavior of the sum functional in clusters driven by either one particularly large mark, or by observing a large amount of marks in one cluster.

The prior results are not able to describe the hidden regular variation in the distribution of $\bm{S}_j$ due to the limitations of existing approaches, as we review next.
\begin{itemize}
    \item 
        The \emph{Tauberian theorem} approach (see, e.g., \cite{karim2021exact,baeriswyl2023tail}) exploits differentiation and inversion techniques for Laplace transforms.
        For our purpose of characterizing the hidden regular variation of $\bm S_j = (S_{j,1},\ldots,S_{j,d})^\top$ over an arbitrary sub-cone $C \subseteq \R^d_+$,
        the strategy in \cite{karim2021exact} could theoretically be adapted using a multivariate version of the Tauberian theorem (e.g., \cite{resnick2007heavy,resnick2015tauberian}).
        However, this is possible only if one has access to a (semi-)closed form expression for the probability generating function of $\bm S_j\mathbbm{I}\{ \bm S_j \in C \}$---rather than $\psi(\bm z) := \E\big[ \prod_{i \in [d]}{z_i}^{ S_{j,i}  }  \big]$, the probability generating function for $\bm S_j$ itself---in order to apply differentiation techniques and verify the conditions of the Tauberian theorem for the measure $\P( \bm S_j \in \ \cdot \ \cap C  )$.
        While useful expressions for the generating function of $\bm S_j$, as well as the joint transform for multivariate Hawkes processes and the conditional intensity functions (as demonstrated in \cite{karim2021exact}), can be derived by exploiting the fact that the process is branching,
        extending this to the transforms of $\bm S_j\mathbbm{I}\{ \bm S_j \in C \}$ is highly non-trivial as it has to be built upon a detailed understanding of how $\bm S_j$ stays within the cone $C$.
        We note that similar issues arise when studying weighted branching processes and smoothing transforms (see, e.g., \cite{Mentemeier2016,Liu_1998,Volkovich_Litvak_2010}).\footnote{Indeed, taking  $\bm R \distequal \sum_{m \geq 1} W_m \bm R^{(m)}$ as an example,
        where $W_m$ are i.i.d.\ scalar variables and $\bm R^{(m)}$ are i.i.d.\ copies of $\bm R$,
        while $\psi(\bm t) = \E\big[ \prod_{m \geq 1} \psi( W_m \bm t ) \big]$ follows directly with $\psi$ being the Laplace transform of $\bm R$, such equality does not hold for the Laplace transform of $\bm R\mathbbm{I}\{\bm R \in C\}$ given a general cone $C$.}
        \smallskip

    \item 
        Another approach takes a more probabilistic route by
        establishing or exploiting \emph{asymptotics for randomly stopped/weighted sums of regularly varying variables};
        see, e.g., 
        \cite{Basrak02102013,guo2025precise,Asmussen_Foss_2018,10.1214/20-AAP1623,fay2006modeling,Olvera-Cravioto_2012,10.3150/10-BEJ251}.
        See also \cite{xu2015convolution,markovich2020maxima,markovich2022weightedmaximasumsnonstationary,axioms12070641,FOSS2024104422} for recent progress in this area.
        However, existing multivariate results do not allow for the characterization of hidden regular variation in random sums of heavy-tailed vectors (see, e.g., \cite{10.3150/08-BEJ125}) or hinge on the light-tailedness of the random count in the sums (see, e.g., Theorem~4.2 of \cite{konstantinides2024randomvectorspresencesingle} and Theorem~4.3 of \cite{das2023aggregatingheavytailedrandomvectors}),
        making them largely incompatible with our setting
        and the goal of understanding the mechanism by which {$n^{-1}\bm S_j$ stays within a general set $A$.}
        Similarly, in the literature on weighted branching processes and smoothing transforms, 
        Theorem~5.1 in
        \cite{Jelenković_Olvera-Cravioto_2010} makes use of large deviations results for weighted recursions on trees, and 
        \cite{10.3150/13-BEJ576,buraczewski2016precisetailasymptoticsattracting} rely on large deviations for the product of i.i.d.\ random variables or matrices.
        However, these technical tools essentially characterize the probability of observing a large norm of the underlying processes  and do not reveal the hidden regular variation therein.
        \smallskip

    \item 
        On a related note, 
        \emph{renewal-theoretic} tools have been useful when studying tail asymptotics of weighted branching processes and smoothing transforms
        (e.g., \cite{Jelenković_Olvera-Cravioto_2012,JELENKOVIC2015217,BURACZEWSKI20131947}
        and Theorem~4.2 in \cite{Jelenković_Olvera-Cravioto_2010}).
        For instance, in multivariate smoothing transforms
        $
        \bm R \distequal \sum_{i = 1}^B \textbf{W}^{(m)} \bm R^{(m)} + \bm Q,
        $
        with weights $\textbf{W}^{(m)}$ being i.i.d.\ matrices,
        $\bm Q$ being a random vector,
        the random variable $B$ taking values in $\mathbb Z_+$,
        and $\bm R^{(m)}$ being i.i.d.\ copies of $\bm R$,
        the tail asymptotics in $\bm R$ can be established by verifying integrability conditions regarding $\bm R$ and $\textbf{W} \bm R$.
        This method is well-suited to analyze 
        random fluctuations from multiplying the weights $\textbf{W}^{(m)}$ (in the spirit of the classical Kesten-Goldie Theorem \cite{Kesten1973, Goldie1991}),
        but seems less natural in our setting 
        \eqref{def: fixed point equation for cluster S i}, 
        where weights are deterministic (i.e., $\textbf{W}^{(m)} \equiv \textbf{I}$) and the offspring counts are heavy-tailed and sub-critical. 

\end{itemize}

To resolve the technical challenges in the asymptotic analysis of $\P( n^{-1}\bm S_j \in A )$ for sufficiently general $A \subseteq \R^d_+ \delequal [0,\infty)^d$, we develop an approach that reveals \emph{the spatio-temporal structure of multiple big jumps in branching processes}.
Specifically, through another set of distributional fixed-point equations (given $M > 0$),
\begin{align}
    \bm S_j^{\leqslant}(M) \distequal
    \bm e_j + \sum_{i \in [d]}\sum_{ m = 1 }^{ B_{i \leftarrow j}\mathbbm{I}\{ B_{i \leftarrow j} \leq M  \}  }\bm S_i^{(m),\leqslant}(M),
    \qquad j \in [d],
     \label{def: fixed point equation for pruned cluster S i leq M}
\end{align}
with the $\bm S_i^{(m),\leqslant}(M)$'s being i.i.d.\ copies of $\bm S_i^{\leqslant}(M)$,
we construct a ``pruned'' version of $\bm S_j$ in \eqref{def: fixed point equation for cluster S i}
by identifying nodes in the underlying branching process that give birth to more than $M$ children along the same dimension and then removing these children.
Our analysis hinges on an intuitive yet crucial connection between $\bm S_j$ and $\bm S_j^{\leqslant}(M)$:
\begin{align}
    \bm S_j \distequal \bm S^{\leqslant}_j(M) + \sum_{i \in [d]}\sum_{m = 1}^{ W^>_{j;i}(M) }\bm S^{(m)}_i,
    \qquad j \in [d],
    \label{proof, equality, from S i truncated to S i}
\end{align}
where
$W^>_{j;i}(M)$ counts the pruned children along the $i^\text{th}$ dimension under threshold $M$, and
the  $\bm S^{(m)}_i$'s are independent copies of the $\bm S_i$'s.
That is, a branching process can be generated by:
(i) halting the reproduction of a node if it plans to give birth to a large number (more precisely, more than $M$) of children along the same dimension, which yields $\bm S_j^\leqslant(M)$, and then
(ii)
resuming the reproduction of child nodes that were previously on hold (and their offspring), which recovers the law of the original branching process and yields $\bm S_j$.
Furthermore, by recursively applying this argument onto the i.i.d.\ copies $\bm S^{(m)}_i$ in the RHS of \eqref{proof, equality, from S i truncated to S i},
we decompose $\bm S_j$ into a nested tree of independent samples of the pruned clusters $\bm S^\leqslant_i(M)$.
We formalize this decomposition by proposing the notion of ``types'',
which characterizes the spatio-temporal relationship of nodes giving birth to a large number of children (i.e., big jumps) in a branching process.
In Sections~\ref{subsec: proof methodology, cluster size} and \ref{subsec: proof, theorem: main result, cluster size}, we provide details of the proof strategy and the definitions involved,
highlighting that under this framework,
the problem largely reduces to establishing concentration inequalities for $\bm S^\leqslant_j(M)$
and deriving the probability of observing each type of structure (as in Definitions~\ref{def: cluster, type} and \ref{def: cluster, generalized type}) in $\bm S_j$.

Building upon this framework, Theorem~\ref{theorem: main result, cluster size} characterizes the hidden regular variation in $\bm S_i$.
Specifically,
given a non-empty index set $\bm j \subseteq \{1,2,\ldots,d\}$
and a set $A \subseteq \R^d_+$ that is bounded away from the origin and ``roughly contained within'' $\R^d(\bm j) \delequal \{ \sum_{i \in \bm j}w_i\cdot \E\bm S_i:\ w_i \geq 0\ \forall i \in \bm  j \}$, which is the cone generated by $(\E\bm S_i)_{i \in \bm j}$,
Theorem~\ref{theorem: main result, cluster size} indicates that
\begin{align}
    \mathbf C^{\bm j}_i(A^\circ)
    \leq 
    \liminf_{n \to \infty}
    \frac{
        \P(n^{-1}\bm S_i \in A)
    }{ \lambda_{ \bm j }(n) }
    \leq 
    \limsup_{n \to \infty}
     \frac{
        \P(n^{-1}\bm S_i \in A)
    }{ \lambda_{ \bm j}(n)}
    \leq 
    \mathbf C^{\bm j}_i(A^-),
    \label{statement, intro, tail asymptotics, 1}
\end{align}
where 
$A^\circ$ and $A^-$ are the interior and closure of $A$, respectively,
$\mathbf C^{\bm j}_i(\cdot)$ is a Borel measure supported on $\R^d(\bm j)$,
and
$\lambda_{\bm j}(n) \in \RV_{-\alpha(\bm j)}(n)$ is some regularly varying function dictated by the law of the $B_{j \leftarrow i}$'s.
That is, over each cone $\R^d(\bm j)$,
$\bm S_i$ exhibits hidden regular variation with rate function $\lambda_{\bm j}(\cdot)$, power-law index $-\alpha(\bm j)$, and limiting measure $\mathbf C^{\bm j}_i(\cdot)$.
Furthermore, given a general set $A \subseteq \R^d_+ \setminus\{\bm 0\}$, which may span multiple cones $\R^d(\bm j)$,
Theorem~\ref{theorem: main result, cluster size} establishes asymptotics of the form
\begin{align}
    \mathbf C^{\bm j(A)}_i(A^\circ)
    \leq 
    \liminf_{n \to \infty}
    \frac{
        \P(n^{-1}\bm S_i \in A)
    }{ \lambda_{ \bm j(A) }(n) }
    \leq 
    \limsup_{n \to \infty}
     \frac{
        \P(n^{-1}\bm S_i \in A)
    }{ \lambda_{ \bm j(A)}(n)}
    \leq 
    \mathbf C^{\bm j(A)}_i(A^-),
     \label{statement, intro, tail asymptotics, 2}
\end{align}
where $\bm j(A) \delequal \underset{ \bm j \subseteq \{1,2,\ldots,d\}:\ \R^d(\bm j)\cap A \neq \emptyset }{\arg\min}\alpha(\bm j)$.
In other words, for a general set $A$,
the asymptotics $\P(n^{-1}\bm S_i \in A)$ are determined by a discrete optimization problem identifying,
among all cones $\R^d(\bm j)$ that intersect the set $A$,
which one has the heaviest tail in terms of $\alpha(\bm j)$, and hence the highest probability of observing a large $\bm S_i$ over this cone.
Besides, the limiting measures $\mathbf C^{\bm j}_i(\cdot)$ are amenable to straightforward computation using Monte Carlo simulation;
see
Section~\ref{subsec: tail of cluster S, statement of results} and remarks therein for the precise statement of Theorem~\ref{theorem: main result, cluster size} and the rigorous definitions of the notions involved.
Here, we note that Theorem~\ref{theorem: main result, cluster size} is stated in terms of $\MRV$,
a notion of multivariate hidden regular variation we propose in Section~\ref{subsec: MRV}.
Compared to existing formalisms (e.g., \cite{Resnick_2004, hult2006regular,das2023aggregatingheavytailedrandomvectors}),
$\MRV$ offers a richer characterization of tail asymptotics and 
provides a more adequate framework for describing heavy tails in branching processes and Hawkes processes: 
as demonstrated in Remark~\ref{remark: bounded away condition under polar transform, tail asymptotics of cluster sizes}, 
asymptotics \eqref{statement, intro, tail asymptotics, 1} and \eqref{statement, intro, tail asymptotics, 2}
would fail
under existing formalisms of multivariate hidden regular variation.

In a companion paper \cite{wangHawkesLDWorkingPaper},
we apply the tail asymptotics of $\bm S_j$ to characterize the sample path large deviations for a multivariate heavy-tailed Hawkes process $\bm N(t)$.
Specifically, under heavy-tailed offspring distributions and proper tail conditions on the fertility functions of $\bm N(t)$,
we establish asymptotics of the form
\begin{align}
    \breve{\mathbf C}_{ \bm k(E) }(E^\circ)
    \leq 
    \liminf_{n \to \infty}
    \frac{
        \P(\bar{\bm N}_n \in E)
        }{
            \breve\lambda_{\bm k(E)}(n)
        }
    \leq 
    \limsup_{n \to \infty}
    \frac{
        \P(\bar{\bm N}_n \in E)
        }{
            \breve\lambda_{\bm k(E)}(n)
        }
    \leq 
    \breve{\mathbf C}_{ \bm k(E) }(E^-),
    \label{intro, LD for Hawkes}
\end{align}
for a collection of sets 
%
$E \subseteq \D[0,\infty)$
general enough to capture scenarios involving multiple big jumps. 
Here, 
 $\bar{\bm N}_n = \{ \bm N(nt)/n:\ t \geq 0  \}$
is the scaled sample path of $\bm N(t)$ embedded in $\D[0,\infty)$,
the $\breve \lambda_{\bm k}(\cdot)$'s are regularly varying functions,
the limiting measures $\breve{\mathbf C}_{ \bm k}(\cdot)$'s are supported on $\D[0,\infty)$,
and the vector $\bm k(E)$ plays a role analogous to rate functions in the classical large deviation principle framework.
Specifically, $\bm k(E)$
represents the most likely configuration of jumps required for a linear path with slope $\mu_{\bm N}$ to enter the set $E$;
here, $\mu_{\bm N}$ is the expectation of increments in $\bm N(t)$ under stationarity,
and the linear function with slope $\mu_{\bm N}$ represents the nominal behavior of the Hawkes process.
Furthermore, as established in \eqref{statement, intro, tail asymptotics, 1}--\eqref{statement, intro, tail asymptotics, 2},
the probability of observing a large cluster over the cone $\R^d(\bm j)$---and thus the ``cost'' of adding a jump aligned in  $\R^d(\bm j)$ to the nominal path---is dictated by tail indices $\alpha(\bm j)$.
Therefore, the characterization of hidden regular variation in Equations \eqref{statement, intro, tail asymptotics, 1}--\eqref{statement, intro, tail asymptotics, 2} in this paper allows us to determine the rate function $\bm k(E)$,
revealing the most likely configuration of big jumps that push $\bar{\bm N}_n$ into $E$,
and develop sample path large deviations for Hawkes processes in  \eqref{intro, LD for Hawkes} that go well beyond the domain of a single big jump.
These results bridge the gaps in the existing literature, provide detailed qualitative insights, and can serve as stepping stones towards efficient rare-event simulation of risks in
practical systems with clustering or mutual-excitation effects.

The rest of the paper is structured as follows.
Section~\ref{sec: M convergence and hidden RV} reviews the notion of $\mathbb M$-convergence and proposes $\MRV$, a new notion of multivariate hidden regular variation.
Section~\ref{sec: tail of cluster size} presents Theorem~\ref{theorem: main result, cluster size}---the main result of this paper---that characterizes the hidden regular variation of $\bm S_j$ in
\eqref{def: fixed point equation for cluster S i},
and describes the proof strategy.
Section~\ref{sec: proof, cluster size asymptotics} provides the proofs.
In the Appendix,
Section~\ref{sec: appendix, lemmas} collects useful auxiliary results,
Section~\ref{sec: counterexamples} provides the details of the counterexample in Remark~\ref{remark: bounded away condition under polar transform, tail asymptotics of cluster sizes},
Section~\ref{subsec: proof, M convergence and asymptotic equivalence} collects the proofs of technical tools regarding $\mathbb M$-convergence and asymptotic equivalence,
Section~\ref{subsec: proof, technical lemmas, cluster size} contains the proofs of technical lemmas applied in Section~\ref{sec: proof, cluster size asymptotics},
and Section~\ref{sec: appendix, theorem tree} provides the theorem tree.

\section{$\mathbb M(\mathbb S\setminus\mathbb C)$-Convergence and Multivariate Hidden Regular Variation}
\label{sec: M convergence and hidden RV}

We review the notion of $\mathbb M$-convergence in Section~\ref{subsec: M convergence},
and then develop the $\MRV$ formalism in Section~\ref{subsec: MRV} to  generalize the classical notion of multivariate regular variation.
This framework supports the formulation and proof of our main results in Section~\ref{sec: tail of cluster size},
capturing the phenomenon of varying power-law index across different directions in Euclidean spaces.

We first introduce notations that will be used frequently throughout the paper.
 Let 
 $\mathbb Z$ be the set of integers,
 $\notationdef{notation-non-negative-numbers-and-zero}{\mathbb{Z}_+} = \{0,1,2,\cdots\}$ 
be the set of non-negative integers,
and
 $\notationdef{notation-non-negative-numbers}{\mathbb{N}} = \{1,2,\cdots\}$ be the set of strictly positive integers.
Let $\notationdef{set-for-integers-below-n}{[n]} \delequal \{1,2,\cdots,n\}$ for any positive integer $n$.
As a convention, we set $[0] = \emptyset$.
For each positive integer $m$, 
let $\notationdef{notation-power-set-for-[d]}{\powerset{m}}$ be the power set of $[m]$, i.e., the collection of all subsets of $\{1,2,\ldots,m\}$,
and  let $\notationdef{notation-power-set-for-[d]-without-empty-set}{\powersetTilde{m}} \delequal \widetilde{\mathcal P}_m \setminus \{\emptyset\}$ be the collection of all \emph{non-empty} subsets of $[m]$.
Let $\notationdef{notation-R}{\R}$ be the set of reals. 
For any $x \in \R$,
let
$\notationdef{floor-operator}{\floor{x}}\delequal \max\{n \in \mathbb{Z}:\ n \leq x\}$
and
$\notationdef{ceil-operator}{\ceil{x}} \delequal \min\{n \in \Z:\ n \geq x\}$.
Let $\notationdef{notation-R-d-+}{\R^d_+} = [0,\infty)^d$. 
Given some metric space $(\mathbb S,\bm d)$ and a set $E \subseteq \mathbb S$,
let
$\notationdef{notation-interior-of-set-E}{E^\circ}$ and $\notationdef{notation-closure-of-set-E}{E^-}$ be the interior and closure of $E$, respectively.
For any $r > 0$, 
let
$\notationdef{notation-epsilon-enlargement-of-set-E}{E^r} \delequal 
\{ y \in \mathbb{S}:\ \bm{d}(E,y)\leq r\}$ be the $r$-enlargement of the set $E$,
and
$
\notationdef{notation-epsilon-shrinkage-of-set-E}{E_{r}} \delequal
((E^c)^r)^\complement
=
\{
y \in \mathbb S:\ \bm d(E^c,y) > r
\}
$
be the $r$-shrinkage
of $E$.
Note that $E^r$ is closed and $E_r$ is open for any $r > 0$.
Throughout,
we adopt the $L_1$ norm $\notationdef{notation-L1-norm}{\norm{\bm x}} = \sum_{i \in [d]}|x_i|$ for any real vector $\bm x \in \R^d$.
We use $\notationdef{notation-R-d-+-unit-sphere}{\mathfrak N^d_+} \delequal \{\bm x \in \R_+^d:\ \norm{\bm x} = 1\}$ to denote the unit sphere under the $L_1$ norm, restricted to the positive quadrant.

\subsection{$\mathbb M(\mathbb S \setminus \mathbb C)$-Convergence}
\label{subsec: M convergence}
We recall the notion of  $\mathbb M(\mathbb S \setminus \mathbb C)$-convergence (\cite{10.1214/14-PS231}),
which has recently emerged as a suitable  foundation for large deviations analyses of heavy-tailed stochastic systems (\cite{10.1214/14-PS231,10.1214/18-AOP1319,10.1214/24-EJP1115}).
Consider a complete and separable metric space $(\mathbb{S},\bm{d})$.
Given Borel measurable sets $A,B \subseteq \mathbb S$,
we say that $A$ is bounded away from $B$ (under $\bm d$) if 
$
\bm d(A,B) \delequal \inf_{ x \in A, y \in B }\bm d(x,y) > 0.
$
Given a Borel set $\mathbb{C} \subseteq \mathbb{S}$,
let $\mathbb{S}\setminus \mathbb{C}$
be the metric subspace of $\mathbb{S}$ in the relative topology,
which induces the 
$\sigma$-algebra
$
\mathscr{S}_{ \mathbb{S}\setminus \mathbb{C}}
\delequal 
\{ A \in \mathscr{S}_{\mathbb{S}}:\ A \subseteq \mathbb{S}\setminus \mathbb{C}\}.
$
Here, we use $\notationdef{notation-borel-sigma-algebra-of-S}{\mathscr S_{\mathbb S}}$ to denote the Borel $\sigma$-algebra of $\mathbb S$.
Let
\begin{align*}
    \notationdef{notation-M-S-exclude-C}{\mathbb{M}(\S\setminus \mathbb{C})}
    \delequal 
    \{
    \nu(\cdot)\text{ is a Borel measure on }\mathbb{S}\setminus \mathbb{C} :\ \nu(\mathbb{S}\setminus \mathbb{C}^r) < \infty\ \forall r > 0
    \}.
\end{align*}
We topologize
$\mathbb{M}(\mathbb{S}\setminus \mathbb{C})$\linkdest{notation-M-convergence}
by the sub-basis generated by sets of the form
$
\{
\nu \in \mathbb{M}(\mathbb{S}\setminus \mathbb{C}):\ \nu(f) \in G
\},
$
where $G \subseteq [0,\infty)$ is open, $f \in \mathcal{C}({ \mathbb{S}\setminus \mathbb{C} })$,
and
$
\notationdef{notation-mathcal-C-S-exclude-C}{\mathcal{C}({ \mathbb{S}\setminus \mathbb{C} })}
$
is the set of all real-valued, non-negative, bounded and continuous functions with support bounded away from $\mathbb{C}$ (i.e., $f(x) = 0\ \forall x \in \mathbb{C}^r$ for some $r > 0$).
We now state the definition of $\mathbb M(\mathbb S \setminus \mathbb C)$-convergence.

\begin{definition}[$\mathbb M(\mathbb S \setminus \mathbb C)$-convergence]
\label{def: M convergence}
Given $\mu_n, \mu \in \M(\S\setminus\C)$,
we say that \textbf{$\mu_n$ converges to $\mu$ in $\M(\S\setminus\C)$} as $n \to \infty$
if
\begin{align*}
    \lim_{n \to \infty}|\mu_n(f) - \mu(f)| = 0,
    \qquad
    \forall f \in \mathcal C(\mathbb S \setminus \mathbb C).
\end{align*}
\end{definition}
When there is no ambiguity about $\mathbb S$ and $\mathbb C$, we refer to Definition~\ref{def: M convergence} as $\mathbb M$-convergence.
Next, we recall the Portmanteau Theorem for $\mathbb M$-convergence.

\begin{theorem}[Theorem 2.1 of \cite{10.1214/14-PS231}]
\label{portmanteau theorem M convergence}
 Let  $\mu_n, \mu \in \M(\S\setminus\C)$.
 We have $\mu_n \to \mu$ in $\M(\S\setminus\C)$ as $n \to \infty$ if and only if
\begin{align*}
    \limsup_{n \to \infty}\mu_n(F) \leq \mu(F),
    \qquad
    \liminf_{n \to \infty}\mu_n(G) \geq \mu(G),
\end{align*}
for any closed set $F$ and open set $G$ that are bounded away from $\mathbb C$.
    
\end{theorem}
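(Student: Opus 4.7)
The argument follows the template of the classical Portmanteau theorem for weak convergence, with the extra bookkeeping that every test function, closed set, and open set in play must have support bounded away from $\mathbb{C}$ so that the relevant measures are finite. Throughout, given any Borel $E \subseteq \mathbb{S}\setminus\mathbb{C}$ with $\bm{d}(E,\mathbb{C}) \geq r > 0$, I will use an auxiliary Lipschitz cutoff $\phi_r: \mathbb{S} \to [0,1]$ that vanishes on $\mathbb{C}^{r/2}$ and equals $1$ on $E$---for instance, $\phi_r(x) = 1 \wedge (2\bm{d}(x,\mathbb{C})/r - 1)_+$---so that every constructed function lies in $\mathcal{C}(\mathbb{S}\setminus\mathbb{C})$.

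For the ``only if'' direction, assume $\mu_n \to \mu$ in $\mathbb{M}(\mathbb{S}\setminus\mathbb{C})$ and fix a closed $F$ with $\bm{d}(F,\mathbb{C}) \geq r > 0$. Set
\[
f_k(x) \delequal \phi_r(x) \cdot \max\{1 - k\bm{d}(x,F),\, 0\},\qquad k \geq 1.
\]
Each $f_k \in \mathcal{C}(\mathbb{S}\setminus\mathbb{C})$, and $f_k \downarrow \mathbbm{I}_F$ pointwise. Thus $\mu_n(F) \leq \mu_n(f_k)$ yields $\limsup_n \mu_n(F) \leq \mu(f_k)$, and monotone convergence---valid since $\mu(f_1) \leq \mu(\mathbb{S}\setminus \mathbb{C}^{r/2}) < \infty$---sends $\mu(f_k) \downarrow \mu(F)$. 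The open-set inequality follows by the mirror construction $g_k(x) \delequal \phi_r(x) \cdot \min\{k\bm{d}(x,G^c),\, 1\} \uparrow \mathbbm{I}_G$ for an open $G$ with $\bm{d}(G,\mathbb{C}) \geq r$.

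For the ``if'' direction, fix $f \in \mathcal{C}(\mathbb{S}\setminus\mathbb{C})$ and invoke the layer-cake identity $\mu_n(f) = \int_0^{\|f\|_\infty} \mu_n(\{f > t\})\, dt$. Continuity of $f$ makes $\{f > t\}$ open and $\{f \geq t\}$ closed, and both sit inside the support of $f$, which is bounded away from $\mathbb{C}$. The hypothesized inequalities then give $\liminf_n \mu_n(\{f > t\}) \geq \mu(\{f > t\})$ and $\limsup_n \mu_n(\{f \geq t\}) \leq \mu(\{f \geq t\})$ at every $t$. Fatou's lemma produces $\liminf_n \mu_n(f) \geq \mu(f)$. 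For the reverse inequality, fix a closed $F^*$ containing $\mathrm{supp}(f)$ that is still bounded away from $\mathbb{C}$; the already-obtained closed-set bound applied to $F^*$ yields $\sup_n \mu_n(F^*) < \infty$, supplying the uniform dominating function for reverse Fatou. The identity $\mu(\{f \geq t\}) = \mu(\{f > t\})$ holds for all but countably many $t$ (the atom values of $f$ under the finite measure $\mu|_{F^*}$), and chaining these pieces gives $\limsup_n \mu_n(f) \leq \mu(f)$.

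The main obstacle, and the one place where the argument departs from the textbook setting, is securing the uniform-in-$n$ dominating function for the reverse Fatou step: since the $\mu_n$ are not probability measures and are only guaranteed to be finite on sets bounded away from $\mathbb{C}$, this bound has to be bootstrapped from the hypothesized closed-set inequality applied to $F^*$ rather than taken for granted. Once this uniform bound is established, everything else is standard metric-space approximation.
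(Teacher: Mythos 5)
Your proof is correct. Note, however, that the paper does not prove this statement at all: it is imported verbatim as Theorem~2.1 of the cited reference (Lindskog, Resnick and Roy's survey on regularly varying measures), so there is no internal proof to compare against. Your argument is essentially the standard Portmanteau proof adapted to $\mathbb M(\mathbb S\setminus\mathbb C)$, and it matches the route taken in that reference: sandwiching indicators of closed/open sets bounded away from $\mathbb C$ between continuous functions whose supports are kept away from $\mathbb C$ by a Lipschitz cutoff (for the ``only if'' direction), and the layer-cake representation with Fatou/reverse Fatou over the sub- and super-level sets of a test function (for the ``if'' direction). The two points that genuinely need care in this non-finite-measure setting are handled correctly: the downward monotone convergence $\mu(f_k)\downarrow\mu(F)$ is justified because $f_1$ is supported in $\mathbb S\setminus\mathbb C^{r/2}$ where $\mu$ is finite, and the uniform dominating bound for reverse Fatou is bootstrapped from the hypothesized closed-set inequality applied to a closed superset $F^*$ of $\mathrm{supp}(f)$ that remains bounded away from $\mathbb C$ (together with the fact that each $\mu_n(F^*)<\infty$, so a finite limsup yields a finite supremum). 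Minor cosmetic remarks: the cutoff $\phi_r$ is redundant in the open-set construction, since $\min\{k\,\bm d(\cdot,G^c),1\}$ is already supported in $G^-$, which is bounded away from $\mathbb C$; and the closed-set inequality is only needed for $t>0$ in the layer-cake step, which is immaterial for the $dt$-integral.
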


\subsection{Multivariate Hidden Regular Variation}
\label{subsec: MRV}

Recall that a measurable function $\phi:(0,\infty) \to (0,\infty)$ is said to be regularly varying as $x \rightarrow\infty$ with index $\beta \in \R$ (denoted as $\phi(x) \in \notationdef{notation-regular-variation}{\RV_\beta}(x)$ as $x \to \infty$) if $\lim_{x \rightarrow \infty}\phi(tx)/\phi(x) = t^\beta$ for all $t>0$. 
See, e.g., \cite{bingham1989regular,resnick2007heavy,foss2011introduction} for properties of regularly varying functions.


The goal of this subsection is to generalize the classical notion of multivariate regular variation (e.g., \cite{Resnick_2004, hult2006regular})
and propose $\MRV$, 
a framework suitable for describing the multivariate hidden regular variation in branching processes and Hawkes processes.
To formally present the definition and encode the geometry and the degree of hidden regular variation over arbitrarily positioned cones in $\R^d_+$,
we introduce the following key elements.
\begin{itemize}
    \item 
        Recall that
  ${\powersetTilde{m}} $ is the collection of all \emph{non-empty} subsets of $[m] = \{1,2,\ldots,m\}$,
  and note that $|{\mathcal P}_m| = 2^m - 1$.
Given $\bar{\textbf S} = \{\bar{\bm s}_j \in [0,\infty)^d: j \in [k]\}$
and some $\bm j \in \powersetTilde{k}$ (i.e., $\bm j \subseteq [k]$ with $\bm j \neq \emptyset$), let
\begin{align}
    \notationdef{notation-convex-cone-R-d-i-cluster-size}{\R^d(\bm j;\bar{\textbf S})}
    \delequal 
    \Bigg\{ \sum_{i \in \bm j}  w_i\bm{\bar s}_i:\ w_i \geq 0\ \forall i \in \bm j \Bigg\}
    \label{def: cone R d index i}
\end{align}
be the convex cone in $\R^d_+$ generated by the vectors $\{ \bar{\bm s}_i:\ i \in \bm j \}$.
The purpose of the $\MRV$ formalism is to describe the hidden regular variation of a measure $\nu$ over the collection of cones $\big(\R^d(\bm j;\bar{\textbf S})\big)_{ \bm j \in \powersetTilde{k} }$ generated \emph{under the basis} $\bar{\textbf S}$.

    \item 
        Next,
        consider the collection of \emph{tail indices}
$
        \bm \alpha = \big\{  \alpha(\bm j) \in [0,\infty):\ \bm j \subseteq [k] \big\}
        $
        that is strictly monotone w.r.t.\ $\bm j$: that is,
        $\alpha(\bm j) < \alpha(\bm j^\prime)$ holds for any $ \bm j \subsetneq \bm j^\prime \subseteq [k]$.
        We adopt the convention that $\alpha(\emptyset) =0$.
        Each $\alpha(\bm j)$ denotes the power-law tail index of the hidden regular variation over the cone ${\R^d(\bm j;\bar{\textbf S})}$.

    \item 
        More precisely, for each $\bm j \in \powersetTilde{k}$,
        the hidden regular variation over the cone $\R^d(\bm j;\bar{\textbf S})$ is characterized by a \emph{rate function}
        $\lambda_{\bm j}: (0,\infty) \to (0,\infty)$ such that $\lambda_{\bm j}(x) \in \RV_{ -\alpha(\bm j) }(x)$ as $x \to \infty$.

    \item
        Meanwhile, under the limiting regime with $\lambda_{\bm j}(n)$-scaling,
        the tail behavior of the measure $\nu$ over the cone  $\R^d(\bm j;\bar{\textbf S})$
        is captured by the \emph{limiting measure} $\mathbf C_{\bm j}$.
        Specifically, recall that we use ${\mathfrak N^d_+}$ to denote the unit sphere restricted in $\R^d_+$.
        For each $\epsilon \geq 0$ and $\bm j \in \powersetTilde{k}$, let
\begin{align}
    \notationdef{notation-enlarged-convex-cone-bar-R-d-i-epsilon}{ \bar \R^d(\bm j, \epsilon;\bar{\textbf S}) }
    \delequal
    \bigg\{
        w\bm s:\ w \geq 0,\ \bm s \in \mathfrak N^d_+,\ 
            \inf_{  \bm x\in \R^d(\bm j;\bar{\textbf S}) \cap \mathfrak N^d_+   }\norm{ \bm s - \bm x  } \leq \epsilon
    \bigg\}
    \label{def: enlarged cone R d index i epsilon}
    \end{align}
be an enlarged version of the cone $\R^d(\bm j;\bar{\textbf S})$ by considering the polar coordinates of its elements under $\epsilon$-perturbation to their angles.
Note that 
$
\bar \R^d(\bm j, 0;\bar{\textbf S}) = \R^d(\bm j;\bar{\textbf S}).
$
We also adopt the convention that $\bar\R^d(\emptyset,\epsilon;\bar{\textbf S}) = \{\bm 0\}$.
        We say that $A \subseteq \R^d_+$ is bounded away from $B \subseteq \R^d_+$ if $\inf_{\bm x \in A,\ \bm y \in B}\norm{\bm x - \bm y} > 0$.
        For each $\bm j \in \powersetTilde{k}$,
        the limiting measure
        $\mathbf C_{\bm j}(\cdot)$ is a Borel measure supported on $\R^d(\bm j;\bar{\bm S})$ such that 
        $
        \mathbf C_{\bm j}(A) < \infty
        $
        holds for any Borel set $A \subseteq \R^d_+$ that is bounded away from
        \begin{align}
            \notationdef{notation-cone-R-d-leq-i-basis-S-index-alpha}{\bar\R^{d}_\leqslant(\bm j,\epsilon; \bar{\textbf S},\bm \alpha)}
            \delequal
            \bigcup_{
                \bm j^\prime \subseteq [k]:\ 
                \bm j^\prime \neq \bm j,\ \alpha(\bm j^\prime) \leq \alpha(\bm j)
            } \bar\R^d(\bm j^\prime,\epsilon;\bar{\textbf S})
            \label{def: cone R d i basis S index alpha}
        \end{align}
        under some (and hence all) $\epsilon > 0$ small enough.
        Note that by the convention $\bar{\R}^d(\emptyset,\epsilon;\bar{\textbf S}) = \{\bm 0\}$,
        we either have ${\bar\R^{d}_\leqslant(\bm j,\epsilon; \bar{\textbf S},\bm \alpha)} = \{\bm 0\}$,
        or that
        $
        {\bar\R^{d}_\leqslant(\bm j,\epsilon; \bar{\textbf S},\bm \alpha)}
        $
        is the union of all ${\bar\R^{d}(\bm j^\prime,\epsilon; \bar{\textbf S})}$
        such that $\bm j^\prime \in \powersetTilde{k}$, $\bm j^\prime \neq \bm j$, and $\alpha(\bm j^\prime) \leq \alpha(\bm j)$.
        
\end{itemize}


We are now ready to state the definition of $\MRV$.

\begin{definition}[$\MRV$]
\label{def: MRV}
Let $\nu(\cdot)$ be a Borel measure on $\R^d_+$, and let $\nu_n(\cdot) \delequal \nu( n\ \cdot\ )$ (i.e., $\nu_n(A) = \nu(nA) = \nu\big\{ n\bm x:\ \bm x \in A \big\}$).
The measure
$\nu(\cdot)$ is said to be \textbf{multivariate regularly varying} on $\R^d_+$
with basis $\bar{\textbf S} = \{ \bar s_j:\ j \in [k] \}$,
tail indices $\bm \alpha$,
rate functions $\lambda_{\bm j}(\cdot)$,
and limiting measures $\mathbf C_{\bm j}(\cdot)$,
which we denote by
$
\nu \in \notationdef{notation-MRV}{\MRV\Big(\bar{\textbf S},\bm \alpha, (\lambda_{\bm j})_{ \bm j \in \powersetTilde{k}}, (\mathbf C_{\bm j})_{  \bm j \in \powersetTilde{k} }  \Big)},
$
if 
    \begin{align}
            \mathbf C_{\bm j}(A^\circ)
            & \leq 
            \liminf_{n \to \infty}\frac{
                \nu_n(A)
            }{
                \lambda_{\bm j}(n)
            }
            \leq 
            \limsup_{n \to \infty}\frac{
                \nu_n(A)
            }{
                \lambda_{\bm j}(n)
            }
            \leq 
            \mathbf C_{\bm j}(A^-) < \infty
            \label{cond, finite index, def: MRV}
        \end{align}
    holds for any $ \bm j \in \powersetTilde{k}$ and any Borel set $A \subseteq \R^d_+$ that is
        bounded away from $\bar \R^{d}_\leqslant(\bm j,\epsilon; \bar{\textbf S},\bm \alpha)$ under some (and hence all) $\epsilon > 0$ small enough.
Additionally, if for any Borel set $A \subseteq \R^d_+$ that is
        bounded away from $\bar \R^{d}([k],\epsilon; \bar{\textbf S},\bm \alpha)$ under some (and hence all) $\epsilon > 0$ small enough,
we have 
\begin{align}
    \nu_n(A) = \lo( n^{-\gamma})\ \text{ as }n \to \infty,
    \qquad \forall \gamma > 0,
    \label{cond, outside of the full cone, finite index, def: MRV}
\end{align}
then we write 
$
\nu \in \notationdef{notation-MRV}{\MRV^*\Big(\bar{\textbf S},\bm \alpha, (\lambda_{\bm j})_{ \bm j \in \powersetTilde{k}}, (\mathbf C_{\bm j})_{  \bm j \in \powersetTilde{k} }  \Big)}.
$



\end{definition}

In \eqref{def: cone R d i basis S index alpha}, we write
$
\notationdef{notation-cone-R-d-leq-i-basis-S-index-alpha-no-epsilon}{\R^{d}_\leqslant(\bm j; \bar{\textbf S},\bm \alpha)} \delequal {\bar\R^{d}_\leqslant(\bm j,0; \bar{\textbf S},\bm \alpha)}.
$
Besides,
when there is no ambiguity about the basis $\bar{\textbf S}$ and the tail indices $\bm \alpha$,
we adopt simpler notations
$
\notationdef{notation-convex-cone-R-d-i-cluster-size-short}{\R^d(\bm j)} \delequal {\R^d(\bm j;\bar{\textbf S})},
$
$
\notationdef{notation-cone-R-d-leq-i-basis-S-index-alpha-short}{\R^{d}_\leqslant(\bm j)} \delequal {\R^{d}_\leqslant(\bm j; \bar{\textbf S},\bm \alpha)},
$
$
\notationdef{notation-enlarged-convex-cone-bar-R-d-i-epsilon-short}{ \bar \R^d(\bm j, \epsilon) } \delequal \bar \R^d(\bm j, \epsilon;\bar{\textbf S}),
$
and
$
\notationdef{notation-cone-R-d-leq-i-basis-S-index-alpha-short}{\bar\R^{d}_\leqslant(\bm j,\epsilon)} \delequal
{\bar\R^{d}_\leqslant(\bm j,\epsilon; \bar{\textbf S},\bm \alpha)}.
$
Notably,
the conditions \eqref{cond, finite index, def: MRV} and \eqref{cond, outside of the full cone, finite index, def: MRV} in Definition~\ref{def: MRV} 
are equivalent to a characterization of heavy tails through polar coordinates. 
Specifically,
we endow the space $[0,\infty) \times \R^{d}$ with the metric
\begin{align}
    \notationdef{notation-uniform-metric-under-polar-transform}{\bm d_\textbf{U}}\big((r_1,\bm w_1),(r_2,\bm w_2)\big)
    =
    |r_1 - r_2| \vee \norm{\bm w_1 - \bm w_2},
    \qquad
    \forall r_i \geq 0,\ \bm w_i \in \R^d,
    \label{def: metric for polar coordinates}
\end{align}
which is the metric
induced by the uniform norm.
Note that $\big( [0,\infty) \times \mathfrak N^d_+, \bm d_\textbf{U} \big)$ is a complete and separable metric space.
Next,
we define the mapping $\Phi: \R^d_+ \to [0,\infty) \times \mathfrak N^d_+$ by
\begin{align}
    \notationdef{notation-set-mapping-Phi-polar-transform}{\Phi(\bm x)}
    \delequal
    \begin{cases}
         \Big(\norm{\bm x},\frac{\bm x}{\norm{\bm x}}\Big) &\text{ if }\bm x \neq 0,
         \\
         \big( 0, (1,0,0,\cdots,0)\big) & \text{ otherwise.}
    \end{cases}
    \label{def: Phi, polar transform}
\end{align}
Since the value of $\Phi(\bm x)$ at $\bm x = \bm 0$ is of no consequence to our subsequent analysis,
$
\Phi
$
can be interpreted as the polar transform with domain extended to $\bm 0$.
Given a Borel measure $\mu(\cdot)$ on $\R^d_+\setminus\{\bm 0\}$,
we define the measure $\mu \circ \Phi^{-1}$ on $(0,\infty) \times \mathfrak N^d_+$ by 
\begin{align}
    \notationdef{notation-measure-composition-Phi-inverse}{\mu \circ \Phi^{-1}}(A) \delequal \mu\Big( \Phi^{-1}(A) \Big),
    \qquad \forall\text{ Borel set }A\subseteq \R^d_+\setminus\{\bm 0\}.
    \label{def: mu composition Phi inverse measure}
\end{align}
As shown in Lemma~\ref{lemma: M convergence for MRV},
$\MRV$ is equivalent to a characterization of hidden regular variation in terms of the $\mathbb M(\mathbb S\setminus\C)$-convergence of polar coordinates,
i.e., under the choice of $\mathbb S=[0,\infty) \times \mathfrak N^d_+$ with metric $\bm d_\textbf{U}$.

\begin{lemma}\label{lemma: M convergence for MRV}
\linksinthm{lemma: M convergence for MRV}
Let $\mathbb C$ be a closed cone in $\R^d_+$ and
$
\mathbb C_\Phi \delequal
    \big\{
        (r,\bm \theta) \in [0,\infty) \times \mathfrak N^d_+ :\ r\bm \theta \in \mathbb C
    \big\}.
$
Let $\mu \in \mathbb M(\R^d_+ \setminus \mathbb C)$.
Let $X_n$ be a sequence of random vectors taking values in $\R^d_+$, and
$
(R_n,\Theta_n) = \Phi(X_n).
$
Let
$\epsilon_n$ be a sequence of positive real numbers with $\lim_{n\to\infty}\epsilon_n = 0$.
Endow the space  $[0,\infty) \times \mathfrak N^d_+$ with metric $\bm d_\textbf{U}$ in \eqref{def: metric for polar coordinates}.
The following two conditions are equivalent:
\begin{enumerate}[(i)]
    \item 
        as $n \to \infty$,
        \begin{align}
    \epsilon_n^{-1}\P\big( (R_n,\Theta_n) \in\ \cdot\ \big) \to \mu \circ \Phi^{-1}(\cdot)
    \quad
    \text{in $\mathbb M\Big( \big([0,\infty) \times \mathfrak N^d_+ \big)\setminus \mathbb C_\Phi\Big)$;}
    \label{condition: M convergence for polar coordinates, lemma: M convergence for MRV}
\end{align}

    \item 
        for any $\epsilon > 0$ and any Borel set $A \subseteq \R^d_+$ that is bounded away from $\bar{\mathbb C}(\epsilon)$,
\begin{align}
    \mu(A^\circ) 
    \leq 
    \liminf_{n \to \infty}\epsilon_n^{-1}\P(X_n \in A)
    \leq 
    \limsup_{n \to \infty}\epsilon_n^{-1}\P(X_n \in A)
    \leq 
    \mu(A^-) < \infty,
    \label{claim, lemma: M convergence for MRV}
\end{align}
where
\begin{align}
    \bar{\mathbb C}(\epsilon) \delequal
\big\{
    w\bm s:\ w \geq 0,\ 
    \bm s \in \mathfrak N^d_+,\ 
            \inf_{  \bm x\in \C \cap \mathfrak N^d_+   }\norm{ \bm s - \bm x  } \leq \epsilon
\big\}.
\label{def: cone C enlarged by angles}
\end{align}
\end{enumerate}
\end{lemma}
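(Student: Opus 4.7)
My plan is to reduce the equivalence to the Portmanteau theorem for $\mathbb M$-convergence (Theorem~\ref{portmanteau theorem M convergence}) after establishing a ``bounded-away correspondence'' between the two spaces. The polar map $\Phi$ restricts to a homeomorphism $\R^d_+ \setminus \{\bm 0\} \to (0,\infty) \times \mathfrak N^d_+$, and both (i) and (ii) describe convergence on sets bounded away from an exclusion set that contains $\bm 0$; once the correspondence is in place, the translation becomes mechanical.

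The crux is the following claim: a Borel set $A \subseteq \R^d_+$ is bounded away from $\bar{\mathbb C}(\epsilon)$ under $\norm{\cdot}$ for some $\epsilon > 0$ if and only if $\Phi(A)$ is bounded away from $\mathbb C_\Phi$ under $\bm d_\textbf{U}$. For the forward direction, suppose $\bm d(A,\bar{\mathbb C}(\epsilon)) \geq \delta > 0$. Since $\bm 0 \in \bar{\mathbb C}(\epsilon)$, every $\bm y \in A$ satisfies $\norm{\bm y} > \delta$; and for any $\theta' \in \mathbb C \cap \mathfrak N^d_+$, the point $\norm{\bm y}\theta'$ lies in $\bar{\mathbb C}(\epsilon)$, so the angular distance from $\bm y/\norm{\bm y}$ to $\theta'$ must exceed $\epsilon$. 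Combined with the decomposition $\mathbb C_\Phi = (\{0\}\times\mathfrak N^d_+) \cup \{(r',\theta'): r'> 0,\ \theta'\in\mathbb C\cap\mathfrak N^d_+\}$, this yields $\bm d_\textbf{U}(\Phi(A),\mathbb C_\Phi) \geq \min(\delta,\epsilon) > 0$. Conversely, if $\bm d_\textbf{U}(\Phi(A),\mathbb C_\Phi) \geq \delta > 0$, testing against $(0,\cdot) \in \mathbb C_\Phi$ and against $(\norm{\bm y},\theta') \in \mathbb C_\Phi$ for $\theta' \in \mathbb C \cap \mathfrak N^d_+$ gives both $\norm{\bm y} > \delta$ and angular distance $> \delta$ from $\bm y/\norm{\bm y}$ to $\mathbb C \cap \mathfrak N^d_+$, uniformly over $\bm y \in A$. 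A case analysis on $w \leq \norm{\bm y}/2$ versus $w > \norm{\bm y}/2$, combined with the elementary identity $\norm{r\theta - w\bm s}_2^2 = (r-w)^2 + rw\norm{\theta - \bm s}_2^2$ and norm equivalence between $L^1$ and $L^2$ on $\R^d$, then furnishes a uniform positive lower bound on $\norm{\bm y - w\bm s}$ for $\bm y \in A$ and $w\bm s \in \bar{\mathbb C}(\delta/2)$.

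With the correspondence in hand, I derive (i) $\Leftrightarrow$ (ii) via Portmanteau. For (i) $\Rightarrow$ (ii): given $A$ bounded away from $\bar{\mathbb C}(\epsilon)$, both $A^\circ$ and $A^-$ are as well, so $\Phi(A^\circ)$ (open) and $\Phi(A^-)$ (closed in $[0,\infty)\times\mathfrak N^d_+$, since $A^-$ stays bounded away from $\bm 0$) are bounded away from $\mathbb C_\Phi$. Applying Theorem~\ref{portmanteau theorem M convergence} to these sets, and using the identities $\P((R_n,\Theta_n)\in \Phi(B)) = \P(X_n \in B)$ and $\mu\circ\Phi^{-1}(\Phi(B)) = \mu(B)$ for Borel $B \subseteq \R^d_+ \setminus \{\bm 0\}$, yields \eqref{claim, lemma: M convergence for MRV}; finiteness of $\mu(A^-)$ follows because $A^-$ is bounded away from $\mathbb C \subseteq \bar{\mathbb C}(\epsilon)$ and $\mu \in \mathbb M(\R^d_+\setminus\mathbb C)$. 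The converse direction is analogous, testing (ii) on preimages $\Phi^{-1}(F)$ and $\Phi^{-1}(G)$ of closed and open sets in $[0,\infty) \times \mathfrak N^d_+$ bounded away from $\mathbb C_\Phi$. I expect the main obstacle to be the reverse direction of the bounded-away correspondence, where angular gaps must be converted into $L^1$ gaps despite the geometric factor $\sqrt{rw}$ weighting the angular term; the case split on the size of $w$ relative to $\norm{\bm y}$ is precisely what handles this subtlety.
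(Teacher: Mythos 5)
Your proposal follows essentially the same route as the paper: the bounded-away correspondence you isolate is exactly the paper's Lemma~\ref{lemma: equivalence for bounded away condition, polar coordinates}, after which both arguments transfer \eqref{claim, lemma: M convergence for MRV} to and from the Portmanteau theorem (Theorem~\ref{portmanteau theorem M convergence}) via the polar homeomorphism on $\R^d_+\setminus\{\bm 0\}$ and the identities $\P(X_n\in B)=\P((R_n,\Theta_n)\in\Phi(B))$, $\mu\circ\Phi^{-1}(\Phi(B))=\mu(B)$. One small repair: the identity $\norm{r\theta - w\bm s}_2^2 = (r-w)^2 + rw\norm{\theta-\bm s}_2^2$ holds only for Euclidean-unit $\theta,\bm s$, whereas here the angular variables are $L^1$-normalized, so in the reverse direction it is cleaner to bound $\norm{\bm y - w\bm s} \geq \norm{\bm y}\,\norm{\Theta - \bm s} - \big|\norm{\bm y} - w\big|$ directly (as the paper in effect does), after which your case split on $w$ versus $\norm{\bm y}$ goes through unchanged.
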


The proof of Lemma~\ref{lemma: M convergence for MRV} is relatively straightforward and is presented in Section~\ref{subsec: proof, M convergence and asymptotic equivalence} of the Appendix.
We add a concluding remark about the key differences between our Definition~\ref{def: MRV} and existing formalisms for multivariate regular variation (MRV).

\begin{remark}[Comparison to Existing Notions of MRV]
\label{remark: comparison to AMRV}
Classical formalisms of MRV (e.g., \cite{Resnick_2004, hult2006regular}) characterize the dominating power-law tail over the entirety of $\R^d$ or $\R^d_+$.
In the language of Definition~\ref{def: MRV}, this generally corresponds to a $\MRV$ condition with a single vector $\bar{\bm s}_{j^*}$ in the basis, where $j^* = \arg\min_{j\in[k]}\alpha(\{j\})$.
In comparison, $\MRV$ enables richer characterizations of tail asymptotics by revealing hidden regular variation beyond the direction $\bar{\bm s}_{j^*}$ of the dominating power-law tail.
It is worth noting that the Adapted-MRV in \cite{das2023aggregatingheavytailedrandomvectors} also aims to characterize hidden regular variation across different directions.
However, the following key differences make our study of $\MRV$ more suitable for the purpose of this paper and more flexible in many cases.
\begin{enumerate}[(i)]
    \item 
        The definition of $\MRV$ allows for arbitrary choices of the $\bar{\bm s}_j$'s beyond the standard basis $(\bm e_j)_{j \in [d]}$ used in \cite{das2023aggregatingheavytailedrandomvectors}.
        While straightforward, such generalizations are required for studying heavy-tailed systems in which the contributions of large jumps are not aligned with mutually orthogonal directions.

    \item 
        
        For each $m = 1,2,\ldots, k$,
        Adapted-MRV in \cite{das2023aggregatingheavytailedrandomvectors}
        investigates the most likely $m$-jump cases:
        that is, given the basis $\{\bar{\bm s}_j\}_{j \in [k]}$
        and among all cones $\R^d(\bm j)$ with $|\bm j| = m$,
        it essentially captures the hidden regular variation over the cone with the smallest tail index $\alpha(\bm j)$.
        This strict hierarchy of $k$ scenarios covers only a subset of the $2^k - 1$ scenarios characterized by $\MRV$.

    \item 
        Adapted-MRV can be interpreted as a stricter version of $\MRV$, in the sense that it requires condition \eqref{cond, finite index, def: MRV} to hold for any $A$ bounded away from $\bar\R_{\leqslant}^d(\bm j, 0)$ (i.e., by forcing $\epsilon = 0$).
        However, as demonstrated in Theorem~\ref{theorem: main result, cluster size} and Remark~\ref{remark: bounded away condition under polar transform, tail asymptotics of cluster sizes}, 
        tail asymptotics of the form \eqref{cond, finite index, def: MRV} 
        would
        not hold for $\bm S_j$ in \eqref{def: fixed point equation for cluster S i} if we set $\epsilon = 0$, thus hindering the use of Adapted-MRV in contexts such as branching processes and Hawkes processes.

        
\end{enumerate}
\end{remark}

\section{Tail Asymptotics of $S_j$}
\label{sec: tail of cluster size}

In this section, we study the tail asymptotics of $\bm S_j$ in
\eqref{def: fixed point equation for cluster S i},
which represents the total progeny of a multi-type branching process in general and the cluster size in multivariate Hawkes processes in particular,
under the presence of power-law heavy tails in the offspring distributions.
That is, while our prime interest is in the tail asymptotics of cluster sizes in multivariate heavy-tailed Hawkes processes, our results apply more generally to multi-type branching processes solving \eqref{def: fixed point equation for cluster S i}; see also the definitions in \eqref{def, proof strategy, collection of B i j copies}--\eqref{def: branching process X n} below.
Section~\ref{subsec: tail of cluster S, statement of results} states the main result.
Section~\ref{subsec: proof methodology, cluster size} gives an overview of the proof strategy.
We defer detailed proofs to Section~\ref{sec: proof, cluster size asymptotics}.

\subsection{Main Result}
\label{subsec: tail of cluster S, statement of results}

We fix some $d \geq 1$ and focus on the $d$-dimensional setting in \eqref{def: fixed point equation for cluster S i}.
We first state the assumptions we will work with.
Let
\begin{align}
    \notationdef{notation-b-i-j-cluster-size}{\bar b_{j\leftarrow i}} \delequal \E B_{j\leftarrow i},
    \label{def: bar b i j, mean of B i j}
\end{align}
which represents the expected number of type-$j$ children of a type-$i$ individual in one generation.
Below, we impose a sub-criticality condition on the $\bar b_{j\leftarrow i}$'s.
Under this assumption, Proposition 1 of \cite{Asmussen_Foss_2018} verifies existence and uniqueness of solutions to Equation \eqref{def: fixed point equation for cluster S i} such that $\E \norm{\bm S_{j}} < \infty$ for all $j \in [d]$.

\begin{assumption}[Sub-Criticality]
\label{assumption: subcriticality}
     The spectral radius of the mean offspring matrix $\notationdef{notation-bar-B-matrix-cluster-size}{\bar{\textbf B}} = (\bar b_{j\leftarrow i})_{j,i \in [d]}$
    \xwa{have $\bar{\bm B}$ transposed?}%
    is strictly less than $1$.
\end{assumption}

Next, we specify the regularly varying heavy tails in the offspring distribution.

\begin{assumption}[Heavy Tails in Progeny]
\label{assumption: heavy tails in B i j}
For any $(i,j) \in [d]^2$,
there exists $\notationdef{notation-alpha-i-j-cluster-size}{\alpha_{j\leftarrow i}} \in (1,\infty)$ such that
\begin{align*}
    \P(B_{j\leftarrow i} > x) \in \RV_{-{\alpha_{j\leftarrow i}} }(x),\qquad \text{as }x \to \infty.
\end{align*}
Furthermore, given $i \in [d]$,
the random vector
$\bm B_{ \bcdot \leftarrow i } =  (B_{j \leftarrow i})_{j \in [d]}$ has independent coordinates across $j \in [d]$.
\end{assumption}

Let
\begin{align}
    \notationdef{notation-vector-bar-s-i-cluster-size}{\bm{\bar s}_i} = (\bar s_{i,1},\ \bar s_{i,2},\ldots,\bar s_{i,d})^\top,
    \qquad 
    \text{ where }\notationdef{notation-bar-s-i-j-cluster-size}{\bar s_{i,j}} \delequal \E S_{i,j}.
    \label{def: bar s i, ray, expectation of cluster size}
\end{align}
That is, $\bar{\bm s}_i = \E\bm S_i$.
As discussed in Remark~\ref{remark: assumptions in theorem, tail asymptotics} below,
the following two assumptions are imposed for convenience of the analysis and can be relaxed at the cost of more involved bookkeeping in Theorem~\ref{theorem: main result, cluster size}.
\begin{assumption}[Full Connectivity]
\label{assumption: regularity condition 1, cluster size, July 2024}
For any $i,j \in [d]$, $\bar s_{i,j} = \E S_{i,j} > 0$.
\end{assumption}

\begin{assumption}[Exclusion of Critical Cases]
\label{assumption: regularity condition 2, cluster size, July 2024}
In Assumption~\ref{assumption: heavy tails in B i j},
$\alpha_{j\leftarrow i} \neq \alpha_{j^\prime \leftarrow i^\prime }$ for any $(i,j),(i^\prime,j^\prime) \in [d]^2$ with $(i,j) \neq (i^\prime,j^\prime)$.
\end{assumption}

To present our main result in terms of $\MRV$ in Definition~\ref{def: MRV},
we specify the basis, tail indices, rate functions, and limiting measures involved.
In particular, we consider the basis
$\bar{\textbf S} = \{ \bar{\bm s}_j:\ j \in [d] \}$ with $\bar{\bm s}_{j}$ defined in \eqref{def: bar s i, ray, expectation of cluster size}.
Next, let
\begin{align}
    \notationdef{notation-alpha-*-j-cluster-size}{\alpha^*(j)} \delequal 
        \min_{l \in [d]}\alpha_{j\leftarrow l},
    \qquad 
    \notationdef{notation-l-*-j-cluster-size}{l^*(j)}  \delequal 
        \arg\min_{l \in [d]}\alpha_{j\leftarrow l}.
    \label{def: cluster size, alpha * l * j}
\end{align}
By Assumption~\ref{assumption: regularity condition 2, cluster size, July 2024},
the argument minimum in the definition of $l^*(j)$ uniquely exists for each $j \in [d]$.
Besides, Assumption~\ref{assumption: heavy tails in B i j} ensures that $\alpha^*(j) > 1\ \forall j \in [d]$.
Recall that $\powersetTilde{d}$
is the collection of all \emph{non-empty} subsets of $[d]$.
Let
\begin{align}
    \notationdef{notation-cost-function-bm-j-cluster-size}{\alpha(\bm j)} \delequal 1 + \sum_{i \in \bm j} \big(\alpha^*(i) - 1\big),
    \qquad 
        \forall \bm j \in \powersetTilde{d}.
    \label{def: cost function, cone, cluster}
\end{align}
As in Section~\ref{subsec: MRV},
we adopt the convention $\alpha(\emptyset) = 0$.
The collection $\bm \alpha = \{ \alpha(\bm j):\ \bm j \subseteq [d] \}$ plays the role of the tail indices for the $\MRV$ description of the $\bm S_j$'s.
As for the rate functions, 
given $\bm j \in \powersetTilde{d}$,
we define
\begin{align}
    \notationdef{notation-lambda-j-n-cluster-size}{\lambda_{\bm j}(n)} \delequal
    n^{-1}\prod_{ i \in \bm j  }n\P(B_{i \leftarrow l^*(i)} > n),
    \qquad
    \forall n \geq 1.
    \label{def: rate function lambda j n, cluster size}
\end{align}
Note that $\lambda_{\bm j}(n) \in \RV_{ -\alpha(\bm j)  }(n)$.
For the limiting measures, we introduce a few definitions.


\begin{definition}[Type]
\label{def: cluster, type}
    ${\bm I} = (I_{k,j})_{k \geq 1,\ j \in [d]}$ is a \textbf{type} if 
\begin{itemize}

    \item $I_{k,j}\in\{0,1\}$ for each $k \geq 1$ and $j \in [d]$;
 
    \item There exists $\notationdef{notation-K-I-for-type-I-cluster-size}{\mathcal K^{\bm I}} \in \mathbb Z_+$ such that $\sum_{j \in [d]}I_{k,j} = 0\ \forall k > \mathcal K^{\bm I}$ and $\sum_{j \in [d]}I_{k,j} \geq 1 \ \forall 1 \leq k \leq \mathcal K^{\bm I}$;

    \item $\sum_{k \geq 1}I_{k,j} \leq 1$ holds for each $j \in [d]$;

      \item For $k = 1$, the set $\{ j \in [d]:\ I_{1,j} = 1 \}$ is either empty or contains exactly one element.
\end{itemize}
We use $\notationdef{notation-mathscr-I-set-of-all-types-cluster-size}{\mathscr I}$ to denote the set containing all types.
For each $\bm I \in \mathscr I$, we say that
\begin{align*}
    \notationdef{notation-active-indices-of-type-I-cluster-size}{\bm j^{\bm I}} \delequal \bigg\{ j \in [d]:\ \sum_{k \geq 1}I_{k,j} = 1  \bigg\}
\end{align*}
is the set of \textbf{active indices} of type $\bm I$,
and $\mathcal K^{\bm I}$ is the \textbf{depth} of type $\bm I$.
Besides, by defining
\begin{align*}
    \notationdef{notation-active-indices-at-depth-k-of-type-I-cluster-size}{\bm j^{\bm I}_k} \delequal 
    \big\{
        j \in [d]:\ I_{k,j} = 1
    \big\},
    \qquad
    \forall k \geq 1,
\end{align*}
we say that $\bm j^{\bm I}_k$ is the set of \textbf{active indices at depth $k$} in type $\bm I$.
\end{definition}
\begin{remark} \label{remark: def of type}
    Note that 
    \begin{enumerate}[(i)]
        \item 
            the only type with $\bm j^{\bm I} = \emptyset$ (and hence $\mathcal K^{\bm I} = 0$) is $I_{k,j} \equiv 0$ for all $k$ and $j$;

        \item 
            if $\mathcal K^{\bm I} \geq 1$, there uniquely exists some $j^{\bm I}_1 \in [d]$ such that $\bm j^{\bm I}_1 = \{j^{\bm I}_1\}$;

        \item 
            for any type $\bm I \in \mathscr I$ with $\bm j^{\bm I} \neq \emptyset$,
            by \eqref{def: rate function lambda j n, cluster size} we have
        \begin{align}
            \lambda_{\bm j^{\bm I}}(n)
            =
            n^{-1}\prod_{ k = 1 }^{ \mathcal K^{\bm I} }\prod_{ j \in \bm j^{\bm I} }
            n\P(B_{j \leftarrow l^*(j)} > n).
            \label{property: rate function for type I, cluster size}
        \end{align}
    \end{enumerate}
\end{remark}



Next,
we adopt the definitions of $\bar\R^d(\bm j,\epsilon)$, $\R^d(\bm j)$ and $\bar\R^d_\leqslant(\bm j,\epsilon)$, $\R^d_\leqslant(\bm j)$ given in Section~\ref{subsec: MRV}
under the basis $\{ \bar{\bm s}_j:\ j \in [d] \}$ and tail indices $(\alpha(\bm j))_{\bm j \in \powersetTilde{d}}$.
Meanwhile,
given $\beta > 0$, define the Borel measure on $(0,\infty)$ by
\begin{align}
    \notationdef{notation-power-law-measure-nu-beta}{\nu_\beta(dw)} \delequal \frac{ \beta dw}{ w^{\beta + 1} }\mathbbm{I}\{w > 0\}.
    \label{def, power law measure nu beta}
\end{align}
Given non-empty index sets $\mathcal I \subseteq [d]$ and $\mathcal J \subseteq [d]$,
we say that 
$
\{\mathcal J(i):\ i \in \mathcal I\}
$
is an \notationdef{notation-assignment-from-J-to-I}{\textbf{assignment of $\mathcal J$ to $\mathcal I$}} if 
\begin{align}
    \mathcal J(i) \subseteq \mathcal J\quad \forall i \in \mathcal I;
    \qquad 
    \bigcup_{i \in \mathcal I}\mathcal{J}(i) = \mathcal J;
    \qquad 
    \mathcal J(i) \cap \mathcal J(i^\prime) = \emptyset\quad \forall i \neq i^\prime.
    \label{def: assignment from mathcal J to mathcal I}
\end{align}
We use $\notationdef{notation-assignment-from-mathcal-J-to-mathcal-I}{\mathbb T_{ \mathcal I \leftarrow \mathcal J }}$ to denote the set of all assignments of $\mathcal{J}$ to $\mathcal I$.
Given non-empty $\mathcal I \subseteq [d]$ and $\mathcal J \subseteq [d]$,
define the mapping
\begin{align}
    \notationdef{notation-function-g-mathcal-I-mathcal-J-cluster-size}{g_{\mathcal I \leftarrow \mathcal J}(\bm w)}
    \delequal 
    \sum_{
        \{\mathcal J(i):\ i \in \mathcal I\} \in \mathbb T_{ \mathcal I \leftarrow \mathcal J }
    }
    \prod_{i \in\mathcal I}
        \prod_{j \in \mathcal J(i)}
        w_i\bar s_{i,l^*(j)},
    \quad
    \forall \bm w = (w_i)_{i \in \mathcal I} \in [0,\infty)^{|\mathcal I|}.
    \label{def: function g mathcal I mathcal J, for measure C i bm I, cluster size}
\end{align}
Given a type $\bm I \in \mathscr I$ with non-empty active index set $\bm j^{\bm I}$,
recall the definitions of $\mathcal K^{\bm I}$ and $\bm j^{\bm I}_k$ in Definition~\ref{def: cluster, type},
and that (when $\bm j^{\bm I} \neq \emptyset$)
there uniquely exists some $j^{\bm I}_1 \in [d]$ such that $\bm j^{\bm I}_1 = \{j^{\bm I}_1\}$.
Let
\begin{align}
    \notationdef{notation-measure-nu-for-type-I-cluster-size}{\nu^{\bm I}(d \bm w)}
    & \delequal
    \bigtimes_{k = 1}^{ \mathcal K^{\bm I} }
    \bigg(
        \bigtimes_{ j \in \bm j^{\bm I}_k }\nu_{\alpha^*(j)  }(d w_{k,j})
    \bigg),
    \label{def, measure nu type I, cluster size}
    \\
    \notationdef{notation-measure-C-type-I-cluster-size}{\mathbf C^{\bm I}(\cdot)}
        & \delequal
    \int \mathbbm{I}
    \Bigg\{
        \sum_{ k = 1 }^{ \mathcal K^{\bm I} }\sum_{ j \in \bm j^{\bm I}_k  }w_{k,j}\bar{\bm s}_j \in \ \cdot\ 
    \Bigg\}
     \Bigg(
    \prod_{ k = 1 }^{ \mathcal K^{\bm I} - 1}
        g_{ \bm j^{\bm I}_k \leftarrow \bm j^{\bm I}_{k+1} }(\bm w_k)
    \Bigg)
    \nu^{\bm I}(d \bm w),
    \label{def: measure C I, cluster}
    \\
    \notationdef{notation-measure-C-i-type-I-cluster-size}{\mathbf C_i^{\bm I}(\cdot)}
        & \delequal
    \int \mathbbm{I}
    \Bigg\{
        \sum_{ k = 1 }^{ \mathcal K^{\bm I} }\sum_{ j \in \bm j^{\bm I}_k  }w_{k,j}\bar{\bm s}_j \in \ \cdot\ 
    \Bigg\}
     \Bigg(
     \bar s_{i,l^*(j^{\bm I}_1)}
    \prod_{ k = 1 }^{ \mathcal K^{\bm I} - 1}
        g_{ \bm j^{\bm I}_k \leftarrow \bm j^{\bm I}_{k+1} }(\bm w_k)
    \Bigg)
    \nu^{\bm I}(d \bm w)
    \label{def: measure C i I, cluster}
    \\ 
    & =  \bar s_{i,l^*(j^{\bm I}_1)}  \mathbf C^{\bm I}(\cdot),
    \nonumber
\end{align}
where we write $\bm w_k = (w_{k,j})_{j \in \bm j^{\bm I}_k}$
and $\bm w = (\bm w_k)_{k \in [\mathcal K^I]}$.
Besides, note that $\mathbf C^{\bm I}(\cdot)$ is supported on the cone $\R^d(\bm j^{\bm I})$.
We are now ready to state the main result of this paper.

\begin{theorem}
\label{theorem: main result, cluster size}
\linksinthm{theorem: main result, cluster size}
Under Assumptions~\ref{assumption: subcriticality}--\ref{assumption: regularity condition 2, cluster size, July 2024},
it holds for any $i \in [d]$ that
\begin{align}
    \P(\bm S_i \in \ \cdot\ )
    \in \MRV^*
    \Bigg(
        (\bar{\bm s}_j)_{j \in [d]},\ 
        \big(\alpha(\bm j)\big)_{ \bm j \subseteq [d] },\ 
        (\lambda_{\bm j})_{\bm j \in \powersetTilde{d}},\ 
        \bigg(  \sum_{\bm I \in \mathscr I:\ \bm j^{\bm I} = \bm j}\mathbf C^{\bm I}_i\bigg)_{\bm j \in \powersetTilde{d}}
    \Bigg).
    \nonumber
\end{align}
That is,
given $i \in [d]$ and  $\bm j \subseteq [d]$ with $\bm j \neq \emptyset$, if a Borel measurable set $A \subseteq \R^d_+$ is bounded away from  $\bar{\R}^d_\leqslant(\bm j,\epsilon)$ under some (and hence all) $\epsilon > 0$ small enough,
then
 \begin{equation}\label{claim, theorem: main result, cluster size}
     \begin{aligned}
         \sum_{\bm I \in \mathscr I:\ \bm j^{\bm I} = \bm j}\mathbf C_i^{\bm I}(A^\circ) 
    & \leq 
        \liminf_{n \to \infty}
        \frac{
        \P(n^{-1}\bm S_i \in A)
        }{
            \lambda_{\bm j}(n)
        }
    \\ 
    & \leq 
        \limsup_{n \to \infty}
        \frac{
        \P(n^{-1}\bm S_i \in A)
        }{
            \lambda_{\bm j}(n)
        }
    \leq 
    \sum_{\bm I \in \mathscr I:\ \bm j^{\bm I} = \bm j}\mathbf C_i^{\bm I}(A^-) < \infty.
     \end{aligned}
 \end{equation}
Here, $\bar{\R}^d_\leqslant(\bm j,\epsilon)$ is defined in \eqref{def: enlarged cone R d index i epsilon},
$\bm j^{\bm I}$ is the set of active indices of type $\bm I$ in Definition~\ref{def: cluster, type},
the rate functions $\lambda_{\bm j}(\cdot)$ are defined in \eqref{def: rate function lambda j n, cluster size},
and the measures $\mathbf C_i^{\bm I}(\cdot)$ are defined in \eqref{def: measure C i I, cluster}.
Furthermore, if the Borel measurable set $A \subseteq \R^d_+$  is bounded away from  $\bar{\R}^d(\{1,2,\ldots,d\},\epsilon)$ for some (and hence all) $\epsilon > 0$ small enough, then
\begin{align}
    \lim_{n \to \infty}n^{\gamma}\cdot\P(n^{-1}\bm S_i \in A) = 0,\qquad\forall \gamma > 0.
    \label{claim, 2, theorem: main result, cluster size}
\end{align}
\end{theorem}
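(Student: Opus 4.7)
The plan is to exploit the pruning-and-decomposition framework sketched in the introduction. Fix a truncation level $M$ and let $\bm S_j^{\leqslant}(M)$ solve \eqref{def: fixed point equation for pruned cluster S i leq M}; these pruned clusters have all offspring counts bounded by $M$, so their tails decay faster than any polynomial and they concentrate around their means $\E \bm S_j^{\leqslant}(M)$, which converge to $\bar{\bm s}_j$ as $M \to \infty$. Iterating \eqref{proof, equality, from S i truncated to S i} then develops $\bm S_i$ into a random hierarchy whose internal components are pruned clusters and whose branching is driven by \emph{big jumps}, i.e., nodes that would have produced more than $M$ children along some dimension. Definition~\ref{def: cluster, type} encodes the combinatorial pattern of such big jumps: $\bm j^{\bm I}_k$ records the dimensions in which big jumps occur at the $k$-th level of the decomposition. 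The singleton restriction at $k=1$ reflects that only one big jump can occur at the root level (there is a single type-$i$ seed), while $\sum_k I_{k,j} \leq 1$ retains only the leading-order patterns, in which each dimension is hit by a big jump at most once (two jumps in the same dimension would contribute an extra heavy-tail factor without producing a new direction).

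For the lower bound in \eqref{claim, theorem: main result, cluster size}, fix $\bm j$ and a type $\bm I$ with $\bm j^{\bm I} = \bm j$, and consider the event that, recursively along the decomposition, exactly the pattern $\bm I$ of big jumps is realized, with the $k$-th-level jump producing children of type $j$ (for $j \in \bm j^{\bm I}_k$) at scale $n w_{k,j}$. By Assumption~\ref{assumption: heavy tails in B i j} each such jump contributes asymptotically a factor $n^{-1}\nu_{\alpha^*(j)}(d w_{k,j})$; the number of candidate type-$l^*(j)$ ancestors available at level $k$ equals, by concentration of the intermediate pruned clusters around their means, $n\sum_{j' \in \bm j^{\bm I}_{k-1}} w_{k-1, j'}\,\bar s_{j', l^*(j)}$, with the level-$0$ term equal to $\bar s_{i, l^*(j^{\bm I}_1)}$. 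Summing over assignments of the level-$(k{+}1)$ jumps to the level-$k$ parents reproduces precisely the combinatorial factor $g_{\bm j^{\bm I}_k \leftarrow \bm j^{\bm I}_{k+1}}$ of \eqref{def: function g mathcal I mathcal J, for measure C i bm I, cluster size}. After scaling by $n$, concentration of all residual pruned clusters gives $n^{-1}\bm S_i \approx \sum_{k,j} w_{k,j}\bar{\bm s}_j$, and integrating over $\bm w$ produces $\mathbf C_i^{\bm I}$. Summing over types with $\bm j^{\bm I} = \bm j$ yields the claimed lower bound.

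For the upper bound, the plan is to partition $\{n^{-1}\bm S_i \in A\}$ according to the realized big-jump pattern. Patterns with active set $\bm j'$ satisfying $\bm j' \neq \bm j$ and $\alpha(\bm j') \leq \alpha(\bm j)$ produce clusters whose scaled versions concentrate near $\R^d(\bm j')$, which, by the hypothesis that $A$ is bounded away from $\bar\R^d_\leqslant(\bm j,\epsilon)$, cannot meet $A$ for $n$ large. Patterns with $\alpha(\bm j') > \alpha(\bm j)$ contribute at most $O(\lambda_{\bm j'}(n)) = \lo(\lambda_{\bm j}(n))$ by the strict monotonicity of $\alpha(\cdot)$ in Assumption~\ref{assumption: regularity condition 2, cluster size, July 2024}. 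The residual \emph{no-big-jump} event is controlled by sub-exponential concentration of the pruned clusters. The matching upper constant (with $A^\circ$ replaced by $A^-$) then follows from the standard Portmanteau step underlying the $\mathbb M$-convergence formulation of Lemma~\ref{lemma: M convergence for MRV}.

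The main technical obstacle is the joint concentration step: one needs tail bounds on $\bm S_j^{\leqslant}(M)$ that are quantitative in $M$, because the truncation level must be chosen as $M = M_n \to \infty$ slowly enough that the big-jump probabilities remain asymptotically $\P(B_{j\leftarrow l^*(j)} > nw)$ (in particular $M_n/n \to 0$), yet fast enough to neutralize the contribution of the pruned portion of the tree. The super-polynomial bound \eqref{claim, 2, theorem: main result, cluster size} then follows from the same concentration: a set $A$ bounded away from $\bar\R^d([d],\epsilon)$ can only be reached by $n^{-1}\bm S_i$ if the realized direction deviates from every admissible combination of big-jump directions and pruned means, an event whose probability decays faster than any polynomial in $n$.
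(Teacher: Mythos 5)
Your high-level strategy is the same as the paper's: prune at a large threshold, classify the surviving big jumps by the types of Definition~\ref{def: cluster, type}, compute the probability of each jump configuration, and let concentration of the pruned subclusters turn the configuration into a point $\sum_{k,j}w_{k,j}\bar{\bm s}_j$ of the cone $\R^d(\bm j)$. However, two concrete steps of your plan do not go through as written. First, you insist that the truncation level must satisfy $M_n\to\infty$ with $M_n/n\to 0$, and you flag the resulting quantitative concentration "in $M$" as an unresolved obstacle. This is a misconception rather than a necessity: with $M_n=\lo(n)$ the decomposition records an unbounded number of "big" jumps of intermediate size (between $M_n$ and $\epsilon n$), the type bookkeeping no longer captures the event at rate $\lambda_{\bm j}(n)$, and no clean classification survives. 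The paper instead keeps the thresholds proportional to $n$ --- namely $n\delta$ at the first iteration and $\delta\cdot\tau^{n|\delta}_{j;i}(k)$ at deeper iterations --- with a fixed small $\delta$; then every recorded jump exceeds $n\delta$, regular variation gives conditional laws of the form $(\delta/x)^{\alpha^*(j)}$ whose $\delta$-dependence disappears in the limiting measures, and the double limit ($n\to\infty$ first, then $\delta\downarrow 0$, $\Delta\downarrow 0$) is organized by the asymptotic-equivalence machinery of Lemma~\ref{lemma: asymptotic equivalence, MRV in Rd} together with Propositions~\ref{proposition: asymptotic equivalence, tail asymptotics for cluster size} and \ref{proposition, M convergence for hat S, tail asymptotics for cluster size}.

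Second, and more fundamentally, your claim that "concentration of all residual pruned clusters gives $n^{-1}\bm S_i\approx\sum_{k,j}w_{k,j}\bar{\bm s}_j$" cannot be made uniform in the absolute sense your sketch requires: the number of pruned subclusters generated at depth $k$ is $\tau^{n|\delta}_{i;j}(k)$, which is of order $n w_{k,j}$ with $w_{k,j}$ unbounded, so the approximation error is only controlled \emph{relative to} the magnitude $\norm{\hat{\bm S}^{n|\delta}_i}$, not in absolute terms. This is exactly why the theorem is stated with $A$ bounded away from the angularly enlarged cone $\bar{\R}^d_\leqslant(\bm j,\epsilon)$ and why the paper carries out the equivalence in polar coordinates (ratio of radii within $[1-\Delta,1+\Delta]$ and angular distance at most $\Delta$), gluing things via Lemma~\ref{lemma: M convergence for MRV}. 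A literal execution of your concentration step would prove the stronger statement with $\epsilon=0$ (Cartesian bounded-away only), which the counterexample in Section~\ref{sec: counterexamples} shows is false: big jumps in early generations amplify CLT-scale fluctuations of the later pruned subclusters up to the large-deviation scale. To repair the argument you would need to formulate and prove the equivalence between $n^{-1}\bm S_i$ and its type-driven approximation in the relative/polar sense, conditioned on each generalized type, which is the content of Proposition~\ref{proposition: asymptotic equivalence, tail asymptotics for cluster size} that your proposal omits.
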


In Section~\ref{subsec: proof methodology, cluster size},
we provide an overview of the proof strategy for Theorem~\ref{theorem: main result, cluster size}.
To conclude this subsection, we state a few remarks about the interpretation of \eqref{claim, theorem: main result, cluster size},
the evaluation of the limiting measures in \eqref{claim, theorem: main result, cluster size},
the application to Hawkes process clusters,
potential relaxations of the assumptions,
and the necessity of the bounded-away from $\bar\R^d_\leqslant(\bm j, \epsilon)$ condition (and hence the $\MRV$ characterization for hidden regular variation) in Theorem~\ref{theorem: main result, cluster size}.

\begin{remark}[Interpreting Asymptotics \eqref{claim, theorem: main result, cluster size}]
\label{remark: interpreation of tail asymptotics of cluster sizes}
Given $A\subseteq\R^d_+$, the asymptotics \eqref{claim, theorem: main result, cluster size} hold for any $\bm j \in \powersetTilde{d}$ such that $A$ is bounded away from $\bar\R^d_\leqslant(\bm j, \epsilon)$ under some $\epsilon > 0$.
However, the index set $\bm j$ that leads to non-trivial bounds in  \eqref{claim, theorem: main result, cluster size} agrees with
\begin{align}
    \bm j(A) \delequal \underset{ \bm j \in \powersetTilde{d}:\ \R^d(\bm j)\cap A \neq \emptyset }{\arg\min}\alpha(\bm j),
    \label{def: rate function j A}
\end{align}
provided that the argument minimum exists uniquely.
Indeed, for any $\bm I \in \mathscr I$ with $\bm j^{\bm I} = \bm j$, note that the measures $\mathbf C^{\bm I}(\cdot)$ are supported on the cone $\R^d(\bm j)$.
As a result, in \eqref{claim, theorem: main result, cluster size} we need to have at least
$
\R^d(\bm j) \cap A \neq \emptyset
$
for the lower bounds to be non-trivial.
In other words, 
\eqref{claim, theorem: main result, cluster size} shows that
given a Borel set $A \subseteq \R^d_+$,
if $A \cap \R^d([d]) \neq \emptyset$, the argument minimum $\bm j(A)$ is unique, and 
 $A$ is bounded away from $\bar{\R}^d_\leqslant(\bm j(A),\epsilon)$ under some $\epsilon > 0$, then
\begin{align*}
    \mathbf C^{\bm j(A)}_i(A^\circ)
    \leq 
    \liminf_{n \to \infty}
    \frac{
        \P(n^{-1}\bm S_i \in A)
    }{ \lambda_{ \bm j(A) }(n) }
    \leq 
    \limsup_{n \to \infty}
     \frac{
        \P(n^{-1}\bm S_i \in A)
    }{ \lambda_{ \bm j(A)}(n)}
    \leq 
    \mathbf C^{\bm j(A)}_i(A^-),
\end{align*}
with limiting measure  $\mathbf C^{\bm j}_i =  \sum_{\bm I \in \mathscr I:\ \bm j^{\bm I} = \bm j}\mathbf C_i^{\bm I}$.
From this perspective, given the rare event set $A$,
the solution $\bm j(A)$ of the discrete optimization problem in \eqref{def: rate function j A} determines the most likely configuration of big jumps triggering the event (i.e., through big jumps aligned with $\bar s_j$ for each $j \in \bm j(A)$) and the degree of hidden regular variation (i.e., with power-law rate $\lambda_{\bm j(A)}(n) \in \RV_{ -\alpha(\bm j(A))  }(n)$).
In particular,
$\bm j(A)$ plays the role of the rate functions in the classical large deviation principle (LDP) framework, dictating the power-law rate of decay for the rare-event probability $\P(n^{-1}\bm S_i \in A)$,
and the limiting measure
$
 \sum_{\bm I \in \mathscr I:\ \bm j^{\bm I} = \bm j}\mathbf C_i^{\bm I}(\cdot)
$
allows the characterization of exact asymptotics beyond the log asymptotics typically available in classical LDPs.
We note that these results also lay the foundation for sample path large deviations of heavy-tailed Hawkes processes in our companion paper \cite{wangHawkesLDWorkingPaper}.
\end{remark}

\begin{remark}[Evaluation of Limiting Measures]
Continuing the discussion in Remark~\ref{remark: interpreation of tail asymptotics of cluster sizes},
we note that $\mathbf C^{\bm I}(\cdot)$ can be readily computed by Monte Carlo simulation.
In particular, given some type $\bm I \in \mathscr I$ with $\bm j^{\bm I}= \bm j$ and some $A \subseteq \R^d_+$ that is bounded away from $\bar\R^d_\leqslant(\bm j,\epsilon)$ under some $\epsilon > 0$,
Lemma~\ref{lemma: choice of bar epsilon bar delta, cluster size} shows that: (i) $\mathbf C^{\bm I}(A)<\infty$, and 
(ii) there exists $\bar\delta > 0$ such that, in \eqref{def: measure C I, cluster}, 
$
 \sum_{ k = 1 }^{ \mathcal K^{\bm I} }\sum_{ j \in \bm j^{\bm I}_k  }w_{k,j}\bar{\bm s}_j \in A
 \ \Longrightarrow\ 
 w_{k,j} > \bar\delta\ \forall k,j.
$
Therefore, 
given $\delta > 0$ small enough,
$\mathbf C^{\bm I}(A)$ can be evaluated by simulating, for each $k \in [\mathcal K^{\bm I}]$ and $j \in \bm j^{\bm I}_k$, a Pareto random variable $W^{ (\delta) }_{k,j}$ with lower bound $\delta$ and power-law index $\alpha^*(j)$, and then estimating
\begin{align*}
    \bigg( \prod_{ k = 1 }^{ \mathcal K^{\bm I} }\prod_{ j \in \bm j^{\bm I}_k  }
    \delta^{ -\alpha^*(j)  }
    \bigg) \cdot 
    \E\Bigg[
    \mathbbm{I}
    \Bigg\{
        \sum_{ k = 1 }^{ \mathcal K^{\bm I} }\sum_{ j \in \bm j^{\bm I}_k  }W^{ (\delta) }_{k,j}\bar{\bm s}_j \in A
    \Bigg\}
    \cdot \Bigg(
    \prod_{ k = 1 }^{ \mathcal K^{\bm I} - 1}
        g_{ \bm j^{\bm I}_k \leftarrow \bm j^{\bm I}_{k+1} }\Big( \big(W^{ (\delta) }_{k,j}\big)_{ j \in \bm j^{\bm I}_k }\Big)
    \Bigg)
    \Bigg].
\end{align*}
Here, note that it is easy to compute $\bar{\bm s}_j = \E \bm S_j$ (and hence the mapping $g_{\mathcal I \leftarrow \mathcal J}$)
as long as the mean offspring matrix 
$
{\bar{\textbf B}} = (\E B_{j\leftarrow i})_{j \in [d],i \in [d]}
$
is available; see \cite{Asmussen_Foss_2018}.
\end{remark}

\begin{remark}[Hawkes Process Clusters]
\label{remark, applied to hawkes process clusters}
Theorem~\ref{theorem: main result, cluster size} establishes tail asymptotics of multi-type branching processes solving \eqref{def: fixed point equation for cluster S i} applying in particular to
cluster sizes in a multivariate Hawkes process,
i.e., a point process
$\bm N(t) = \big(N_1(t),\ldots,N_d(t)\big)^\top$ with 
initial value $\bm N(0) = \bm 0$ and 
conditional intensity
$
h_{i}(t) \delequal  c_{i} + \sum_{j \in [d]}\int_0^t \tilde B_{i \leftarrow j}(s)f_{i \leftarrow j}(s)dN_j(s)
$
for each dimension $i \in [d]$.
Here, the positive constants $c_i$ are the arrival rates of immigrants along each dimension,
the deterministic functions $f_{i \leftarrow j}(\cdot)$ are such that $\norm{f_{i \leftarrow j}}_1 \delequal \int_0^\infty f_{i \leftarrow j}(t)dt < \infty$,
and the excitation rates $\big(\tilde B_{i \leftarrow j}(s)\big)_{s > 0}$ are
i.i.d.\ copies of $\tilde B_{i \leftarrow j}$.
The size of a cluster induced by a type-$j$ immigrant admits the law of $\bm S_j$ solving \eqref{def: fixed point equation for cluster S i} under the offspring distribution
$
\P(B_{i \leftarrow j} > x) = \int_0^\infty \P\big(  \text{Poisson}( w \norm{f_{i\leftarrow j}}_1  ) > x \big)    \P(\tilde B_{i \leftarrow j} \in dw),
$
implying that $B_{i\leftarrow j}$ and $\tilde B_{i \leftarrow j}$ share the same regularly varying index in this context.
Therefore, in heavy-tailed Hawkes processes, the cluster size vectors $\bm S_i$ exhibit the $\MRV^*$ tails characterized in Theorem~\ref{theorem: main result, cluster size} (i.e., under the tail indices, rate functions, and limiting measures defined in \eqref{def: cost function, cone, cluster}, \eqref{def: rate function lambda j n, cluster size}, and \eqref{def, measure nu type I, cluster size}--\eqref{def: measure C i I, cluster}, respectively), with $B_{i \leftarrow j}$ as specified above and $\alpha_{i \leftarrow j}$ as the regular variation index of $\tilde B_{i \leftarrow j}$.
\end{remark}

\begin{remark}[Relaxing Assumptions]
\label{remark: assumptions in theorem, tail asymptotics}
Although not pursued in this paper, Assumptions~\ref{assumption: regularity condition 1, cluster size, July 2024} and \ref{assumption: regularity condition 2, cluster size, July 2024} could be relaxed, albeit at the cost of more involved bookkeeping in Theorem~\ref{theorem: main result, cluster size}:
\begin{itemize}
    \item 
        The full-connectivity condition in Assumption~\ref{assumption: regularity condition 1, cluster size, July 2024}
        can be relaxed by adapting the notion of a Hawkes graph in \cite{karim2021exact}.
        The key idea is to modify $\alpha^*(j)$ and $l^*(j)$ in \eqref{def: cluster size, alpha * l * j} and only consider the subset of $[d]$ corresponding to the ``essential dimensions'' related to $j$:
        for instance, in \eqref{def: cluster size, alpha * l * j} one can safely disregard any $l \in [d]$ with $\E S_{l,j} = 0$, as an ancestor along the $l^\text{th}$ dimension will almost surely have no offspring along the $j^\text{th}$ dimension.

    \item 
        Suppose that Assumption~\ref{assumption: regularity condition 2, cluster size, July 2024} is dropped and there are some $j \in [d]$ and $i,i^\prime \in [d]$ with $i \neq i^\prime$ such that 
         $\alpha_{j \leftarrow i^\prime} = \alpha_{j \leftarrow i}$.
        That is, by only comparing the tail indices, it is unclear whether $\P(B_{ j \leftarrow i^\prime} > x)$ or $\P(B_{j \leftarrow i} > x)$ has a heavier tail, thus preventing us to determine the most likely cause for a large jump along the direction $\bar{\bm s}_j$.
        In such cases, one can either impose extra assumptions about the tail CDFs of the $B_{j \leftarrow i}$'s to break the ties, or work with the non-uniqueness of the argument minimum in \eqref{def: cluster size, alpha * l * j}.
        The latter could result in rougher asymptotics of a more involved form, due to the need to keep track of all possible scenarios in the arguments minimum;
        see for instance the comparison between Theorem 3.4 and Theorem 3.5 in \cite{10.1214/18-AOP1319}.        
\end{itemize}
\end{remark}

\begin{remark}[Bounded-Away from $\bar\R^d_\leqslant(\bm j,\epsilon)$ Condition]
\label{remark: bounded away condition under polar transform, tail asymptotics of cluster sizes}
The characterization in Theorem~\ref{theorem: main result, cluster size} is, in some sense, the tightest one can hope for, as asymptotics of the form \eqref{claim, theorem: main result, cluster size} do not hold under the weaker condition that $A$ is only bounded away from $\R^d_\leqslant(\bm j)$ (i.e., by forcing $\epsilon = 0$ in the statement of Theorem~\ref{theorem: main result, cluster size}).
In Section~\ref{sec: counterexamples}  of the Appendix,
we show that \eqref{claim, theorem: main result, cluster size} could fail for choices of the set $A$ such as
$A = \{ \bm x \in \R^d_+:\ \inf_{ \bm y \in \R^d(\bm j)  }\norm{ \bm x - \bm y } > c  \}$,
which is bounded away from $\R^d(\bm j)$ but not from any $\bar \R^d(\bm j, \epsilon)$ with $\epsilon > 0$.
The gist of the counterexample in Section~\ref{sec: counterexamples} is that the underlying structure of branching processes in $\bm S_j$ leads to a multiplicative effect,
and big jumps in previous generations may amplify a CLT-scale perturbation in subsequent generations to the large-deviation scale.
As shown in Lemma~\ref{lemma: M convergence for MRV}, $\MRV$ is a characterization of hidden regular variation through $\mathbb M(\mathbb S \setminus \mathbb C)$-convergence of polar coordinates.
On the other hand, forcing $\epsilon = 0$ in the bounded-away condition in Theorem~\ref{theorem: main result, cluster size}
is equivalent to considering a Cartesian-coordinates-based characterization (see, e.g., Adapted-MRV in \cite{das2023aggregatingheavytailedrandomvectors}).
Therefore, the counterexample confirms that $\MRV$ provides a more adequate framework for characterizing hidden regular variation in the contexts such as branching processes and Hawkes processes.
\end{remark}

\subsection{Proof Strategy}
\label{subsec: proof methodology, cluster size}



As has been noted in the Introduction, our proof of Theorem~\ref{theorem: main result, cluster size} relies on 
a recursive application of Equation \eqref{proof, equality, from S i truncated to S i}.
To make sense of the terms involved in \eqref{proof, equality, from S i truncated to S i}, we consider a natural coupling between $\bm S_j$ in \eqref{def: fixed point equation for cluster S i}
and $\bm S_j^\leqslant(M)$ in \eqref{def: fixed point equation for pruned cluster S i leq M}.
More precisely, consider a probability space supporting a collection of independent random vectors 
\begin{align}
    \Big\{ \notationdef{notation-B-j-n-m-cluster-size}{\bm B^{(t,m)}_{ \bcdot \leftarrow j }}:\ j \in [d],\ t \geq 1,\ m \geq 1 \Big\},
    \label{def, proof strategy, collection of B i j copies}
\end{align}
where each 
$
\bm B^{(t,m)}_{ \bcdot \leftarrow j }
=
(
    B^{(t,m)}_{ 1 \leftarrow j }, B^{(t,m)}_{ 2 \leftarrow j }, \ldots ,B^{(t,m)}_{ d \leftarrow j }
)^\top
$
is an i.i.d.\ copy of the random vector
$
\bm B_{ \bcdot \leftarrow j }
=
(
    B_{ 1 \leftarrow j }, B_{ 2 \leftarrow j }, \ldots ,B_{ d \leftarrow j }
)^\top.
$
Define a multivariate branching process $\bm X_j(t) = (X_{j,i}(t))_{i \in [d]}$ by 
\begin{align}
    \notationdef{notation-X-n-i-Galton-Watson-process}{\bm X_j(t)} \delequal \sum_{i \in [d]}\sum_{m = 1}^{ X_{j,i}(t-1) }{\bm B}^{(t,m)}_{\bcdot \leftarrow i},
    \qquad \forall t \geq 1,
    \label{def: branching process X n}
\end{align}
under initial value $\bm X_j(0) = \bm e_j$ (i.e., the unit vector $(0,0,1,0,\ldots,0)$ with the $j^\text{th}$ coordinate being 1).
The sub-criticality condition in Assumption~\ref{assumption: subcriticality} ensures that 
the summation $\sum_{t \geq 0} \bm X_j(t)$ converges almost surely and
$\sum_{t \geq 0} \bm X_j(t) \distequal \bm S_j$,
thus solving the fixed-point equation in \eqref{def: fixed point equation for cluster S i}.
Likewise, let
\begin{align}
    B^{\leqslant, (t,m)}_{i \leftarrow j}(M) \delequal  B^{(t,m)}_{i \leftarrow j}\mathbbm{I}\{ B^{(t,m)}_{i \leftarrow j} \leq M  \},
    \quad
    \notationdef{notation-B-leq-M-n-m-j-cluster-size}{{\bm B}^{\leqslant, (t,m)}_{\bcdot \leftarrow j}(M)}
    \delequal
    \big(B^{\leqslant, (t,m)}_{i \leftarrow j}(M)\big)_{i \in [d]}.
    \label{def: truncated offspring count,B leq M j k}
\end{align}
be the truncated version of the $\bm B^{(t.m)}_{\bcdot \leftarrow j}$'s under threshold $M$,
and define the multivariate branching process $\bm X_j^\leqslant(t;M) = \big( X_{j,i}^\leqslant(t;M) \big)_{i \in [d]}$
by 
\begin{align}
    \notationdef{notation-X-leqM-n-i-cluster-size}{\bm X^{\leqslant}_j(t;M)} = \sum_{i \in [d]} \sum_{m = 1}^{ X^{\leqslant}_{j,i}(t - 1;M) }{\bm B}^{\leqslant, (t,m)}_{ \bcdot \leftarrow i}(M),
    \qquad \forall  t \geq 1,
    \label{def: branching process X n leq M}
\end{align}
under initial value $\bm X_j^\leqslant(0;M) = \bm e_j$.
Note that
\begin{align}
    \sum_{t \geq 0}\bm X^{\leqslant}_j(t;M) \distequal
    \notationdef{notation-pruned-cluster-S-leqM-i}{\bm S^{\leqslant}_j(M)},
    \qquad
    \forall M \in (0,\infty),\ j \in [d],
    \label{def: pruned tree S i leq M}
\end{align}
thus solving Equation \eqref{def: fixed point equation for pruned cluster S i leq M}.

Furthermore, the coupling between $\bm X_j(t)$ in \eqref{def: branching process X n}  and $\bm X_j^\leqslant(t;M)$ in \eqref{def: branching process X n leq M}
allows us to count the nodes pruned under $M$ due to large $B^{(t,m)}_{l \leftarrow i}$'s (i.e., big jumps in the branching processes).
Specifically, by defining
\begin{align}
        \notationdef{notation-W-i-M-l-j-cluster-size}{W^{>}_{j;i \leftarrow l}(M)}
    & \delequal 
        \sum_{t \geq 1}\sum_{m = 1}^{ X^{\leqslant}_{j,l}(t-1;M) }
            B^{(t,m)}_{i \leftarrow l}\mathbbm{I}\big\{
                B^{(t,m)}_{i \leftarrow l} > M
            \big\},
    \label{def: W i M j, pruned cluster, 1, cluster size}
    \\
        \notationdef{notation-W-i-M-cdot-j-cluster-size}{W^{>}_{j;i}(M)}
    & \delequal
        \sum_{l \in [d]}W^{>}_{j;i\leftarrow l}(M)
    =
    \sum_{t \geq 1}
    \sum_{l \in [d]}\sum_{m = 1}^{ X^{\leqslant}_{j,l}(t-1;M) }
            B^{(t,m)}_{i \leftarrow l}\mathbbm{I}\big\{
                B^{(t,m)}_{i \leftarrow l} > M
            \big\},
    \label{def: W i M j, pruned cluster, 2, cluster size}
\end{align}
we can use ${W^{>}_{j;i}(M)}$ to count descendants along the $i^\text{th}$ dimension pruned in the branching process $\bm{X}^{\leqslant}_j(t;M)$ (due to their parent node giving birth to more than $M$ children along the $i^\text{th}$ dimension in one generation),
and use ${W^{>}_{j;i \leftarrow l}(M)}$ to specifically count pruned nodes along the $i^\text{th}$ dimension with parent along the $l^\text{th}$ dimension.
Similarly, by defining
\begin{align}
        \notationdef{notation-N-i-M-l-j}{N^{>}_{j;i \leftarrow l}(M)}
    & \delequal 
        \sum_{t \geq 1}\sum_{m = 1}^{ X^{\leqslant}_{j,l}(t-1;M) }
            \mathbbm{I}\big\{
                B^{(t,m)}_{i \leftarrow l} > M
            \big\},
            \label{def, N i | M l j, cluster size}
    \\ 
        \notationdef{notation-N-i-M-cdot-j}{N^{>}_{j;i}(M)}
    & \delequal
   \sum_{l \in [d]}N^{>}_{j;i\leftarrow l}(M)
    =
    \sum_{t \geq 1}
    \sum_{l \in [d]}\sum_{m = 1}^{ X^{\leqslant}_{j,l}(t-1;M) }
            \mathbbm{I}\big\{
                B^{(t,m)}_{i \leftarrow l} > M
            \big\},
    \label{def: cluster size, N i | M cdot j}
\end{align}
we can employ $N^{>}_{j;i}(M)$ to count the times pruning occurs in $\bm X^\leqslant_j(t;M)$ for nodes along the $i^\text{th}$ dimension,
and 
employ
$N^{>}_{j;i \leftarrow l}(M)$ to specifically count the times of pruning for nodes along the $i^\text{th}$ dimension with parents along the $l^\text{th}$ dimension.

In summary, the probability space specified above allows us to consider a coupling between $\bm S_j$ and $\bm S^\leqslant_j(M)$,
where
$
\bm S_j = \sum_{t \geq 0}\bm X_j(t)
$
and
$
\bm S^\leqslant_j(M) = \sum_{t \geq 0}\bm X^\leqslant_j(t;M).
$
This gives a clear construction for the $\bm S^\leqslant_j(M)$ and $W_{j;i}^>(M)$'s in \eqref{proof, equality, from S i truncated to S i} on the same probability space,
where
 ${N^{>}_{j;i}(M)}$ counts the big jumps along the $i^\text{th}$ dimension that are removed from the underlying branching process $\bm X_j^\leqslant(t;M)$,
and ${W^{>}_{j;i}(M)}$ represents the accumulated size of these big jumps.
Furthermore, 
the equality \eqref{proof, equality, from S i truncated to S i} indicates a two-step procedure that generates $\bm S_j$.
At the first step, whenever a node plans to give birth to more than $M$ children along the same dimension, we skip the birth of these children as if their births are put ``on hold'';
in doing so, we obtain a branching process under the truncated offspring distribution \eqref{def: truncated offspring count,B leq M j k} yielding $\bm S^\leqslant_j(M)$.
At the second step, we resume the births of these previously on-hold nodes;
more precisely, for each dimension $i \in [d]$ there are $W^{>}_{j;i}(M)$ nodes whose birth were skipped in step one;
by generating these nodes and the sub-trees induced by them (corresponding to the i.i.d.\ copies $\bm S^{(m)}_i$ on the RHS of \eqref{proof, equality, from S i truncated to S i}), we recover the law of $\bm S_j$.\footnote{For the sake of completeness,
we collect the rigorous proof of \eqref{proof, equality, from S i truncated to S i}
in Section~\ref{sec: appendix, lemmas} of the Appendix.}

We then apply \eqref{proof, equality, from S i truncated to S i} recursively.
For instance,  two iterations of \eqref{proof, equality, from S i truncated to S i} lead to
\begin{align}
    \bm S_j \distequal 
    \bm S^\leqslant_j(M)
    +
    \sum_{i \in [d]}\sum_{ m = 1 }^{ W^{>}_{j;i}(M)  }
    \Bigg[
        \bm S_i^{\leqslant,(m)}(M)
        +
        \sum_{l \in [d]}\sum_{ q = 1 }^{ W^{>,(m)}_{i;l}(M)  }\bm S^{(m,q)}_l
    \Bigg],
    \label{proof strategy, two iteration version of the pruned cluster decomp}
\end{align}
where 
$\bm S_l^{(m,q)}$'s are i.i.d.\ copies of $\bm S_l$,
and
$\big(\bm S^{\leqslant,(m)}_i(M),  W^{>,(m)}_{i;1}(M), \ldots,  W^{>,(m)}_{i;d}(M) \big)$'s
are i.i.d.\ copies of 
$\big(\bm S^{\leqslant}_i(M),  W^{>}_{i;1}(M), \ldots,  W^{>}_{i;d}(M) \big)$.
The proof of the main result in Section~\ref{subsec: proof, theorem: main result, cluster size} is built upon a suitable recursive application of the equality \eqref{proof, equality, from S i truncated to S i}
that further extends \eqref{proof strategy, two iteration version of the pruned cluster decomp} and decomposes $\bm S_j$ as a (random) sum of i.i.d.\ copies of the $\bm S_i^\leqslant(M)$'s.
Roughly speaking, given $n \geq 1$ and $\delta > 0$, under the truncation threshold $M = n\delta$ we get
\begin{align}
    \bm S_j \distequal 
    \bm S^{\leqslant}_j(n\delta) + 
    \sum_{ k \geq 1 }\sum_{i \in [d]}\sum_{ m = 1 }^{  \tau^{n|\delta}_{j;i}(k) } \bm S^{ \leqslant, (k,m) }_i(n\delta),
    \label{proof strategy, decomposition of S j}
\end{align}
where the
$\bm S_i^{ \leqslant, (k,m) }(n\delta)$'s are i.i.d.\ copies of $\bm S_i^\leqslant(n\delta)$,
and $\tau^{n|\delta}_{j;i}(k)$ denotes the number of pruned nodes along the $i^\text{th}$ dimension during the $k^\text{th}$ iteration in the recursive application of \eqref{proof, equality, from S i truncated to S i}:
for instance, $\tau^{n|\delta}_{j;i}(1)$ agrees with $W^{>}_{j;i}(n\delta)$ defined in \eqref{def: W i M j, pruned cluster, 2, cluster size}.
We provide the detailed construction of the $\tau^{n|\delta}_{j;i}(k)$'s in Section~\ref{subsec: proof, theorem: main result, cluster size}, and note here that:
(i) the decomposition used in our analysis is slightly more involved than \eqref{proof strategy, decomposition of S j} and specifies a different truncation threshold for each iteration (instead of fixing $M = n\delta$);
and (ii) the procedure almost surely terminates after finitely many steps (i.e., $\tau^{n|\delta}_{j;i}(k) = 0$ eventually for all $k$ large enough) due to $\norm{\bm S_j} < \infty$ almost surely.

Now, the proof of Theorem~\ref{theorem: main result, cluster size} reduces to studying in detail the concentration inequalities of  $\bm S^{\leqslant}_i(M)$ and the law of  $\big(\tau^{n|\delta}_{j;i}(k)\big)_{k \geq 1, i \in [d]}$.
First,
Proposition~\ref{proposition: asymptotic equivalence, tail asymptotics for cluster size} shows that, for the asymptotic analysis of $\P(n^{-1}\bm S_i \in A)$ in Theorem~\ref{theorem: main result, cluster size},
it is (asymptotically) equivalent to study
\begin{align}
    \hat{\bm S}^{n|\delta}_j
    \delequal 
    \sum_{ k \geq 1 }\sum_{i \in [d]} n^{-1} \tau^{n|\delta}_{j;i}(k) \cdot \bar{\bm s}_j.
    \label{def: hat S n delta i, cluster size, proof sketch}
\end{align}
Specifically,
Lemmas~\ref{lemma: tail bound, pruned cluster size S i leq n delta} and \ref{lemma: concentration ineq for pruned cluster S i}, which support the proof of Proposition~\ref{proposition: asymptotic equivalence, tail asymptotics for cluster size}, establish
tail asymptotics and concentration inequalities for $\bm S^\leqslant_i(M)$:
under the proper choice of $\delta$, the running average for i.i.d.\ copies of $\bm S^\leqslant_i(n\delta)$ 
concentrates around $\bar{\bm s}_i = \E \bm S_i$ at
arbitrarily fast power-law rates, justifying the approximation of $n^{-1}\bm S_j$ by $\hat{\bm S}^{n|\delta}_j$ in light of the decomposition \eqref{proof strategy, decomposition of S j}.
Then, the problem amounts to analyzing the joint asymptotics of $\big(n^{-1}\tau^{n|\delta}_{j;i}(k)\big)_{ i \in [d],\ k \geq 1 }$.
This is the content of Proposition~\ref{proposition, M convergence for hat S, tail asymptotics for cluster size}.
In particular, note that
$
{\bm I^{n|\delta}_j} \delequal \big(I^{n|\delta}_{j;i}(k)\big)_{k \geq 1,\ i \in [d]}
$
with ${I^{n|\delta}_{i;j}(k)} \delequal \mathbbm{I}\big\{ \tau^{n|\delta}_{i;j}(k) > 0 \big\}$
indicates the existence of big jumps across different dimensions $i$ and depths $k$ in the decomposition \eqref{proof strategy, decomposition of S j} for $\bm S_j$.
For $\hat{\bm S}^{n|\delta}_j$ defined in \eqref{def: hat S n delta i, cluster size, proof sketch} to fall into a given set $A \subseteq \R^d_+$,
${\bm I^{n|\delta}_j}$ must take specific values.
That is,
for the rare event
$
\{ n^{-1}\bm S_j \in A  \}
$
to occur,
the big jumps in the branching process will almost always exhibit specific \emph{types}  of  spatio-temporal structures (as in Definitions~\ref{def: cluster, type} and \ref{def: cluster, generalized type}).
Proposition~\ref{proposition, M convergence for hat S, tail asymptotics for cluster size} then characterizes the asymptotic law of $\big(n^{-1}\tau^{n|\delta}_{j;i}(k)\big)_{ i \in [d],\ k \geq 1 }$ when conditioned on the \emph{type} of $\bm S_j$ (i.e., the value of ${\bm I^{n|\delta}_j}$).
The proof of Proposition~\ref{proposition, M convergence for hat S, tail asymptotics for cluster size} relies on the asymptotics of ${W^{>}_{j;i}(M)}$ and ${N^{>}_{j;i}(M)}$ from Lemma~\ref{lemma: cluster size, asymptotics, N i | n delta, cdot j, crude estimate}, which reduce to analyzing
the sums of regularly varying variables truncated from below, conditioned on the sum being large.
We provide the detailed proofs in Section~\ref{sec: proof, cluster size asymptotics},
and include the theorem tree in Section~\ref{sec: appendix, theorem tree} of the Appendix to aid readability.

\section{Proofs of the Main Result, Two Key Propositions and a Lemma}
\label{sec: proof, cluster size asymptotics}

\subsection{Proof of the Main Result}
\label{subsec: proof, theorem: main result, cluster size}

We start by highlighting several properties of $\bm S_j^\leqslant(M)$ in \eqref{def: fixed point equation for pruned cluster S i leq M} and the quantities ${W^{>}_{j;i \leftarrow l}(M)}$, ${W^{>}_{j;i}(M)}$, ${N^{>}_{j;i \leftarrow l}(M)}$, ${N^{>}_{j;i}(M)}$
defined in \eqref{def: W i M j, pruned cluster, 1, cluster size}--\eqref{def: cluster size, N i | M cdot j}.
First, by definitions,
\begin{equation}\label{property, W and N i M l j when positive, cluster size}
    \begin{aligned}
        W^{>}_{i;j\leftarrow l}(M) > 0
        \quad
        \Longleftrightarrow
        \quad
        W^{>}_{i;j\leftarrow l}(M) > M
        \quad
        \Longleftrightarrow
        \quad
        N^{>}_{i;j\leftarrow l}(M) \geq 1,
        \\ 
        W^{>}_{i;j}(M) > 0
        \quad
        \Longleftrightarrow
        \quad
        W^{>}_{i;j}(M) > M
        \quad
        \Longleftrightarrow
        \quad
        N^{>}_{i;j}(M) \geq 1.
    \end{aligned}
\end{equation}
Next, we consider a useful stochastic comparison relation between branching processes. 
Here, for any random vectors $\bm V$ and ${\bm V}^\prime$ in $\R^d$,
we use $\notationdef{notation-stochastic-dominance}{\bm V\stleq {\bm V}^\prime}$ to denote stochastic comparison between $\bm V$ and $\bm V^\prime$,
in the sense that
$
\P(\bm V > \bm x) \leq \P({\bm V}^\prime > \bm x)
$
holds
for any real vector $\bm x \in \R^d$.
Let $\textbf W = (W_{i,j})_{i,j \in [d]}$ and $\textbf V = (V_{i,j})_{i,j \in [d]}$ be two random matrices in $(\Z_+)^{d \times d}$, and $\bm W_j$, $\bm V_j$ be the $j$-th row vector of $\textbf W$ and $\textbf V$.
Let the $\textbf W^{(t,m)}$'s be i.i.d.\ copies of $\textbf W$, and we adopt similar notations for $\textbf V$.
Consider $d$-dimensional branching processes $(\bm X^{\bf W}(t))_{t \geq 0}$ and $(\bm X^{\bf V}(t))_{t \geq 0}$ defined by
\begin{align*}
     \bm X^{\bf W}(t) = \sum_{j \in [d]} \sum_{m = 1}^{ X^{\bf W}_{j}(t-1) }{\bm W}^{(t,m)}_{j},
     \qquad
     \bm X^{\bf V}(t) = \sum_{j \in [d]} \sum_{m = 1}^{ X^{\bf V}_{j}(t-1) }{\bm V}^{(t,m)}_{j},
     \qquad \forall t \geq 1,
\end{align*}
initialized by $\bm X^{\bf W}(0) = \bm X^{\bf V}(0) = \bm X^\prime$
using some random vector $\bm X^\prime$ taking values in $\mathbb Z_+^d$.
Under the condition $\bf W \stleq \bf V$,
one can see that
\begin{align}
    \bm X^{\bf W}(t) \stleq \bm X^{\bf V}(t),\qquad \forall t \geq 0.
    \label{property: stochastic comparison, general, glaton watson trees}
\end{align}
In fact, using
the coupling argument in Section~\ref{subsec: proof methodology, cluster size},
one can construct a probability space that supports both $\big(\bm X^{\bf W}(t)\big)_{t \geq 0}$ and 
$\big(\bm X^{\bf V}(t)\big)_{t \geq 0}$, with $\bm X^{\bf W}(t) \leq \bm X^{\bf V}(t)$ (almost surely) for each $t \geq 0$.
Similarly, by considering the coupling of
$
\bm S_j = \sum_{t \geq 0}\bm X_j(t)
$
and
$
\bm S^\leqslant_j(M) = \sum_{t \geq 0}\bm X^\leqslant_j(t;M)
$
(see definitions in \eqref{def: branching process X n} and \eqref{def: branching process X n leq M}),
we have
\begin{align}
    \bm S^{\leqslant}_i(M) \leq \bm S^\leqslant_i(M^\prime) \leq \bm S_i,
    \qquad \forall i \in [d],\ 0 < M < M^\prime < \infty.
    \label{property: stochastic comparison, S and pruned S}
\end{align}

As described in Section~\ref{subsec: proof methodology, cluster size},
our proof of Theorem~\ref{theorem: main result, cluster size} hinges on a recursive application of the equality~\eqref{proof, equality, from S i truncated to S i}
that decomposes $\bm S_i$ into a nested collection of the pruned $\bm S_j^\leqslant(M)$'s.
Now, we describe this recursive procedure in full detail.
Consider a probability space supporting (for each $j \in [d]$)
\begin{align}
    \bm B^{(k,m,t,q)}_{ \bcdot \leftarrow j } \stackrel{i.i.d.}{\sim} \bm B_{ \bcdot \leftarrow j },
    \qquad
    \forall k,m,t,q \geq 1,
    \label{def: probability space with i.i.d. copies B k m t q j, cluster size}
\end{align}
with 
$
\bm B_{ \bcdot \leftarrow j }
=
(
    B_{ 1 \leftarrow j }, B_{ 2 \leftarrow j }, \ldots ,B_{ d \leftarrow j }
)^\top
$
being the offspring distribution
in \eqref{def: fixed point equation for cluster S i}.
Let
\begin{align}
\bm S^{(k,m)}_j \delequal \sum_{t \geq 0}\bm X_j^{(k,m)}(t),\quad \text{where }
    {\bm X_j^{(k,m)}(t)} \delequal \sum_{i \in [d]} \sum_{q = 1}^{ X_{j,i}^{(k,m)}(t-1) }{\bm B}^{(k,m,t,q)}_{ \bcdot \leftarrow i},
    \ \ \forall t \geq 1,
    \label{def: probability space with i.i.d. copies B k m t q j, cluster size, 2}
\end{align}
under initial value $\bm X_{j}^{(k,m)}(0) = \bm e_j$.
Besides, we adopt the notation in \eqref{def: truncated offspring count,B leq M j k} and use $\bm B^{\leqslant, (k,m,t,q)}_{ \bcdot \leftarrow j}(M)$
to denote the truncated version of $\bm B^{(k,m,t,q)}_{ \bcdot \leftarrow j}$ under threshold $M$.
Let
\begin{equation}\label{def: probability space with i.i.d. copies B k m t q j, cluster size, 3}
    \begin{aligned}
        {\bm X^{\leqslant,(k,m)}_j(t;M)} & \delequal 
    \sum_{i \in [d]} \sum_{q = 1}^{ X^{\leqslant,(k,m)}_{j,i}(t - 1;M) }
    {\bm B}^{\leqslant, (k,m,t,q)}_{ \bcdot \leftarrow i}(M),
    \quad \forall t \geq 1,
    \\
        \bm S_j^{\leqslant, (k,m)}(M) & \delequal \sum_{t \geq 0}\bm X_j^{\leqslant,(k,m)}(t;M),
    \end{aligned}
\end{equation}
with initial value ${\bm X^{\leqslant,(k,m)}_j(0;M)} = \bm e_j$.
Analogous to \eqref{def: W i M j, pruned cluster, 2, cluster size}, \eqref{def: cluster size, N i | M cdot j},
we define 
\begin{equation}
    \begin{aligned}
        W^{>,(k,m)}_{j;i}(M) & \delequal 
        \sum_{t \geq 1}
    \sum_{l \in [d]}\sum_{q = 1}^{ X^{\leqslant,(k,m)}_{j,l}(t-1;M) }
            B^{(k,m,t,q)}_{i \leftarrow l}\mathbbm{I}\big\{
                B^{(k,m,t,q)}_{i \leftarrow l} > M
            \big\},
        \\ 
        N^{>,(k,m)}_{j;i}(M) & \delequal 
        \sum_{t \geq 1}
    \sum_{l \in [d]}\sum_{q = 1}^{ X^{\leqslant,(k,m)}_{j,l}(t-1;M) }
            \mathbbm{I}\big\{
                B^{(k,m,t,q)}_{i \leftarrow l} > M
            \big\}.
    \end{aligned}
\end{equation}
Given $M > 0$ and $i \in [d]$,
note that
the collection of vectors
\begin{align}
    \Big(\bm S^{\leqslant,(k,m)}_j(M),
    W^{>,(k,m)}_{j;1}(M),\ldots, W^{>,(k,m)}_{j;d}(M),
     N^{>,(k,m)}_{j;1}(M),\ldots, N^{>,(k,m)}_{j;d}(M)
    \Big)_{ k, m \geq 1 }
\label{proof, indp samples, pruned clusters}
\end{align}
are i.i.d.\ copies of
$
 \big(\bm S_j^\leqslant(M),\  W^{>}_{j;1}(M), \ldots, W^{>}_{j;d}(M),\
     N^{>}_{j;1}(M), \ldots, N^{>}_{j;d}(M)
    \big).
$

Now, given $j \in [d]$, $n \in \mathbb N$, and $\delta > 0$, we consider the following procedure,
where $k$ denotes the iteration in the recursive application of \eqref{proof, equality, from S i truncated to S i}, and 
$
M^{n|\delta}_{\bcdot \leftarrow j}(k)
$
denotes the truncation threshold employed in the $k^\text{th}$ iteration
for sub-trees induced by type-$j$ nodes.
\begin{enumerate}[(i)]
    \item 
        Set
        \begin{align}
            \bm{\tau}^{n|\delta}_{j}(0) = \bm e_j.
            \label{def: layer zero, from pruned cluster to full cluster}
        \end{align}
        In addition, set
        \begin{align}
            M^{n|\delta}_{\bcdot \leftarrow i}(1) = n\delta,
            \qquad \forall i \in [d].
            \label{def: layer zero, truncation threshold, from pruned cluster to full cluster}
        \end{align}

    \item Starting from $k \geq 1$, do the following inductively.
    If there is some $i \in [d]$ such that 
    $
    \tau^{n|\delta}_{j;i}(k-1) > 0,
    $
    let
        \begin{align}
                \notationdef{notation-tau-k-M-j}{\tau^{n|\delta}_{j;l}(k)}
            & \delequal
            \sum_{i \in [d]}\sum_{m = 1}^{ \tau^{n|\delta}_{j;i}(k- 1) }
                W^{>,(k,m)}_{i;l}\Big(M^{n|\delta}_{\bcdot \leftarrow i}(k)\Big),\qquad \forall l \in [d],
            \label{def: from pruned cluster to full cluster, tau and S at step k + 1}
            \\
            \bm S^{n|\delta}_{j;\bcdot \leftarrow i}(k)
            & \delequal 
            \sum_{m = 1}^{ \tau^{n|\delta}_{j;i}(k-1) }\bm S^{\leqslant,(k,m)}_i
                \Big(M^{n|\delta}_{\bcdot \leftarrow i}(k)\Big),
            \qquad \forall i \in [d],
            \label{def: from pruned cluster to full cluster, tau and S at step k + 1, 2}
            \\
            \bm S^{n|\delta}_{j}(k)
            & \delequal 
            \sum_{i \in [d]}
            \bm S^{n|\delta}_{j;\bcdot \leftarrow i}(k)
            =
            \sum_{i \in [d]}
            \sum_{m = 1}^{ \tau^{n|\delta}_{j;i}(k-1) }\bm S^{\leqslant,(k,m)}_i
                \Big(M^{n|\delta}_{\bcdot \leftarrow i}(k)\Big),
            \label{def: from pruned cluster to full cluster, tau and S at step k + 1, 3}
        \end{align}
        and set
        \begin{align}
            M_{\bcdot \leftarrow i}^{n|\delta}(k+1) =  \delta \cdot \tau^{n|\delta}_{j;i}(k),
            \qquad \forall i \in [d].
            \label{def: from pruned cluster to full cluster, tau and S at step k + 1, 4}
        \end{align}
        Otherwise, move onto step (iii).

    \item 
        Now,
        let
        \begin{align}
            \mathcal K^{n|\delta}_j \delequal 
            \max\Big\{ k \geq 0:\ \tau^{n|\delta}_{j;i}(k) > 0\text{ for some }i \in [d]  \Big\}.
            \label{def: from pruned cluster to full cluster, tau and S at step k + 1, 5}
        \end{align}
        By step (ii) and the definition of $\mathcal K^{n|\delta}_j$,
        we have $\tau^{n|\delta}_{j;i}(k) = 0\ \forall i \in [d]$ under $k = \mathcal K^{n|\delta}_j + 1$. 
        For all $k >  \mathcal K^{n|\delta}_j + 1$, we also set
        \begin{align}
            \tau^{n|\delta}_{j;i}(k) = 0,\qquad 
            \forall i \in [d].
            \label{def: from pruned cluster to full cluster, tau and S at step k + 1, 6}
        \end{align}
        By \eqref{proof, equality, from S i truncated to S i},
        \begin{align}
            \bm S_j \distequal 
            \sum_{k = 1 }^{\mathcal K^{n|\delta}_j + 1}\bm S_j^{n|\delta}(k) 
            =
            \sum_{k = 1 }^{\mathcal K^{n|\delta}_j + 1}
            \sum_{i \in [d]}
            \sum_{m = 1}^{ \tau^{n|\delta}_{j;i}(k-1) }\bm S^{\leqslant,(k,m)}_i
                \Big(M^{n|\delta}_{\bcdot \leftarrow i}(k)\Big).
            \label{def: from pruned cluster to full cluster, tau and S at step k + 1, 7}
        \end{align}
\end{enumerate}
In particular, 
step (ii) is a recursive application of \eqref{proof, equality, from S i truncated to S i}.
For each $k \geq 1$,
we use $\tau^{n|\delta}_{j;i}(k-1)$
to count the number of copies of $\bm S_i$ that remain to be generated after the  $(k-1)^\text{th}$ iteration of step (ii).
At the $k^\text{th}$ iteration,
the independent copies of $\bm S_i$ are generated via \eqref{proof, equality, from S i truncated to S i}
under the truncation threshold
$
M^{n|\delta}_{\bcdot \leftarrow i}(k),
$
which is determined by the rule \eqref{def: from pruned cluster to full cluster, tau and S at step k + 1, 4}
using the values of $\tau^{n|\delta}_{j;i}(k-1)$ in the previous iteration.
We add a few remarks:
\begin{itemize}

    \item 
        Under the sub-criticality condition in Assumption~\ref{assumption: subcriticality},
        step (ii) will almost surely terminate after finitely many steps 
        (meaning that $\mathcal K^{n|\delta}_j < \infty$ almost surely).
        This is because $\norm{\bm S_j} < \infty$ almost surely, and each copy $\bm S^{\leqslant,(k,m)}_i(M)$ will add a least one node---the ancestor along the $i^\text{th}$ dimension that induces this sub-tree.

    \item 
        To prove Theorem~\ref{theorem: main result, cluster size}, 
        it suffices to consider a finite-iteration version of step (ii).
        Indeed, Lemma~\ref{lemma: crude estimate, type of cluster} confirms that, given $\gamma > 0$,
        it holds for all $K$ large enough that
        the probability of step (ii) running beyond $K$ iterations is of order $\lo (n^{-\gamma})$.
        Therefore, 
        by picking a constant $K$ large enough to ensure an $\lo (\lambda_{\bm j}(n))$ bound for such pathological cases,
        we can prove \eqref{claim, theorem: main result, cluster size} (given $A$ and $\bm j$) or \eqref{claim, 2, theorem: main result, cluster size}
        by only
        applying equality \eqref{proof, equality, from S i truncated to S i} for $K$ times (instead of stopping randomly) in step (ii).
        Switching to this alternative approach has no real consequences for our subsequent analysis, and we will not explore it in detail.

    \item 
        Lastly, by \eqref{property, W and N i M l j when positive, cluster size} and the choices of $M^{n|\delta}_{\bcdot \leftarrow i}(k)$ above, we have 
        \begin{align}
            \tau^{n|\delta}_{j;i}(k) > 0
            \quad
            \iff
            \quad
            \tau^{n|\delta}_{j;i}(k) > n\delta^k,
            \qquad
            \forall k \geq 1,\ i \in [d].
            \label{property: lower bound for tau n delta i j k, cluster size}
        \end{align}

\end{itemize}

\medskip
To proceed with our proof of Theorem~\ref{theorem: main result, cluster size},
let
\begin{align}
    \bar{\bm S}^n_j & \delequal n^{-1}\sum_{k  = 1}^{\mathcal K^{n|\delta}_j + 1}\bm S_j^{n|\delta}(k) 
    \distequal
    n^{-1}\bm S_j,
    \label{def: scaled cluster bar S n i}
    \\
    \hat{\bm S}^{n|\delta}_j
    & \delequal 
    \sum_{ k \geq 1 }\sum_{i \in [d]} n^{-1} \tau^{n|\delta}_{j;i}(k) \cdot \bar{\bm s}_j.
    \label{def: hat S n delta i, cluster size}
\end{align}
The last equality in \eqref{def: scaled cluster bar S n i} follows from 
\eqref{def: from pruned cluster to full cluster, tau and S at step k + 1, 7}.
Next, recall the definition of the mapping $\Phi$ in \eqref{def: Phi, polar transform},
and define
\begin{align}
    \notationdef{notation-polar-coordiates-for-bar-S}{(\bar R^{n}_j, \bar \Theta^n_j)} \delequal \Phi(\bar{\bm S}^n_j),
    \qquad
    \notationdef{notation-polar-coordinates-for-hat-S}{(\hat R^{n|\delta}_j, \hat \Theta^{n|\delta}_j)}
    \delequal 
    \Phi(\hat{\bm S}^{n|\delta}_j),
    \label{def: polar coordinates, bar S and hat S, cluster size}
\end{align}
which can be interpreted as the polar coordinates of $\bar{\bm S}^n_j$ and $\hat{\bm S}^{n|\delta}_j$.
Note that $\bar R^n_j = \norm{\bar{\bm S}^n_j}$ and $\hat{R}^{n|\delta}_j = \norm{\hat{\bm S}^{n|\delta}_j}$.
Meanwhile,
based on the definitions of $\tau^{n|\delta}_{j;i}(k)$ in \eqref{def: layer zero, from pruned cluster to full cluster} and \eqref{def: from pruned cluster to full cluster, tau and S at step k + 1},
we define
\begin{align}
    \notationdef{notation-I-k-M-j}{I^{n|\delta}_{j;i}(k)} \delequal \mathbbm{I}\big\{ \tau^{n|\delta}_{j;i}(k) > 0 \big\},
    \qquad \forall i \in [d],\ k \geq 1.
    \label{def, I k M j for M type, cluster size}
\end{align}
By the definition of $\mathcal K^{n|\delta}_{j}$ in \eqref{def: from pruned cluster to full cluster, tau and S at step k + 1, 5}, we have
\begin{itemize}
    \item 
        $I^{n|\delta}_{j;i}(k) = 0\ \ \forall k \geq \mathcal K^{n|\delta}_{j} + 1,\  i \in [d]$,

    \item 
        For any $k= 1,\ldots,\mathcal K^{n|\delta}_{j}$, there exists some $i \in [d]$ such that $I^{n|\delta}_{j;i}(k) = 1$.
\end{itemize}
We say that
$\notationdef{notation-M-type-I-|-M-cluster-size}{\bm I^{n|\delta}_j} = \big(I^{n|\delta}_{j;i}(k)\big)_{k \geq 1,\ i \in [d]}$ is the \textbf{$(n,\delta)$-type of} $\bar{\bm S}^{n}_j$.
Note that $\bm I^{n|\delta}_j$ can take values outside of $\mathscr I$,  the collection of all types in Definition~\ref{def: cluster, type}.
We thus consider the following generalization,
where $\bm I^{n|\delta}_j \in \widetilde{\mathscr I}$ a.s.\ due to $\mathcal K^{n|\delta}_j < \infty$ a.s.

\begin{definition}[Generalized Type]
    \label{def: cluster, generalized type}
$\bm I = (I_{k,i})_{k \geq 1,\ j \in [d]}$ is a \textbf{generalized type} if it satisfies the following conditions:
\begin{itemize}

    \item
        $I_{k,j}\in\{0,1\}$ for all $k \geq 1$ and $j \in [d]$;

 
    \item
        There exists ${\mathcal K^{\bm I}} \in \mathbb Z_+$ such that $\sum_{i \in [d]}I_{k,j} = 0\ \forall k > \mathcal K^{\bm I}$ and $\sum_{j \in [d]}I_{k,j} \geq 1 \ \forall 1 \leq k \leq \mathcal K^{\bm I}$.

\end{itemize}
We use $\notationdef{notation-tilde-mathscr-I-generalized-type-cluster-size}{\widetilde{\mathscr I}}$ to denote the set containing all generalized types.
For each $\bm I \in \widetilde{\mathscr I}$, we say that
$
\bm j^{\bm I} \delequal \big\{ j \in [d]:\ \sum_{k \geq 1}I_{k,j} = 1  \big\}
$
is the set of \textbf{active indices} of the generalized type $\bm I$,
and $\mathcal K^{\bm I}$ is the \textbf{depth} of $\bm I$.
For each $k \geq 1$, we say that
$
{\bm j^{\bm I}_k} \delequal 
    \big\{
        j \in [d]:\ I_{k,j} = 1
    \big\}
$
is the set of \textbf{active indices at depth $k$} in $\bm I$.
\end{definition}

We prove Theorem~\ref{theorem: main result, cluster size}
by establishing the asymptotic equivalence between 
$
(\bar R^{n}_j, \bar \Theta^n_j)
$
and
$
(\hat R^{n|\delta}_j, \hat \Theta^{n|\delta}_j)
$
in terms of  $\mathbb M$-convergence (see Definition~\ref{def: M convergence}),
and we view ${\bm I^{n|\delta}_j} = \big(I^{n|\delta}_{j;i}(k)\big)_{k \geq 1,\ i \in [d]}$ as a mark of $(\hat R^{n|\delta}_j, \hat \Theta^{n|\delta}_j)$ that encapsulates the spatio-temporal information of the big jumps in the underlying branching process.
To this end, we prepare 
Lemma~\ref{lemma: asymptotic equivalence, MRV in Rd}.
This result can be seen as a version of Lemma~2.4 in \cite{wang2023large} tailored for the space of polar coordinates $\mathbb S=[0,\infty) \times \mathfrak N^d_+$ under the metric $\bm d_\textbf{U}$ defined in \eqref{def: metric for polar coordinates},
and there are only two key differences.
First, 
Condition \eqref{condition: C is a cone, lemma: asymptotic equivalence, MRV in Rd} explicitly requires that, under polar transform, the pre-image of $\mathbb C$ is a cone in $\R^d_+$.
Second, we augment the approximations $Y^\delta_n$ with random marks $V^\delta_n$; in this regard, Lemma~2.4 in \cite{wang2023large} can be seen as a simplified version of our Lemma~\ref{lemma: asymptotic equivalence, MRV in Rd} featuring ``dummy'' marks (e.g., $V^\delta_n \equiv 1$).
The proof  is similar to that of Lemma~2.4 in \cite{wang2023large} and is collected in Section~\ref{subsec: proof, M convergence and asymptotic equivalence}  of the Appendix for the sake of completeness.

\begin{lemma}\label{lemma: asymptotic equivalence, MRV in Rd}
\linksinthm{lemma: asymptotic equivalence, MRV in Rd}
Let $(R_n,\Theta_n)$ and $(\hat R^\delta_n,\hat \Theta^\delta_n)$ be random elements taking values in $\mathbb S = [0,\infty) \times \mathfrak N^d_+$ with metric $\bm d_\textbf{U}$ in \eqref{def: metric for polar coordinates}.
Let $V^\delta_n$ be random elements taking values in a countable set $\mathbb V$.
Let $\mathbb C \subseteq \mathbb S$ be such that $(0,\bm w) \in \mathbb C$ for any $\bm w \in \mathfrak N^d_+$, and 
\begin{align}
    (r,\bm w) \in \mathbb C,\ r > 0,\ \bm w \in \mathfrak N^d_+
      \quad \Longrightarrow \quad 
    (t, \bm w) \in \mathbb C\ \  \forall t \geq 0.
    \label{condition: C is a cone, lemma: asymptotic equivalence, MRV in Rd}
\end{align}
Let
$\mathcal V \subset \mathbb V$ be a set containing only finitely many elements (i.e., $|\mathcal V| < \infty$),
and  let $\mu_v \in \mathbb M(\mathbb S \setminus \mathbb C)$ for each $v \in \mathcal V$.
Let $\epsilon_n$ be a sequence of strictly positive real numbers with $\lim_{n \to \infty}\epsilon_n = 0$.
Suppose that
\begin{enumerate}[(i)]
    \item 
        (\textbf{Asymptotic equivalence})
        Given $\Delta \in (0,1)$ ,
        it holds for any $\delta > 0$ small enough that 
           \begin{align*}
           \lim_{n \to \infty}\epsilon^{-1}_n
        & \P\bigg(
            \big\{
                 R_n \vee \hat R^{\delta}_n > \Delta 
            \big\}
            \cap 
            \Big\{
            \frac{\hat R^\delta_n}{R_n}\notin [ 1-\Delta, 1+\Delta]
            \text{ or }
            \norm{\Theta_n - \hat \Theta^\delta_n} > \Delta 
            \Big\}
        \bigg) = 0;
        \end{align*}
        
    \item 
         (\textbf{Convergence given the marks $V^\delta_n$})
         Let the Borel set $B \subseteq \mathbb S$ be  bounded away from $\mathbb{C}$ under $\bm d_{\textbf U}$,
         and let
        $\Delta \in \big(0,\bm d_{\textbf U}(B,\mathbb C)\big)$;
        under any $\delta > 0$ small enough,
        the claim
        \begin{align*}
        \mu_v(B_\Delta) - \Delta
        & \leq 
         \liminf_{n \to \infty}\epsilon^{-1}_n\P\big( (\hat R^\delta_n,\hat\Theta^\delta_n)  \in B,\ V^\delta_n = v\big)
         \\
       & \leq
       \limsup_{n \to \infty}\epsilon^{-1}_n\P\big( (\hat R^\delta_n,\hat\Theta^\delta_n)  \in B,\ V^\delta_n = v\big) \leq \mu_v(B^{\Delta}) + \Delta
    \end{align*}
    holds for each $v \in \mathcal V$, and we also have
    \begin{align}
        \lim_{n \to \infty}\epsilon^{-1}_n\P\big( (\hat R^\delta_n,\hat\Theta^\delta_n)  \in B,\ V^\delta_n \notin \mathcal V\big) = 0.
        \nonumber
    \end{align}
\end{enumerate}
Then,
$
\epsilon^{-1}_n\P\big( (R_n,\Theta_n) \in \ \cdot\ \big)
\to \sum_{v \in \mathcal V}\mu_v(\cdot)
$
in $\mathbb M(\mathbb S \setminus \mathbb C)$.
\end{lemma}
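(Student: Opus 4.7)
The plan is to invoke the Portmanteau theorem for $\mathbb{M}$-convergence (Theorem~\ref{portmanteau theorem M convergence}) and verify, with $\mu \delequal \sum_{v \in \mathcal{V}} \mu_v$, that $\limsup_{n} \epsilon_n^{-1} \P((R_n,\Theta_n)\in F) \leq \mu(F)$ for every closed $F$ bounded away from $\mathbb{C}$, and the matching $\liminf \geq \mu(G)$ for every open $G$ bounded away from $\mathbb{C}$, all under the metric $\bm{d}_{\mathbf{U}}$. I focus on the upper bound; the lower bound follows symmetrically with shrinkages $G_\eta$ in place of enlargements.

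The principal obstacle is that condition~(i) delivers only \emph{relative} radial closeness $\hat R^\delta_n / R_n \in [1-\Delta,1+\Delta]$, whereas $\bm{d}_{\mathbf{U}}$ measures radii \emph{absolutely}. To bridge this, given closed $F$ with $r_0 \delequal \bm{d}_{\mathbf{U}}(F,\mathbb{C}) > 0$, I would fix a truncation level $M > r_0 \vee 1$ and split $\{(R_n,\Theta_n)\in F\}$ according to $R_n \leq M$ versus $R_n > M$. On the bounded piece, asymptotic equivalence with parameter $\Delta$ yields $|R_n - \hat R^\delta_n| \leq \Delta M$ and $\|\Theta_n - \hat\Theta^\delta_n\| \leq \Delta$, so $(\hat R^\delta_n,\hat\Theta^\delta_n) \in F^{\Delta M}$ outside a set of probability $o(\epsilon_n)$. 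The cone property~\eqref{condition: C is a cone, lemma: asymptotic equivalence, MRV in Rd} is essential here: it gives the explicit identity $\bm{d}_{\mathbf{U}}((r,\bm{w}),\mathbb{C}) = \min(r,\inf_{\bm{w}^\star \in \mathbb{C}^{\mathrm{ang}}}\|\bm{w} - \bm{w}^\star\|)$, where $\mathbb{C}^{\mathrm{ang}}$ is the angular shadow of $\mathbb{C}$, and hence radial enlargement of $F$ by at most $r_0/2$ does not reduce separation from $\mathbb{C}$.

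For the tail part $\{R_n > M\}$, condition~(i) alone gives $\hat R^\delta_n > M/2$ with probability $1-o(\epsilon_n)$, whence
\begin{align*}
    \P\big((R_n,\Theta_n) \in F,\, R_n > M\big) \leq \P\big(\hat R^\delta_n > M/2\big) + o(\epsilon_n).
\end{align*}
Since $\{r > M/2\}$ is bounded away from $\mathbb{C}$, applying the mark decomposition $\P((\hat R^\delta_n,\hat\Theta^\delta_n) \in B) = \sum_{v \in \mathcal{V}} \P((\hat R^\delta_n,\hat\Theta^\delta_n) \in B, V^\delta_n = v) + \P((\hat R^\delta_n,\hat\Theta^\delta_n) \in B, V^\delta_n \notin \mathcal{V})$ together with condition~(ii) (the $V^\delta_n \notin \mathcal{V}$ contribution vanishing) bounds the right-hand side by $\epsilon_n \cdot \mu(\{r > M/4\}) + o(\epsilon_n)$. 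Local finiteness of $\mu \in \mathbb{M}(\mathbb{S}\setminus\mathbb{C})$ together with $\bigcap_M \{r > M/4\} = \emptyset$ and continuity of measure from above drive $\mu(\{r > M/4\}) \downarrow 0$ as $M \to \infty$.

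Combining the two pieces and applying condition~(ii) to $F^{\Delta M}$ itself yields, for $\Delta' > 0$ small,
\begin{align*}
    \limsup_{n\to\infty} \epsilon_n^{-1}\P\big((R_n,\Theta_n)\in F\big) \leq \sum_{v\in\mathcal{V}}\mu_v\big(F^{\Delta M + \Delta'}\big) + |\mathcal{V}|\Delta' + \mu\big(\{r > M/4\}\big).
\end{align*}
Sending $\Delta' \downarrow 0$, then $\Delta \downarrow 0$ with $M$ fixed so that $\Delta M \downarrow 0$, and finally $M \to \infty$, and invoking outer regularity of each $\mu_v$ at the closed set $F$ (together with $\mu_v\big(F^{r_0/2}\big) < \infty$), delivers the target bound $\mu(F)$. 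I expect the main bookkeeping challenge to be orchestrating these interleaved limits in $(\Delta,\Delta',M,\delta,n)$ so that every set under consideration remains bounded away from $\mathbb{C}$ at each stage, which is precisely where the cone hypothesis plays its decisive structural role.
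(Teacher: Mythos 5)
Your overall route—Portmanteau for $\mathbb M$-convergence, splitting at a radius truncation $M$, passing between $F$ and its $\Delta$-enlargements, and interleaving the limits in $(\Delta,M,\delta,n)$—is the same as the paper's, and your treatment of the bounded-radius piece $\{R_n\le M\}$ is essentially correct. The genuine gap is in the tail piece $\{R_n>M\}$. There you discard the angular information and reduce to $\P(\hat R^\delta_n>M/2)$, claiming that $\{(r,\bm w):r>M/2\}$ is bounded away from $\mathbb C$ so that condition (ii) applies and $\mu(\{r>M/4\})\downarrow 0$ by continuity from above. This fails: by the cone property \eqref{condition: C is a cone, lemma: asymptotic equivalence, MRV in Rd}, as soon as $\mathbb C$ contains any point of positive radius it contains the whole ray through that angle, hence points of arbitrarily large radius; then $\{r>M/2\}$ intersects $\mathbb C$ and is \emph{not} bounded away from it, so condition (ii) gives no bound for this set. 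Worse, the $\mu_v$ are only assumed to lie in $\mathbb M(\mathbb S\setminus\mathbb C)$, which guarantees finiteness only on sets bounded away from $\mathbb C$; in the intended application ($\mathbb C=\mathbb C^d_\leqslant(\bm j)$, $\mu_v=\mathbf C^{\bm I}_i\circ\Phi^{-1}$) the mass of large-radius points accumulating near the sub-cone is infinite, so $\mu(\{r>M/4\})=\infty$ and the continuity-from-above step has no content.

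The repair is precisely where the cone hypothesis does its real work in the paper's argument, and it is the step you drop: on $\{(R_n,\Theta_n)\in F,\ R_n>M\}$ you must keep, in addition to $\hat R^\delta_n\ge(1-\Delta)M$, the angular closeness $\norm{\hat\Theta^\delta_n-\Theta_n}\le\Delta$ together with the fact that $\Theta_n$ lies in the angular projection $F_\theta$ of $F$. The cone property forces the angular projections of $F$ and of the positive-radius part of $\mathbb C$ to be separated by any $\bar\epsilon<\bm d_{\mathbf U}(F,\mathbb C)$ (this is your $\min(r,\,\inf\norm{\bm w-\bm w^\star})$ identity, used now on the tail rather than the bounded piece). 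Hence the tail event is pushed into the set of points with radius at least $(1-\Delta)M$ and angle within $\Delta<\bar\epsilon$ of $F_\theta$; this set \emph{is} bounded away from $\mathbb C$, so condition (ii) applies, its $\mu_v$-measure is finite, and these sets decrease to $\emptyset$ as $M\to\infty$, so continuity from above legitimately kills the tail. With that substitution your bookkeeping of the limits goes through and matches the paper; the lower bound via $F$ replaced by open $G$, shrinkages, and $B(M)\uparrow B$ is unproblematic.
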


Recall the definitions of $\R^d(\bm j)$ and $\R^d_{\leqslant}(\bm j) = \bar\R^d_\leqslant(\bm j, 0)$ in 
\eqref{def: cone R d index i} and \eqref{def: cone R d i basis S index alpha}, respectively.
For any $\bm j \subseteq \{1,2,\ldots,d\}$
that is non-empty, let
\begin{equation}\label{def: cone C d leq j, cluster size}
    \begin{aligned}
        \notationdef{notation-cone-C-bm-j}{ \mathbb C^d(\bm j)}
    \delequal 
    \big\{
        (r,\bm w) \in [0,\infty) \times \mathfrak N^d_+:\ r\bm w \in \R^d(\bm j)
    \big\},
    \\ 
    \notationdef{notation-cone-C-leq-bm-j}{ \mathbb C^d_\leqslant(\bm j)}
    \delequal 
    \big\{
        (r,\bm w) \in [0,\infty) \times \mathfrak N^d_+:\ r\bm w \in \R^d_\leqslant(\bm j)
    \big\}.
    \end{aligned}
\end{equation}
Recall the definitions of 
$
{(\bar R^{n}_j, \bar \Theta^n_j)}
$
and
$
 {(\hat R^{n|\delta}_j, \hat \Theta^{n|\delta}_j)}
$
in \eqref{def: polar coordinates, bar S and hat S, cluster size}.
The next two key propositions allow us to apply Lemma~\ref{lemma: asymptotic equivalence, MRV in Rd} and establish Theorem~\ref{theorem: main result, cluster size}.

\begin{proposition}\label{proposition: asymptotic equivalence, tail asymptotics for cluster size}
\linksinthm{proposition: asymptotic equivalence, tail asymptotics for cluster size}
Let Assumptions~\ref{assumption: subcriticality}--\ref{assumption: regularity condition 2, cluster size, July 2024} hold.
Given $i \in [d]$, $\gamma > 0,\ \Delta \in (0,1)$, and
non-empty $\bm j \subseteq \{1,2,\ldots,d\}$,
it holds for any $\delta > 0$ small enough that
 \begin{align}
     \lim_{n \to \infty}n^{\gamma} \cdot 
     \P\Bigg(
        \Big\{
           \bar R^n_i \vee \hat R^{n|\delta}_i > \Delta
        \Big\}
        \cap
        \bigg\{
            \frac{ \hat R^{n|\delta}_i }{\bar R^n_i} \notin [ 1-\Delta,1+\Delta]
            \text{ or }
            \norm{ \bar\Theta^n_i - \hat \Theta^{n|\delta}_i } > \Delta
        \bigg\}
     \Bigg)
     & = 0.
     \label{claim, proposition: asymptotic equivalence, tail asymptotics for cluster size}
 \end{align}
\end{proposition}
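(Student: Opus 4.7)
The plan is to reduce the claim to a Euclidean approximation statement: with probability $1 - o(n^{-\gamma})$, uniformly in $\delta$ small enough, the difference $\|\bar{\bm S}^n_i - \hat{\bm S}^{n|\delta}_i\|$ will be smaller than $\eta$ for any prescribed $\eta>0$. Once this is established, on the event $\{\bar R^n_i \vee \hat R^{n|\delta}_i > \Delta\}$, choosing $\eta$ small relative to $\Delta$ will force both $|\bar R^n_i - \hat R^{n|\delta}_i|/(\bar R^n_i \vee \hat R^{n|\delta}_i)$ and $\|\bar\Theta^n_i - \hat\Theta^{n|\delta}_i\|$ to be at most $\Delta$, since the polar map $\Phi$ is Lipschitz away from the origin. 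Combined with the identity $\bar{\bm S}^n_i \distequal n^{-1}\bm S_i$, this yields the claim.

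To control $\|\bar{\bm S}^n_i - \hat{\bm S}^{n|\delta}_i\|$, I would start from the decomposition \eqref{def: from pruned cluster to full cluster, tau and S at step k + 1, 7} and the definition \eqref{def: hat S n delta i, cluster size} to obtain the triangle-inequality bound
\begin{align*}
\|\bar{\bm S}^n_i - \hat{\bm S}^{n|\delta}_i\| \leq n^{-1}\|\bm S^{\leqslant}_i(n\delta)\| + n^{-1}\sum_{k=1}^{\mathcal K^{n|\delta}_i}\sum_{j \in [d]}\Big\|\sum_{m=1}^{\tau^{n|\delta}_{i;j}(k)}\bm S^{\leqslant,(k+1,m)}_j\big(M^{n|\delta}_{\bcdot\leftarrow j}(k+1)\big) - \tau^{n|\delta}_{i;j}(k)\bar{\bm s}_j\Big\|,
\end{align*}
and handle the three sources of error separately. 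First, I would invoke Lemma~\ref{lemma: crude estimate, type of cluster} to restrict to the event $\{\mathcal K^{n|\delta}_i \leq K\}$ for a large deterministic $K=K(\gamma)$, paying only an $o(n^{-\gamma})$ penalty. Second, since every offspring count inside $\bm S^{\leqslant}_i(n\delta)$ is bounded by $n\delta$, Lemma~\ref{lemma: tail bound, pruned cluster size S i leq n delta} would supply polynomial-in-$n$ bounds on arbitrarily high moments of $\|\bm S^{\leqslant}_i(n\delta)\|$, so Markov's inequality yields $\mathbf P(n^{-1}\|\bm S^{\leqslant}_i(n\delta)\| > \eta) = o(n^{-\gamma})$. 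Third, on the event $\{\tau^{n|\delta}_{i;j}(k) > 0\}$, property \eqref{property: lower bound for tau n delta i j k, cluster size} guarantees $\tau^{n|\delta}_{i;j}(k) > n\delta^k$, so the inner sum averages at least $n\delta^k$ i.i.d.\ pruned clusters with truncation $M = \delta\tau^{n|\delta}_{i;j}(k) > n\delta^{k+1}$. Conditioning on the $\sigma$-algebra $\mathcal F_k$ generated by the pruned trees up to level $k$ makes $\tau^{n|\delta}_{i;j}(k)$ and $M^{n|\delta}_{\bcdot\leftarrow j}(k+1)$ measurable, while the level-$(k{+}1)$ clusters remain independent of $\mathcal F_k$; Lemma~\ref{lemma: concentration ineq for pruned cluster S i} then yields $o(n^{-\gamma})$ concentration of the sum around $\tau^{n|\delta}_{i;j}(k)\cdot\mathbf E\bm S^{\leqslant}_j(M)$, and dominated convergence (using $\bm S^{\leqslant}_j(M) \nearrow \bm S_j$ and $\mathbf E\|\bm S_j\|<\infty$) ensures $\mathbf E\bm S^{\leqslant}_j(M) \to \bar{\bm s}_j$, with the residual bias absorbed by shrinking $\delta$.

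The main obstacle will be coordinating these per-level estimates across the $K$ levels while preserving the $o(n^{-\gamma})$ rate. The conditional independence of subtrees at different levels, given the ancestor counts, is essential: it allows the concentration bound at level $k+1$ to be applied pointwise on each acceptable realization of $\mathcal F_k$, and a union bound over the $K\cdot d$ pairs $(k,j)$ then completes the argument. A subtle point is that on the event of interest $\tau^{n|\delta}_{i;j}(k)$ may itself be of order $n$, so the \emph{absolute} deviation in the $(k,j)$-th summand must be bounded by $\eta n$ while the random number of summands is $\tau^{n|\delta}_{i;j}(k) \lesssim n$; the fact that Lemma~\ref{lemma: concentration ineq for pruned cluster S i} provides deviation bounds that are polynomial in the averaging scale (and arbitrarily fast on appropriate truncation) is what makes the argument go through uniformly in these scales once $\delta$ is taken small enough.
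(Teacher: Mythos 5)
Your overall machinery (the pruned-cluster decomposition \eqref{def: from pruned cluster to full cluster, tau and S at step k + 1, 7}, depth truncation via Lemma~\ref{lemma: crude estimate, type of cluster}, concentration of pruned clusters via Lemmas~\ref{lemma: tail bound, pruned cluster size S i leq n delta} and \ref{lemma: concentration ineq for pruned cluster S i}, and level-by-level conditioning on $\mathcal F_k$) is essentially the paper's, but your reduction step has a genuine gap. You propose to prove the \emph{absolute} bound $\norm{\bar{\bm S}^n_i-\hat{\bm S}^{n|\delta}_i}\le\eta$ with probability $1-\lo(n^{-\gamma})$ and then transfer it to the polar coordinates. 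That bound is false once $\gamma$ is large: the counts $\tau^{n|\delta}_{i;j}(k)$ are heavy-tailed, and an event such as $\{\tau^{n|\delta}_{i;j}(1)>n^{2}\}$ has probability of order $\P(B_{j\leftarrow l^*(j)}>n^{2}\delta)\in\RV_{-2\alpha^*(j)}(n)$, which is polynomially small but not $\lo(n^{-\gamma})$ when $\gamma>2\alpha^*(j)$. On such events the deviation of $\sum_{m\le\tau}\big(\bm S^{\leqslant,(m)}_j(\delta\tau)-\bar{\bm s}_j\big)$ is of order $\sqrt{\tau}\asymp n$ at CLT scale (and larger at stable scale when $\alpha^*(j)<2$, where in addition the truncation bias $\tau\,\norm{\bar{\bm s}_j-\E\bm S^{\leqslant}_j(\delta\tau)}$ grows roughly like $\tau^{2-\alpha^*(j)}$), so $\norm{\bar{\bm S}^n_i-\hat{\bm S}^{n|\delta}_i}>\eta$ occurs with conditional probability bounded away from zero. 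Your supporting assertion that $\tau^{n|\delta}_{i;j}(k)\lesssim n$ ``on the event of interest'' is unjustified: the event $\{\bar R^n_i\vee\hat R^{n|\delta}_i>\Delta\}$ bounds the scale from \emph{below}, not above, and realizations with $\tau\gg n$ do lie in it.

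The repair is precisely what the paper does inside Lemma~\ref{lemma: distance between bar s and hat S given type, cluster size}: control \emph{relative} rather than absolute errors. It suffices that, for every admissible value $N\ge\lceil n\delta^k\rceil$ of $\tau^{n|\delta}_{i;j}(k)$, the average $N^{-1}\sum_{m\le N}\bm S^{\leqslant,(m)}_j(N\delta)$ lies within $\tilde\Delta$ of $\bar{\bm s}_j$; summing the $\lo(N^{-\gamma-2})$ bound of Lemma~\ref{lemma: concentration ineq for pruned cluster S i} over all such $N$ costs only $\lo(n^{-\gamma})$, and the resulting error $\tilde\Delta\cdot\tau^{n|\delta}_{i;j}(k)$ per level is harmless because $\norm{\hat{\bm S}^{n|\delta}_i}$ itself scales with $\sum_{k,j}\tau^{n|\delta}_{i;j}(k)$, while the ratio and angle conditions in the claim are scale-invariant. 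Two smaller points: Lemma~\ref{lemma: tail bound, pruned cluster size S i leq n delta} is already the tail bound $\P(\norm{\bm S^{\leqslant}_i(n\delta)}>n\Delta)=\lo(n^{-\gamma})$, so no detour through moments is needed; and Lemma~\ref{lemma: concentration ineq for pruned cluster S i} centers at $\bar{\bm s}_j=\E\bm S_j$, so no separate bias step via dominated convergence is required. Your truncation of the depth at a deterministic $K(\gamma)$ is legitimate (and consistent with the paper's remark after the recursive construction); the paper instead splits according to whether $\tilde\alpha(\bm I^{n|\delta}_i)$ exceeds $\alpha(\bm j)$ and treats the finitely many remaining generalized types one at a time, but that difference is organizational rather than substantive.
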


\begin{proposition}\label{proposition, M convergence for hat S, tail asymptotics for cluster size}
\linksinthm{proposition, M convergence for hat S, tail asymptotics for cluster size}
Let Assumptions~\ref{assumption: subcriticality}--\ref{assumption: regularity condition 2, cluster size, July 2024} hold.
Let $i \in [d]$, and let $\bm j \subseteq \{1,2,\ldots,d\}$ be non-empty.
Let
$
\mathscr I(\bm j)
    \delequal
    \big\{ \bm I \in \mathscr I:\ \bm j^{\bm I} = \bm j \big\},
$
where $\bm j^{\bm I}$ is the set of active indices of type $\bm I$, and $\mathscr I$ is the collection of all types (see Definition~\ref{def: cluster, type}).
Given $\Delta > 0$ and a Borel set
$B \subseteq [0,\infty) \times \mathfrak N^d_+$ that is  bounded away from
 $\C^d_\leqslant(\bm j)$ under $\bm d_\textbf{U}$,
 the following claims hold.
 \begin{enumerate}[(i)]

     \item 
         Under any $\delta > 0$ small enough, it holds for each $\bm I \in \mathscr I(\bm j)$ that
         \begin{equation}\label{claim, proposition, M convergence for hat S, tail asymptotics for cluster size}
            \begin{aligned}
                \limsup_{n \to \infty}
                \big(\lambda_{\bm j}(n)\big)^{-1} \cdot 
                \P\Big(
                    (\hat R^{n|\delta}_i, \hat \Theta^{n|\delta}_i) \in B,\ 
                    \bm I^{n|\delta}_i = \bm I
                \Big)
                & \leq 
                \mathbf C^{\bm I}_i \circ \Phi^{-1}(B^\Delta) + \Delta,
                \\
                \liminf_{n \to \infty}
                \big(\lambda_{\bm j}(n)\big)^{-1} \cdot 
                \P\Big(
                    (\hat R^{n|\delta}_i, \hat \Theta^{n|\delta}_i) \in B,\ 
                    \bm I^{n|\delta}_i = \bm I
                \Big)
                & \geq 
                \mathbf C^{\bm I}_i \circ \Phi^{-1}(B_\Delta) - \Delta,
            \end{aligned}
         \end{equation}
         where  
         the measure $\mathbf C_i^{\bm I} \circ \Phi^{-1}(\cdot)$ is defined using
         \eqref{def: mu composition Phi inverse measure}
with the $\mathbf C^{\bm I}_i(\cdot)$'s in \eqref{def: measure C i I, cluster}.

    \item 
        Under any $\delta > 0$ small enough
        \begin{align}
            \lim_{n \to \infty}
                \big(\lambda_{\bm j}(n)\big)^{-1} \cdot 
                \P\Big(
                    (\hat R^{n|\delta}_i, \hat \Theta^{n|\delta}_i) \in B,\ 
                    \bm I^{n|\delta}_i \notin \mathscr I(\bm j)
                \Big) = 0.
                \label{claim, under wrong marks, no  proposition, M convergence for hat S, tail asymptotics for cluster size}
        \end{align}

    \item 
        $
        \sum_{\bm I \in \mathscr I(\bm j)}\mathbf C_i^{\bm I} \circ \Phi^{-1} (B)
     < \infty.
        $
 \end{enumerate}
\end{proposition}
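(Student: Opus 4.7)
The plan is to prove the proposition by exploiting the recursive structure of $\hat{\bm S}^{n|\delta}_i$ detailed in Section~\ref{subsec: proof, theorem: main result, cluster size}, combined with the asymptotic estimates for the pruned quantities $W^>_{j;i}(M)$ and $N^>_{j;i}(M)$ from Lemma~\ref{lemma: cluster size, asymptotics, N i | n delta, cdot j, crude estimate} and the concentration inequalities for $\bm S^\leqslant_i(M)$ from Lemmas~\ref{lemma: tail bound, pruned cluster size S i leq n delta} and \ref{lemma: concentration ineq for pruned cluster S i}. The main idea is that, conditional on the $(n,\delta)$-type $\bm I^{n|\delta}_i$ equaling a given $\bm I \in \mathscr I(\bm j)$, the joint law of $\bigl(n^{-1}\tau^{n|\delta}_{i;j}(k)\bigr)_{k \geq 1,\,j \in [d]}$ converges to a product of independent Pareto-type variables whose joint law is encoded by $\nu^{\bm I}$ in \eqref{def, measure nu type I, cluster size}.

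For claim~(i), I would proceed by induction on the depth $\mathcal K^{\bm I}$. Fix $\bm I \in \mathscr I(\bm j)$ with unique active index $j^{\bm I}_1$ at depth~$1$. Conditional on $\bm I^{n|\delta}_i = \bm I$, the event $\tau^{n|\delta}_{i;j^{\bm I}_1}(1) > 0$ corresponds to the pruned cluster $\bm S^\leqslant_i(n\delta)$ containing a node of type $l^*(j^{\bm I}_1)$ that gives birth to more than $n\delta$ children of type $j^{\bm I}_1$. By the concentration inequality of Lemma~\ref{lemma: concentration ineq for pruned cluster S i}, the count of type-$l^*(j^{\bm I}_1)$ ancestors in $\bm S^\leqslant_i(n\delta)$ concentrates at $\bar s_{i,l^*(j^{\bm I}_1)}$, and by Lemma~\ref{lemma: cluster size, asymptotics, N i | n delta, cdot j, crude estimate} each such ancestor produces the big jump at a rate proportional to $n\P(B_{j^{\bm I}_1 \leftarrow l^*(j^{\bm I}_1)} > n)$ weighted by a Pareto$(\alpha^*(j^{\bm I}_1))$ random variable. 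This explains the prefactor $\bar s_{i,l^*(j^{\bm I}_1)}$ distinguishing $\mathbf C^{\bm I}_i$ from $\mathbf C^{\bm I}$. The inductive step then propagates: given the $\tau^{n|\delta}_{i;j}(k-1)$ values for $j \in \bm j^{\bm I}_{k-1}$, the threshold at depth $k$ is $\delta \tau^{n|\delta}_{i;j}(k-1)$, and the number of type-$l^*(j')$ descendants available to trigger a jump into $j' \in \bm j^{\bm I}_k$ equals, by concentration, the sum over the $\tau^{n|\delta}_{i;j}(k-1)$ parent clusters of $\bar s_{j, l^*(j')}$, yielding the polynomial $g_{\bm j^{\bm I}_{k-1} \leftarrow \bm j^{\bm I}_k}$ in \eqref{def: function g mathcal I mathcal J, for measure C i bm I, cluster size}. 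Multiplying the rate contributions $n\P(B_{j \leftarrow l^*(j)} > n)$ over all $k \in [\mathcal K^{\bm I}]$ and $j \in \bm j^{\bm I}_k$ produces $\lambda_{\bm j}(n)$ via \eqref{property: rate function for type I, cluster size}, and integrating against the product of Pareto measures gives $\mathbf C^{\bm I}_i$. Upper and lower bounds with $\Delta$-perturbations of $B$ follow by choosing $\delta$ small so that the concentration errors over the finitely many layers are absorbed into the slack.

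For claim~(ii), I would partition $\{\bm I^{n|\delta}_i \notin \mathscr I(\bm j)\}$ into three cases. If the observed generalized type violates the type condition $\sum_k I_{k,j} \leq 1$ for some $j \in [d]$, then some dimension experiences multiple big jumps and the decay is faster than $\lambda_{\bm j}(n)$ by a genuine power of $n$. If $|\bm j^{\bm I^{n|\delta}_i}_1| \geq 2$, two simultaneous big jumps arise from the single tree $\bm S^\leqslant_i(n\delta)$, which is again $\lo(\lambda_{\bm j}(n))$. Otherwise $\bm j^{\bm I^{n|\delta}_i} \neq \bm j$: either $\alpha(\bm j^{\bm I^{n|\delta}_i}) > \alpha(\bm j)$, in which case the rate is directly smaller, or $\bm j^{\bm I^{n|\delta}_i}$ points into a cone contained in $\mathbb C^d_\leqslant(\bm j)$, and the event $(\hat R^{n|\delta}_i, \hat\Theta^{n|\delta}_i) \in B$ then has vanishing probability because $\hat{\bm S}^{n|\delta}_i \in \R^d(\bm j^{\bm I^{n|\delta}_i})$ while $B$ is bounded away from $\mathbb C^d_\leqslant(\bm j)$. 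Finally, Lemma~\ref{lemma: crude estimate, type of cluster} truncates contributions from generalized types of depth beyond a fixed $K$ at arbitrary polynomial rate. Claim~(iii) is then an immediate consequence: $\mathbf C^{\bm I}_i \circ \Phi^{-1}$ is supported in $\mathbb C^d(\bm j)$, and the bounded-away condition on $B$ together with Lemma~\ref{lemma: choice of bar epsilon bar delta, cluster size} forces each weight $w_{k,j}$ to exceed some uniform $\bar\delta > 0$, making the Pareto integral finite.

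The main obstacle will be the inductive propagation of the conditional law in claim~(i): we must uniformly control the concentration error of $\bm S^\leqslant_{i'}(M)$ around $\bar{\bm s}_{i'}$ while simultaneously conditioning on rare big-jump events in the subsequent layer, decoupling them via the independence of sub-trees made explicit in \eqref{proof, indp samples, pruned clusters}. Because slack introduced at one layer compounds multiplicatively across the $\mathcal K^{\bm I}$ layers, the choice of $\delta$ and the accounting for perturbations of $B$ versus $B_\Delta$ and $B^\Delta$ must be done with care, making essential use of the bounded-away condition to keep every coordinate $w_{k,j}$ bounded below away from $0$.
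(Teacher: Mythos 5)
Your parts (ii) and (iii) follow essentially the paper's own route: the three-way split of $\{\bm I^{n|\delta}_i \notin \mathscr I(\bm j)\}$ (generalized types with $\tilde\alpha(\bm I)>\alpha(\bm j)$, types in $\widetilde{\mathscr I}(\bm j)\setminus\mathscr I(\bm j)$ with $|\bm j^{\bm I}_1|\geq 2$, and remaining types whose cone lies in $\R^d_\leqslant(\bm j)$ so that the event is empty by the bounded-away hypothesis) is exactly how the paper argues, and (iii) is indeed immediate from Lemma~\ref{lemma: choice of bar epsilon bar delta, cluster size}(b) together with $\mathbf C^{\bm I}_i = \bar s_{i,l^*(j^{\bm I}_1)}\mathbf C^{\bm I}$.

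For part (i), however, there are two problems. First, your stated mechanism for the depth-one prefactor is incorrect: the number of type-$l^*(j^{\bm I}_1)$ nodes in the single pruned cluster $\bm S^\leqslant_i(n\delta)$ does \emph{not} concentrate at $\bar s_{i,l^*(j^{\bm I}_1)}$ (it is one integer-valued random variable, not an average, and Lemma~\ref{lemma: concentration ineq for pruned cluster S i} concerns averages of $n$ i.i.d.\ copies). The constant $\bar s_{i,l^*(j^{\bm I}_1)}$ is a first-moment effect: one shows $\E[N^{>}_{i;j\leftarrow l}(M)] = \E[S^\leqslant_{i,l}(M)]\,\P(B_{j\leftarrow l}>M)$ and separately that $\P(N^{>}_{i;j}(n\delta)\geq 2)$ is negligible, as in \eqref{claim 1, lemma: cluster size, asymptotics, N i | n delta, cdot j, crude estimate}--\eqref{claim 2, lemma: cluster size, asymptotics, N i | n delta, cdot j, crude estimate}; concentration enters only at depths $k\geq 2$, where $\Theta(n)$ independent pruned clusters are aggregated. (Relatedly, the per-ancestor rate at depth one is $\P(B_{j^{\bm I}_1\leftarrow l^*(j^{\bm I}_1)}>n\delta)$, not $n\P(\cdot>n)$; the $n$-factors only arise from the $\Theta(n)$ parents at later depths.) Second, and more seriously, the heart of the claim — the inductive propagation of the joint law of $\big(n^{-1}\tau^{n|\delta}_{i;j}(k)\big)$ through the layers — is precisely what you defer as ``the main obstacle'' and never supply. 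The paper carries this out via a compact-box limit theorem (Lemma~\ref{lemma: limit theorem for n tau to hat C I, cluster size}) together with a tightness bound for large coordinates (Lemma~\ref{lemma: prob of type I, tail prob of tau, cluster size}), both resting on uniform-in-$\bm t$ refined asymptotics for $\P\big(N^{>|\delta}_{\bm t(\mathcal I);j}\geq 1 \text{ iff } j\in\mathcal J\big)$ and for the conditional Pareto law of $W^{>|\delta}_{\bm t(\mathcal I);j}$ (Lemma~\ref{lemma: cluster size, asymptotics, N i | n delta, cdot j, refined estimate}); the uniformity over the random thresholds $\delta\,\tau^{n|\delta}_{i;j}(k-1)$ and the summation over assignments in $\mathbb T_{\mathcal I\leftarrow\mathcal J}$ (which is what actually produces $g_{\bm j^{\bm I}_k\leftarrow\bm j^{\bm I}_{k+1}}$) are indispensable and non-trivial. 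Without these, plus the Portmanteau-type transfer through the map $\Phi\circ h^{\bm I}$ (using that $B$ is bounded away from $\C^d_\leqslant(\bm j)$ so every $w_{k,j}$ is bounded below, via Lemma~\ref{lemma: choice of bar epsilon bar delta, cluster size}(a)) and a continuity-of-measure argument in the truncation level $M$ for the lower bound, the asserted $\limsup/\liminf$ bounds against $\mathbf C^{\bm I}_i\circ\Phi^{-1}(B^\Delta)+\Delta$ and $\mathbf C^{\bm I}_i\circ\Phi^{-1}(B_\Delta)-\Delta$ do not follow from what you have written.
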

To conclude Section~\ref{subsec: proof, theorem: main result, cluster size},
we 
provide the proof of Theorem~\ref{theorem: main result, cluster size} using Propositions~\ref{proposition: asymptotic equivalence, tail asymptotics for cluster size} and \ref{proposition, M convergence for hat S, tail asymptotics for cluster size}.
The remainder of Section~\ref{sec: proof, cluster size asymptotics} is devoted to establishing Propositions~\ref{proposition: asymptotic equivalence, tail asymptotics for cluster size} and \ref{proposition, M convergence for hat S, tail asymptotics for cluster size}.

\begin{proof}[Proof of Theorem \ref{theorem: main result, cluster size}]
\linksinpf{theorem: main result, cluster size}
We first prove Claim~\eqref{claim, theorem: main result, cluster size}.
Under the choice of 
$(R_n,\Theta_n) = ( \bar R^n_i, \bar \Theta^n_i )$,
$
(\hat R^\delta_n, \hat \Theta^\delta_n )=( \hat R^{n|\delta}_i, \hat \Theta^{n|\delta}_i ),
$
$
\mathbb S = [0,\infty)\times \mathfrak N^d_+,
$
$
\mathbb C =
   \C^d_\leqslant(\bm j),
$
$
\epsilon_n = \lambda_{\bm j}(n),
$
$
V^\delta_n = \bm I^{n|\delta}_i,
$
and
$
\mathcal V = 
\mathscr I(\bm j)
    =
    \big\{ \bm I \in \mathscr I:\ \bm j^{\bm I} = \bm j \big\},
$
Propositions~\ref{proposition: asymptotic equivalence, tail asymptotics for cluster size} and \ref{proposition, M convergence for hat S, tail asymptotics for cluster size}
verify the conditions in 
Lemma~\ref{lemma: asymptotic equivalence, MRV in Rd}.
In particular,
part (iii) of Proposition~\ref{proposition, M convergence for hat S, tail asymptotics for cluster size}
confirms that $\mathbf C^{\bm I}_{ i  } \circ \Phi^{-1} \in \mathbb M\big(\mathbb S\setminus \mathbb C_\leqslant^d(\bm j)\big)$
for each $\bm I \in \mathscr I(\bm j)$.
Next, Proposition~\ref{proposition: asymptotic equivalence, tail asymptotics for cluster size} verifies condition (i) of Lemma~\ref{lemma: asymptotic equivalence, MRV in Rd},
and 
parts (i) and (ii) of Proposition~\ref{proposition, M convergence for hat S, tail asymptotics for cluster size} verify condition (ii) of Lemma~\ref{lemma: asymptotic equivalence, MRV in Rd}.
This allows us to apply Lemma~\ref{lemma: asymptotic equivalence, MRV in Rd} and obtain
\begin{align}
    \big(\lambda_{\bm j}(n)\big)^{-1}
    \P\Big( (\bar R^n_i, \bar \Theta^n_i) \in\ \cdot \ \Big)
    \to 
    \sum_{\bm I \in \mathscr I(\bm j)}\mathbf C_i^{\bm I} \circ \Phi^{-1}(\cdot)
    \quad
    \text{ in }\mathbb M\big( \mathbb S \setminus \mathbb C^d_\leqslant(\bm j) \big).
    \nonumber
\end{align}
Lastly, applying Lemma~\ref{lemma: M convergence for MRV}
under the choice of $X_n = \bar{\bm S}^n_i$, $(R_n,\Theta_n) = (\bar R^{n}_i, \bar \Theta^n_i)$, 
and $\mu
    =
    \sum_{\bm I \in \mathscr I(\bm j)}\mathbf C_i^{\bm I}$,
we conclude the proof of Claim~\eqref{claim, theorem: main result, cluster size}.

The proof of Claim~\eqref{claim, 2, theorem: main result, cluster size} is almost identical,
and
the plan is to apply Lemma~\ref{lemma: asymptotic equivalence, MRV in Rd} under the choices of
$(R_n,\Theta_n) = ( \bar R^n_i, \bar \Theta^n_i )$,
$
(\hat R^\delta_n, \hat \Theta^\delta_n )=( \hat R^{n|\delta}_i, \hat \Theta^{n|\delta}_i ),
$
$
\mathbb S = [0,\infty)\times \mathfrak N^d_+,
$
$
\mathbb C =
   \C^d([d]),
$
$
\epsilon_n = n^{-\gamma},
$
and with dummy marks (i.e., $V^\delta_n \equiv 0$, $\mathcal V = \mathbb V = \{0\}$).
 Again, Proposition~\ref{proposition: asymptotic equivalence, tail asymptotics for cluster size} verifies condition (i) of Lemma~\ref{lemma: asymptotic equivalence, MRV in Rd}.
Meanwhile, note that
$\hat{\bm S}^{n|\delta}_i \in \R^d([d])$ and $( \hat R^{n|\delta}_i, \hat \Theta^{n|\delta}_i ) \in \mathbb C^d([d])$
 by definitions in \eqref{def: hat S n delta i, cluster size} and \eqref{def: polar coordinates, bar S and hat S, cluster size}.
Then, for any $B \in \mathbb S$ that is bounded away from $\mathbb C^d([d])$, it holds trivially that
$
\P\big(  ( \hat R^{n|\delta}_i, \hat \Theta^{n|\delta}_i ) \in B  \big) = 0,
$
thus verifying condition (ii) of Lemma~\ref{lemma: asymptotic equivalence, MRV in Rd} with $\mu_v \equiv 0$.
By Lemma~\ref{lemma: asymptotic equivalence, MRV in Rd}, 
we get
$
n^{\gamma}\cdot\P\big( (\bar R^n_i, \bar \Theta^n_i) \in\ \cdot \  \big)
\rightarrow 0
$
in 
$
\mathbb M\big( \mathbb S \setminus \mathbb C^d([d]) \big)
$
for any $\gamma > 0$.
Applying Lemma~\ref{lemma: M convergence for MRV} again,
we conclude the proof.
\end{proof}

\subsection{Proof of Proposition~\ref{proposition: asymptotic equivalence, tail asymptotics for cluster size}}
\label{subsec: proof, proposition: asymptotic equivalence, tail asymptotics for cluster size}

We first state two lemmas to characterize tail asymptotics and provide concentration inequalities for $\bm S^{\leqslant}_i(M)$ defined in \eqref{proof, equality, from S i truncated to S i}. 

\begin{lemma}\label{lemma: tail bound, pruned cluster size S i leq n delta}
\linksinthm{lemma: tail bound, pruned cluster size S i leq n delta}
Let Assumptions~\ref{assumption: subcriticality}--\ref{assumption: regularity condition 2, cluster size, July 2024} hold.
Given any $\Delta,\ \gamma \in (0,\infty)$,
there exists $\delta_0 = \delta_0(\Delta,\gamma) > 0$ such that
\begin{align}
    \lim_{n \to \infty}
    n^\gamma \cdot 
    \P\Big(
        \norm{ \bm S^{\leqslant}_i(n\delta) } > n\Delta
    \Big) = 0,
    \qquad 
    \forall \delta \in (0,\delta_0),\ i \in [d].
    \label{claim, lemma: tail bound, pruned cluster size S i leq n delta}
\end{align}
\end{lemma}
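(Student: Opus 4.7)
The plan is to establish the lemma via a Chernoff-type bound on $T_i(M) := \norm{\bm S^{\leqslant}_i(M)}$, using a tilt parameter $s = s_n$ that grows like $\log n / n$. The truncation $B_{j \leftarrow i}^{(M)} \leq M$ guarantees finiteness of the moment generating function $\phi_i(s;M) := \E e^{sT_i(M)}$ for every $s > 0$, while the heavy-tail assumption (Assumption~\ref{assumption: heavy tails in B i j}) controls the growth of $\E e^{sB^{(M)}_{j \leftarrow i}}$ as $sM$ increases.

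The starting point is the MGF recursion implied by Equation~\eqref{def: fixed point equation for pruned cluster S i leq M}: by the independence of offspring coordinates, $\phi_i(s;M) = e^s \prod_{j \in [d]} \E[\phi_j(s;M)^{B^{(M)}_{j \leftarrow i}}]$. Setting $\psi_i := \log \phi_i$ and using $\log(1+x)\leq x$ leads to a vector inequality of the form $\bm\psi \leq s\bm 1 + (1+\theta)\bar{\textbf B}^\top \bm\psi + \bm E$, where $\bm E$ collects the nonlinear residual. I would estimate each $E_i$ by splitting $\E[e^{\psi_j B^{(M)}_{j \leftarrow i}}]$ at a cutoff $K \sim \theta/\psi_j$ with $\theta$ small: via integration by parts, the contribution from $B_{j \leftarrow i} \leq K$ is bounded by $(1+\theta)\bar b_{j \leftarrow i}\psi_j$, while the contribution from $K < B_{j \leftarrow i} \leq M$ is dominated by $e^{\psi_j M}\cdot O(K^{-\alpha^*}) = O(e^{\psi_j M}\psi_j^{\alpha^*})$, where $\alpha^* := \min_{i,j}\alpha_{j \leftarrow i} > 1$.

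Sub-criticality (Assumption~\ref{assumption: subcriticality}) ensures that for $\theta$ small enough, $(I - (1+\theta)\bar{\textbf B}^\top)^{-1}$ is well-defined with non-negative entries, so inverting the fixed-point inequality yields $\psi_i \leq V_i^{(\theta)} s + O(e^{\psi_i M}\psi_i^{\alpha^*})$, with $V_i^{(\theta)} \to V_i := \E\norm{\bm S_i}$ as $\theta \to 0$. I would then pick $s := (\gamma+1)\log n/(n\Delta)$ so that $e^{-sn\Delta} = n^{-\gamma-1}$, and verify self-consistency: with $M = n\delta$, the ansatz $\psi_i = O(\log n/n)$ is preserved iff $\psi_i M < \alpha^* \log n$, which translates to $V_{\max}^{(\theta)}(\gamma+1)\delta/\Delta < \alpha^*$. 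Taking $\delta_0 := \alpha^*\Delta/((\gamma+1)\max_i V_i)$ (after shrinking $\theta$) makes the error in the fixed-point $o(1)$, so that $e^{\psi_i} = 1+o(1)$, and Chernoff yields $\P(T_i(M) > n\Delta) \leq e^{-sn\Delta}e^{\psi_i} = o(n^{-\gamma})$.

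The main technical obstacle is closing this MGF fixed-point: the tilt $s$ must be pushed up enough to give polynomial decay in $n$ through the Chernoff exponent, but not so high as to trigger the $e^{\psi_j M}$ blow-up of the truncated offspring MGF. The heavy-tail index $\alpha^* > 1$ provides just enough slack to balance both requirements, but only when $\delta$ is small relative to the target rate $\gamma$, which explains the dependence $\delta_0(\Delta,\gamma)$ in the statement. A secondary point is that the log-linearization and vector fixed-point argument need to be justified rigorously under the self-consistent ansatz; this can be handled by monotone iteration of the MGF recursion, using that the truncated process has bounded offspring and hence finite moments of all orders at every stage.
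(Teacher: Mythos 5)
Your proposal takes a genuinely different route from the paper. The paper proves this lemma by a geometric/combinatorial reduction: on the event $\{\norm{\bm S^{\leqslant}_i(n\delta)}>n\Delta\}$ it extracts, from the first $\lceil n\Delta\rceil$ nodes of the pruned tree, a vector of per-type counts lying on the simplex, covers the simplex with finitely many points, and then applies concentration inequalities for sums of truncated regularly varying vectors (a Bernstein-plus-big-jump-counting argument, Lemma~\ref{lemma: concentration ineq, truncated heavy tailed RV in Rd}); when $\norm{\bar{\textbf B}}\geq 1$ it additionally sub-samples the tree every $r$ generations, with $r$ chosen via Gelfand's formula so that $\norm{\bar{\textbf B}^r}<1$. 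Your multitype Chernoff argument instead tilts the total progeny directly at $s_n\asymp \log n/n$, linearizes the log-MGF fixed point, and inverts $(I-(1+\theta)\bar{\textbf B}^\top)^{-1}$ using sub-criticality of the spectral radius; this bypasses the operator-norm issue entirely (no Gelfand/sub-sampling step is needed), and the balance $e^{\psi M}\psi^{\alpha^*}$ versus $\bar b_{j\leftarrow i}\psi$ correctly isolates why $\delta_0$ must shrink with $\gamma/\Delta$, which is exactly the mechanism the paper captures through the truncation threshold in its concentration lemma. Both routes are legitimate; yours yields explicit exponential bounds at the cost of a delicate tilt calibration, the paper's is cruder per step but only ever needs polynomial-rate concentration.

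Two caveats. First, the blanket claim that truncation makes $\phi_i(s;M)=\E e^{sT_i(M)}$ finite for every $s>0$ is false: a sub-critical branching process with bounded offspring still has total progeny with only exponential (not super-exponential) tails, so the MGF is finite only up to a critical tilt $s_c(M)$ that shrinks as $M$ grows. Consequently the fixed-point inequality cannot be manipulated as if $\bm\psi$ were known finite; the monotone-iteration device you mention at the end (finite-generation progeny is bounded when offspring are bounded, so each iterate has an entire MGF, and one propagates a uniform inductive bound such as $\psi^{(t)}_i\leq 2V_i s_n$ before passing to the limit) is therefore essential to the proof, not an optional refinement, and it is precisely there that the constraint on $\delta$ must be verified. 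Second, the self-consistency budget is slightly off: to keep the residual $e^{\psi_j M}\,\P(B_{j\leftarrow i}>\theta/\psi_j)$ of the same order as the linear term one needs roughly $\psi_j M\lesssim(\alpha^*-1)\log n$ (and Potter bounds to replace the exact power law), so the admissible threshold is of the form $\delta_0\asymp(\alpha^*-1)\Delta/((\gamma+1)\max_i V_i)$ rather than with $\alpha^*$ in place of $\alpha^*-1$. Since the lemma only asserts existence of some $\delta_0(\Delta,\gamma)>0$, this is a constant-tracking correction rather than a structural flaw.
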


\begin{lemma}\label{lemma: concentration ineq for pruned cluster S i}
\linksinthm{lemma: concentration ineq for pruned cluster S i}
Let Assumptions~\ref{assumption: subcriticality}--\ref{assumption: regularity condition 2, cluster size, July 2024} hold.
Given $\epsilon,\ \gamma > 0$, there exists $\delta_0 = \delta_0(\epsilon,\gamma) > 0$ such that
\begin{align*}
    \lim_{n \to \infty} 
        n^\gamma \cdot 
        \P\Bigg(
        \norm{
            \frac{1}{n}\sum_{m = 1}^n \bm S^{\leqslant, (m)}_i(n\delta) - \bar{\bm s}_i
        }
        > \epsilon
        \Bigg) = 0,
        \qquad \forall \delta \in (0,\delta_0),\ i \in [d],
\end{align*}
where 
    $\bar{\bm s}_i  = \E \bm S_i$ (see \eqref{def: bar s i, ray, expectation of cluster size}),
    and the
    $\bm S^{\leqslant,(m)}_i(M)$'s
    are 
    independent copies of $\bm S^{\leqslant}_i(M)$.
\end{lemma}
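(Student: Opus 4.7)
The plan is to split the deviation from $\bar{\bm s}_i$ into a deterministic bias and a stochastic fluctuation,
\[
\bigg\|\frac{1}{n}\sum_{m=1}^n \bm S^{\leqslant,(m)}_i(n\delta) - \bar{\bm s}_i\bigg\|
\leq
\bigg\|\frac{1}{n}\sum_{m=1}^n \bm S^{\leqslant,(m)}_i(n\delta) - \E\bm S^\leqslant_i(n\delta)\bigg\|
+ \big\|\E\bm S^\leqslant_i(n\delta) - \bar{\bm s}_i\big\|,
\]
and handle each piece separately. For the bias, I would invoke the monotone coupling in \eqref{property: stochastic comparison, S and pruned S}, so that $\bm S^\leqslant_i(M)\nearrow\bm S_i$ almost surely as $M\to\infty$, combined with $\E\|\bm S_i\|<\infty$ from Assumption~\ref{assumption: subcriticality}. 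Monotone convergence then yields $\E\bm S^\leqslant_i(n\delta)\to\bar{\bm s}_i$ as $n\to\infty$ for each fixed $\delta>0$, so the bias falls below $\epsilon/2$ for all $n$ large, and the task reduces to establishing $\lim_{n\to\infty}n^{\gamma}\,\P(\|\bar{\bm Y}_n-\E\bar{\bm Y}_n\|>\epsilon/2)=0$ for $\bar{\bm Y}_n \delequal \frac{1}{n}\sum_m \bm S^{\leqslant,(m)}_i(n\delta)$.

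Next, I would truncate each i.i.d.\ copy at scale $n\Delta$ for a fixed $\Delta>0$, setting $\tilde{\bm Y}^{(m)} \delequal \bm S^{\leqslant,(m)}_i(n\delta)\,\mathbbm{I}\{\|\bm S^{\leqslant,(m)}_i(n\delta)\|\leq n\Delta\}$. Applying Lemma~\ref{lemma: tail bound, pruned cluster size S i leq n delta} with target exponent $\gamma+1$ supplies $\delta_0=\delta_0(\Delta,\gamma)$ with the property that $\P(\|\bm S^\leqslant_i(n\delta)\|>n\Delta)=\lo(n^{-\gamma-1})$ for $\delta\in(0,\delta_0)$, whence the union bound yields $\P(\exists m\in[n]:\bm S^{\leqslant,(m)}_i(n\delta)\neq\tilde{\bm Y}^{(m)})=\lo(n^{-\gamma})$. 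On the complement of this bad event the two sums coincide, and the task further reduces to concentration of the bounded i.i.d.\ averages $n^{-1}\sum_m \tilde{\bm Y}^{(m)}$ around their mean, using that $\|\E\tilde{\bm Y}^{(m)} - \E\bm S^\leqslant_i(n\delta)\|$ is itself $\lo(n^{-\gamma})$ via the same tail estimate combined with $\bm S^\leqslant_i(n\delta)\leq\bm S_i$ almost surely.

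The final step is a Rosenthal-type moment bound applied coordinatewise to the centered bounded variables $\tilde Y^{(m)}_l-\E\tilde Y^{(m)}_l$, which at even moment order $p$ yields
\[
\E\bigg|\frac{1}{n}\sum_{m=1}^n \big(\tilde Y^{(m)}_l - \E\tilde Y^{(m)}_l\big)\bigg|^{p}
\leq C_{p}\Big(n^{1-p}\,\E\|\bm S^\leqslant_i(n\delta)\|^{p} + n^{-p/2}\,(\sigma^{n,\delta})^{p}\Big),
\]
after which Markov's inequality delivers the required $\lo(n^{-\gamma})$ bound once $p$ is chosen large enough relative to $\gamma$. The main obstacle lies precisely here: because the offspring distributions are only regularly varying, the truncated $p$-th moment of $B_{j\leftarrow l}\mathbbm{I}\{B_{j\leftarrow l}\leq n\delta\}$ grows like $(n\delta)^{p-\alpha^*(j)}$ when $p>\alpha^*(j)$, and this growth must be propagated through the multitype Galton-Watson recursion for $\bm S^\leqslant_i(n\delta)$. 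The saving is that under Assumption~\ref{assumption: subcriticality}, for $\delta$ sufficiently small the truncated mean matrix $\bar{\textbf B}(n\delta)\delequal\big(\E B_{j\leftarrow l}\mathbbm{I}\{B_{j\leftarrow l}\leq n\delta\}\big)_{j,l}$ retains spectral radius uniformly bounded away from $1$, so standard branching-process moment recursions yield $\E\|\bm S^\leqslant_i(n\delta)\|^{p}\lesssim(n\delta)^{p-\alpha^*}$ and $(\sigma^{n,\delta})^{2}\lesssim(n\delta)^{2-\alpha^*}$ with constants independent of $n$. Picking $p$ large enough and then $\delta$ correspondingly small so as to absorb the resulting factors produces the desired polynomial decay and closes the argument.
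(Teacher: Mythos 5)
Your decomposition into bias plus fluctuation and your use of Lemma~\ref{lemma: tail bound, pruned cluster size S i leq n delta} (with exponent $\gamma+1$ and a union bound) to discard copies exceeding $n\Delta$ are both fine and close in spirit to the paper's treatment of the upper deviation. The genuine gap is in your final step. A Rosenthal-plus-Markov argument cannot deliver arbitrary polynomial decay here: with your own estimate $\E\|\bm S^{\leqslant}_i(n\delta)\|^{p}\lesssim (n\delta)^{p-\alpha^*}$, the first Rosenthal term gives
\begin{align*}
n^{1-p}\,\E\|\bm S^{\leqslant}_i(n\delta)\|^{p}\;\lesssim\;\delta^{\,p-\alpha^*}\,n^{1-\alpha^*},
\end{align*}
so the exponent of $n$ is capped at $-(\alpha^*-1)$ no matter how large you take $p$ or how small you take $\delta$; increasing $p$ only shrinks the $\delta$-prefactor. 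Hence for $\gamma>\alpha^*-1$ the proposed bound cannot be $\lo(n^{-\gamma})$. The loss is structural: the $p$-th moment of the sum is dominated by the event that a single summand is of order $n\delta$, but such an event moves the empirical average by only $\delta\ll\epsilon$, so it is irrelevant to the deviation $\{\|\bar{\bm Y}_n-\E\bar{\bm Y}_n\|>\epsilon/2\}$. A pure moment inequality cannot separate these two scales. (A secondary inaccuracy: $\|\E\tilde{\bm Y}^{(m)}-\E\bm S^{\leqslant}_i(n\delta)\|$ is only regularly varying of index $-(\alpha^*-1)$, not $\lo(n^{-\gamma})$; this is harmless since you only need it to be $o(1)$, but the claim as written is false for large $\gamma$.)

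The missing idea, which is how the paper proceeds, is to exploit the truncation combinatorially rather than through moments. Via the monotone coupling \eqref{property: stochastic comparison, S and pruned S} one dominates the truncated summands coordinatewise by $\phi_{n\Delta}(S^{(m)}_{i,j})$ with $\P(S_{i,j}>x)\in\RV_{-\alpha^*}(x)$, $\alpha^*>1$, and then applies the concentration inequality for truncated regularly varying sums (Lemma~\ref{lemma: concentration ineq, truncated heavy tailed RV in Rd}, in the spirit of Fuk--Nagaev): summands below $n^{\beta}$ ($\beta<1$) are handled by Bernstein's inequality; the probability that at least $J$ summands exceed $n^{\beta}$ is $O\big((n\P(S_{i,j}>n^{\beta}))^{J}\big)$, which is $\lo(n^{-\gamma})$ once $J(\alpha^*\beta-1)>\gamma$; and fewer than $J$ summands, each capped by the truncation at $n\delta$ with $\delta$ chosen small relative to $\epsilon/J$, cannot shift the average by more than $\epsilon/3$. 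This is precisely where the smallness of $\delta$ enters --- through the counting step, not through moment prefactors. The paper also treats the lower deviation separately (monotone coupling to a bounded variable with mean above $\bar s_{i,j}-\epsilon$, then Cramér), whereas your two-sided fluctuation bound would have covered both sides had the concentration step been valid.
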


These results follow from concentration inequalities for truncated heavy-tailed random vectors.
Indeed, by definitions
in \eqref{def: truncated offspring count,B leq M j k}--\eqref{def: branching process X n leq M},
$\bm S^\leqslant_i(n\delta)$ can be expressed as a randomly stopped sum of i.i.d.\ copies of heavy-tailed random vectors $\bm B_{\bcdot \leftarrow j}$ truncated under threshold $n\delta$.
Furthermore, one can establish useful bounds on the random count in the summation,
as the number of individuals born in each generation of the branching process $\big(\bm X^\leqslant_j(t;M)\big)_{t \geq 0}$  is expected to contract geometrically fast 
if
$
\norm{\bar{\textbf B}} \delequal \sup_{ \norm{\bm x} = 1  }\norm{\bar{\textbf B}\bm x} < 1.
$
Therefore, we are able to suitably apply Lemma~3.1 of \cite{wang2023large} and prove Lemma~\ref{lemma: tail bound, pruned cluster size S i leq n delta}.
As an implication of Lemma~\ref{lemma: tail bound, pruned cluster size S i leq n delta},
$
\bm S_i^\leqslant(n\delta)
$
is almost always bounded by $\bm S_i \mathbbm{I}\{ \norm{\bm S_i} \leq n\Delta \}$ (under smaller enough $\delta$), thus
allowing us to apply Lemma~3.1 of \cite{wang2023large} again and verify Lemma~\ref{lemma: concentration ineq for pruned cluster S i}.
We collect their proofs in Section~\ref{subsec: proof, technical lemmas, cluster size}  of the Appendix.
Here, we note that the proof of Lemma~\ref{lemma: tail bound, pruned cluster size S i leq n delta} becomes more involved if
$
\norm{\bar{\textbf B}} \geq 1:
$
in that case, inspired by the Gelfand's formula based approach in  \cite{KEVEI2021109067},
we identify some $r$ with $\norm{\bar{\textbf B}^r} < 1$, 
and apply the same arguments to the sub-trees constructed by sampling the original branching tree every $r$ generations.

Next, we discuss properties of ${\bm I^{n|\delta}_j} = \big(I^{n|\delta}_{j;i}(k)\big)_{k \geq 1,\ i \in [d]}$ defined in \eqref{def, I k M j for M type, cluster size},
as well as the notion of generalized types in Definition~\ref{def: cluster, generalized type}.
Analogous to $\alpha(\cdot)$ defined in \eqref{def: cost function, cone, cluster},
we let
\begin{align}
    \notationdef{notation-tilde-c-cost-function-cluster-size}{\tilde{\alpha}(\bm I)} \delequal
        1 + \sum_{k \geq 1}\sum_{j \in [d]}I_{k,j}\cdot(\alpha^*(j) - 1)
    \label{def: tilde c I, cost for generalized type, cluster size}
\end{align}
for any generalized type $\bm I = (I_{k,j})_{k \geq 1, j \in [d]} \in \widetilde{\mathscr I}$
with $\mathcal K^{\bm I} \neq 0$ 
(i.e., there are some $k \geq 1$ and $j \in [d]$ such that $I_{k,j} \neq 0$).
If $\mathcal K^{\bm I} = 0$ (i.e., $I_{k,j} \equiv 0$),
we set $\tilde{\alpha}(\bm I) = 0$.
We stress again that Definition~\ref{def: cluster, generalized type} generalizes Definition~\ref{def: cluster, type} as $\widetilde{\mathscr I} \supseteq \mathscr I$.
In particular, given a generalized type $\bm I \in \widetilde{\mathscr I}$,
for any  $j \in \bm j^{\bm I}$ there could be multiple $k \geq 1$ such that $I_{k,j} = 1$.
However, this would not be the case for a type $\bm I \in \mathscr I$.
Besides, recall that we work with Assumption~\ref{assumption: heavy tails in B i j} in this paper,
which ensures that $\alpha^*(j) > 1\ \forall j \in [d]$ in \eqref{def: cluster size, alpha * l * j}.
Therefore, for $\alpha(\cdot)$ defined in \eqref{def: cost function, cone, cluster},
\begin{align}
    \alpha(\bm j^{\bm I}) & \leq \tilde{\alpha}(\bm I),
    \qquad
    \forall \bm I \in \widetilde{\mathscr I}\text{ with }\mathcal K^{\bm I}\geq 1,
    \label{property: cost of j I and type I, 1, cluster size}
    \\ 
    \alpha(\bm j^{\bm I}) & = \tilde{\alpha}(\bm I),
    \qquad
    \forall \bm I \in {\mathscr I}\text{ with }\mathcal K^{\bm I}\geq 1.
    \label{property: cost of j I and type I, 2, cluster size}
\end{align}
Similarly,
if there exists $j \in [d]$ such that $\#\{k \geq 1:\ I_{k,j} = 1\} \geq 2$, 
then, by definitions in \eqref{def: cost function, cone, cluster} and \eqref{def: tilde c I, cost for generalized type, cluster size},
we must have $\alpha(\bm j^{\bm I}) < \tilde \alpha(\bm I)$.
As a result, for any generalized type $\bm I = (I_{k,j})_{k \geq 1, j \in [d]} \in \widetilde{\mathscr I}$,
\begin{align}
    \alpha(\bm j^{\bm I}) = \tilde \alpha(\bm I)
    \quad \Longrightarrow \quad 
    \#\{k \geq 1:\ I_{k,j} = 1\} \leq 1\ \forall j \in [d].
    \label{property: cost of generalized type when alpha agrees with tilde alpha}
\end{align}
Besides, note that
the events $\{\bm I^{n|\delta}_{i} = \bm I  \}$
are mutually exclusive across different generalized types $\bm I \in \widetilde{\mathscr I}$.
Then, due to $\mathcal K^{n|\delta}_j < \infty$ almost surely,
it holds for any $n \geq 1$, $\delta > 0$, $i \in [d]$ that
\begin{align}
    \P\Big( \big( \bar{\bm S}^n_i, \hat{\bm S}^{n|\delta}_i\big) \in\ \cdot\ \Big)
    = 
    \sum_{ \bm I \in \widetilde{\mathscr I} }\P\Big(
        \big( \bar{\bm S}^n_i, \hat{\bm S}^{n|\delta}_i\big)\in \ \cdot\ ,\ \bm I^{n|\delta}_i = \bm I
    \Big),
    \label{proof: decomposition via generalized type, cluster size}
\end{align}
where  $\bar{\bm S}^n_i \distequal n^{-1}\bm S_i$ (see \eqref{def: scaled cluster bar S n i})
and $\hat{\bm S}^{n|\delta}_i$ is defined in \eqref{def: hat S n delta i, cluster size}.

We also highlight the Markov property embedded in $\bm I^{n|\delta}_i$.
Recall the probability space considered in Section~\ref{subsec: proof, theorem: main result, cluster size}
that supports the
$\bm B^{(k,m,t,q)}_{\bcdot \leftarrow j}$'s in \eqref{def: probability space with i.i.d. copies B k m t q j, cluster size},
and define the $\sigma$-algebra
\begin{align*}
    \notationdef{notation-mathcal-F-k-sigma-algebra}{\mathcal F_k}
    \delequal
    \sigma\Big\{
        \bm B^{(k^\prime,m,t,q)}_{\bcdot \leftarrow j}:\  m,t,q \geq 1,\ j \in [d],\ k^\prime \in [k]
    \Big\},
    \qquad
    \forall k \geq 1.
\end{align*}
Let $\mathcal F_0 \delequal \{ \emptyset,\Omega \}$.
By \eqref{def: probability space with i.i.d. copies B k m t q j, cluster size, 2} and \eqref{def: probability space with i.i.d. copies B k m t q j, cluster size, 3},
$
(\bm S^{(k,m)}_i)_{m \geq 1, i \in [d] }
$
and
the random vectors in \eqref{proof, indp samples, pruned clusters}
are measurable w.r.t.\ $\mathcal F_k$.
Then, in the procedure  \eqref{def: layer zero, from pruned cluster to full cluster}--\eqref{def: from pruned cluster to full cluster, tau and S at step k + 1, 7},
$
\tau^{n|\delta}_{j;i}(k),\ 
    \bm S^{n|\delta}_{j;\bcdot \leftarrow i}(k),\text{ and }
    \bm S^{n|\delta}_{j}(k)
$
are measurable w.r.t.\ $\mathcal F_k$.
Besides,
 $M_{\bcdot \leftarrow i}^{n|\delta}(k)$ is determined by $\tau^{n|\delta}_{j;i}(k-1)$ (see \eqref{def: from pruned cluster to full cluster, tau and S at step k + 1, 4}),
and hence measurable w.r.t.\ $\mathcal F_{k -1}$.
Furthermore,
conditioned on the value of  $\big(\tau^{n|\delta}_{j;i}(k-1)\big)_{i \in [d]}$,
the random vector 
$
\big( \tau^{n|\delta}_{j;i}(k),\ \bm S^{n|\delta}_{j;\bcdot \leftarrow i}(k)\big)_{i \in [d]}
$
is independent from $\mathcal F_{k-1}$. (i.e., the history).
As a result, for each $k \geq 2$,
\begin{align}
    & \P\Bigg(
        \Big( \tau^{n|\delta}_{j;i}(k),\ \bm S^{n|\delta}_{j;\bcdot \leftarrow i}(k) \Big)_{i \in [d]} \in \ \bcdot
        \ \Bigg|\ 
        \mathcal F_{k-1} 
    \Bigg)
    \label{proof, cluster size, markov property in pruned clusters}
    \\ 
    & = 
    \P\Bigg(
         \Big( \tau^{n|\delta}_{j;i}(k),\ \bm S^{n|\delta}_{j;\bcdot \leftarrow i}(k) \Big)_{i \in [d]} \in \ \bcdot
        \ \Bigg|\ 
        \Big( \tau^{n|\delta}_{j;l}(k-1)\Big)_{l \in [d]}
    \Bigg)
    \nonumber
    \\ 
    & = 
    \P\Bigg(
    \Bigg(
        \sum_{l \in [d]}\sum_{m = 1}^{ \tau^{n|\delta}_{j;l}(k- 1) }
                W^{>,(k,m)}_{l;i}\Big(\delta \cdot \tau^{n|\delta}_{j;l}(k-1)\Big),
     \nonumber
     \\ 
     &\qquad\qquad\quad\quad\quad
     \sum_{m = 1}^{ \tau^{n|\delta}_{j;i}(k-1) }\bm S^{\leqslant,(k,m)}_i
                \Big(\delta \cdot \tau^{n|\delta}_{j;i}(k-1)\Big)
     \Bigg)_{i \in [d]} \in\ \bcdot\ 
     \Bigg|\ \Big( \tau^{n|\delta}_{j;l}(k-1)\Big)_{l \in [d]}
    \Bigg).
    \nonumber
\end{align}
An immediate consequence of \eqref{proof, cluster size, markov property in pruned clusters} is that,
given any generalized type $\bm I  \in \widetilde{\mathscr I}$,
\begin{align}
    &
    \P( \bm I^{n|\delta}_i = \bm I)
    \label{property: decompo of event type I using Markov property, cluster size}
    \\ 
    & = \P\Big( I^{n|\delta}_{i;j}(1) = 1\text{ iff }j \in \bm j^{\bm I}_1\Big)
    \cdot 
    \prod_{ k = 2 }^{ \mathcal K^I + 1}
    \P\Big(
        I^{n|\delta}_{i;j}(k) = 1\text{ iff }j \in \bm j^{\bm I}_k\ \Big|\ 
        I^{n|\delta}_{i;j}(k-1) = 1\text{ iff }j \in \bm j^{\bm I}_{k-1}
    \Big)
    \nonumber
    \\ 
    & = 
    \P\Big( \tau^{n|\delta}_{i;j}(1) > 0\text{ iff }j \in \bm j^{\bm I}_1\Big)
    \cdot 
    \prod_{ k = 2 }^{ \mathcal K^I + 1}
    \P\Big(
        \tau^{n|\delta}_{i;j}(k) > 0\text{ iff }j \in \bm j^{\bm I}_k\ \Big|\ 
        \tau^{n|\delta}_{i;j}(k-1) > 0\text{ iff }j \in \bm j^{\bm I}_{k-1}
    \Big),
    \nonumber
\end{align}
where $\bm j^{\bm I}_k$ is the set of active indices at depth $k$ of $\bm I$ (see Definition~\ref{def: cluster, generalized type}),
and the display above follows from \eqref{proof, cluster size, markov property in pruned clusters}
as well as the definition of $I^{n|\delta}_{i;j}(k)$ in \eqref{def, I k M j for M type, cluster size}.

In light of \eqref{property: decompo of event type I using Markov property, cluster size},
as well as the definitions of the $\tau^{n|\delta}_{i;j}(k)$'s in \eqref{def: from pruned cluster to full cluster, tau and S at step k + 1},
the asymptotic analysis of events $\{\bm I^{n|\delta}_i = \bm I\}$ 
boils down to characterizing the asymptotic law of (the sums of) $W^{>}_{i;j}(M)$ and $N^{>}_{i;j}(M)$
in \eqref{def: W i M j, pruned cluster, 1, cluster size}--\eqref{def: cluster size, N i | M cdot j}.
This is the content of Lemma~\ref{lemma: cluster size, asymptotics, N i | n delta, cdot j, crude estimate},
which will be a key tool in our analysis.
In particular, independently for each $i \in [d]$, let
\begin{align}
    \bigg\{
    \Big(
    W^{>,(m)}_{i;1}(M),\ldots, W^{>,(m)}_{i;d}(M),
     N^{>,(m)}_{i;1}(M),\ldots, N^{>,(m)}_{i;d}(M)
    \Big):\ 
   m \geq 1
\bigg\}
   \label{proof: def copies of W > and N > vectors}
\end{align}
be independent copies of
$
\big(
    W^{>}_{i;1}(M),\ldots, W^{>}_{i;d}(M),
     N^{>}_{i;1}(M),\ldots, N^{>}_{i;d}(M)
    \big).
$
Given any non-empty $\mathcal I \subseteq [d]$ and any $\bm t(\mathcal I) = (t_i)_{i \in \mathcal I}$ with $t_i \in \mathbb Z_+$ for each $i \in \mathcal I$,
we write
\begin{align}
    \notationdef{notation-N-bm-t-mathcal-I-cdot-j}{N^{>|\delta}_{\bm t(\mathcal I);j}} \delequal \sum_{i \in \mathcal I} \sum_{m = 1}^{ t_i }N^{>,(m)}_{i;j}(\delta t_i),
    \qquad 
    \notationdef{notation-W-bm-t-mathcal-I-cdot-j}{W^{>|\delta}_{\bm t(\mathcal I);j}} \delequal \sum_{i \in \mathcal I} \sum_{m = 1}^{ t_i }W^{>,(m)}_{i;j}(\delta t_i).
    \label{def, proof cluster size, N W mathcal I mathcal J bcdot j}
\end{align}
By \eqref{property, W and N i M l j when positive, cluster size}, note that 
\begin{align}
    {N^{>|\delta}_{\bm t(\mathcal I);j}} > 0
    \qquad\iff \qquad
     {N^{>|\delta}_{\bm t(\mathcal I);j}} \geq 1
     \qquad\iff \qquad
    {W^{>|\delta}_{\bm t(\mathcal I);j}} > 0.
    \label{property, sum of W and N i M l j when positive, cluster size}
\end{align}

\begin{lemma}
\label{lemma: cluster size, asymptotics, N i | n delta, cdot j, crude estimate}
\linksinthm{lemma: cluster size, asymptotics, N i | n delta, cdot j, crude estimate}
Let Assumptions~\ref{assumption: subcriticality}--\ref{assumption: regularity condition 2, cluster size, July 2024} hold.

\begin{enumerate}[$(i)$]
    \item 
        Under any $\delta > 0$ small enough, we have (as $n \to \infty$),
        \begin{align}
            \P\Big(N^{>}_{i;j}(n\delta) = 1\Big) & \sim \bar s_{i,l^*(j)} \P(B_{j \leftarrow l^*(j)} > n\delta),
        \label{claim 1, lemma: cluster size, asymptotics, N i | n delta, cdot j, crude estimate}
            \\
            \P\Big(N^{>}_{i;j}(n\delta) \geq 2\Big) & = \lo\big(\P(B_{j \leftarrow l^*(j)} > n\delta)\big),
        \label{claim 2, lemma: cluster size, asymptotics, N i | n delta, cdot j, crude estimate}
        \end{align}
        where
        $l^*(\cdot)$ and $\alpha^*(\cdot)$ are defined in \eqref{def: cluster size, alpha * l * j}.
        Besides, given $0<c<C<\infty$ and $i \in [d],\ j \in [d]$, 
it holds for any $\delta > 0$ small enough that
        \begin{align}
            \lim_{n \to \infty}
            \P\Big(
                N^{>}_{i;j\leftarrow l^*(j)}(n\delta) = 1;\ 
                N^{>}_{i;j\leftarrow l}(n\delta) = 0\ \forall l \neq l^*(j)\ 
                \Big|\ 
                N^{>}_{i;j}(n\delta)  \geq 1
            \Big)
            & = 1,
            \label{claim 1, part iii, lemma: cluster size, asymptotics, N i | n delta, cdot j, refined estimates}
            \\
            \lim_{n \to \infty}
            \sup_{ x \in [c,C] }
            \Bigg| 
                \frac{ 
                    \P\big(
                W^{>}_{i;j}(n\delta) > nx\ 
                \big|\ 
                N^{>}_{i;j}(n\delta)  \geq 1
            \big)
                }{
                    ({\delta}/{x})^{\alpha^*(j)}
                }
                - 1
            \Bigg|
            & = 0,
            \label{claim 2, part iii, lemma: cluster size, asymptotics, N i | n delta, cdot j, refined estimates}
        \end{align}
        where $N^{>}_{i;j\leftarrow l}(M)$, $N^{>}_{i;j}(M)$ are defined in \eqref{def, N i | M l j, cluster size}--\eqref{def: cluster size, N i | M cdot j}.

    \item 
        There exists $\delta_0 > 0$ such that the following holds for any $\delta \in (0,\delta_0)$:
        for each $\mathcal J \subseteq [d]$ with $|\mathcal J| \geq 2$,
        \begin{align*}
            \P\Big(
                N^{>}_{i;j}(n\delta) \geq 1\ \forall j \in \mathcal J
            \Big)
            =
            \lo 
                \bigg(
                    n^{|\mathcal J| - 1}
                    \prod_{j \in \mathcal J}
                    \P(B_{j \leftarrow l^*(j)} > n\delta)
                \bigg),
            \qquad \text{ as }n\to\infty.
        \end{align*}

    \item 
        Let 
        $\mathcal I \subseteq \{1,2,\ldots,d\}$ and
        $\mathcal J \subseteq \{1,2,\ldots,d\}$ be non-empty.
        There exists $\delta_0 > 0$ such that
    \begin{align}
    \limsup_{ n \to \infty}
    \sup_{ 
        t_i \geq nc\ \forall i \in \mathcal I
    }
    \frac{
        \P\big(
            {N^{>|\delta}_{\bm t(\mathcal I);j}} \geq 1\text{ iff }j \in \mathcal J
        \big)
    }{
    \prod_{j \in \mathcal J}n\P\big( B_{j \leftarrow l^*(j)} > n\delta\big)
    }
    < \infty,
    \qquad
    \forall \delta \in (0,\delta_0),\ c > 0,
    \label{claim, part iii, lemma: cluster size, asymptotics, N i | n delta, cdot j, crude estimate}
    \end{align}
    where ${N^{>|\delta}_{\bm t(\mathcal I);j}}$ is defined in \eqref{def, proof cluster size, N W mathcal I mathcal J bcdot j}.
\end{enumerate}

\end{lemma}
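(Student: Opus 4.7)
The plan is to exploit a structural observation: for each index $(t,m,l)$, whether it is a node of the pruned tree $\bm X_i^{\leqslant}(\cdot;n\delta)$ depends only on the truncated offspring vectors $\bm B^{\leqslant,(t',m')}_{\bcdot\leftarrow l'}(n\delta)$ at ancestor indices with $t'<t$, and is therefore independent of $\bm B^{(t,m)}_{\bcdot\leftarrow l}$. Combined with the coordinate-wise independence of $\bm B_{\bcdot\leftarrow l}$ in Assumption~\ref{assumption: heavy tails in B i j}, this yields the Wald-type identity
\begin{align*}
\E N^{>}_{i;j\leftarrow l}(n\delta) = \P(B_{j\leftarrow l}>n\delta)\cdot \E T_l(n\delta),
\qquad T_l(M)\delequal\sum_{t\geq 0}X_{i,l}^{\leqslant}(t;M),
\end{align*}
and, up to the negative correlation induced by big jumps eliminating descendants (which only strengthens the upper bound), the product formula
\begin{align*}
\E\Big[\prod_{j\in\mathcal J}N^{>}_{i;j}(n\delta)\Big]\leq \E\Big[\prod_{j\in\mathcal J}\sum_{l\in[d]}T_l(n\delta)\cdot \P(B_{j\leftarrow l}>n\delta)\Big].
\end{align*}

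For part (i), dominated convergence ($T_l(n\delta)\leq S_{i,l}$ with $\E S_{i,l}=\bar s_{i,l}<\infty$) gives $\E N^{>}_{i;j\leftarrow l}(n\delta)\sim \bar s_{i,l}\P(B_{j\leftarrow l}>n\delta)$, and summing over $l$ singles out $l^*(j)$ as the dominant term by the strict separation of tail indices in Assumption~\ref{assumption: regularity condition 2, cluster size, July 2024}. The bilinear version of the product formula then delivers $\E N^{>}_{i;j}\big(N^{>}_{i;j}-1\big)=O\big(\P(B_{j\leftarrow l^*(j)}>n\delta)^2\big)=\lo(\E N^{>}_{i;j})$, so the second-moment method yields \eqref{claim 1, lemma: cluster size, asymptotics, N i | n delta, cdot j, crude estimate}--\eqref{claim 2, lemma: cluster size, asymptotics, N i | n delta, cdot j, crude estimate}. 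The conditional refinements \eqref{claim 1, part iii, lemma: cluster size, asymptotics, N i | n delta, cdot j, refined estimates}--\eqref{claim 2, part iii, lemma: cluster size, asymptotics, N i | n delta, cdot j, refined estimates} follow because on the dominant event ``exactly one big jump, from a type-$l^*(j)$ parent'', $W^{>}_{i;j}(n\delta)$ equals that single $B^{(t,m)}_{j\leftarrow l^*(j)}$ conditioned on exceeding $n\delta$, and the uniform convergence of regularly varying functions on compact subsets of $(\delta,\infty)$ then delivers the limit $(\delta/x)^{\alpha^*(j)}$.

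For parts (ii) and (iii), using $\P(B_{j\leftarrow l}>n\delta)\leq C\,\P(B_{j\leftarrow l^*(j)}>n\delta)$ (Assumption~\ref{assumption: regularity condition 2, cluster size, July 2024}) and $T_l\leq\|\bm S_i^{\leqslant}(n\delta)\|$, the product formula reduces the task to
\begin{align*}
\P\Big(\bigcap_{j\in\mathcal J}\{N^{>}_{i;j}(n\delta)\geq 1\}\Big)\leq C^{|\mathcal J|}\,\E\big[\|\bm S_i^{\leqslant}(n\delta)\|^{|\mathcal J|}\big]\prod_{j\in\mathcal J}\P(B_{j\leftarrow l^*(j)}>n\delta),
\end{align*}
so it suffices to prove $\E\|\bm S_i^{\leqslant}(n\delta)\|^k=\lo(n^{k-1})$ for each $k\geq 1$. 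In part (iii), the supremum over $t_i\geq nc$ is handled by noting that $t\mapsto t\P(B_{j\leftarrow l^*(j)}>\delta t)\in\RV_{1-\alpha^*(j)}$ has a negative index and is thus eventually decreasing, so its supremum over $[nc,\infty)$ is $O(n\P(B_{j\leftarrow l^*(j)}>n\delta))$; Lemma~\ref{lemma: concentration ineq for pruned cluster S i} applied to the i.i.d.\ copies in \eqref{proof: def copies of W > and N > vectors} then provides the required LLN-style control of $\sum_m T_l^{(i,m)}(\delta t_i)/t_i\to \bar s_{i,l}$.

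The main obstacle is the moment bound $\E\|\bm S_i^{\leqslant}(n\delta)\|^k=\lo(n^{k-1})$. I plan to derive it by splitting $\E\|\bm S_i^{\leqslant}(n\delta)\|^k=k\int_0^\infty x^{k-1}\P(\|\bm S_i^{\leqslant}(n\delta)\|>x)\,dx$: Lemma~\ref{lemma: tail bound, pruned cluster size S i leq n delta} controls the range $x\geq n\Delta$ with arbitrarily fast power-law decay, while the domination $\bm S_i^{\leqslant}(n\delta)\leq \bm S_i$ combined with the power-law tail of $\bm S_i$ of index $\alpha_{\min}\delequal \min_l\alpha^*(l)>1$ handles moderate $x$, producing a bound of order $n^{(k-\alpha_{\min})^+}$. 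The condition $\alpha_{\min}>1$ from Assumption~\ref{assumption: heavy tails in B i j} is exactly what forces $(k-\alpha_{\min})^+<k-1$, closing the argument.
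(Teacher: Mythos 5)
Your overall architecture parallels the paper's: the first-moment identity $\E N^{>}_{i;j\leftarrow l}(M)=\E\big[S^{\leqslant}_{i,l}(M)\big]\P(B_{j\leftarrow l}>M)$ (which the paper obtains via a fixed-point equation and Proposition~1 of \cite{Asmussen_Foss_2018}, and which your independence/Wald argument also yields correctly, since node presence at generation $t$ is determined by offspring variables of generations $<t$), dominance of $l^*(j)$ under Assumption~\ref{assumption: regularity condition 2, cluster size, July 2024}, the conditional law of $W^{>}_{i;j}(n\delta)$ on the single-big-jump event together with the uniform convergence theorem for \eqref{claim 2, part iii, lemma: cluster size, asymptotics, N i | n delta, cdot j, refined estimates}, Karamata-type control of truncated moments of $\norm{\bm S^{\leqslant}_i(n\delta)}$ combined with Lemma~\ref{lemma: tail bound, pruned cluster size S i leq n delta}, and regular-variation monotonicity/Potter-type bounds to handle the supremum over $t_i\geq nc$ in part (iii). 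However, there is a genuine gap at the step you lean on most: the asserted product inequality $\E\big[\prod_{j\in\mathcal J}N^{>}_{i;j}(n\delta)\big]\leq \E\big[\prod_{j\in\mathcal J}\sum_{l}T_l(n\delta)\P(B_{j\leftarrow l}>n\delta)\big]$, justified only by the remark that big jumps eliminate descendants and hence "only strengthen" the bound. The correlation actually cuts the other way once you condition on the pruned tree: a present type-$l$ node whose truncated type-$j$ offspring count is zero has conditional big-jump probability $\P(B_{j\leftarrow l}>n\delta)\big/\P\big(B_{j\leftarrow l}\in\{0\}\cup(n\delta,\infty)\big)$, which \emph{exceeds} $\P(B_{j\leftarrow l}>n\delta)$, so the inequality as written is false; it only survives up to the constant $1/\P(B_{j\leftarrow l}=0)$, and it degenerates entirely when $\P(B_{j\leftarrow l}=0)=0$ (then the conditional probability is $1$), a case not excluded by Assumptions~\ref{assumption: subcriticality}--\ref{assumption: regularity condition 2, cluster size, July 2024}. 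The paper handles exactly this by an explicit two-stage coupling (generate the pruned tree under the truncated law, then resample the $\{0\}\cup(n\delta,\infty)$ coordinates) and, when the essential lower bound $\underline b_{j\leftarrow l}$ is positive, by redefining the pruning to truncate down to $\underline b_{j\leftarrow l}$ rather than $0$; see the treatment around \eqref{proof, part ii, math display 1, lemma: cluster size, asymptotics, N i | n delta, l j}. Without either the constants or the essential-lower-bound modification, your bound for parts (ii)--(iii) is not established.

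A secondary imprecision: the claim $\E\big[N^{>}_{i;j}(n\delta)\big(N^{>}_{i;j}(n\delta)-1\big)\big]=\bo\big(\P(B_{j\leftarrow l^*(j)}>n\delta)^2\big)$ is generally wrong when $\min_l\alpha^*(l)\leq 2$, because the correct bound carries the truncated second moment $\E\big[\norm{\bm S^{\leqslant}_i(n\delta)}^2\mathbbm{I}\{\norm{\bm S^{\leqslant}_i(n\delta)}\leq n\Delta\}\big]$, which is regularly varying of index $(2-\alpha_{\min})^+$; the conclusion $\lo\big(\E N^{>}_{i;j}(n\delta)\big)$ still follows since $\alpha_{\min}+\alpha^*(j)>2$, but your stated intermediate bound needs correction (this is precisely the Karamata computation in the paper's proof of part (ii) of Lemma~\ref{lemma: cluster size, asymptotics, N i | n delta, l j}). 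Similarly, $\E\norm{\bm S^{\leqslant}_i(n\delta)}^k=\lo(n^{k-1})$ fails for $k=1$; you only need $k\geq 2$, so restrict the claim accordingly, and for the range $x\geq n\Delta$ you should invoke the monotonicity \eqref{property: stochastic comparison, S and pruned S} to convert Lemma~\ref{lemma: tail bound, pruned cluster size S i leq n delta} into a decay bound valid uniformly in $x$, since domination by $\bm S_i$ alone makes the tail integral diverge when $k>\alpha_{\min}$.
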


We defer the proof of Lemma~\ref{lemma: cluster size, asymptotics, N i | n delta, cdot j, crude estimate} to Section~\ref{subsec: proof, lemma: cluster size, asymptotics, N i | n delta, cdot j, crude estimate}.
In this section, we focus on applying Lemma~\ref{lemma: cluster size, asymptotics, N i | n delta, cdot j, crude estimate} and establishing
Proposition~\ref{proposition: asymptotic equivalence, tail asymptotics for cluster size}.
First, given $\bm I, \bm I^\prime \in \widetilde{\mathscr I}$,
we define the following (partial) ordering:
\begin{align}
    \bm I \subseteq \bm I^\prime 
    \quad
    \Longleftrightarrow
    \quad
    I_{k,j} = I^\prime_{k,j},\qquad \forall j \in [d],\ k \in [\mathcal K^{\bm I}].
    \label{proof: def ordering of generalized types, cluster size}
\end{align}
That is, $\bm I \subseteq \bm I^\prime$ if they match with each other up to the depth of $\bm I$. 
Lemma~\ref{lemma: crude estimate, type of cluster} provides bounds for events of the form $\{ \bm I \subseteq \bm I^{n|\delta}_i \}$.

\begin{lemma}\label{lemma: crude estimate, type of cluster}
\linksinthm{lemma: crude estimate, type of cluster}
Let Assumptions~\ref{assumption: subcriticality}--\ref{assumption: regularity condition 2, cluster size, July 2024} hold.
Given $\bm I = (I_{k,j})_{k \geq 1, j \in [d]} \in \widetilde{\mathscr I}$
with $\mathcal K^{\bm I} \geq 1$,
it holds for any $\delta > 0$ small enough that
\begin{align}
    \P\big( \bm I \subseteq \bm I^{n|\delta}_i\big)
    =
    \bo
    \Bigg(
        n^{-1}
        \prod_{k = 1}^{ \mathcal K^{\bm I} }\prod_{ j \in \bm j^{\bm I}_k }
        n\P\big(B_{j \leftarrow l^*(j)} > n\delta\big)
    \Bigg),
    \ \text{ as }n \to \infty.
    \label{claim: lemma: crude estimate, type of cluster}
\end{align}
Furthermore, if $|\bm j^{\bm I}_1| \geq 2$, it holds for any $\delta > 0$ small enough that
\begin{align}
    \P\big( \bm I \subseteq \bm I^{n|\delta}_i\big)
    =
    \lo 
    \Bigg(
        n^{-1}
        \prod_{k = 1}^{ \mathcal K^{\bm I} }\prod_{ j \in \bm j^{\bm I}_k }
        n\P\big(B_{j \leftarrow l^*(j)} > n\delta\big)
    \Bigg),
    \ \text{ as }n \to \infty.
    \label{claim: lemma: crude estimate, |j 1 type I| geq 2, type of cluster}
\end{align}
\end{lemma}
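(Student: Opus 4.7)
The plan is to decompose $\P(\bm I \subseteq \bm I^{n|\delta}_i)$ via the Markov property \eqref{property: decompo of event type I using Markov property, cluster size} and to bound each factor using Lemma~\ref{lemma: cluster size, asymptotics, N i | n delta, cdot j, crude estimate} together with the lower bound \eqref{property: lower bound for tau n delta i j k, cluster size}. Truncating at depth $\mathcal K^{\bm I}$ (since $\bm I \subseteq \bm I^{n|\delta}_i$ only constrains the active indices up to depth $\mathcal K^{\bm I}$), I write $\P(\bm I \subseteq \bm I^{n|\delta}_i) = p_1 \cdot \prod_{k=2}^{\mathcal K^{\bm I}} p_k$, where $p_k$ denotes the conditional probability that the active indices of $\bm I^{n|\delta}_i$ at depth $k$ equal $\bm j^{\bm I}_k$, given that at each earlier depth $k^\prime < k$ they equal $\bm j^{\bm I}_{k^\prime}$.

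For $k = 1$, since $\bm \tau^{n|\delta}_{i}(0) = \bm e_i$ and $M^{n|\delta}_{\bcdot \leftarrow i}(1) = n\delta$, the event $\{\tau^{n|\delta}_{i;j}(1) > 0\}$ reduces to $\{N^{>}_{i;j}(n\delta) \geq 1\}$ by \eqref{property, W and N i M l j when positive, cluster size}. When $\bm j^{\bm I}_1 = \{j_1\}$, combining \eqref{claim 1, lemma: cluster size, asymptotics, N i | n delta, cdot j, crude estimate} and \eqref{claim 2, lemma: cluster size, asymptotics, N i | n delta, cdot j, crude estimate} gives $p_1 = \bo\!\big(\P(B_{j_1 \leftarrow l^*(j_1)} > n\delta)\big)$. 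When $|\bm j^{\bm I}_1| \geq 2$, I bound $p_1$ from above by $\P\big(N^{>}_{i;j}(n\delta) \geq 1 \text{ for all } j \in \bm j^{\bm I}_1\big)$ and apply part~(ii) of Lemma~\ref{lemma: cluster size, asymptotics, N i | n delta, cdot j, crude estimate} with $\mathcal J = \bm j^{\bm I}_1$ to get $p_1 = \lo\!\big(n^{|\bm j^{\bm I}_1| - 1} \prod_{j \in \bm j^{\bm I}_1} \P(B_{j \leftarrow l^*(j)} > n\delta)\big)$.

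For $k \geq 2$, the conditioning forces $\tau^{n|\delta}_{i;l}(k-1) > n\delta^{k-1}$ for every $l \in \bm j^{\bm I}_{k-1}$ by \eqref{property: lower bound for tau n delta i j k, cluster size}. By the Markov structure \eqref{proof, cluster size, markov property in pruned clusters} together with the definition \eqref{def: from pruned cluster to full cluster, tau and S at step k + 1}, the conditional law of $\big(\tau^{n|\delta}_{i;j}(k)\big)_{j \in [d]}$ agrees with that of $\big(N^{>|\delta}_{\bm t(\bm j^{\bm I}_{k-1});j}\big)_{j \in [d]}$ evaluated at $t_l = \tau^{n|\delta}_{i;l}(k-1)$, realized through the fresh i.i.d.\ copies in \eqref{proof: def copies of W > and N > vectors}. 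Since $t_l \geq n\delta^{k-1}$ under the conditioning, applying part~(iii) of Lemma~\ref{lemma: cluster size, asymptotics, N i | n delta, cdot j, crude estimate} with $\mathcal I = \bm j^{\bm I}_{k-1}$, $\mathcal J = \bm j^{\bm I}_k$, and $c = \delta^{k-1}$ yields $p_k = \bo\!\big(\prod_{j \in \bm j^{\bm I}_k} n\P(B_{j \leftarrow l^*(j)} > n\delta)\big)$ uniformly in the conditioning values.

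Multiplying these bounds and collecting powers of $n$, the resulting exponent equals $-|\bm j^{\bm I}_1| + \sum_{k = 1}^{\mathcal K^{\bm I}} |\bm j^{\bm I}_k|$. For $|\bm j^{\bm I}_1| = 1$ this matches $-1 + \sum_k |\bm j^{\bm I}_k|$ and delivers \eqref{claim: lemma: crude estimate, type of cluster}; for $|\bm j^{\bm I}_1| \geq 2$, the $\lo$ estimate for $p_1$ combined with the extra factor $n^{-(|\bm j^{\bm I}_1| - 1)}$ produces the strict $\lo$ bound \eqref{claim: lemma: crude estimate, |j 1 type I| geq 2, type of cluster}. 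The main obstacle is handling the factors $p_k$ for $k \geq 2$ uniformly over the random values $\tau^{n|\delta}_{i;l}(k-1)$; this is exactly what the $\sup_{t_i \geq nc}$ uniformity built into part~(iii) of Lemma~\ref{lemma: cluster size, asymptotics, N i | n delta, cdot j, crude estimate} is designed to accommodate.
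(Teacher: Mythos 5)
Your proposal is correct and follows essentially the same route as the paper's proof: the Markov-property factorization across depths, the bootstrap lower bound $\tau^{n|\delta}_{i;j}(k-1) > n\delta^{k-1}$ from \eqref{property: lower bound for tau n delta i j k, cluster size}, and the application of parts (i), (ii), and (iii) of Lemma~\ref{lemma: cluster size, asymptotics, N i | n delta, cdot j, crude estimate} to the first-depth and deeper factors, respectively. One small imprecision: conditionally on $\big(\tau^{n|\delta}_{i;l}(k-1)\big)_l$, the law of $\big(\tau^{n|\delta}_{i;j}(k)\big)_j$ is that of the sums $W^{>|\delta}_{\bm t(\bm j^{\bm I}_{k-1});\,\bcdot}$ rather than $N^{>|\delta}_{\bm t(\bm j^{\bm I}_{k-1});\,\bcdot}$, but since only the positivity events enter and these coincide by \eqref{property, sum of W and N i M l j when positive, cluster size}, your bounds are unaffected.
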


\begin{proof}
\linksinpf{lemma: crude estimate, type of cluster}
By the definition  in \eqref{def, I k M j for M type, cluster size},
$
\{ \bm I \subseteq \bm I^{n|\delta}_i \}
     = \bigcap_{k = 1}^{ \mathcal K^{\bm I} } \big\{ 
        I^{n|\delta}_{i;j}(k) = 1\text{ iff }j \in \bm j^{\bm I}_k
    \big\}
    =
    \bigcap_{k = 1}^{ \mathcal K^{\bm I} } \big\{ 
        \tau^{n|\delta}_{i;j}(k) > 0 \text{ iff }j \in \bm j^{\bm I}_k
    \big\}.
$
Then, analogous to \eqref{property: decompo of event type I using Markov property, cluster size}, we have
\begin{align}
    & \P\big(\bm I \subseteq \bm I^{n|\delta}_i\big)
    \label{proof: applying markov property, decomp of events, lemma: crude estimate, type of cluster}
    \\ 
    & = 
    \P\big(
        \tau^{n|\delta}_{i;j}(1) > 0 \text{ iff }j \in \bm j^{\bm I}_1
    \big)
    \cdot 
    \prod_{ k = 2 }^{\mathcal K^{\bm I}}
    \P\Big(
        \tau^{n|\delta}_{i;j}(k) > 0 \text{ iff }j \in \bm j^{\bm I}_k
        \ \Big|\ 
        \tau^{n|\delta}_{i;j}(k-1) >0 \text{ iff }j \in \bm j^{\bm I}_{k-1}
    \Big)
    \nonumber
    \\ 
    & = 
    \P\big(
        \tau^{n|\delta}_{i;j}(1) > n\delta \text{ iff }j \in \bm j^{\bm I}_1
    \big)
    \nonumber
    \\ 
    &\qquad
    \cdot 
    \prod_{ k = 2 }^{\mathcal K^{\bm I}}
    \P\Big(
        \tau^{n|\delta}_{i;j}(k) > 0 \text{ iff }j \in \bm j^{\bm I}_k
        \ \Big|\ 
        \tau^{n|\delta}_{i;j}(k-1) >n\delta^{k-1} \text{ iff }j \in \bm j^{\bm I}_{k-1}
    \Big)
    \quad 
    \text{by \eqref{property: lower bound for tau n delta i j k, cluster size}}
    \nonumber
    \\ 
    & \leq 
    \P\big(
        \tau^{n|\delta}_{i;j}(1) > n\delta \ \forall j \in \bm j^{\bm I}_1
    \big)
    \nonumber
    \\
    &\qquad
    \cdot 
    \prod_{ k = 2 }^{\mathcal K^{\bm I}}
    \P\Big(
        \tau^{n|\delta}_{i;j}(k) > 0\ \forall j \in \bm j^{\bm I}_k
        \ \Big|\ 
        \tau^{n|\delta}_{i;j}(k-1) >n\delta^{k-1} \text{ iff }j \in \bm j^{\bm I}_{k-1}
    \Big)
    \nonumber
    \\ 
    & \leq 
   \P\Big(
        W^{>}_{i;j}(n\delta) > 0\ \forall j \in \bm j^{\bm I}_1
    \Big)
    \nonumber
    \\ 
    &\quad 
    \cdot 
    \prod_{k = 2}^{\mathcal K^{\bm I}}\ 
    \sup_{  t_l \geq n\delta^{k-1}\ \forall l \in \bm j^{\bm I}_{k-1}  }
    \P\Bigg(
        \sum_{ l \in \bm j^{\bm I}_{k-1} }\sum_{m = 1}^{t_l}
        W^{>, (m) }_{ l; j  }(\delta \cdot t_l) > 0\ \forall j \in \bm j^{\bm I}_k
    \Bigg)
    \quad
    \text{by \eqref{def: layer zero, from pruned cluster to full cluster} and \eqref{def: from pruned cluster to full cluster, tau and S at step k + 1}}
    \nonumber
    \\ 
    & = 
    \P\Big(
        W^{>}_{i;j}(n\delta) > 0\ \forall j \in \bm j^{\bm I}_1
    \Big)
    \cdot 
    \prod_{k = 2}^{\mathcal K^{\bm I}}\ 
    \sup_{  t_l \geq n\delta^{k-1}\ \forall l \in \bm j^{\bm I}_{k-1}  }
    \P\Big(
        W^{>|\delta}_{ \bm t(\bm j^{\bm I}_{k-1}) ; j  } > 0\ \forall j \in \bm j^{\bm I}_k
    \Big)
    \nonumber
    \\ 
    &\qquad\qquad\qquad\qquad\qquad\qquad
    \text{using notations in \eqref{def, proof cluster size, N W mathcal I mathcal J bcdot j}, where we write $ \bm t(\bm j^{\bm I}_{k-1}) = (t_l)_{l \in \bm j^{\bm I}_{k-1} }$ }
    \nonumber
    \\ 
    & = 
    \underbrace{ \P\Big(
        N^{>}_{i;j}(n\delta) \geq 1\ \forall j \in \bm j^{\bm I}_1
    \Big) }_{ \delequal p_1(n,\delta) }
    \cdot 
    \prod_{k = 2}^{\mathcal K^{\bm I}}\ 
    \underbrace{ \sup_{  t_l \geq n\delta^{k-1}\ \forall l \in \bm j^{\bm I}_{k-1}  }
    \P\Big(
        N^{>|\delta}_{ \bm t(\bm j^{\bm I}_{k-1}) ; j  } \geq 1\ \forall j \in \bm j^{\bm I}_k
    \Big) }_{ \delequal p_k(n,\delta)  }
    \nonumber
    \\ 
    &\qquad\qquad\qquad\qquad\qquad\qquad
    \text{due to \eqref{property, W and N i M l j when positive, cluster size} and \eqref{property, sum of W and N i M l j when positive, cluster size}}.
    \nonumber
\end{align}
We first analyze $p_1(n,\delta)$.
If $|\bm j^{\bm I}_1| = 1$ (i.e., the set $\bm j^{\bm I}_1$ contains only one element),
we write $\bm j^{\bm I}_1 =  \{j_1\}$.
Using \eqref{claim 1, lemma: cluster size, asymptotics, N i | n delta, cdot j, crude estimate} and \eqref{claim 2, lemma: cluster size, asymptotics, N i | n delta, cdot j, crude estimate} in part (i), Lemma~\ref{lemma: cluster size, asymptotics, N i | n delta, cdot j, crude estimate},
for any $\delta > 0$ small enough,
\begin{align}
    p_1(n,\delta) = \bo \Big(\P\big( B_{j_1 \leftarrow l^*(j_1)} > n\delta\big)\Big)
    \text{ as }n\to \infty,
    \qquad
    \text{if }\bm j^{\bm I}_1 =  \{j_1\}.
    \label{proof: bound 1, one index at depth 1, lemma: crude estimate, type of cluster}
\end{align}
When $|\bm j^{\bm I}_1| \geq 2$, 
by part (ii) of Lemma~\ref{lemma: cluster size, asymptotics, N i | n delta, cdot j, crude estimate},
it holds for any $\delta > 0$ small enough that
\begin{align}
    p_1(n,\delta) = \lo \Bigg( n^{-1}\prod_{j \in \bm j^{\bm I}_1}n\P\big( B_{ j \leftarrow l^*(j)} > n\delta\big) \Bigg)
    \text{ as }n \to \infty,
    \qquad
    \text{if }|\bm j^{\bm I}_1| \geq 2.
    \label{proof: bound 1, lemma: crude estimate, type of cluster}
\end{align}
On the other hand, by part (iii) of Lemma~\ref{lemma: cluster size, asymptotics, N i | n delta, cdot j, crude estimate} under the choice of $c = \delta^{ \mathcal K^{\bm I} }$, it holds for any $\delta > 0$ small enough that 
\begin{align}
    p_k(n,\delta) = 
    \bo \Bigg( \prod_{j \in \bm j^{\bm I}_k}n\P\big( B_{j \leftarrow l^*(j)} > n\delta\big) \Bigg)
    \ \text{ as }n \to \infty,
    \qquad\forall k = 2,3,\ldots,\mathcal K^{\bm I}.
    \label{proof: bound 2, lemma: crude estimate, type of cluster}
\end{align}
Combining \eqref{proof: bound 1, one index at depth 1, lemma: crude estimate, type of cluster} (resp.\ \eqref{proof: bound 1, lemma: crude estimate, type of cluster}) with \eqref{proof: bound 2, lemma: crude estimate, type of cluster},
we establish \eqref{claim: lemma: crude estimate, type of cluster} (resp.\ \eqref{claim: lemma: crude estimate, |j 1 type I| geq 2, type of cluster}).
\end{proof}

Next, recall the definitions of $(\bar R^{n}_j, \bar \Theta^n_j)$ and $(\hat R^{n|\delta}_j, \hat \Theta^{n|\delta}_j)$ in \eqref{def: polar coordinates, bar S and hat S, cluster size}.
We prepare
Lemma~\ref{lemma: distance between bar s and hat S given type, cluster size} to 
bound the probability that 
$(\bar R^{n}_j, \bar \Theta^n_j)$ and $(\hat R^{n|\delta}_j, \hat \Theta^{n|\delta}_j)$  are not close to each other.

\begin{lemma}\label{lemma: distance between bar s and hat S given type, cluster size}
\linksinthm{lemma: distance between bar s and hat S given type, cluster size}
Let Assumptions~\ref{assumption: subcriticality}--\ref{assumption: regularity condition 2, cluster size, July 2024} hold.
Given $i \in [d]$, $\gamma,\epsilon> 0$, $\Delta \in (0,\epsilon]$, and $\bm I \in \widetilde{\mathscr I}$,
\begin{align*}
    \lim_{n \to \infty}
    n^\gamma\cdot 
    \P\Big(
        B^n_i(\delta,\bm I,\epsilon,\Delta)
    \Big) = 0,
    \qquad
    \forall \delta > 0\text{ small enough},
\end{align*}
with
\begin{align}
    & B_{i}^n(\delta,\bm I,\epsilon,\Delta)
    \label{proof: def event B n i, lemma: distance between bar s and hat S given type, cluster size}
    \\
    & \delequal
    \big\{
        \bm I^{n|\delta}_i = \bm I,\ 
        \bar R^n_i
        \vee \hat R^{n|\delta}_i
        > \epsilon
    \big\}
    \cap 
    \Bigg\{
        \frac{ \bar R^n_i  }{ \hat R^{n|\delta}_i } \notin [1-\Delta, 1+ \Delta]\text{ or }
        \norm{ \bar \Theta^n_i - \hat \Theta^{n|\delta}_i } > \Delta
    \Bigg\}.
    \nonumber
\end{align}
\end{lemma}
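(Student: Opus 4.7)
The plan is to control $\|\bar{\bm S}^n_i - \hat{\bm S}^{n|\delta}_i\|$ with high probability on $\{\bm I^{n|\delta}_i = \bm I\}$, and then convert that norm bound, together with $\bar R^n_i \vee \hat R^{n|\delta}_i > \epsilon$, into the claimed closeness of polar coordinates. On the type event, the depth is $\mathcal K^{n|\delta}_i = \mathcal K^{\bm I}$ and $\tau^{n|\delta}_{i;l}(k) > 0$ precisely when $l \in \bm j^{\bm I}_k$. Comparing the decomposition \eqref{def: from pruned cluster to full cluster, tau and S at step k + 1, 7} (after shifting the outer index $k \mapsto k+1$) against the definition of $\hat{\bm S}^{n|\delta}_i$, and pairing depth by depth the random sum $n^{-1}\sum_{m=1}^{\tau^{n|\delta}_{i;l}(k)} \bm S^{\leqslant,(k+1,m)}_l(\delta \tau^{n|\delta}_{i;l}(k))$ against its mean surrogate $n^{-1}\tau^{n|\delta}_{i;l}(k)\,\bar{\bm s}_l$, one obtains
\[
\bar{\bm S}^n_i - \hat{\bm S}^{n|\delta}_i = \tfrac{1}{n}\bm S^{\leqslant,(1,1)}_i(n\delta) + \sum_{k=1}^{\mathcal K^{\bm I}} \sum_{l \in \bm j^{\bm I}_k} \tfrac{1}{n}\Big[\sum_{m=1}^{\tau^{n|\delta}_{i;l}(k)} \bm S^{\leqslant,(k+1,m)}_l\big(\delta \tau^{n|\delta}_{i;l}(k)\big) - \tau^{n|\delta}_{i;l}(k)\, \bar{\bm s}_l\Big].
\]
The degenerate case $\mathcal K^{\bm I} = 0$ is immediate from Lemma~\ref{lemma: tail bound, pruned cluster size S i leq n delta}, so assume $\mathcal K^{\bm I} \geq 1$.

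The key step is a concentration estimate on each bracketed term via the Markov property \eqref{proof, cluster size, markov property in pruned clusters}: conditioning on $\mathcal F_k$, and hence on $\tau^{n|\delta}_{i;l}(k) = N$, the inner sum consists of $N$ i.i.d.\ copies of $\bm S^{\leqslant}_l(\delta N)$. For any $\eta, \gamma > 0$ and all $\delta < \delta_0(\eta,\gamma)$, Lemma~\ref{lemma: concentration ineq for pruned cluster S i} applied at scale $N$ yields the pointwise decay $\P\big(\|N^{-1}\sum_m \bm S^{\leqslant}_l(\delta N) - \bar{\bm s}_l\| > \eta\big) = o(N^{-\gamma})$; combined with the deterministic lower bound $\tau^{n|\delta}_{i;l}(k) > n\delta^k$ from \eqref{property: lower bound for tau n delta i j k, cluster size} and the monotonicity of $N \mapsto N^{-\gamma}$, this upgrades to a uniform $o(n^{-\gamma})$ bound on the bad event after summing over the admissible values of $N$. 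The initial term $n^{-1}\bm S^{\leqslant,(1,1)}_i(n\delta)$ is handled by Lemma~\ref{lemma: tail bound, pruned cluster size S i leq n delta}, and since $\bm I$ is fixed (so $|\bm j^{\bm I}|$ and $\mathcal K^{\bm I}$ are finite) the sum over the finitely many brackets preserves the $o(n^{-\gamma})$ rate. Together these give $\|\bar{\bm S}^n_i - \hat{\bm S}^{n|\delta}_i\| \leq \eta\,\big(1 + C(\bm I)\,\hat R^{n|\delta}_i\big)$ off an event of probability $o(n^{-\gamma})$, where $C(\bm I)$ arises from bounding $\hat R^{n|\delta}_i$ below by a positive constant multiple of $\sum_{k,l} n^{-1}\tau^{n|\delta}_{i;l}(k)$ via Assumption~\ref{assumption: regularity condition 1, cluster size, July 2024}.

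The final step is a geometric conversion: under $\bar R^n_i \vee \hat R^{n|\delta}_i > \epsilon$, choosing $\eta$ small enough as a function of $\epsilon, \Delta, C(\bm I)$ first forces both $\bar R^n_i$ and $\hat R^{n|\delta}_i$ to exceed $\epsilon/2$, and then makes the relative error $\|\bar{\bm S}^n_i - \hat{\bm S}^{n|\delta}_i\|/(\bar R^n_i \wedge \hat R^{n|\delta}_i)$ small enough that $\bar R^n_i/\hat R^{n|\delta}_i \in [1-\Delta, 1+\Delta]$ and $\|\bar\Theta^n_i - \hat\Theta^{n|\delta}_i\| \leq \Delta$ by routine polar estimates; one finally takes $\delta < \delta_0(\eta,\gamma)$. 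The main technical obstacle is exactly this uniformity of the concentration across the random, $\mathcal F_k$-measurable truncation threshold $\delta \tau^{n|\delta}_{i;l}(k)$: it is what makes the Markov conditioning and the explicit lower bound $\tau^{n|\delta}_{i;l}(k) \geq n\delta^k$ essential, so that the pointwise $o(N^{-\gamma})$ rate in Lemma~\ref{lemma: concentration ineq for pruned cluster S i} upgrades to a genuinely uniform $o(n^{-\gamma})$ rate on the half-line $\{N \geq n\delta^k\}$.
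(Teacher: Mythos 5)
Your proposal is correct and follows essentially the same route as the paper's proof: the same decomposition of $\bar{\bm S}^n_i - \hat{\bm S}^{n|\delta}_i$ into the initial pruned cluster plus depth-by-depth deviations of the random sums from their mean surrogates $\tau^{n|\delta}_{i;l}(k)\bar{\bm s}_l$, controlled respectively by Lemma~\ref{lemma: tail bound, pruned cluster size S i leq n delta} and Lemma~\ref{lemma: concentration ineq for pruned cluster S i} together with the lower bound $\tau^{n|\delta}_{i;l}(k) > n\delta^k$ from \eqref{property: lower bound for tau n delta i j k, cluster size}, followed by the same polar-coordinate conversion (including the separate treatment of $\mathcal K^{\bm I}=0$). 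The only difference is bookkeeping in the uniformity over the random count $N=\tau^{n|\delta}_{i;l}(k)$: the paper takes a union bound over $N \geq \ceil{n\delta^k}$ after invoking Lemma~\ref{lemma: concentration ineq for pruned cluster S i} at exponent $\gamma+2$, whereas your conditioning-plus-monotonicity argument achieves the same $\lo(n^{-\gamma})$ bound (and if you instead sum the unconditional tail bounds you simply need the lemma at a slightly larger exponent, which it provides for free since $\gamma$ there is arbitrary).
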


\begin{proof}
\linksinpf{lemma: distance between bar s and hat S given type, cluster size}
By Definition~\ref{def: cluster, generalized type},
the only generalized type $\bm I = (I_{k,j})_{k \geq 1,\ j \in [d]} \in \widetilde{\mathscr I}$
with depth $\mathcal K^{\bm I} = 0$ is $I_{k,j} \equiv 0\ \forall k,j$.
We first discuss the case where $\mathcal K^{\bm I} \geq 1$.
At the end of this proof, we address the case where $\mathcal K^{\bm I} = 0$.

We start by fixing some constants.
First,
recall the definition of $\bar{\bm s}_j = \E \bm S_j$ in \eqref{def: bar s i, ray, expectation of cluster size}.
Since the branching process for $\bm S_j$ contains at least the ancestor along the $j^\text{th}$ dimension, 
we have $\norm{\bar{\bm s}_j} \geq 1$ for each $j \in [d]$. 
Therefore, for
\begin{align}
    \rho = \min_{ j \in [d] }\norm{\bar{\bm s}_j} \Big/ \max_{j \in [d]}\norm{\bar{\bm s}_j},
    \label{proof: constant tilde Delta, 1, lemma: distance between bar s and hat S given type, cluster size}
\end{align}
we have $\rho \in (0,1)$.
Next, given $\gamma,\epsilon,\Delta > 0$,
we fix $\tilde\Delta > 0$ small enough such that
\begin{align}
    \tilde \Delta < \frac{\epsilon}{2},
    \qquad
    \frac{ \tilde \Delta }{
        \epsilon \cdot \rho/4
    }
    < \frac{\Delta}{4},
    \qquad 
    \frac{
        \tilde \Delta 
    }{
        \min_{j \in [d]}\norm{\bm s_j}
    }
    < \frac{\Delta}{4},
    \qquad
    \tilde \Delta < \frac{1}{2}\min_{j \in [d]}\norm{\bar{\bm s}_j}.
    \label{proof: constant tilde Delta, 2, lemma: distance between bar s and hat S given type, cluster size}
\end{align}
To proceed, recall that in Definition~\ref{def: cluster, generalized type}, we use $\mathcal K^{\bm I}$ to denote the depth of the generalized type $\bm I$,
and
$\bm j^{\bm I}_k$ for the set of active indices at depth $k$.
By \eqref{def: scaled cluster bar S n i}--\eqref{def: hat S n delta i, cluster size},
 on the event $\{ \bm I^{n|\delta}_i = \bm I \}$, it holds that
(using notations in \eqref{def: from pruned cluster to full cluster, tau and S at step k + 1}--\eqref{def: from pruned cluster to full cluster, tau and S at step k + 1, 4})
\begin{equation} \label{proof: def hat S n delta i, lemma: distance between bar s and hat S given type, cluster size}
    \begin{aligned}
        \hat{\bm S}^{n|\delta}_i
    & = 
    n^{-1}\sum_{k = 1}^{\mathcal K^{\bm I}}\sum_{ j \in \bm j^{\bm I}_k }\tau^{n|\delta}_{i;j}(k) \cdot \bar{\bm s}_j,
    \\ 
    \bar{\bm S}^n_i
    & = n^{-1}
    \Bigg[
        \bm S_i^{\leqslant, (1,1)}(n\delta)
        +
        \sum_{k = 1}^{\mathcal K^{\bm I}}\sum_{ j \in \bm j^{\bm I}_k }\sum_{m = 1}^{ \tau^{n|\delta}_{i;j}(k)  }
            \bm S^{\leqslant,(k+1,m)}_j\Big(\delta\cdot \tau^{n|\delta}_{i;j}(k)\Big)
    \Bigg].
    \end{aligned}
\end{equation}
Next, define the events
\begin{align}
    A^n_{0}(\delta,\bm I)
    & = 
    \Big\{
        \Big|\Big|
             \underbrace{ n^{-1}\bm S_i^{\leqslant, (1,1)}(n\delta) }_{ \delequal \bm \Delta_{0,i}  }
        \Big|\Big|\leq \tilde \Delta 
    \Big\},
    \label{proof, def, event A n 0, lemma: distance between bar s and hat S given type, cluster size}
\end{align}
and (for each $k \in [\mathcal K^{\bm I}],\ j \in \bm j^{\bm I}_k$), 
\begin{align}
    A^n_{k,j}(\delta,\bm I)
    & =
    \left\{\rule{0cm}{0.9cm}
        \norm{ 
            \underbrace{ 
                \big(\tau^{n|\delta}_{i;j}(k) \big)^{-1}
                \sum_{m = 1}^{ \tau^{n|\delta}_{i;j}(k)  }
            \bigg( \bm S^{\leqslant,(k+1,m)}_j\Big(\delta\cdot \tau^{n|\delta}_{i;j}(k)\Big) - \bar{\bm s}_j \bigg)
            }_{ \delequal \bm \Delta_{k,j} }
        }
        \leq \tilde \Delta 
    \right\}.
    \label{proof, def, event A n k j, lemma: distance between bar s and hat S given type, cluster size}
\end{align}
Also, define the event
\begin{align}
    A^n_{*}(\delta,\bm I) = 
    \big\{
        \bm I^{n|\delta}_i = \bm I,\ 
        \bar R^n_i
        \vee \hat R^{n|\delta}_i 
        > \epsilon
    \big\}
    \cap 
    A_{0}^n(\delta,\bm I)
    \cap
    \Bigg(
        \bigcap_{ k \in [\mathcal K^{\bm I}] }\bigcap_{ j \in \bm j^{\bm I}_k }
        A_{k,j}^n(\delta,\bm I)
    \Bigg).
    \label{proof: def, event A n *, lemma: distance between bar s and hat S given type, cluster size}
\end{align}
Suppose we can show that
\begin{align}
    \text{
         on the event $A^n_{*}(\delta,\bm I)$, it holds that
    }\quad 
    \frac{ \bar R^n_i  }{ \hat R^{n|\delta}_i }\in [1-\Delta, 1+ \Delta]\text{ and }
        \norm{ \bar \Theta^n_i - \hat \Theta^{n|\delta}_i } \leq \Delta.
    \label{proof, goal, lemma: distance between bar s and hat S given type, cluster size}
\end{align}
Then, by the definition in \eqref{proof: def event B n i, lemma: distance between bar s and hat S given type, cluster size},
we have 
$
B_{i}^n(\delta,\bm I,\epsilon,\Delta) \cap A^n_{*}(\delta,\bm I) = \emptyset,
$
and hence
\begin{align*}
    \P\Big(
        B^n_{i}(\delta,\bm I,\epsilon,\Delta)
    \Big)
    & \leq
    \P\Big(
       \big( A_{*}^n(\delta,\bm I)\big)^\complement
    \Big)
    \\ 
    & \leq 
    \P\Big(
        \big( A^n_{0}(\delta,\bm I) \big)^\complement
    \Big)
    +
     \sum_{k = 1}^{\mathcal K^{\bm I}}\sum_{ j \in \bm j^{\bm I}_k }
     \P\Big(
        \big(
            A^n_{k,j}(\delta,\bm I)
        \big)^\complement
     \Big).
\end{align*}
By Lemma~\ref{lemma: tail bound, pruned cluster size S i leq n delta},
it holds for any $\delta > 0$ small enough that 
$
\P\Big(
        \big( A^n_{0}(\delta,\bm I) \big)^\complement
    \Big)
     =
    \P\Big(
        \norm{ \bm S^{\leqslant}_i(n\delta) } > n\tilde\Delta
    \Big)
    =
    \lo ( n^{-\gamma} ).
$
Meanwhile, for each $k \in [\mathcal K^{\bm I}]$ and $j \in \bm j^{\bm I}_k$, 
\begin{align*}
    & \P\Big(
        \big(
            A^n_{k,j}(\delta,\bm I)
        \big)^\complement
     \Big)
     \\
     & \leq 
     \P\Bigg(
        \norm{
            \frac{1}{N}\sum_{m = 1}^N \bm S^{\leqslant, (m)}_j(N\delta) - \bar{\bm s}_j
        }
        > \tilde\Delta
        \text{ for some }N \geq \ceil{n\delta^k} 
     \Bigg)
     \qquad
     \text{due to \eqref{property: lower bound for tau n delta i j k, cluster size}}
     \\ 
     & \leq 
     \sum_{N \geq \ceil{n\delta^k}}
     \underbrace{ \P\Bigg(
        \norm{
            \frac{1}{N}\sum_{m = 1}^N \bm S^{\leqslant, (m)}_j(N\delta) - \bar{\bm s}_j
        }
        > \tilde\Delta
     \Bigg)}_{ = p(N,\delta)}.
\end{align*}
By Lemma~\ref{lemma: concentration ineq for pruned cluster S i},
it holds for any $\delta > 0$ small enough that 
$
p(N,\delta) = \lo ( N^{-\gamma - 2})
$
(as $N \to \infty$).
As a result, given any $\delta > 0$ sufficiently small,
there exists $\bar N(\delta) \in (0,\infty)$ such that 
$
p(N,\delta) \leq N^{-\gamma -2}
$
 $\forall N \geq \bar N(\delta)$.
Then, for any $n$ large enough such that $n\delta^k \geq \bar N(\delta)$, we have
$
\sum_{ N \geq \ceil{n\delta^k} } p(N,\delta) = \bo( n^{ -\gamma - 1  } )
     =
     \lo ( n^{ -\gamma  } ).
$
In summary, 
we have established that
$
\P\big(
        B^n_{i}(\delta,\bm I,\epsilon,\Delta)
    \big)
=
\lo (n^{-\gamma})
$
for any $\delta > 0$ small enough.
This concludes the proof for the case where $\mathcal K^{\bm I} \geq 1$.
Now, it remains to prove Claim~\eqref{proof, goal, lemma: distance between bar s and hat S given type, cluster size},
under the choice of $\tilde \Delta$ in \eqref{proof: constant tilde Delta, 2, lemma: distance between bar s and hat S given type, cluster size}.

\medskip
\noindent
\textbf{Proof of Claim~\eqref{proof, goal, lemma: distance between bar s and hat S given type, cluster size}}.
Using notations in \eqref{proof, def, event A n 0, lemma: distance between bar s and hat S given type, cluster size}, \eqref{proof, def, event A n k j, lemma: distance between bar s and hat S given type, cluster size},
on the event $A^n_{*}(\delta,\bm I)$ we have 
\begin{align}
    \bar{\bm S}^n_i
    & = 
    \bm \Delta_{0,i}
    +
    \sum_{k = 1}^{\mathcal K^{\bm I}}\sum_{ j \in \bm j^{\bm I}_k }
    n^{-1}\tau^{n|\delta}_{i;j}(k) \cdot \big( \bar{\bm s}_j + \bm \Delta_{k,j}\big),
    \label{proof: expression for bar S n i, lemma: distance between bar s and hat S given type, cluster size}
\end{align}
with $\norm{\bm \Delta_{k,j}} \leq \tilde \Delta $
for each $k = 0,1,\ldots,\mathcal K^{\bm I}$ and $j \in [d]$.
First, we show that on the event $A_{*}^n(\delta,\bm I)$,
\begin{enumerate}[(i)]
    \item 
        $
           \bar R^n_i \wedge  \hat R^{n|\delta}_i > \epsilon \rho/4;
        $
        
    \item
        $
        \norm{
            \bar{\bm S}^{n}_i - \hat{\bm S}^{n|\delta}_i
        }
        \big/ 
        \norm{ \hat{\bm S}^{n|\delta}_i  } \leq \Delta/2;
        $

    \item 
        $
        \norm{ \bar\Theta^n_i - \hat \Theta^{n|\delta}_i  } \leq \Delta. 
        $
\end{enumerate}
To prove (i),
note that under the $L_1$ norm $\norm{\cdot}$, we have
\begin{align}
    \hat{R}^{n|\delta}_i = 
    \norm{ 
        \hat{\bm S}^{n|\delta}_i
    }
    =
    \sum_{k = 1}^{\mathcal K^{\bm I}}\sum_{ j \in \bm j^{\bm I}_k }
        n^{-1}\tau^{n|\delta}_{i;j}(k) \cdot \norm{ \bar{\bm s}_j }.
    \label{proof: expression for hat R, lemma: distance between bar s and hat S given type, cluster size}
\end{align}
Likewise, in \eqref{proof: def hat S n delta i, lemma: distance between bar s and hat S given type, cluster size}, the coordinates of each $\bm S^{\leqslant,(k,m)}_i(n\delta)$ are non-negative by definition,
which implies
\begin{align}
    \bar R^n_i 
    =
    \norm{
        \bar{\bm S}^n_i
    }
    =
    \norm{
        \bm \Delta_{0,i}
    }
    +
     \sum_{k = 1}^{\mathcal K^{\bm I}}\sum_{ j \in \bm j^{\bm I}_k }
        n^{-1}\tau^{n|\delta}_{i;j}(k) \cdot \norm{ \bar{\bm s}_j + \bm \Delta_{k,j} }.
    \label{proof: expression for bar R, lemma: distance between bar s and hat S given type, cluster size}
\end{align}
By definitions in \eqref{proof: def, event A n *, lemma: distance between bar s and hat S given type, cluster size},
on the event $A^n_{*}(\delta,\bm I)$ we have 
$\bar R^n_i > \epsilon$ or $\hat R^{n|\delta}_i > \epsilon$.
We first consider the case of $\bar R^n_i > \epsilon$.
By \eqref{proof: expression for bar R, lemma: distance between bar s and hat S given type, cluster size},
on the event $A^n_{*}(\delta,\bm I)$ we have
$
\epsilon < \tilde \Delta + 
\sum_{k = 1}^{\mathcal K^{\bm I}}\sum_{ j \in \bm j^{\bm I}_k }
        n^{-1}\tau^{n|\delta}_{i;j}(k) \cdot  \Big( \max_{j \in [d]}\norm{ \bar{\bm s}_j} + \tilde \Delta\Big).
$
Under the choice of $\tilde \Delta$ in \eqref{proof: constant tilde Delta, 2, lemma: distance between bar s and hat S given type, cluster size}, it then holds on the event $A^n_{*}(\delta,\bm I)$
that
$
\sum_{k = 1}^{\mathcal K^{\bm I}}\sum_{ j \in \bm j^{\bm I}_k }
        n^{-1}\tau^{n|\delta}_{i;j}(k)
        >
        \frac{
            \epsilon
        }{
            4\max_{j \in [d]}\norm{\bar{\bm s}_j}
        }.
$
Together with \eqref{proof: expression for hat R, lemma: distance between bar s and hat S given type, cluster size},
we confirm that (on the event $A^n_{*}(\delta,\bm I)$)
\begin{align*}
    \hat R^{n|\delta}
    & \geq
    \min_{j \in [d]}\norm{\bar{\bm s}_j}\cdot
    \sum_{k = 1}^{\mathcal K^{\bm I}}\sum_{ j \in \bm j^{\bm I}_k }
    n^{-1}\tau^{n|\delta}_{i;j}(k)
    >
    \frac{\epsilon}{4} \cdot \frac{\min_{j \in [d]}\norm{\bar{\bm s}_j}}{\max_{j \in [d]}\norm{\bar{\bm s}_j}}
    = \epsilon \rho/4;
    \quad \text{see \eqref{proof: constant tilde Delta, 1, lemma: distance between bar s and hat S given type, cluster size}.}
\end{align*}
Similarly, if $\hat{R}^{n|\delta}_i > \epsilon$,
then by \eqref{proof: expression for hat R, lemma: distance between bar s and hat S given type, cluster size},
we get
$
 \sum_{k = 1}^{\mathcal K^{\bm I}}\sum_{ j \in \bm j^{\bm I}_k }
        n^{-1}\tau^{n|\delta}_{i;j}(k)
    > 
    \frac{\epsilon}{
        \max_{j \in [d]}\norm{\bar{\bm s}_j}
    },
$
and hence
\begin{align*}
    \bar R^n_i
    \geq 
    \sum_{k = 1}^{\mathcal K^{\bm I}}\sum_{ j \in \bm j^{\bm I}_k }
        n^{-1}\tau^{n|\delta}_{i;j}(k)
        \cdot \bigg( \min_{j \in [d]}\norm{ \bar{\bm s}_j } - \tilde{\Delta}\bigg)
    & > 
    \epsilon \rho/2
    \qquad
    \text{ by \eqref{proof: expression for bar R, lemma: distance between bar s and hat S given type, cluster size}.}
\end{align*}
This concludes the proof of Claim (i).
Next, 
it follows
from \eqref{proof: def hat S n delta i, lemma: distance between bar s and hat S given type, cluster size} and \eqref{proof: expression for bar S n i, lemma: distance between bar s and hat S given type, cluster size}
that 
$
\norm{
            \bar{\bm S}^{n}_i - \hat{\bm S}^{n|\delta}_i
        }
\leq \tilde \Delta + 
\sum_{k = 1}^{\mathcal K^{\bm I}}
\sum_{ j \in \bm j^{\bm I}_k }
n^{-1}\tau^{n|\delta}_{i;j}(k) \cdot \tilde \Delta.
$
This leads to
\begin{align*}
    \frac{
        \norm{
            \bar{\bm S}^{n}_i - \hat{\bm S}^{n|\delta}_i
        }
    }{
        \norm{ \hat{\bm S}^{n|\delta}_i }
    }
    & \leq 
    \frac{
        \tilde \Delta 
    }{
        \norm{ \hat{\bm S}^{n|\delta}_i }        
    }
    +
    \frac{
        \sum_{k = 1}^{\mathcal K^{\bm I}}\sum_{ j \in \bm j^{\bm I}_k }
        n^{-1}\tau^{n|\delta}_{i;j}(k) \cdot \tilde \Delta
    }{
        \norm{ \hat{\bm S}^{n|\delta}_i }        
    }
    \\ 
    & \leq 
    \frac{
        \tilde \Delta 
    }{
        \epsilon \cdot \rho/4
    }
    +
    \frac{
        \tilde \Delta \cdot 
        \sum_{k = 1}^{\mathcal K^{\bm I}}\sum_{ j \in \bm j^{\bm I}_k }n^{-1}\tau^{n|\delta}_{i;j}(k)
    }{
        \min_{j \in [d]}\norm{\bar{\bm s}_j} \cdot 
        \sum_{k = 1}^{\mathcal K^{\bm I}}\sum_{ j \in \bm j^{\bm I}_k }n^{-1}\tau^{n|\delta}_{i;j}(k)
    }
    \ \ 
    \text{by Claim (i) and \eqref{proof: expression for hat R, lemma: distance between bar s and hat S given type, cluster size}}
    \\
    & \leq 
    \frac{\Delta}{4} + \frac{\Delta}{4} = \frac{\Delta}{2}
    \qquad
    \text{ by our choice of $\tilde \Delta$ in \eqref{proof: constant tilde Delta, 2, lemma: distance between bar s and hat S given type, cluster size}.}
\end{align*}
This verifies Claim (ii).
For Claim (iii), 
note again that $\bar R^n_i \wedge  \hat R^{n|\delta}_i > 0$  on the event $A^n_{*}(\delta,\bm I)$
by Claim (i).
Then, by the definition in \eqref{def: polar coordinates, bar S and hat S, cluster size},
\begin{align*}
    \norm{ \bar\Theta^n_i - \hat \Theta^{n|\delta}_i  }
    & = 
    \norm{
    \frac{
        \bar{\bm S}^n_i
    }{
        \norm{\bar{\bm S}^n_i}
    }
    -
    \frac{
        \hat{\bm S}^{n|\delta}_i
    }{
        \norm{ \hat{\bm S}^{n|\delta}_i }
    }
    }
    \leq 
    \norm{
        \frac{
          \hat{\bm S}^{n|\delta}_i - \bar{\bm S}^n_i
        }{
            \norm{ \hat{\bm S}^{n|\delta}_i}
        }
    }
    +
    \norm{
        \bar{\bm S}^n_i
        \cdot 
        \bigg(
            \frac{1}{
                 \norm{ \hat{\bm S}^{n|\delta}_i}
            }
            -
            \frac{1}{
                \norm{\bar{\bm S}^n_i}
            }
        \bigg)
    }
    \\ 
    & = 
    \frac{
        \norm{\hat{\bm S}^{n|\delta}_i - \bar{\bm S}^n_i}
    }{
         \norm{ \hat{\bm S}^{n|\delta}_i}
    }
    +
    \norm{\bar{\bm S}^n_i}
    \cdot 
    \frac{
    \Big|
        \norm{\bar{\bm S}^n_i} - \norm{ \hat{\bm S}^{n|\delta}_i}
    \Big|
    }{
        \norm{\bar{\bm S}^n_i} \norm{ \hat{\bm S}^{n|\delta}_i}
    }
    \leq 
     \frac{
        \norm{\hat{\bm S}^{n|\delta}_i - \bar{\bm S}^n_i}
    }{
         \norm{ \hat{\bm S}^{n|\delta}_i}
    }
    +
     \frac{
        \norm{\hat{\bm S}^{n|\delta}_i - \bar{\bm S}^n_i}
    }{
         \norm{ \hat{\bm S}^{n|\delta}_i}
    }
    \\ 
    & \leq 
    2 \cdot \frac{\Delta}{2} = \Delta
    \qquad
    \text{by Claim (ii)}.
\end{align*}
This verifies Claim (iii).
Lastly, Claims (ii) and (iii) imply that, on the event $A_{*}^n(\delta,\bm I)$, we have
$
{ \bar R^n_i  }\big/{ \hat R^{n|\delta}_i } \in [1-\Delta, 1+ \Delta]
$
and
$
\norm{ \bar \Theta^n_i - \hat \Theta^{n|\delta}_i } \leq \Delta.
$
This concludes the proof of Claim~\eqref{proof, goal, lemma: distance between bar s and hat S given type, cluster size}.

\medskip
\noindent
\textbf{Proof of the case with $\mathcal K^{\bm I} = 0$}.
If $\mathcal K^{\bm I} = 0$ (i.e., $I_{k,j} \equiv 0\ \forall k \geq 1, j \in [d]$),
it holds on the event $\{\bm I^{n|\delta}_i  = \bm I\}$ that
$\hat{\bm S}^{n|\delta}_i = \bm 0$ and $\bar{\bm S}^n_i = n^{-1}\bm S_i^{\leqslant, (1,1)}(n\delta)$.
Therefore, 
on the event 
$
\big\{
        \bm I^{n|\delta}_i = \bm I,\ 
        \bar R^n_i \vee \hat R^{n|\delta}_i > \epsilon
    \big\},
$
we have $\norm{  n^{-1}\bm S_i^{\leqslant, (1,1)}(n\delta) } > \epsilon$.
Applying Lemma~\ref{lemma: tail bound, pruned cluster size S i leq n delta} again,
we get
$
    \P\Big(
        \norm{\bm S_i^{\leqslant,(1,1)}(n\delta)} > n\epsilon
    \Big) = \lo (n^{-\gamma})
$
for any $\delta > 0$ small enough.
\end{proof}

We are now ready to state the proof of Proposition~\ref{proposition: asymptotic equivalence, tail asymptotics for cluster size}. 

\begin{proof}[Proof of Proposition~\ref{proposition: asymptotic equivalence, tail asymptotics for cluster size}]
\linksinpf{proposition: asymptotic equivalence, tail asymptotics for cluster size}


Define  the event
\begin{align}
    E^n_\delta(\Delta) \delequal
        \big\{
            \bar R^n_i \vee \hat R^{n|\delta}_i > \Delta
        \big\}
        \cap
        \bigg\{
            \frac{ \bar R^n_i }{ \hat R^{n|\delta}_i } \notin [1-\Delta,1+\Delta]
            \text{ or }
            \norm{ \bar\Theta^n_i - \hat \Theta^{n|\delta}_i } > \Delta
        \bigg\}.
        \nonumber
\end{align}
First, 
due to the arbitrariness of $\Delta > 0$ in Claim \eqref{claim, proposition: asymptotic equivalence, tail asymptotics for cluster size}
and the simple fact (for any $c \in (0,1)$ and $r,r^\prime \geq 0$)
\begin{align*}
    r \vee r^\prime > 0,\ \frac{r}{r^\prime} \notin [1 -c, 1+ c]
    \quad 
    \Longrightarrow
    \quad
    r \vee r^\prime > 0,\ \frac{r^\prime}{r} \notin [1/(1+c), 1/(1- c)],
\end{align*}
it is equivalent to show that, given $\Delta \in (0,1)$,
it holds for any $\delta > 0$ small enough that
$
\P\big( E^n_\delta(\Delta)\big) = \lo (n^{-\gamma}).
$
Next,
recall the definitions of $\tilde{\alpha}(\cdot)$ in \eqref{def: tilde c I, cost for generalized type, cluster size}
and $\alpha(\cdot)$ in \eqref{def: cost function, cone, cluster}.
Due to
$
\P\big(E^n_\delta(\Delta)\big)
    = 
    \P\Big(E^n_\delta(\Delta) \cap \big\{ \tilde{\alpha}\big(\bm I^{n|\delta}_i\big) > \alpha(\bm j)  \big\} \Big)
    +
    \P\Big(E^n_\delta(\Delta)\cap \big\{ \tilde{\alpha}\big(\bm I^{n|\delta}_i\big) \leq \alpha(\bm j)  \big\} \Big),
$
it suffices to show that (for any $\delta > 0$ small enough)
\begin{align}
    \lim_{n \to \infty} \big(\lambda_{\bm j}(n)\big)^{-1}
    \P\Big( \tilde{\alpha}\big(\bm I^{n|\delta}_i\big) > \alpha(\bm j) \Big)
    & = 0,
    \label{proof: goal 1, asymptotic equivalence, tail asymptotics for cluster size}
    \\ 
    \lim_{n \to \infty} \big(\lambda_{\bm j}(n)\big)^{-1}
    \P\Big(E^n_\delta(\Delta) \cap \big\{ \tilde{\alpha}\big(\bm I^{n|\delta}_i\big) \leq \alpha(\bm j)  \big\} \Big)
    & = 0.
    \label{proof: goal 2, asymptotic equivalence, tail asymptotics for cluster size}
\end{align}

\medskip
\noindent
\textbf{Proof of Claim~\eqref{proof: goal 1, asymptotic equivalence, tail asymptotics for cluster size}}.
We first define the set (where the partial ordering for generalized types is defined in \eqref{proof: def ordering of generalized types, cluster size})
\begin{align*}
    \partial \mathscr I(\bm j) \delequal
    \Big\{
        \bm I \in \widetilde{\mathscr I}:\ 
        \tilde{\alpha}(\bm I) > \alpha(\bm j);\ 
        \tilde{\alpha}(\hat{\bm I}) \leq \alpha(\bm j)
        \text{ for any $\hat{\bm I} \in \widetilde{\mathscr I}$ with }
        \hat{\bm I} \subseteq \bm I,\ \hat{\bm I} \neq \bm I 
    \Big\},
\end{align*}
and stress the following: 
for any $\bm I^\prime \in \widetilde{\mathscr I}$ with $\tilde{\alpha}(\bm I^\prime) > \alpha(\bm j)$, there must be some $\bm I \subseteq \bm I^\prime$ such that $\bm I \in \partial \mathscr I(\bm j).$
To see why, note that 
 the function $\tilde{\alpha}(\cdot)$ in \eqref{def: tilde c I, cost for generalized type, cluster size} is linear w.r.t.\ the $I_{k,j}$'s, and the coefficients $(\alpha^*(j) - 1)_{j \in [d]}$ are strictly positive under Assumption~\ref{assumption: heavy tails in B i j}.
Then, given any $\bm I^\prime = (I^\prime_{k,j})_{k \geq 1, j \in [d]} \in \widetilde{\mathscr I}$ with $\tilde{\alpha}(\bm I^\prime) > \alpha(\bm j)$,
by identifying the smallest $K \in [\mathcal K^{\bm I^\prime}]$ satisfying
\begin{align*}
    1 + \sum_{k = 1}^K\sum_{j \in [d]}I^\prime_{k,j}\big(\alpha^*(j) - 1\big) > \alpha(\bm j),
\end{align*}
and setting $\bm I = (I_{k,j})_{k \geq 1, j \in [d]}$ with $I_{k,j} = I^{\prime}_{k,j}\ \forall j$ if $k \leq K$ and $I_{k,j} \equiv 0$ if $k > K$,
we have 
$\bm I \subseteq \bm I^\prime$ and $\bm I \in \partial \mathscr I(\bm j).$
Also, due to $\alpha^*(j) - 1> 0$ for each $j \in [d]$,
the set $\partial \mathscr I(\bm j)$ contains only finitely many elements.
In summary, we get
$
\big\{ \tilde{\alpha}\big(\bm I^{n|\delta}_i\big) > \alpha(\bm j)  \big\}
    \subseteq
    \bigcup_{ \bm I \in  \partial \mathscr I(\bm j)}\big\{ \bm I \subseteq \bm I^{n|\delta}_i \big\},
$
and it suffices to show that, given $\bm I \in \partial \mathscr I(\bm j)$, it holds for all $\delta > 0$ small enough that 
$
\P\big( \bm I \subseteq \bm I^{n|\delta}_i\big) = \lo \big(\lambda_{\bm j}(n)\big)
$
as $n \to \infty$.
To proceed, let
\begin{align}
    \tilde \lambda^{\bm I}(n) \delequal 
n^{-1}
        \prod_{k = 1}^{ \mathcal K^{\bm I} }\prod_{ j \in \bm j^{\bm I}_k }
        n\P\big(B_{j \leftarrow l^*(j)} > n\delta\big),
\qquad
\forall \bm I \in \widetilde{\mathscr I},
\label{proof, def tilde lambda generalized type I}
\end{align}
and fix some $\bm I \in  \partial \mathscr I(\bm j)$.
First, due to $\tilde \alpha(\bm I) > \alpha(\bm j)$, we have $\mathcal K^{\bm I} \geq 1$ (otherwise, we get $\tilde \alpha(\bm I) = 0$ by definition).
Then,
by Lemma~\ref{lemma: crude estimate, type of cluster},
it holds for any $\delta > 0$ small enough that 
 $
 \P\big( \bm I \subseteq \bm I^{n|\delta}_i\big)
 = \bo \big(\tilde \lambda^{\bm I}(n) \big).
 $
Due to $\lambda_{\bm j}(n) \in \RV_{ -\alpha(\bm j)  }(n)$,
$
\tilde \lambda^{\bm I}(n) \in \RV_{ -\tilde\alpha(\bm I)  }(n),
$
and 
$
\tilde{\alpha}(\bm I) > \alpha(\bm j),
$
we get 
$
\P\big( \bm I \subseteq \bm I^{n|\delta}_i\big)  = \lo \big(\lambda_{\bm j}(n)\big).
$
This concludes the proof of Claim~\eqref{proof: goal 1, asymptotic equivalence, tail asymptotics for cluster size}.

\medskip
\noindent
\textbf{Proof of Claim~\eqref{proof: goal 2, asymptotic equivalence, tail asymptotics for cluster size}}.
Due to $\alpha^*(j) > 1$ for each $j \in [d]$ (see \eqref{def: cluster size, alpha * l * j} and Assumption~\ref{assumption: heavy tails in B i j}),
there are only finitely many generalized types $\bm I \in \widetilde{\mathscr I}$ such that $\tilde{\alpha}(\bm I) \leq \alpha(\bm j)$.
Therefore, it suffices to fix one of such $\bm I$ and show that
$
\P\big( \{ \bm I^{n|\delta}_i = \bm I  \} \cap  E^n_\delta(\Delta) \big) = \lo\big( \lambda_{\bm j}(n)  \big) 
$
(as $n \to \infty$) for any $\delta > 0$ small enough.
Applying Lemma~\ref{lemma: distance between bar s and hat S given type, cluster size}, we conclude the proof of
Claim~\eqref{proof: goal 2, asymptotic equivalence, tail asymptotics for cluster size}.
\end{proof}

\subsection{Proof of Proposition~\ref{proposition, M convergence for hat S, tail asymptotics for cluster size}}
\label{subsec: proof, proposition, M convergence for hat S, tail asymptotics for cluster size}

We first prepare a few technical lemmas.
Lemma~\ref{lemma: type and generalized type} states useful properties of 
types (Definition~\ref{def: cluster, type}) and
generalized types (Definition~\ref{def: cluster, generalized type}).

\begin{lemma}\label{lemma: type and generalized type}
\linksinthm{lemma: type and generalized type}
Let Assumption~\ref{assumption: heavy tails in B i j} hold.
For any $\bm j \subseteq [d]$ that is non-empty, let
\begin{align}
    \mathscr I(\bm j)
    \delequal
    \{\bm I \in \mathscr I:\ \bm j^{\bm I} = \bm j  \},
    \quad  
    \widetilde{\mathscr I}(\bm j)\delequal
    \{
    \bm I \in \widetilde{\mathscr I}:\ 
     \bm j^{\bm I} = \bm j,\ \tilde{\alpha}(\bm I) = \alpha(\bm j)
    \},
    \label{def: mathscr I bm j, lemma: type and generalized type}
\end{align}
where $\mathscr I$ is the set of all types, $\widetilde{\mathscr I}$ is the set of all generalized types, 
$\bm j^{\bm I}$ is the set of active indices of $\bm I$ (see Definition~\ref{def: cluster, generalized type}),
and $\tilde{\alpha}(\cdot)$, $\alpha(\cdot)$ are defined in \eqref{def: tilde c I, cost for generalized type, cluster size} and \eqref{def: cost function, cone, cluster}, respectively.
The following claims hold for any  non-empty $\bm j \subseteq [d]$:
\begin{enumerate}[(i)]
    \item 
        $\mathscr I(\bm j) \subseteq \widetilde{\mathscr I}(\bm j)$;

    \item 
        It holds for any $\bm I \in \widetilde{\mathscr I}(\bm j) \setminus \mathscr I(\bm j)$ that $|\bm j^{\bm I}_1| \geq 2$.
\end{enumerate}
\end{lemma}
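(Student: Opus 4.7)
The plan is to deduce both claims directly from the structural definitions of \(\mathscr I\) and \(\widetilde{\mathscr I}\) together with the already-established relationships \eqref{property: cost of j I and type I, 1, cluster size}, \eqref{property: cost of j I and type I, 2, cluster size}, and \eqref{property: cost of generalized type when alpha agrees with tilde alpha} between the cost functions \(\alpha(\cdot)\) and \(\tilde{\alpha}(\cdot)\). The key observation is that the two ``extra'' restrictions in Definition~\ref{def: cluster, type} (relative to Definition~\ref{def: cluster, generalized type})---namely that each index \(j\) can be active at most once across depths, and that depth \(k=1\) hosts at most one active index---have very different consequences for \(\tilde{\alpha}\): violating the first strictly inflates \(\tilde\alpha\) above \(\alpha(\bm j^{\bm I})\), whereas violating the second leaves \(\tilde \alpha\) equal to \(\alpha(\bm j^{\bm I})\). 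Thus the condition \(\tilde\alpha(\bm I) = \alpha(\bm j)\) built into \(\widetilde{\mathscr I}(\bm j)\) automatically enforces the ``at most once per index'' constraint, and any failure of \(\bm I\) to belong to \(\mathscr I(\bm j)\) must be due to the depth-1 constraint.

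For claim (i), fix \(\bm I \in \mathscr I(\bm j)\). Trivially \(\mathscr I \subseteq \widetilde{\mathscr I}\), and \(\bm j^{\bm I} = \bm j\) by assumption. Because \(\bm j \neq \emptyset\) we have \(\mathcal K^{\bm I} \geq 1\), so \eqref{property: cost of j I and type I, 2, cluster size} applies and gives \(\tilde \alpha(\bm I) = \alpha(\bm j^{\bm I}) = \alpha(\bm j)\). Hence \(\bm I \in \widetilde{\mathscr I}(\bm j)\).

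For claim (ii), let \(\bm I \in \widetilde{\mathscr I}(\bm j) \setminus \mathscr I(\bm j)\). I would argue by contrapositive on the two distinguishing axioms of Definition~\ref{def: cluster, type}. First, suppose that some \(j \in [d]\) satisfies \(\#\{k \geq 1 : I_{k,j} = 1\} \geq 2\). By \eqref{property: cost of generalized type when alpha agrees with tilde alpha}, this would force \(\alpha(\bm j^{\bm I}) < \tilde{\alpha}(\bm I)\), contradicting \(\tilde\alpha(\bm I) = \alpha(\bm j) = \alpha(\bm j^{\bm I})\). Therefore every \(j \in [d]\) has \(\sum_{k \geq 1} I_{k,j} \leq 1\), so the only axiom of Definition~\ref{def: cluster, type} that can fail is the one requiring \(|\bm j^{\bm I}_1| \leq 1\). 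Since \(\bm I \notin \mathscr I\), this axiom must indeed fail, i.e.\ \(|\bm j^{\bm I}_1| \geq 2\).

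No real obstacle is anticipated: the argument is a bookkeeping exercise that repackages \eqref{property: cost of j I and type I, 2, cluster size} and \eqref{property: cost of generalized type when alpha agrees with tilde alpha}. The only point worth being careful about is the boundary case \(\mathcal K^{\bm I} = 0\), which is excluded in claim (i) by \(\bm j \neq \emptyset\) and is irrelevant to claim (ii) because its hypothesis \(\bm I \notin \mathscr I\) also forces \(\mathcal K^{\bm I} \geq 1\) (the all-zero tuple lies in both \(\mathscr I\) and \(\widetilde{\mathscr I}\)).
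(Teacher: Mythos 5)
Your proposal is correct and follows essentially the same route as the paper: part (i) from $\mathscr I \subseteq \widetilde{\mathscr I}$ together with \eqref{property: cost of j I and type I, 2, cluster size}, and part (ii) by noting that only the two extra axioms of Definition~\ref{def: cluster, type} can fail and using \eqref{property: cost of generalized type when alpha agrees with tilde alpha} (equivalently, $\tilde\alpha(\bm I)>\alpha(\bm j^{\bm I})$ when some index is active at two depths) to rule out the repeated-index case, forcing $|\bm j^{\bm I}_1|\geq 2$. Your extra remark on the $\mathcal K^{\bm I}=0$ boundary case is consistent with the paper's treatment and introduces no gap.
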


Lemma~\ref{lemma: choice of bar epsilon bar delta, cluster size} studies geometric properties
of sets that are bounded away from $\C^d_\leqslant(\bm j)$.

\begin{lemma}\label{lemma: choice of bar epsilon bar delta, cluster size}
\linksinthm{lemma: choice of bar epsilon bar delta, cluster size}
Let Assumption~\ref{assumption: heavy tails in B i j} hold.
Let $\bm j \subseteq [d]$ be non-empty, and $B \subseteq [0,\infty) \times \mathfrak N^d_+$ be  a Borel set that is bounded away from $\C^d_\leqslant(\bm j)$ (see \eqref{def: cone C d leq j, cluster size}) under $\bm d_\textbf{U}$ (see \eqref{def: metric for polar coordinates}).
\begin{enumerate}[(a)]
    \item 
        There exist $\bar\epsilon > 0$ and $\bar\delta > 0$ such that the following claims hold:
        given $\bm x = \sum_{i \in \bm j}w_i\bar{\bm s}_i$ with $w_i \geq 0\ \forall i \in \bm j$, 
        if $\Phi(\bm x) \in B$ (with $\Phi(\cdot)$ defined in \eqref{def: Phi, polar transform}),
        then
        \begin{itemize}
            \item
                $\min_{j \in \bm j}w_j \geq \bar\epsilon$,\smallskip
                
            \item 
                ${ w_j }/{ w_i  } \geq \bar\delta$ for any $j \in \bm j$ and $i \in \bm j$;\smallskip
        \end{itemize}

    \item 
        $
        \mathbf C^{\bm I} \circ \Phi^{-1}(B) < \infty
        $
        for any $\bm I \in \mathscr I$ with $\bm j^{\bm I} = \bm j$,
        where
        $\bm j^{\bm I}$ is the set of active indices of type $\bm I$ in Definition~\ref{def: cluster, type},
        $\mathbf C^{\bm I}$ is defined in \eqref{def: measure C I, cluster},
        and $\mu \circ \Phi^{-1}(B) = \mu\big( \Phi^{-1}(B) \big)$.
\end{enumerate}
\end{lemma}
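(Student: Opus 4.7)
The plan is to prove part (a) by a compactness/contradiction argument exploiting the scaling invariance of the cone $\R^d_\leqslant(\bm j)$, and then to deduce part (b) from the ratio bounds in (a) via a one-parameter rescaling that collapses the integral defining $\mathbf C^{\bm I}\circ \Phi^{-1}(B)$ to a radial integral $\int_{\bar\epsilon}^\infty t^{-1-\alpha(\bm j)}\,dt$ times a bounded angular piece.

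For part (a), let $\eta \delequal \bm d_\textbf{U}(B,\C^d_\leqslant(\bm j)) > 0$. Since $(0,\bm\theta)\in \C^d_\leqslant(\bm j)$ for every $\bm\theta \in \mathfrak N^d_+$ and $\R^d_\leqslant(\bm j)$ is a cone, every $(r,\bm\theta)\in B$ satisfies both $r\geq \eta$ and $\norm{\bm\theta - \bm\theta'}\geq \eta$ for every $\bm\theta' \in \R^d_\leqslant(\bm j)\cap \mathfrak N^d_+$. Suppose for contradiction that there is a sequence $\bm x_n = \sum_{i\in\bm j} w_{i,n}\bar{\bm s}_i$ with $\Phi(\bm x_n)\in B$ and normalized weights $\tilde w_{i,n} \delequal w_{i,n}/\sum_k w_{k,n}$ satisfying $\tilde w_{j_0,n}\to 0$ for some $j_0\in\bm j$. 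After extraction on the compact simplex, the directions $\bm x_n/\norm{\bm x_n}$ converge to an element of $\R^d(\bm j\setminus\{j_0\})\cap\mathfrak N^d_+$; since $\alpha(\bm j\setminus\{j_0\}) = \alpha(\bm j) - (\alpha^*(j_0)-1) < \alpha(\bm j)$, this limit lies in $\R^d_\leqslant(\bm j)\cap\mathfrak N^d_+$, contradicting the angular bound. Hence $\tilde w_{j,n}\geq \tilde\epsilon$ uniformly over $\Phi^{-1}(B)$ for some $\tilde\epsilon>0$; combined with $\sum_k w_{k,n}\geq \eta/\max_i \norm{\bar{\bm s}_i}$ (from $r\geq\eta$), this gives $\min_j w_j \geq \bar\epsilon$, and the ratio bound $w_j/w_i = \tilde w_j/\tilde w_i \geq \tilde\epsilon$ is immediate. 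The singleton case $\bm j=\{j_0\}$ is even simpler since $w_{j_0}\norm{\bar{\bm s}_{j_0}} = r \geq \eta$.

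For part (b), fix a type $\bm I\in\mathscr I$ with $\bm j^{\bm I}=\bm j$. Because $\bm I$ is a type (not merely a generalized type), each $j\in\bm j$ appears at a unique depth $k(j)$, and $\bm j^{\bm I}_1=\{j_0\}$ is a singleton. For any $\bm x = \sum_{(k,j)\text{ active}} w_{k,j}\bar{\bm s}_j \in \Phi^{-1}(B)$, setting $w_i \delequal w_{k(i),i}$ lets us invoke part (a), yielding $w_{k(j),j}\geq \bar\epsilon$ and $w_{k(j),j}/w_{k(i),i}\in[\bar\delta,1/\bar\delta]$. I would then change variables by setting $t = w_{1,j_0}$ and $u_{k,j} = w_{k,j}/t$ for the remaining active pairs, confining the integration in \eqref{def: measure C I, cluster} to the region $t\geq\bar\epsilon$ and $\bm u\in[\bar\delta,1/\bar\delta]^{|\bm j|-1}$.

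A direct homogeneity computation then pins down the Jacobian. The density $\nu^{\bm I}(d\bm w)$ produces a factor $t^{-1-\sum_{j\in\bm j}\alpha^*(j)}\,dt$ in $t$, times an $\bm u$-density bounded on the compact box, while the polynomial $\prod_{k=1}^{\mathcal K^{\bm I}-1} g_{\bm j^{\bm I}_k\leftarrow \bm j^{\bm I}_{k+1}}$ is homogeneous of degree $\sum_{k=2}^{\mathcal K^{\bm I}}|\bm j^{\bm I}_k|=|\bm j|-1$ in the $w_{k,j}$'s (using $|\bm j^{\bm I}_1|=1$), contributing a factor $t^{|\bm j|-1}$ times a bounded polynomial in $\bm u$. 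The net exponent of $t$ equals $|\bm j|-2-\sum_{j\in\bm j}\alpha^*(j) = -1-\alpha(\bm j)$ by the identity $\alpha(\bm j)=1+\sum_j(\alpha^*(j)-1)$. Since Assumption~\ref{assumption: heavy tails in B i j} gives $\alpha(\bm j)>1$, $\int_{\bar\epsilon}^\infty t^{-1-\alpha(\bm j)}\,dt<\infty$ and the $\bm u$-integral over the compact box is finite, yielding $\mathbf C^{\bm I}\circ\Phi^{-1}(B)<\infty$. The main obstacle is purely the careful bookkeeping of these homogeneity exponents; no new structural ideas beyond part (a) are required.
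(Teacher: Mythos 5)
Your proposal is correct, and it rests on the same two observations that drive the paper's proof: for (a), that deleting a coordinate with small relative weight places the direction of $\bm x$ near the lower cone $\R^d_\leqslant(\bm j)$ (here you correctly use $\alpha(\bm j\setminus\{j_0\})<\alpha(\bm j)$, so $\R^d(\bm j\setminus\{j_0\})\subseteq\R^d_\leqslant(\bm j)$, with the singleton case handled separately), contradicting the bounded-away hypothesis in the angular component of $\bm d_\textbf{U}$; and for (b), the homogeneity of $g_{\mathcal I\leftarrow\mathcal J}$ (degree $|\mathcal J|$, totalling $|\bm j|-1$ across depths because $|\bm j^{\bm I}_1|=1$) together with the scaling of $\nu^{\bm I}$. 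The implementations differ mildly. In (a) the paper fixes explicit constants and shows quantitatively that removing the small coordinate perturbs the angle by at most a prescribed $\Delta$, whereas you argue by compactness of the weight simplex and subsequence extraction; both are valid, and the uniform $\tilde\epsilon$ follows since $\bm j$ is finite. In (b) the paper decomposes the admissible region into multiplicative shells indexed by $w_{1,j^{\bm I}_1}\in[\rho^n,\rho^{n+1})$ and shows the shell masses decay like $\hat\rho^{-n}$ with $\hat\rho=\rho^{\alpha(\bm j)}>1$, while you integrate the scale variable continuously via $t=w_{1,j^{\bm I}_1}$, $u_{k,j}=w_{k,j}/t$; your exponent bookkeeping, $|\bm j|-2-\sum_{j\in\bm j}\alpha^*(j)=-1-\alpha(\bm j)$ (which implicitly and correctly absorbs the Jacobian $t^{|\bm j|-1}$), matches the paper's geometric ratio, and the restriction $t\geq\bar\epsilon$, $u_{k,j}\in[\bar\delta,1/\bar\delta]$ is exactly what part (a) licenses because each active $j$ occurs at a unique depth for a genuine type. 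I see no gap; the two arguments are equivalent up to discretizing versus integrating out the radial scale.
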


These two results follow directly from the definitions of types, the functions $\tilde\alpha(\cdot)$, $\alpha(\cdot)$, and the measure $\mathbf C^{\bm I}$.
We collect the proofs of Lemmas~\ref{lemma: type and generalized type} and \ref{lemma: choice of bar epsilon bar delta, cluster size} in Section~\ref{subsec: proof, technical lemmas, cluster size} of the Appendix.
Next, we prepare results for the asymptotic analysis of $\big(n^{-1}\tau^{n|\delta}_{i;j}(k)\big)_{k \geq 1, j \in [d]}$.
For each $\bm I \in \mathscr I$ and $M,c > 0$, we define the set
\begin{equation}\label{proof: def, set B type I M c, cluster size}
    \begin{aligned}
        & \notationdef{notation-set-hat-B-type-I-M-c}{E^{\bm I}(M,c)} \delequal 
    \Bigg\{
        (w_{k,j})_{k \geq 1, j \in [d]} \in [0,\infty)^{ \infty \times d }:
        \\ 
        &\qquad
        w_{k,j} > M\text{ and } \min_{ k^\prime \in [\mathcal K^{\bm I}], j^\prime \in \bm j^{\bm I}_{k^\prime} }\frac{ w_{k,j}  }{ w_{k^\prime, j^\prime} } \geq c
        \ \forall k \in [\mathcal K^{\bm I}],\ j \in \bm j^{\bm I}_{k};\ 
        w_{k,j} = 0 \ \forall k \geq 1,\ j \notin \bm j^{\bm I}_k
    \Bigg\},
    \end{aligned}
\end{equation}
where $\mathcal K^{\bm I}$ is the depth of type $\bm I$, and $\bm j^{\bm I}_k$ is the set of active indices of type $\bm I$ at depth $k$; see Definition~\ref{def: cluster, type}.
In particular, for any $k > \mathcal K^{\bm I}$ we have $\bm j^{\bm I}_k = \emptyset$ by definition.
Meanwhile, recall the definitions of $\tau^{n|\delta}_{i;j}(k)$ in \eqref{def: layer zero, from pruned cluster to full cluster} and \eqref{def: from pruned cluster to full cluster, tau and S at step k + 1}.
In this section, 
we write
\begin{align}
    \notationdef{notation-bold-tau-n-delta-i}{\bm \tau^{n|\delta}_{i}} \delequal
    \Big( \tau^{n|\delta}_{i;j}(k) \Big)_{ k \geq 1, j \in [d] }.
    \label{proof: def bm tau n delta i, cluster size }
\end{align}
Lemma~\ref{lemma: prob of type I, tail prob of tau, cluster size} bounds
 the probability that $n^{-1}\bm \tau^{n|\delta}_{i}$ lies outside of a bounded set.

\begin{lemma}\label{lemma: prob of type I, tail prob of tau, cluster size}
\linksinthm{lemma: prob of type I, tail prob of tau, cluster size}
Let Assumptions~\ref{assumption: subcriticality}--\ref{assumption: regularity condition 2, cluster size, July 2024} hold.
Let $i \in [d]$, $c\in (0,1)$, and let the type $\bm I = (I_{k,j})_{ k \geq 1, j \in [d] } \in \mathscr I$ be such that $\mathcal K^{\bm I} \geq 1$ (i.e., $I_{k,j} \geq 1$ for some $k,j$).
There exists some function $C^{\bm I}_i(\cdot,c)$ with $\lim_{M \to \infty}C^{\bm I}_i(M,c) = 0$ such that,
for each $M > 0$,
\begin{align}
    \limsup_{ n \to \infty }
    \Big(\lambda_{ \bm j^{\bm I} }(n)\Big)^{-1}
    \P\Big(
        n^{-1}\bm \tau^{n|\delta}_i \in E^{\bm I}(M,c)
    \Big) \leq C^{\bm I}_i(M,c),
    \quad
    \forall \delta > 0\text{ small enough},
    \nonumber
\end{align}
where $\bm j^{\bm I}$ is the set of active indices of type $\bm I$ (see Definition~\ref{def: cluster, type}),
$\lambda_{\bm j}(n)$ is defined in \eqref{def: rate function lambda j n, cluster size},
${\bm I^{n|\delta}_i} = \big(I^{n|\delta}_{i;j}(k)\big)_{k \geq 1,\ j \in [d]}$ is defined in \eqref{def, I k M j for M type, cluster size},
and $E^{\bm I}(M,c)$ is defined in \eqref{proof: def, set B type I M c, cluster size}.
\end{lemma}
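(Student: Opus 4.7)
The plan is to bound $\P\big(n^{-1}\bm \tau^{n|\delta}_i \in E^{\bm I}(M,c)\big)$ by unrolling the Markov-type decomposition \eqref{property: decompo of event type I using Markov property, cluster size} along the depths $k = 1,\ldots,\mathcal K^{\bm I}$, exactly as in the proof of Lemma~\ref{lemma: crude estimate, type of cluster}, and then applying the refined asymptotics of Lemma~\ref{lemma: cluster size, asymptotics, N i | n delta, cdot j, crude estimate} at each depth. The key observation is that the set $E^{\bm I}(M,c)$ forces $\bm I^{n|\delta}_i = \bm I$ exactly (via the zero-coordinates at non-active $(k,j)$) \emph{plus} the quantitative lower bound $\tau^{n|\delta}_{i;j}(k) > nM$ on every active coordinate and the ratio constraint, so the proof reduces to the same factorization as in Lemma~\ref{lemma: crude estimate, type of cluster} with each factor enhanced by a power-law tail penalty in $M$.

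First I would follow the chain of inequalities in \eqref{proof: applying markov property, decomp of events, lemma: crude estimate, type of cluster} to obtain, up to conditioning on $\big(\tau^{n|\delta}_{i;l}(k-1)\big)_{l \in \bm j^{\bm I}_{k-1}}$, an upper bound involving the sums $W^{>|\delta}_{\bm t(\bm j^{\bm I}_{k-1});j}$ defined in \eqref{def, proof cluster size, N W mathcal I mathcal J bcdot j}. The ratio constraint in $E^{\bm I}(M,c)$ guarantees that at every depth $k$ the conditioning values $t_l = \tau^{n|\delta}_{i;l}(k-1)$ satisfy $t_l \geq cnM$, so the truncation thresholds $\delta t_l$ are uniformly of order $n$, placing us squarely in the regime of Lemma~\ref{lemma: cluster size, asymptotics, N i | n delta, cdot j, crude estimate}. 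Then at each depth I would use (a) part (iii) of Lemma~\ref{lemma: cluster size, asymptotics, N i | n delta, cdot j, crude estimate} to bound the probability $\P\big(N^{>|\delta}_{\bm t(\bm j^{\bm I}_{k-1});j} \geq 1 \text{ iff } j \in \bm j^{\bm I}_k\big)$ by a constant times $\prod_{j \in \bm j^{\bm I}_k} n\P(B_{j \leftarrow l^*(j)} > n\delta)$, and (b) the refined conditional tail estimate \eqref{claim 2, part iii, lemma: cluster size, asymptotics, N i | n delta, cdot j, refined estimates} to extract an additional factor of order $M^{-\alpha^*(j)}$ per active coordinate reflecting the lower bound $W^{>} > nM$. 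Multiplying across depths and invoking \eqref{property: rate function for type I, cluster size} to identify $\prod_{k=1}^{\mathcal K^{\bm I}}\prod_{j \in \bm j^{\bm I}_k} n\P(B_{j \leftarrow l^*(j)} > n\delta) = n\,\lambda_{\bm j^{\bm I}}(n)$, the normalization by $\lambda_{\bm j^{\bm I}}(n)$ leaves a constant (depending only on $c$ and the shape of $\bm I$) times $\prod_{k\in[\mathcal K^{\bm I}]}\prod_{j \in \bm j^{\bm I}_k} M^{-\alpha^*(j)}$, which we can take as $C^{\bm I}_i(M,c)$; since $\alpha^*(j) > 0$ for every $j$, this tends to $0$ as $M \to \infty$.

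The main obstacle will be extracting the power-law tail factor in $M$ at depths $k \geq 2$. There, $W^{>|\delta}_{\bm t(\bm j^{\bm I}_{k-1});j}$ is a sum over $l \in \bm j^{\bm I}_{k-1}$ and over $m \leq t_l$ of i.i.d.\ copies of $W^{>,(m)}_{l;j}(\delta t_l)$, and the event $\{W^{>|\delta}_{\bm t(\bm j^{\bm I}_{k-1});j} > nM\}$ conditional on the pruning event is not a clean single-variable tail. To handle this, I would further decompose using \eqref{claim 1, part iii, lemma: cluster size, asymptotics, N i | n delta, cdot j, refined estimates} to identify, for each active $j$, the (a.s.\ unique) dimension $l \in \bm j^{\bm I}_{k-1}$ and index $m$ where pruning occurs, then apply \eqref{claim 2, part iii, lemma: cluster size, asymptotics, N i | n delta, cdot j, refined estimates} to that single term; the remaining contributions are negligible up to constants, again uniformly on the region where the ratio constraint keeps $\delta t_l$ bounded below by a constant multiple of $n\delta$. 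Once this bookkeeping is in place, the product structure of the Markov decomposition immediately yields the required bound, and the resulting expression for $C^{\bm I}_i(M,c)$ is monotone in $M$ and vanishes in the limit.
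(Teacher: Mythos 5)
Your skeleton (the Markov factorization \eqref{property: decompo of event type I using Markov property, cluster size} across depths, followed by per-depth tail estimates) is the same as the paper's, and your treatment of depth $1$ is fine, since there the truncation level really is $n\delta$, so $\P\big(W^{>}_{i;j_1^{\bm I}}(n\delta)>nM\big)\approx \bar s_{i,l^*(j_1^{\bm I})}\P(B_{j_1^{\bm I}\leftarrow l^*(j_1^{\bm I})}>n)\,M^{-\alpha^*(j_1^{\bm I})}$. The genuine gap is at depths $k\geq 2$. There the truncation threshold is not $n\delta$ but $\delta\,\tau^{n|\delta}_{i;l}(k-1)\geq \delta nM$ (see \eqref{def: from pruned cluster to full cluster, tau and S at step k + 1, 4}), so conditionally on pruning having occurred at depth $k$, the pruned mass already exceeds $\delta nM$; the conditional probability that it also exceeds $nM$ is of order $\big(\delta t_l/(nM)\big)^{\alpha^*(j)}\wedge 1\geq \delta^{\alpha^*(j)}$, i.e.\ bounded below uniformly in $M$. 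Hence the ``extra factor $M^{-\alpha^*(j)}$ per active coordinate'' that you claim to extract from \eqref{claim 2, part iii, lemma: cluster size, asymptotics, N i | n delta, cdot j, refined estimates} at depths $k\geq 2$ does not exist: that estimate, correctly rescaled to threshold $\delta t_l$, yields only a $\delta$-dependent constant (this is exactly how the paper uses it, via the ratio constraint $W^{>|\delta}_{\bm t(k-1);j}\geq c\max_l t_l$, to cancel the $\delta$-dependence of $\P(B>n\delta)$, not to gain decay in $M$). In the paper, the $M$-decay at depths $k\geq 2$ comes from a different mechanism entirely: the refined, uniform-in-$\bm t$ asymptotics for $\P\big(N^{>|\delta}_{\bm t(\mathcal I);j}\geq 1\text{ iff }j\in\mathcal J\big)$ (Lemma~\ref{lemma: cluster size, asymptotics, N i | n delta, cdot j, refined estimate}), whose prefactor $C_{\mathcal I\leftarrow\mathcal J}(n^{-1}\bm t(\mathcal I))$ is bounded by constants times $\prod_j M^{-(\alpha^*(j)-1)}$ when $t_l\geq nM$; your plan replaces this by the crude part~(iii) bound of Lemma~\ref{lemma: cluster size, asymptotics, N i | n delta, cdot j, crude estimate}, which carries no $M$-dependence at all.

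Two further remarks. First, the single-root estimates \eqref{claim 1, part iii, lemma: cluster size, asymptotics, N i | n delta, cdot j, refined estimates}--\eqref{claim 2, part iii, lemma: cluster size, asymptotics, N i | n delta, cdot j, refined estimates} are stated for one ancestor and threshold $n\delta$; applying them to the depth-$k$ sums over $t_l\geq nM$ independent roots with threshold $\delta t_l$, uniformly in $\bm t$, is precisely the content of the paper's separate Lemma~\ref{lemma: cluster size, asymptotics, N i | n delta, cdot j, refined estimate}, which your proposal assumes rather than supplies (``placing us squarely in the regime of'' the crude lemma is not enough, since its conditional claims are not uniform over this family). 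Second, the lemma could in principle be proved with $M$-decay extracted from depth $1$ alone, so the absence of decay at higher depths is not by itself fatal; but one would still need the $W$/ratio constraint at depths $k\geq 2$ to produce a $\delta$-independent constant (otherwise the normalization by $\lambda_{\bm j^{\bm I}}(n)$ leaves factors $\delta^{-\alpha^*(j)}$ and the bound is not valid for all small $\delta$ with one function $C^{\bm I}_i(M,c)$). As written, your argument rests on an intermediate estimate that is false, and the stated final formula $\prod_{k,j}M^{-\alpha^*(j)}$ overstates the achievable decay; it needs to be repaired along the lines above before it constitutes a proof.
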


Next, for each type $\bm I = (I_{k,j})_{k \geq 1, j \in [d]} \in \mathscr I$ with $\mathcal K^{\bm I} \geq 1$ and each $i \in [d]$, define
\begin{align}
    \notationdef{notation-hat-mathbf-C-i-type-I-cluster-size}{\widehat{\mathbf C}^{\bm I}(\cdot)}
        & \delequal
    \int \mathbbm{I}\bigg\{
        (w_{k,j})_{ k \in [ \mathcal K^{\bm I}] ,\ j \in \bm j_k^{\bm I} } \in \ \bcdot\ 
    \bigg\}
    \bigg(
    \prod_{ k = 1 }^{ \mathcal K^{\bm I} - 1}
        g_{ \bm j_k^{\bm I} \leftarrow \bm j_{k+1}^{\bm I} }(\bm w_k)
    \bigg)
    \nu^{\bm I}(d \bm w),
    \quad 
    {\widehat{\mathbf C}^{\bm I}_i} \delequal \bar s_{i,l^*(j^{\bm I}_1)}\cdot {\widehat{\mathbf C}^{\bm I}_i}
    \label{def, measure hat C i type I, cluster size}
\end{align}
where
${\bm j^{\bm I}_k}$ is the set of active indices at depth $k$ of type $\bm I$,
the mapping
$g_{ \mathcal I \leftarrow \mathcal J }(\bm w)$ is defined in \eqref{def: function g mathcal I mathcal J, for measure C i bm I, cluster size},
$\nu^{\bm I}(d\bm w)$ is defined in \eqref{def, measure nu type I, cluster size},
and, as noted in Remark~\ref{remark: def of type}, $j_1^{\bm I}$ is the unique index in $\{1,2,\ldots,d\}$ such that $\bm j^{\bm I}_1 = \{j^{\bm I}_1\}$.
Also, for any type $\bm I \in \mathscr I$ with $\mathcal K^{\bm I} \geq 1$, let 
\begin{align}
    h^{\bm I}(\bm w) \delequal \sum_{ k \in [\mathcal K^{\bm I}] }\sum_{ j \in \bm j^{\bm I}_k }w_{k,j}\bar{\bm s}_j,
    \qquad
    \forall \bm w = (w_{k,j})_{k \in [\mathcal K^{\bm I}], j \in \bm j^{\bm I}_k}.
    \label{def: mapping h type I, cluster size proof}
\end{align}
By the definition of $\mathbf C^{\bm I}_i$ in \eqref{def: measure C i I, cluster}, we have
\begin{align}
    \mathbf C^{\bm I}_i(B)
    =
    {\widehat{\mathbf C}^{\bm I}}_i\Big( (h^{\bm I})^{-1}(B)  \Big),
    \qquad
    \forall \text{ Borel measurable }B \subseteq \R^d_+.
    \label{property: connection between C I and hat C I measures, cluster size}
\end{align}
Lemma~\ref{lemma: limit theorem for n tau to hat C I, cluster size}
studies the (asymptotic) law of
$n^{-1}\bm \tau^{ n|\delta }_{i}$ when restricted on compact sets.

\begin{lemma}\label{lemma: limit theorem for n tau to hat C I, cluster size}
    \linksinthm{lemma: limit theorem for n tau to hat C I, cluster size}
Let Assumptions~\ref{assumption: subcriticality}--\ref{assumption: regularity condition 2, cluster size, July 2024} hold.
Let $i \in [d]$ and $0 < c < C < \infty$.
Let $\bm j \subseteq [d]$ be non-empty, 
and  $\bm I \in \mathscr I$ be such that $\bm j^{\bm I} = \bm j$.
There exists $\delta_0 > 0$ such that
for any $\delta \in (0,\delta_0)$
and any
$\bm x = (x_{k,j})_{ k \in [\mathcal K^{\bm I}],\ j \in \bm j^{\bm I}_k }$, 
$\bm y = (y_{k,j})_{ k \in [\mathcal K^{\bm I}],\ j \in \bm j^{\bm I}_k }$
with $c \leq x_{k,j} < y_{k,j}\leq C\ \forall k \in [\mathcal K^{\bm I}], j \in \bm j^{\bm I}_k$,
\begin{align}
    \lim_{n \to\infty}
    \big(\lambda_{\bm j}(n)\big)^{-1}
    \P\Big(
        n^{-1}\bm \tau^{n|\delta}_{i} \in 
        A^{\bm I}(\bm x, \bm y)
    \Big)
    =
    \widehat{\mathbf C}^{\bm I}_i
    \Bigg(
        \bigtimes_{ k \in [\mathcal K^{\bm I}] }\bigtimes_{ j \in \bm j^{\bm I}_k } (x_{k,j},y_{k,j}]
    \Bigg),
    \label{claim, lemma: limit theorem for n tau to hat C I, cluster size}
\end{align}
where
$\bm \tau^{n|\delta}_{i}$ is defined in \eqref{proof: def bm tau n delta i, cluster size },
$\widehat{\mathbf C}^{\bm I}_i$ is defined in \eqref{def, measure hat C i type I, cluster size},
$\bar s_{i,j} = \E S_{i,j}$ (see \eqref{def: bar s i, ray, expectation of cluster size}),
$j^{\bm I}_1$ is the unique index $j \in [d]$ such that $I_{j,1} = 1$ (see Remark~\ref{remark: def of type}),
and
\begin{equation}\label{def: set A x y type I, lemma: limit theorem for n tau to hat C I, cluster size}
    \begin{aligned}
        & A^{\bm I}(\bm x, \bm y)
    \delequal
    \Big\{
        (w_{k,j})_{ k \geq 1,j \in [d] } \in [0,\infty)^{\infty \times d }:
        \\
        &\qquad\qquad
        w_{k,j} \in (x_{k,j}, y_{k,j}]\ \forall k \in [\mathcal K^{\bm I}],\ j \in \bm j^{\bm I}_k;\ 
        w_{k,j} = 0\ \forall k \geq 1,\ j \notin \bm j^{\bm I}_k
    \Big\}.
    \end{aligned}
\end{equation}
\end{lemma}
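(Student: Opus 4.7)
[Proof plan for Lemma~\ref{lemma: limit theorem for n tau to hat C I, cluster size}]
The plan is to exploit the Markov property \eqref{proof, cluster size, markov property in pruned clusters} to decompose the event $\{n^{-1}\bm\tau^{n|\delta}_i \in A^{\bm I}(\bm x, \bm y)\}$ level-by-level across depths $k = 1,\dots,\mathcal K^{\bm I}+1$, and then to extract the limit at each level using Lemma~\ref{lemma: cluster size, asymptotics, N i | n delta, cdot j, crude estimate}. Because $A^{\bm I}(\bm x,\bm y)$ forces $\tau^{n|\delta}_{i;j}(k) > 0 \iff j \in \bm j^{\bm I}_k$ for every $k \in [\mathcal K^{\bm I}]$ and $\tau^{n|\delta}_{i;j}(\mathcal K^{\bm I}+1) = 0$ for all $j$, repeatedly conditioning on $(\tau^{n|\delta}_{i;l}(k-1))_{l\in[d]}$ writes $\P(n^{-1}\bm\tau^{n|\delta}_i \in A^{\bm I}(\bm x, \bm y))$ as the expectation of a telescoping product of conditional probabilities (one per level), with the terminal factor capturing complete termination at level $\mathcal K^{\bm I}+1$. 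The latter event is almost certain on the scale of $\lambda_{\bm j}(n)$: its failure is $\lo(\lambda_{\bm j}(n))$ by part (iii) of Lemma~\ref{lemma: cluster size, asymptotics, N i | n delta, cdot j, crude estimate} applied with $\mathcal I = \bm j^{\bm I}_{\mathcal K^{\bm I}}$ and any non-empty $\mathcal J\subseteq[d]$, since this would contribute an extra factor of $n\P(B_{j \leftarrow l^*(j)} > n) = o(1)$.

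For level~$1$, $\bm j^{\bm I}_1 = \{j^{\bm I}_1\}$ is a singleton, so the level-$1$ event is simply $W^{>}_{i;j^{\bm I}_1}(n\delta)/n \in (x_{1,j^{\bm I}_1},y_{1,j^{\bm I}_1}]$ and $N^{>}_{i;j}(n\delta) = 0$ for $j \neq j^{\bm I}_1$. Combining \eqref{claim 1, lemma: cluster size, asymptotics, N i | n delta, cdot j, crude estimate}--\eqref{claim 2, part iii, lemma: cluster size, asymptotics, N i | n delta, cdot j, refined estimates} and the regular variation $\P(B_{j \leftarrow l^*(j)} > n\delta) \sim \delta^{-\alpha^*(j)}\P(B_{j \leftarrow l^*(j)} > n)$, the factors of $\delta$ cancel telescopically against the Pareto-conditional tail, yielding the asymptotic $\bar s_{i,l^*(j^{\bm I}_1)}\,\P(B_{j^{\bm I}_1 \leftarrow l^*(j^{\bm I}_1)} > n)\,\nu_{\alpha^*(j^{\bm I}_1)}((x_{1,j^{\bm I}_1},y_{1,j^{\bm I}_1}])$. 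For level $k \geq 2$, conditional on $\tau^{n|\delta}_{i;l}(k-1) = t_l$ with $t_l/n \to w_{k-1,l} \in (x_{k-1,l},y_{k-1,l}]$ for $l \in \bm j^{\bm I}_{k-1}$, we have $\tau^{n|\delta}_{i;j}(k) = \sum_{l \in \bm j^{\bm I}_{k-1}}\sum_{m=1}^{t_l}W^{>,(k,m)}_{l;j}(\delta t_l)$. Summing over all assignments $\{\mathcal J(l):l\in\bm j^{\bm I}_{k-1}\}\in\mathbb T_{\bm j^{\bm I}_{k-1}\leftarrow\bm j^{\bm I}_k}$ describing which parent $l$ supplies the unique big jump of each child-type $j\in\bm j^{\bm I}_k$, a Poisson-type inclusion-exclusion combined with the computation $t_l\,\bar s_{l,l^*(j)}\,\P(B_{j\leftarrow l^*(j)}>\delta t_l) \sim w_{k-1,l}\,\bar s_{l,l^*(j)} \cdot n\P(B_{j\leftarrow l^*(j)}>n)$ (and the same $\delta$-cancellation as in level $1$) gives conditional probability asymptotically equal to $g_{\bm j^{\bm I}_{k-1}\leftarrow\bm j^{\bm I}_k}(\bm w_{k-1})\prod_{j\in\bm j^{\bm I}_k}n\P(B_{j\leftarrow l^*(j)}>n)\,\nu_{\alpha^*(j)}((x_{k,j},y_{k,j}])$. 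Multiplying across levels, collecting the rate factors into $\lambda_{\bm j^{\bm I}}(n) = n^{-1}\prod_{j\in\bm j^{\bm I}}n\P(B_{j\leftarrow l^*(j)}>n)$ (each $j\in\bm j^{\bm I}$ appears active at exactly one level since $\bm I \in \mathscr I$), and recognizing the remaining integral against $\nu^{\bm I}(d\bm w) = \bigtimes_k\bigtimes_{j\in\bm j^{\bm I}_k}\nu_{\alpha^*(j)}(dw_{k,j})$ of the product $\bar s_{i,l^*(j^{\bm I}_1)}\prod_{k=1}^{\mathcal K^{\bm I}-1}g_{\bm j^{\bm I}_k \leftarrow \bm j^{\bm I}_{k+1}}(\bm w_k)$ reproduces $\widehat{\mathbf C}^{\bm I}_i\bigl(\bigtimes_k\bigtimes_{j\in\bm j^{\bm I}_k}(x_{k,j},y_{k,j}]\bigr)$, as required.

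The main obstacle is the combinatorial and uniform-in-$(t_l)$ analysis at Step $k\ge 2$. First, one must rule out contributions where some child-type $j\in\bm j^{\bm I}_k$ receives two or more big jumps (either from the same or distinct parents), or where a big jump of the ``wrong'' child-type $j\notin\bm j^{\bm I}_k$ occurs: these contribute an extra factor of $n\P(B_{j'\leftarrow l^*(j')}>n) = o(1)$ by parts (ii)--(iii) of Lemma~\ref{lemma: cluster size, asymptotics, N i | n delta, cdot j, crude estimate} and are thus negligible under the correct rate $\lambda_{\bm j}(n)$. Second, one must show that only parent-child pairs with $l^* = l^*(j)$ contribute at leading order, exploiting \eqref{claim 1, part iii, lemma: cluster size, asymptotics, N i | n delta, cdot j, refined estimates}, so that the $\bar s_{l,l^*(j)}$ coefficients in $g_{\bm j^{\bm I}_{k-1}\leftarrow\bm j^{\bm I}_k}$ emerge naturally. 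Third, the conditioning is delicate since the pruning thresholds $\delta t_l$ themselves depend on the previous level's values; handling this requires the uniform-in-threshold asymptotics from part (iii) of Lemma~\ref{lemma: cluster size, asymptotics, N i | n delta, cdot j, crude estimate} (applied with $c = \min_{k,j}x_{k,j}$ and $C = \max_{k,j}y_{k,j}$), together with part (a) of Lemma~\ref{lemma: choice of bar epsilon bar delta, cluster size} to ensure $t_l$ stays uniformly bounded away from $0$ on the set of interest.
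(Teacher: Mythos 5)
Your plan follows essentially the same route as the paper's proof: decompose level-by-level via the Markov property \eqref{proof, cluster size, markov property in pruned clusters}, at each depth isolate the event of exactly one big jump per active index with its Pareto-conditional magnitude, handle the parent-to-child bookkeeping through assignments (which is what produces $g_{\bm j^{\bm I}_{k-1}\leftarrow\bm j^{\bm I}_k}$ and the $\bar s_{l,l^*(j)}$ coefficients), show the terminal no-further-jump factor tends to one uniformly, and collect the rates into $\lambda_{\bm j}(n)$ and the remaining integrals into $\widehat{\mathbf C}^{\bm I}_i$. Two remarks on details: the uniform-in-$\bm t$ \emph{exact} asymptotics you sketch for levels $k\ge 2$ are precisely the content of Lemma~\ref{lemma: cluster size, asymptotics, N i | n delta, cdot j, refined estimate} in the appendix (part (iii) of Lemma~\ref{lemma: cluster size, asymptotics, N i | n delta, cdot j, crude estimate}, which you cite for this, is only a $\limsup<\infty$ bound and cannot by itself deliver the constants $C_{\mathcal I\leftarrow\mathcal J}$ and $g_{\mathcal I\leftarrow\mathcal J}$, though your inclusion--exclusion computation is how that lemma is proved), and the paper closes the limit not by conditioning on exact rescaled values $t_l/n\to w_{k-1,l}$ but by a sandwich argument --- reduce to thin boxes $y_{k,j}/x_{k,j}<\rho$, bound $g$ at the box endpoints via monotonicity and the intermediate value theorem, and let $\rho\downarrow 1$ --- so your weak-convergence variant is viable but would need that inductive convergence of the rescaled law of $n^{-1}\bm\tau^{n|\delta}_i$ made explicit.
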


In essence, Lemmas~\ref{lemma: prob of type I, tail prob of tau, cluster size} and \ref{lemma: limit theorem for n tau to hat C I, cluster size} refine the asymptotics in Lemma~\ref{lemma: crude estimate, type of cluster}.
Their proofs follow the same spirit as Lemma~\ref{lemma: crude estimate, type of cluster}
and proceed by
combining the asymptotics in Lemma~\ref{lemma: cluster size, asymptotics, N i | n delta, cdot j, crude estimate} with the Markov property \eqref{proof, cluster size, markov property in pruned clusters}.
The key difference is that, this time, we apply \eqref{claim 1, part iii, lemma: cluster size, asymptotics, N i | n delta, cdot j, refined estimates} and \eqref{claim 2, part iii, lemma: cluster size, asymptotics, N i | n delta, cdot j, refined estimates}
to characterize the asymptotic law of $n^{-1}\bm \tau^{n|\delta}_i$,
rather than relying on the other cruder estimates in Lemma~\ref{lemma: cluster size, asymptotics, N i | n delta, cdot j, crude estimate} and only obtaining the asymptotics of
$
\bm I^{n|\delta}_i
$
defined in \eqref{def, I k M j for M type, cluster size},
which simply indicates the positivity of the coordinates in $n^{-1}\bm \tau^{n|\delta}_i$.
To avoid repetition, 
we defer the proofs of Lemmas~\ref{lemma: prob of type I, tail prob of tau, cluster size} and \ref{lemma: limit theorem for n tau to hat C I, cluster size} to Section~\ref{subsec: proof, technical lemmas, cluster size}  of the Appendix.

Now, we state the proof of Proposition~\ref{proposition, M convergence for hat S, tail asymptotics for cluster size} using the technical tools introduced above.
We first prove part (iii) of Proposition~\ref{proposition, M convergence for hat S, tail asymptotics for cluster size},
and then move onto parts (i) and (ii).

\begin{proof}[Proof of Proposition~\ref{proposition, M convergence for hat S, tail asymptotics for cluster size}, Part (iii)]
Follows from
part (b) of Lemma~\ref{lemma: choice of bar epsilon bar delta, cluster size},
as well as
$
 \mathbf C^{\bm I}_i \circ \Phi^{-1} = \bar s_{i,l^*(j_1)} \cdot \mathbf C^{\bm I} \circ \Phi^{-1};
$
see \eqref{def: measure C i I, cluster}.
\end{proof}

\begin{proof}[Proof of Proposition~\ref{proposition, M convergence for hat S, tail asymptotics for cluster size}, Part (i)]
Since there are only finitely many elements in $\mathscr I(\bm j)$,
it suffices to fix some $\Delta > 0$ and type $\bm I \in \mathscr I(\bm j)$, and then verify \eqref{claim, proposition, M convergence for hat S, tail asymptotics for cluster size} for any $\delta > 0$ small enough.
Also, since the set $B$ (and hence its closure $B^-$) is bounded away from $\C_\leqslant^d(\bm j)$ under $\bm d_\text{U}$,
by part (a) of Lemma~\ref{lemma: choice of bar epsilon bar delta, cluster size}
there exists $\bar\epsilon \in (0,1)$ such that
for any $\bm x = \sum_{j \in \bm j}w_j\bar{\bm s}_j$ with $w_j \geq 0\ \forall j \in \bm j$,
\begin{align}
    \Phi(\bm x) \in B^-
    \qquad
    \Longrightarrow
    \qquad
    \min_{j \in \bm j}w_j > \bar\epsilon,\text{ and }
    \min_{j,j^\prime \in \bm j}
    \frac{
        w_j
    }{
        w_{j^\prime}
    } > \bar\epsilon.
    \label{proof: choice of bar epsilon and bar delta, proposition, M convergence for hat S, tail asymptotics for cluster size}
\end{align}
Define
$
\bar h^{ \bm I }: [0,\infty)^{ \infty \times d } \to \R^d_+
$
by $\bar h^{\bm I}(\bm w) \delequal \sum_{ k \in [\mathcal K^{\bm I}] }\sum_{ j \in \bm j^{\bm I}_k }w_{k,j}\bar{\bm s}_j$
for any $\bm w = (w_{k,j})_{k \geq 1,j \in [d]}$.
That is, $\bar h^{\bm I}$ trivially extends the domain of $h^{\bm I}$ 
to $[0,\infty)^{ \infty \times d  }$.
By the definition of $h^{\bm I}$ in \eqref{def: mapping h type I, cluster size proof} and
that ${(\hat R^{n|\delta}_j, \hat \Theta^{n|\delta}_j)}
    =
    \Phi(\hat{\bm S}^{n|\delta}_j)$
    (see \eqref{def: hat S n delta i, cluster size}--\eqref{def: polar coordinates, bar S and hat S, cluster size}),
    on the event $\{ \bm I^{n|\delta}_i = \bm I \}$
    we have
    $
     (\hat R^{n|\delta}_j, \hat \Theta^{n|\delta}_j)
    =
    \Phi\Big( \bar h^{\bm I}\big( n^{-1}\bm \tau^{n|\delta}_i \big)  \Big).
    $
Meanwhile, regarding the type $\bm I = (I_{k,j})_{k \geq 1, j \in [d]} \in \mathscr I(\bm j)$ fixed at the beginning of the proof,
by the third bullet point in Definition~\ref{def: cluster, type}, 
for each $j \in \bm j$ there uniquely exists some $k(j) \geq 1$ such that $I_{k(j),j} = 1$.
Then,
by \eqref{proof: choice of bar epsilon and bar delta, proposition, M convergence for hat S, tail asymptotics for cluster size}
and the definition of ${\bm I^{n|\delta}_i} = \big(I^{n|\delta}_{i;j}(k)\big)_{k \geq 1,\ j \in [d]}$ in \eqref{def, I k M j for M type, cluster size},
we have
\begin{align}
         \Phi\Big( \bar h^{\bm I}\big( n^{-1}\bm \tau^{n|\delta}_i \big)  \Big) \in B,\ 
          \bm I^{n|\delta}_i = \bm I 
    \quad \Longleftrightarrow \quad
     \Phi\Big( \bar h^{\bm I}\big( n^{-1}\bm \tau^{n|\delta}_i \big)  \Big) \in B,\ 
     n^{-1}\bm \tau^{n|\delta}_i \in E^{\bm I}(\bar\epsilon,\bar\epsilon),
     \nonumber
\end{align}
where the set $E^{\bm I}(M,c)$ is defined in \eqref{proof: def, set B type I M c, cluster size}.
Therefore, for any $M > 0$,
\begin{align*}
     & \P\Big(
                    (\hat R^{n|\delta}_i, \hat \Theta^{n|\delta}_i) \in B,\ 
                    \bm I^{n|\delta}_i = \bm I
    \Big)
    \\
    & = 
    \P\bigg(
         \Phi\Big( \bar h^{\bm I}\big( n^{-1}\bm \tau^{n|\delta}_i \big)  \Big) \in B,\ 
          n^{-1}\bm \tau^{n|\delta}_i \in E^{\bm I}(\bar\epsilon,\bar\epsilon)
    \bigg)
    \\ 
    & \leq 
    \P\Big(
         n^{-1}\bm \tau^{n|\delta}_i \in E^{\bm I}(M,\bar\epsilon)
    \Big)
    +
    \P\bigg(
         \Phi\Big( \bar h^{\bm I}\big( n^{-1}\bm \tau^{n|\delta}_i \big)  \Big) \in B,\ 
          n^{-1}\bm \tau^{n|\delta}_i \in E^{\bm I}(\bar\epsilon,\bar\epsilon) \setminus E^{\bm I}(M,\bar\epsilon)
    \bigg).
\end{align*}
By defining 
\begin{align}
    & E^{\bm I}_{\leqslant}(M)
    \delequal
     E^{\bm I}(\bar\epsilon,\bar\epsilon) \setminus E^{\bm I}(M,\bar\epsilon),
    \label{proof, def set B I leq M, proposition, M convergence for hat S, tail asymptotics for cluster size, part i}
\end{align}
we obtain the upper bound (for each $M > 0$)
\begin{align}
    & \P\Big(
                    (\hat R^{n|\delta}_i, \hat \Theta^{n|\delta}_i) \in B,\ 
                    \bm I^{n|\delta}_i = \bm I
    \Big)
    \label{proof, target upper bound, proposition, M convergence for hat S, tail asymptotics for cluster size, part i}
    \\ 
    & \leq 
    \P\Big(
         n^{-1}\bm \tau^{n|\delta}_i \in E^{\bm I}(M,\bar\epsilon)
    \Big)
    +
    \P\bigg(
         \Phi\Big( \bar h^{\bm I}\big( n^{-1}\bm \tau^{n|\delta}_i \big)  \Big) \in B,\ 
          n^{-1}\bm \tau^{n|\delta}_i \in E^{\bm I}_\leqslant(M)
    \bigg).
    \nonumber
\end{align}
Likewise, we get the lower bound
\begin{align}
    & \P\Big(
                    (\hat R^{n|\delta}_i, \hat \Theta^{n|\delta}_i) \in B,\ 
                    \bm I^{n|\delta}_i = \bm I
    \Big)
    \geq 
     \P\bigg(
         \Phi\Big( \bar h^{\bm I}\big( n^{-1}\bm \tau^{n|\delta}_i \big)  \Big) \in B,\ 
          n^{-1}\bm \tau^{n|\delta}_i \in E^{\bm I}_\leqslant(M)
    \bigg).
    \label{proof, target lower bound, proposition, M convergence for hat S, tail asymptotics for cluster size, part i}
\end{align}
Recall that we arbitrarily picked some $\Delta > 0$ at the beginning.
Suppose there exists some $\bar M = \bar M(\Delta) > 0$ such that, given $M > \bar M$,
it holds for any $\delta > 0$ small enough that 
\begin{align}
    \limsup_{n \to \infty}\big( \lambda_{\bm j}(n) \big)^{-1}\P\Big(
         n^{-1}\bm \tau^{n|\delta}_i \in E^{\bm I}(M,\bar\epsilon)
    \Big) < \Delta, 
    \label{proof, goal 1, proposition, M convergence for hat S, tail asymptotics for cluster size, part i}
\end{align}
and 
\begin{equation}
    \begin{aligned}
        \limsup_{n \to \infty}
        \big( \lambda_{\bm j}(n) \big)^{-1}
        \P\bigg(
         \Phi\Big( \bar h^{\bm I}\big( n^{-1}\bm \tau^{n|\delta}_i \big)  \Big) \in B,\ 
          n^{-1}\bm \tau^{n|\delta}_i \in E^{\bm I}_\leqslant(M)
    \bigg)
    & \leq \mathbf C^{\bm I}_i \circ \Phi^{-1}(B^\Delta),
    \\
    \liminf_{n \to \infty}
        \big( \lambda_{\bm j}(n) \big)^{-1}
        \P\bigg(
         \Phi\Big( \bar h^{\bm I}\big( n^{-1}\bm \tau^{n|\delta}_i \big)  \Big) \in B,\ 
          n^{-1}\bm \tau^{n|\delta}_i \in E^{\bm I}_\leqslant(M)
    \bigg)
    & \geq \mathbf C^{\bm I}_i \circ \Phi^{-1}(B_\Delta) - \Delta.
    \label{proof, goal 2, proposition, M convergence for hat S, tail asymptotics for cluster size, part i}
    \end{aligned}
\end{equation}
Then, by plugging these claims into the upper and lower bounds \eqref{proof, target upper bound, proposition, M convergence for hat S, tail asymptotics for cluster size, part i}--\eqref{proof, target lower bound, proposition, M convergence for hat S, tail asymptotics for cluster size, part i},
we conclude the proof for part (i) of Proposition~\ref{proposition, M convergence for hat S, tail asymptotics for cluster size}.
Now, it remains to verify Claims \eqref{proof, goal 1, proposition, M convergence for hat S, tail asymptotics for cluster size, part i}--\eqref{proof, goal 2, proposition, M convergence for hat S, tail asymptotics for cluster size, part i}.

\medskip
\noindent
\textbf{Proof of Claim \eqref{proof, goal 1, proposition, M convergence for hat S, tail asymptotics for cluster size, part i}}.
This is exactly the content of Lemma~\ref{lemma: prob of type I, tail prob of tau, cluster size}.
In particular, it suffices to prick $\bar M$ large enough such that, in Lemma~\ref{lemma: prob of type I, tail prob of tau, cluster size}, $C_i^{\bm I}(M,\bar\epsilon) < \Delta$ for any $M > \bar M$.

\medskip
\noindent
\textbf{Proof of Claim \eqref{proof, goal 2, proposition, M convergence for hat S, tail asymptotics for cluster size, part i}}.
For any $\bm w = (w_{k,j})_{ k \geq 1, j \in [d] } \in [0,\infty)^{\infty \times d}$,
we define $\psi^{\bm I}(\bm w) \delequal (w_{k,j})_{ k \in [\mathcal K^{\bm I}], j \in \bm j^{\bm I}_k }$.
That is, 
$\psi^{\bm I}$ is the projection mapping from $[0,\infty)^{ \infty \times d }$ onto the coordinates corresponding to the active indices of $\bm I$.
Also, recall the definition of the set $E^{\bm I}_\leqslant(M)$ in \eqref{proof, def set B I leq M, proposition, M convergence for hat S, tail asymptotics for cluster size, part i}.
Given $M > 0$,
Lemma~\ref{lemma: limit theorem for n tau to hat C I, cluster size} shows that
for any $\delta > 0$ small enough,
\begin{align}
    \big(\lambda_{\bm j}(n)\big)^{-1}
    \P\bigg(
        n^{-1}\bm \tau^{n|\delta}_{i} \in  E^{\bm I}_{\leqslant}(M);\ 
        \psi^{\bm I}\Big(n^{-1}\bm \tau^{n|\delta}_{i}\Big) \in \ \cdot\ 
    \bigg)
    \Rightarrow
    \widehat{\mathbf C}^{\bm I}_i\bigg( \ \cdot \ \cap  \psi^{\bm I}\Big(E^{\bm I}_{\leqslant}(M)\Big)\bigg)
    \label{proof: goal, weak convergence, proposition, M convergence for hat S, tail asymptotics for cluster size}
\end{align}
(as $n \to \infty$)
in terms of weak convergence of finite measures.
To see why, it suffices to note the following.
\begin{itemize}
    \item 
        By Lemma~\ref{lemma: crude estimate, type of cluster},
        we confirm that for any $\delta > 0$ small enough,
        \begin{align*}
        \sup_{n \geq 1}\big(\lambda_{\bm j}(n)\big)^{-1}
            \P\Big(
                n^{-1}\bm \tau^{n|\delta}_{i} \in E^{\bm I}_{\leqslant}(M)
            \Big) 
            \leq 
        \sup_{n \geq 1}\big(\lambda_{\bm j}(n)\big)^{-1}
            \P\big(
                \bm I \subseteq \bm I^{n|\delta}_i
            \big) 
            < \infty;
        \end{align*}
        In other words, 
        the LHS of \eqref{proof: goal, weak convergence, proposition, M convergence for hat S, tail asymptotics for cluster size} is a sequence of finite measures with a uniform upper bound on their masses.

    \item 
        Fix some $\bm w = (w_{k,j})_{k \geq 1, j \in [d]} \in E^{\bm I}_\leqslant(M)$.
        By the definitions of $E^{\bm I}(M,c)$ in \eqref{proof: def, set B type I M c, cluster size}
        and $E^{\bm I}_\leqslant(M)$ in \eqref{proof, def set B I leq M, proposition, M convergence for hat S, tail asymptotics for cluster size, part i},
        there exists some $k \in [\mathcal K^{\bm I}]$ and $j \in \bm j^{\bm I}_k$ such that 
        $
        w_{k,j} \in (\bar\epsilon,M].
        $
        Then, by the condition that
        $
        w_{k,j}/w_{k^\prime,j^\prime} \geq \bar\epsilon
        $
        for any $k,k^\prime \in [\mathcal K^{\bm I}]$ and $j \in \bm j^{\bm I}_k$, $j^\prime \in \bm j^{\bm I}_{k^\prime}$ (see \eqref{proof: def, set B type I M c, cluster size}),
        we must have $w_{k,j} \in (\bar\epsilon,M/\bar\epsilon]$ for each $k \in [\mathcal K^{\bm I}]$, $j \in \bm j^{\bm I}_k$.
        
    \item 
        Furthermore,
        the sets of the form
        $
        \bigtimes_{i \in m}(x_i,y_i]
        $
        studied in Lemma~\ref{lemma: limit theorem for n tau to hat C I, cluster size}
        constitute a convergent determining class for the weak convergence of finite measures on $(\bar\epsilon, M/\bar\epsilon ]^{m}$.
        This allows us to apply Lemma~\ref{lemma: limit theorem for n tau to hat C I, cluster size} and verify \eqref{proof: goal, weak convergence, proposition, M convergence for hat S, tail asymptotics for cluster size} for any $\delta > 0$ small enough.

\end{itemize}
To apply the weak convergence in \eqref{proof: goal, weak convergence, proposition, M convergence for hat S, tail asymptotics for cluster size},
we make a few observations.
First, 
 by definitions of $\bar h^{\bm I}$ and $\psi^{\bm I}$, we have
$
\bar h^{\bm I}(\bm w) = h^{\bm I}\big( \psi^{\bm I}(\bm w) \big)
$
for any $\bm w \in [0,\infty)^{\infty \times d}$,
which implies
\begin{equation} \label{proof, equiavlence for projection psi and extension bar h, proposition, M convergence for hat S, tail asymptotics for cluster size, part i}
    \begin{aligned}
        & \Big\{
     \Phi\Big( \bar h^{\bm I}\big( n^{-1}\bm \tau^{n|\delta}_i \big)  \Big) \in B,\ 
          n^{-1}\bm \tau^{n|\delta}_i \in E^{\bm I}_\leqslant(M)
    \Big\}
    \\ 
    & 
    =
    \Big\{
     \psi^{\bm I}\big( n^{-1}\bm \tau^{n|\delta}_i \big) \in (h^{\bm I})^{-1}\big( \Phi^{-1}(B)\big),\ 
          n^{-1}\bm \tau^{n|\delta}_i \in E^{\bm I}_\leqslant(M)
    \Big\}.
    \end{aligned}
\end{equation}
Next, note that $\Phi \circ h^{\bm I}(\cdot)$ is continuous at any $(w_{k,j})_{k \in [\mathcal K^{\bm I}], j \in  \bm j^{\bm I}_k }$
with $w_{k,j} > 0\ \forall k,j$,
and that 
$
B^- \subseteq \Phi \circ h^{\bm I}
\Big(\Big\{
    (w_{k,j})_{k \in [\mathcal K^{\bm I}], j \in  \bm j^{\bm I}_k }:
    \ w_{k,j} > 0\ \forall k,j 
\Big\}\Big);
$
see \eqref{proof: choice of bar epsilon and bar delta, proposition, M convergence for hat S, tail asymptotics for cluster size}.
This implies that
$
(h^{\bm I})^{-1}\big( \Phi^{-1}(B^-)\big)
$
is closed and 
$
(h^{\bm I})^{-1}\big( \Phi^{-1}(B^\circ)\big)
$
is open, and hence
\begin{align}
    \Big( (h^{\bm I})^{-1}\big( \Phi^{-1}(B)\big) \Big)^- \subseteq (h^{\bm I})^{-1}\big( \Phi^{-1}(B^-)\big),
    \
    \Big( (h^{\bm I})^{-1}\big( \Phi^{-1}(B)\big) \Big)^\circ \supseteq (h^{\bm I})^{-1}\big( \Phi^{-1}(B^\circ)\big).
    \label{proof, closure and interior under Phi h composition, proposition, M convergence for hat S, tail asymptotics for cluster size, part i}
\end{align}
As a result, for any $\delta > 0$ small enough,
\begin{align*}
     & \limsup_{n \to \infty}
        \big( \lambda_{\bm j}(n) \big)^{-1}
        \P\bigg(
         \Phi\Big( \bar h^{\bm I}\big( n^{-1}\bm \tau^{n|\delta}_i \big)  \Big) \in B,\ 
          n^{-1}\bm \tau^{n|\delta}_i \in E^{\bm I}_\leqslant(M)
    \bigg)
    \\ 
    & =
    \limsup_{n \to \infty}
        \big( \lambda_{\bm j}(n) \big)^{-1}
        \P\bigg(
          \psi^{\bm I}\big( n^{-1}\bm \tau^{n|\delta}_i \big) \in (h^{\bm I})^{-1}\big( \Phi^{-1}(B)\big),\ 
          n^{-1}\bm \tau^{n|\delta}_i \in E^{\bm I}_\leqslant(M)
    \bigg)
    \ 
    \text{by \eqref{proof, equiavlence for projection psi and extension bar h, proposition, M convergence for hat S, tail asymptotics for cluster size, part i}}
    \\ 
    & \leq 
     \widehat{\mathbf C}^{\bm I}_i\bigg( 
        \Big( (h^{\bm I})^{-1}\big( \Phi^{-1}(B^-)\big) \Big)
        \cap  \psi^{\bm I}\Big(E^{\bm I}_{\leqslant}(M)\Big)\bigg)
    \quad 
    \text{by \eqref{proof: goal, weak convergence, proposition, M convergence for hat S, tail asymptotics for cluster size} and \eqref{proof, closure and interior under Phi h composition, proposition, M convergence for hat S, tail asymptotics for cluster size, part i}}
    \\ 
    & \leq 
    \widehat{\mathbf C}^{\bm I}_i\Big(
        (h^{\bm I})^{-1}\big( \Phi^{-1}(B^-)\big)
        \Big)
    = 
    \mathbf C^{\bm I}_i\Big( \Phi^{-1}(B^-) \Big)
    = 
    \mathbf C_i^{\bm I} \circ \Phi^{-1}(B^-)
    \quad
    \text{by \eqref{property: connection between C I and hat C I measures, cluster size}.}
\end{align*}
This verifies the upper bound in Claim \eqref{proof, goal 2, proposition, M convergence for hat S, tail asymptotics for cluster size, part i} for any $M > 0$.
Likewise, 
using \eqref{proof, equiavlence for projection psi and extension bar h, proposition, M convergence for hat S, tail asymptotics for cluster size, part i}, \eqref{proof, closure and interior under Phi h composition, proposition, M convergence for hat S, tail asymptotics for cluster size, part i}, and the weak convergence in \eqref{proof: goal, weak convergence, proposition, M convergence for hat S, tail asymptotics for cluster size},
given $M > 0$
we obtain the lower bound
\begin{equation} \label{proof, lower bound display, claim 2, proposition, M convergence for hat S, tail asymptotics for cluster size, part i}
    \begin{aligned}
         & 
    \liminf_{n \to \infty}
        \big( \lambda_{\bm j}(n) \big)^{-1}
        \P\bigg(
         \Phi\Big( \bar h^{\bm I}\big( n^{-1}\bm \tau^{n|\delta}_i \big)  \Big) \in B,\ 
          n^{-1}\bm \tau^{n|\delta}_i \in E^{\bm I}_\leqslant(M)
    \bigg)
    \\ 
    & \geq 
     \widehat{\mathbf C}^{\bm I}_i\bigg( 
        \Big( (h^{\bm I})^{-1}\big( \Phi^{-1}(B^\circ)\big) \Big)
        \cap  \psi^{\bm I}\Big(E^{\bm I}_{\leqslant}(M)\Big)\bigg)
    \end{aligned}
\end{equation}
for any $\delta > 0$ small enough.
To further bound the RHS of \eqref{proof, lower bound display, claim 2, proposition, M convergence for hat S, tail asymptotics for cluster size, part i},
we make a few observations.
First,
\eqref{proof: choice of bar epsilon and bar delta, proposition, M convergence for hat S, tail asymptotics for cluster size} implies that for any
 $\bm w = (w_{k,j})_{k \in [\mathcal K^{\bm I}], j \in  \bm j^{\bm I}_k }$
with $w_{k,j} \geq 0\ \forall k,j$
and
$\Phi\big( h^{\bm I}(\bm w)  \big) \in B$,
we must have
$
\bm w \in 
\psi^{\bm I}\big( E^{\bm I}(\bar\epsilon,\bar\epsilon) \big).
$
As a result,
\begin{align*}
    \widehat{\mathbf C}^{\bm I}_i\bigg( 
     \Big( (h^{\bm I})^{-1}\big( \Phi^{-1}(B^\circ)\big) \Big)
        \cap  \psi^{\bm I}\big(E^{\bm I}(\bar\epsilon,\bar\epsilon)\big)
    \bigg)
    =
    \widehat{\mathbf C}^{\bm I}_i \Big( (h^{\bm I})^{-1}\big( \Phi^{-1}(B^\circ)\big) \Big).
\end{align*}
Next,
the sequence of sets $E^{\bm I}_{\leqslant}(M)$ in \eqref{proof, def set B I leq M, proposition, M convergence for hat S, tail asymptotics for cluster size, part i}
is monotone increasing w.r.t.\ $M$,
with
$
\bigcup_{M > 0}E^{\bm I}_{\leqslant}(M) = E^{\bm I}(\bar\epsilon,\bar\epsilon).
$
On the other hand, 
by \eqref{property: connection between C I and hat C I measures, cluster size} we get 
$
\widehat{\mathbf C}^{\bm I}_i \big( (h^{\bm I})^{-1}( \Phi^{-1}(B^\circ)) \big) = \mathbf C_i^{\bm I} \circ \Phi^{-1}(B^\circ).
$
Also,
part (iii) of Proposition~\ref{proposition, M convergence for hat S, tail asymptotics for cluster size},
which we established earlier,
confirms that $\mathbf C_i^{\bm I} \circ \Phi^{-1}(B^\circ) < \infty$.
Then, by continuity of measures, there exists $\bar M = \bar M(\Delta)$ such that
\begin{align*}
    \widehat{\mathbf C}^{\bm I}_i\bigg( 
        \Big( (h^{\bm I})^{-1}\big( \Phi^{-1}(B^\circ)\big) \Big)
        \cap  \psi^{\bm I}\Big(E^{\bm I}_{\leqslant}(M)\Big)\bigg)
    > 
    \mathbf C_i^{\bm I} \circ \Phi^{-1}(B^\circ)  - \Delta,
    \quad \forall M > \bar M.
\end{align*}
Plugging this into \eqref{proof, lower bound display, claim 2, proposition, M convergence for hat S, tail asymptotics for cluster size, part i}, we conclude the proof for the lower bound in Claim~\eqref{proof, goal 2, proposition, M convergence for hat S, tail asymptotics for cluster size, part i}.
\end{proof}

\begin{proof}[Proof of Proposition~\ref{proposition, M convergence for hat S, tail asymptotics for cluster size}, Part (ii)]
Recall the definitions of $\mathscr I(\bm j)$ and $\widetilde{\mathscr I}(\bm j)$ in  \eqref{def: mathscr I bm j, lemma: type and generalized type},
and that we have fixed some non-empty $\bm j \subseteq \{1,2,\ldots,d\}$ in the statement of this proposition.
Note that if, for some generalized type $\bm I \in \widetilde{\mathscr I}(\bm j)$, we have $\bm I \notin \mathscr I(\bm j)$,
then, there are only three possibilities: 
(1) $\tilde\alpha(\bm I) > \alpha(\bm j)$;
(2) $\bm I \in \widetilde{\mathscr I}(\bm j) \setminus \mathscr I(\bm j)$;
or 
(3) $\tilde\alpha(\bm I) \leq \alpha(\bm j),\ \bm I \notin \widetilde{\mathscr I}(\bm j)$.
Therefore, to prove part (ii), it suffices to show that (for any $\delta > 0$ small enough)
\begin{align}
    \lim_{n \to \infty}
    \big(\lambda_{\bm j}(n)\big)^{-1}
    \P\Big( \tilde{\alpha}\big(\bm I^{n|\delta}_i\big) > \alpha(\bm j)  \Big)
    & = 0,
    \label{goal 1, proposition, M convergence for hat S, tail asymptotics for cluster size}
    \\
    \lim_{n \to \infty}
    \big(\lambda_{\bm j}(n)\big)^{-1}
    \P\Big( 
        (\hat R^{n|\delta}_i, \hat \Theta^{n|\delta}_i) \in B,\ 
            \bm I^{n|\delta}_i \in \widetilde{\mathscr I}(\bm j) \setminus {\mathscr I}(\bm j)
    \Big)
    & = 0,
    \label{goal 2, alt version, proposition, M convergence for hat S, tail asymptotics for cluster size}
    \\
        \Big\{
            (\hat R^{n|\delta}_i, \hat \Theta^{n|\delta}_i) \in B,\ 
            \tilde\alpha(\bm I) \leq \alpha(\bm j),\
            \bm I^{n|\delta}_i \notin \widetilde{\mathscr I}(\bm j)
        \Big\}
    & = \emptyset.
    \label{goal 2, proposition, M convergence for hat S, tail asymptotics for cluster size}
\end{align}

\medskip
\noindent
\textbf{Proof of Claim \eqref{goal 1, proposition, M convergence for hat S, tail asymptotics for cluster size}}.
This is verified
in the proof of Proposition~\ref{proposition: asymptotic equivalence, tail asymptotics for cluster size}; see Claim~\eqref{proof: goal 1, asymptotic equivalence, tail asymptotics for cluster size}.

\medskip
\noindent
\textbf{Proof of Claim \eqref{goal 2, alt version, proposition, M convergence for hat S, tail asymptotics for cluster size}}.
Due to $\alpha^*(j) \in (1,\infty)\ \forall j \in [d]$ (see Assumption~\ref{assumption: heavy tails in B i j} and \eqref{def: cluster size, alpha * l * j}),
we must have $|\widetilde{\mathscr I}(\bm j) \setminus {\mathscr I}(\bm j)| < \infty$,
so
it suffices to fix some $\bm I \in \widetilde{\mathscr I}(\bm j) \setminus {\mathscr I}(\bm j)$ and show that
\begin{align}
    \lim_{n \to \infty}
    \big(\lambda_{\bm j}(n)\big)^{-1}
        \P\big(
            \bm I^{n|\delta}_i = \bm I
        \big)
    \leq 
    \lim_{n \to \infty}
    \big(\lambda_{\bm j}(n)\big)^{-1}
        \P\big(
            \bm I \subseteq \bm I^{n|\delta}_i
        \big)
    & = 0,
    \quad
    \forall \delta > 0 
    \text{ small enough,}
    \nonumber
\end{align}
where the partial ordering $\bm I \subseteq \bm I^\prime$ is defined in \eqref{proof: def ordering of generalized types, cluster size}.
By part (ii) of Lemma~\ref{lemma: type and generalized type}, $\bm I \in \widetilde{\mathscr I}(\bm j) \setminus {\mathscr I}(\bm j)$ implies that $|\bm j^{\bm I}_1 |\geq 2$.
This allows us to apply the Claim~\eqref{claim: lemma: crude estimate, |j 1 type I| geq 2, type of cluster} of Lemma~\ref{lemma: crude estimate, type of cluster}
and get
$
\P\big( \bm I \subseteq \bm I^{n|\delta}_i\big)
=
\lo\big( \tilde \lambda^{\bm I}(n)  \big)
$
for any $\delta > 0$ small enough,
with
$\tilde \lambda^{\bm I}(n)$ defined in \eqref{proof, def tilde lambda generalized type I}.
Also,
by the definition of $\tilde{\mathscr I}(\bm j)$, we have $\alpha(\bm j) = \tilde \alpha(\bm I)$ and $\bm j = \bm j^{\bm I}$.
Then, by property \eqref{property: cost of generalized type when alpha agrees with tilde alpha}
and the definition of $\lambda_{\bm j}(n)$ in \eqref{def: rate function lambda j n, cluster size},
we must have 
$
\lambda_{\bm j}(n) = \tilde \lambda^{\bm I}(n).
$
This verifies
$
\P\big(
            \bm I \subseteq \bm I^{n|\delta}_i
        \big)
= \lo \big(\lambda_{\bm j}(n)\big)
$
for any $\delta > 0$ small enough and
concludes the proof of Claim~\eqref{goal 2, alt version, proposition, M convergence for hat S, tail asymptotics for cluster size}.

\medskip
\noindent
\textbf{Proof of Claim \eqref{goal 2, proposition, M convergence for hat S, tail asymptotics for cluster size}}.
We arbitrarily pick some generalized type $\bm I \in \widetilde{\mathscr I}$
such that $\tilde\alpha(\bm I) \leq \alpha(\bm j)$ and $\bm I \notin \widetilde{\mathscr I}(\bm j)$.
By the definition of $\widetilde{\mathscr I}(\bm j)$ in \eqref{def: mathscr I bm j, lemma: type and generalized type},
for such $\bm I$ we either have $\tilde\alpha(\bm I) < \alpha(\bm j)$,
or $\tilde \alpha(\bm I) = \alpha(\bm j),\ \bm j^{\bm I} \neq \bm j$,
where $\bm j^{\bm I}$ is the set of active indices in $\bm I$ (see Definition~\ref{def: cluster, generalized type}),
and $\bm j \subseteq \{1,2,\ldots,d\}$ is the non-empty set prescribed in the statement of this proposition.
In both cases,
due to \eqref{property: cost of j I and type I, 1, cluster size}, we must have
\begin{align}
    \bm j^{\bm I} \neq \bm j,\qquad {\alpha}(\bm j^{\bm I}) \leq \alpha(\bm j).
    \label{proof: goal 2, active index set for generalized type I, proposition, M convergence for hat S, tail asymptotics for cluster size}
\end{align}
Meanwhile,
it holds on the event $\{ \bm I^{n|\delta}_i = \bm I \}$ that
$
\hat{\bm S}^{n|\delta}_i
    = 
    \sum_{k = 1}^{\mathcal K^{\bm I}}\sum_{ j \in \bm j^{\bm I}_k }
        n^{-1}\tau^{n|\delta}_{i;j}(k) \cdot \bar{\bm s}_j;
$
see \eqref{def: hat S n delta i, cluster size}.
Then,
by \eqref{proof: goal 2, active index set for generalized type I, proposition, M convergence for hat S, tail asymptotics for cluster size}
and the definition of $\R^d_\leqslant(\bm j) = \bigcup_{ \bm j^\prime \neq \bm j,\ \alpha(\bm j^\prime) \leq \alpha(\bm j)   }\R^d(\bm j^\prime)$ in \eqref{def: cone R d i basis S index alpha},
on the event $\{ \bm I^{n|\delta}_i = \bm I  \}$
we must have $\hat{\bm S}^{n|\delta}_i \in \R^d_{\leqslant}(\bm j)$,
and hence 
$
\Phi(\hat{\bm S}^{n|\delta}_i) = \big(\hat R^{n|\delta}_i, \hat \Theta^{n|\delta}_i\big) \in \mathbb C^d_\leqslant(\bm j);
$
see \eqref{def: cone C d leq j, cluster size}.
Since $B$ is bounded away from $\mathbb C^d_\leqslant(\bm j)$, we have just confirmed that
$
\big\{  \big(\hat R^{n|\delta}_i, \hat \Theta^{n|\delta}_i\big) \in B,\ \bm I^{n|\delta}_i = \bm I  \big\} = \emptyset.
$
Repeating this argument for each generalized type $\bm I$
satisfying $\tilde\alpha(\bm I) \leq \alpha(\bm j)$ and $\bm I \notin \widetilde{\mathscr I}(\bm j)$,
we conclude the proof of Claim~\eqref{goal 2, proposition, M convergence for hat S, tail asymptotics for cluster size}.
\end{proof}

\subsection{Proof of Lemma~\ref{lemma: cluster size, asymptotics, N i | n delta, cdot j, crude estimate}}
\label{subsec: proof, lemma: cluster size, asymptotics, N i | n delta, cdot j, crude estimate}

Recall 
the definitions of $W^{>}_{i;j\leftarrow l}(M)$, $\ W^{>}_{i;j}(M)$, $N^{>}_{i;j\leftarrow l}(M)$, and $N^{>}_{i;j}(M)$
in \eqref{def: W i M j, pruned cluster, 1, cluster size}--\eqref{def: cluster size, N i | M cdot j}, and that $\bar s_{i,j} = \E S_{i,j}$.
To prove Lemma~\ref{lemma: cluster size, asymptotics, N i | n delta, cdot j, crude estimate}, we prepare the following result.

\begin{lemma}\label{lemma: cluster size, asymptotics, N i | n delta, l j}
\linksinthm{lemma: cluster size, asymptotics, N i | n delta, l j}
Let Assumptions~\ref{assumption: subcriticality}--\ref{assumption: regularity condition 2, cluster size, July 2024} hold.
\begin{enumerate}[$(i)$]
    \item Let $\delta > 0$. As $n \to \infty$,
        \begin{align}
    \P\Big(N^{>}_{i;j \leftarrow l}(n\delta) = 1\Big) & \sim \bar s_{i,l} \P(B_{j \leftarrow l} > n\delta),
        \label{claim 1, lemma: cluster size, asymptotics, N i | n delta, l j}
    \\
    \P\Big(N^{>}_{i;j \leftarrow l}(n\delta) \geq 2\Big) & = \lo \big(\P(B_{j \leftarrow l} > n\delta)\big).
        \label{claim 2, lemma: cluster size, asymptotics, N i | n delta, l j}
\end{align}

    \item 
        There exists $\delta_0 > 0$ such that
        for any $\mathcal T \subseteq [d]^2$ with $|\mathcal T| \geq 2$ and any $\delta \in (0,\delta_0)$,
        \begin{align*}
            \P\Big(
               N^{>}_{i;j \leftarrow l}(n\delta) \geq 1
                \ \forall (l,j) \in \mathcal T
            \Big)
            =
            \lo 
            \bigg(
                n^{|\mathcal T| - 1} \cdot \prod_{(l,j) \in \mathcal T}\P(B_{j \leftarrow l} > n\delta)
            \bigg),
            \quad
            \text{ as }n\to\infty.
        \end{align*}
        
\end{enumerate}

\end{lemma}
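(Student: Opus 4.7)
The plan is to exploit the Markov structure and an explicit conditional Binomial representation for the pruned process. Write $\mathcal G_t = \sigma\{\bm B^{(s,m)}_{\cdot \leftarrow \cdot}: s \leq t,\ m \geq 1\}$ and $\mathcal G_0 = \{\emptyset, \Omega\}$. Since $X^{\leqslant}_{i,l}(t-1;n\delta)$ is $\mathcal G_{t-1}$-measurable while $B^{(t,m)}_{j \leftarrow l}$ is independent of $\mathcal G_{t-1}$, the expectation of the truncated count factors as
\[
\E N^{>}_{i;j \leftarrow l}(n\delta) = \sum_{t \geq 1} \E[X^{\leqslant}_{i,l}(t-1;n\delta)] \cdot \P(B_{j \leftarrow l} > n\delta) = \E[S^{\leqslant}_{i,l}(n\delta)] \cdot \P(B_{j \leftarrow l} > n\delta).
\]
Monotone convergence combined with $\E S_{i,l} = \bar s_{i,l} < \infty$ (guaranteed by Assumption~\ref{assumption: subcriticality}) yields $\E[S^{\leqslant}_{i,l}(n\delta)] \to \bar s_{i,l}$ as $n \to \infty$. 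To upgrade this expectation identity into Claims \eqref{claim 1, lemma: cluster size, asymptotics, N i | n delta, l j} and \eqref{claim 2, lemma: cluster size, asymptotics, N i | n delta, l j}, the main step is an estimate of the second (factorial) moment.

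The key structural observation, exploiting Assumption~\ref{assumption: heavy tails in B i j}, is that the truncation $B \mapsto B\mathbbm{I}\{B \leq M\}$ reveals $B$ exactly when the pruned value is positive and leaves only a two-point alternative $\{B = 0\} \cup \{B > M\}$ when it is zero. Let $\mathcal T^{\leqslant}$ be the $\sigma$-algebra generated by the entire pruned process (equivalently, by all $B^{\leqslant, (t,m)}_{\cdot \leftarrow \cdot}(n\delta)$). Then for each pruned-tree type-$l$ individual $(t,m)$, conditional on $\mathcal T^{\leqslant}$ the indicator $\mathbbm{I}\{B^{(t,m)}_{j \leftarrow l} > n\delta\}$ is identically zero on $\{B^{\leqslant,(t,m)}_{j \leftarrow l}(n\delta) > 0\}$ and Bernoulli$(q)$ on $\{B^{\leqslant,(t,m)}_{j \leftarrow l}(n\delta) = 0\}$, with $q = \P(B_{j \leftarrow l} > n\delta)/[\P(B_{j \leftarrow l} = 0) + \P(B_{j \leftarrow l} > n\delta)] = O(\P(B_{j \leftarrow l} > n\delta))$. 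Independence of the coordinates of $\bm B_{\cdot \leftarrow l}$ and of offspring vectors across distinct $(t,m)$ further implies mutual independence of these Bernoullis across $(t,m)$ and across distinct pairs $(l,j)$. Writing $K_0^{(l,j)} \leq S^{\leqslant}_{i,l}(n\delta)$ for the number of pruned-tree type-$l$ individuals with $B^{\leqslant,(t,m)}_{j \leftarrow l}(n\delta) = 0$, we therefore obtain $N^{>}_{i;j \leftarrow l}(n\delta) \mid \mathcal T^{\leqslant} \sim \mathrm{Bin}\bigl(K_0^{(l,j)}, q\bigr)$ together with conditional mutual independence across the pairs $(l,j)$.

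The final ingredient is a polynomial moment bound $\E[(S^{\leqslant}_{i,l}(n\delta))^p] = O(n^{\max(0,\, p - \alpha^*(l))})$ for each integer $p \geq 1$. For $p < \alpha^*(l)$ this follows from $S^{\leqslant} \leq S_{i,l}$ and the existing tail estimate $\P(S_{i,l} > x) \in \RV_{-\alpha^*(l)}(x)$ from \cite{Asmussen_Foss_2018}; for $p \geq \alpha^*(l)$ it propagates through the subcritical branching recursion from the truncated-offspring bound $\E(B^{\leqslant}_{l' \leftarrow l}(n\delta))^p = O(n^{p - \alpha_{l' \leftarrow l}})$ (via Karamata's theorem), with Lemma~\ref{lemma: tail bound, pruned cluster size S i leq n delta} controlling the contribution from $\{\|S^{\leqslant}\| > n\Delta\}$. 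Specializing to $p = 2$ and combining with $q = O(\P(B_{j \leftarrow l} > n\delta))$:
\[
\E[N^{>}_{i;j \leftarrow l}(n\delta)\,(N^{>}_{i;j \leftarrow l}(n\delta) - 1)] \leq \E[(S^{\leqslant}_{i,l}(n\delta))^2]\, q^2 = O\!\left(n^{\max(0,\,2-\alpha^*(l))\, -\, 2\alpha_{j \leftarrow l}}\right),
\]
which is $o(\P(B_{j \leftarrow l} > n\delta))$ because $\alpha^*(l) + \alpha_{j \leftarrow l} > 2$ (both indices exceed $1$ strictly by Assumption~\ref{assumption: heavy tails in B i j}). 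Markov's inequality then delivers Claim~\eqref{claim 2, lemma: cluster size, asymptotics, N i | n delta, l j}, and the decomposition $\E N^{>} = \P(N^{>} \geq 1) + \E[(N^{>} - 1)^+]$ together with $\E[(N^{>} - 1)^+] \leq \tfrac{1}{2}\E[N^{>}(N^{>}-1)]$ yields Claim~\eqref{claim 1, lemma: cluster size, asymptotics, N i | n delta, l j}.

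For Part (ii), bound $\mathbbm{I}\{N^{>}_{(l,j)} \geq 1\} \leq N^{>}_{(l,j)}$ to obtain $\P\bigl(\cap_{(l,j) \in \mathcal T} \{N^{>}_{i;j \leftarrow l}(n\delta) \geq 1\}\bigr) \leq \E \prod_{(l,j) \in \mathcal T} N^{>}_{i;j \leftarrow l}(n\delta)$. The conditional mutual independence established above gives $\E[\prod_{(l,j)} N^{>}_{(l,j)} \mid \mathcal T^{\leqslant}] = \prod_{(l,j)} q_{(l,j)} K_0^{(l,j)}$, so $\E \prod N^{>} \leq \prod_{(l,j)} q_{(l,j)} \cdot \E \prod_{(l,j) \in \mathcal T} S^{\leqslant}_{i, l}(n\delta)$. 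Hölder's inequality with equal exponents $|\mathcal T|$ and the moment bound give $\E \prod_{(l,j) \in \mathcal T} S^{\leqslant}_{i, l}(n\delta) = O\bigl(n^{\sum_{(l,j) \in \mathcal T} \max(0,\, 1 - \alpha^*(l)/|\mathcal T|)}\bigr)$, and because every $\alpha^*(l) > 1$ strictly this exponent is strictly smaller than $|\mathcal T|(1 - 1/|\mathcal T|) = |\mathcal T| - 1$. Combined with $\prod q_{(l,j)} = O(\prod \P(B_{j \leftarrow l} > n\delta))$, we arrive at $\E \prod N^{>}_{(l,j)}(n\delta) = O(n^{|\mathcal T| - 1 - \epsilon}) \prod_{(l,j)} \P(B_{j \leftarrow l} > n\delta)$ for some $\epsilon > 0$, which is the desired $o(n^{|\mathcal T| - 1} \prod \P)$ bound. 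The principal obstacle is verifying the polynomial moment bound on $S^{\leqslant}_{i,l}(n\delta)$ for $p \geq \alpha^*(l)$: one cannot simply dominate by $S_{i,l}$ (whose $p$-th moment is infinite), but must interpolate between the heavy-tailed behaviour of $S_{i,l}$ on $[0, n\Delta]$ and the super-polynomial decay of $\P(S^{\leqslant} > n\Delta)$ supplied by Lemma~\ref{lemma: tail bound, pruned cluster size S i leq n delta}, propagated via the subcritical recursion from the truncated offspring moments.
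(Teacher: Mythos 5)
Your conditional Binomial representation is the same coupling the paper uses, but the step where you bound the conditional success probability, $q = \P(B_{j \leftarrow l} > n\delta)\big/\big[\P(B_{j \leftarrow l} = 0) + \P(B_{j \leftarrow l} > n\delta)\big] = \bo\big(\P(B_{j \leftarrow l} > n\delta)\big)$, silently requires $\P(B_{j \leftarrow l} = 0) > 0$, and this is not implied by Assumptions~\ref{assumption: subcriticality}--\ref{assumption: regularity condition 2, cluster size, July 2024}: sub-criticality only constrains the spectral radius of $\bar{\textbf B}$, so an off-diagonal offspring count may satisfy $B_{j \leftarrow l} \geq 1$ (or $\geq 2$) almost surely. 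In that case $\{B^{\leqslant,(t,m)}_{j\leftarrow l}(n\delta) = 0\} = \{B^{(t,m)}_{j\leftarrow l} > n\delta\}$, so $q = 1$, and both your second-factorial-moment bound for part (i) and your product/H\"older bound for part (ii) collapse. The paper deals with exactly this case at the end of its proof: it replaces the pruning-to-zero by pruning down to the essential lower bound $\underline b_{j \leftarrow l} = \min\{k:\ \P(B_{j\leftarrow l}=k)>0\}$, i.e.\ works with the dominating process built from $\underline b_{j\leftarrow l} \vee (B\mathbbm{I}\{B \leq n\delta\})$, so that the hidden alternative at each node becomes $\{B = \underline b_{j\leftarrow l}\}\cup(n\delta,\infty)$, whose "small" branch has probability $\P(B = \underline b_{j\leftarrow l}) > 0$. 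Your argument needs the same modification (condition on this coarser pruned tree rather than on $\mathcal T^{\leqslant}$); without it the proof is incomplete precisely in the configurations the lemma must cover.

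Two further points, less serious. First, the tail of $S_{i,l}$ is governed by the \emph{global} minimal index $\alpha^* = \min_{p,q}\alpha_{q \leftarrow p}$ (Theorem~2 of \cite{Asmussen_Foss_2018} under Assumption~\ref{assumption: regularity condition 1, cluster size, July 2024}), not by $\alpha^*(l) = \min_k \alpha_{l \leftarrow k}$, so your moment bounds should read $\E[(S^{\leqslant}_{i,l}(n\delta))^p] = \bo(n^{\max(0,\,p-\alpha^*)+\lo(1)})$; your conclusions survive because only $\alpha^* > 1$ and $\alpha_{j\leftarrow l} > 1$ are ultimately used, but the higher-moment bound for $p \geq \alpha^*$ is asserted rather than proved and must be carried out (the paper avoids it in part (ii) by truncating $\norm{\bm S^{\leqslant}_i(n\delta)}$ at $n\Delta$, invoking Lemma~\ref{lemma: tail bound, pruned cluster size S i leq n delta} for the excess and Karamata only for the truncated second moment, which is why it also needs $\delta$ small in part (ii)). Second, modulo these repairs your route — first-moment identity plus conditional second-moment for part (i), and H\"older on the conditional product for general $|\mathcal T|$ in part (ii) — is a legitimate alternative to the paper's, which instead proves the matching lower bound on $\P(N^{>}_{i;j\leftarrow l}(n\delta)\geq 1)$ directly through the same resampling coupling and treats $|\mathcal T|=2$ explicitly.
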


\begin{proof}\linksinpf{lemma: cluster size, asymptotics, N i | n delta, l j}
$(i)$ Suppose that we can verify (as $n \to \infty$)
\begin{align}
    \E \big[ N^{>}_{i;j \leftarrow l}(n\delta)\big] \sim \bar s_{i,l} \P(B_{j \leftarrow l} > n\delta),
    \label{claim 3, lemma: cluster size, asymptotics, N i | n delta, l j}
    \\
    \P\big(N^{>}_{i;j \leftarrow l}(n\delta)\geq 1\big)\sim \bar s_{i,l} \P(B_{j \leftarrow l} > n\delta).
    \label{claim 3, 2, lemma: cluster size, asymptotics, N i | n delta, l j}
\end{align}
Then, note that for a sequence of random variables $Z_n$ taking non-negative integer values, 
by the elementary bound $\E Z_n = \sum_{k \geq 1}\P(Z_n \geq k) \geq \P(Z_n \geq 2) + \P(Z_n \geq 1)$,
we get
\begin{align*}
    \lim_{n \to \infty}
    \frac{ \E Z_n  }{a_n} = c,
    \ 
    \lim_{n \to \infty}
    \frac{ \P(Z_n \geq 1)  }{a_n} = c
    \quad
    \Longrightarrow\quad
    \lim_{n \to \infty}
    \frac{ \P(Z_n = 1) }{a_n} = c,
    \ 
    \lim_{n \to \infty}
    \frac{ \P(Z_n \geq 2) }{a_n} = 0
\end{align*}
for any $c > 0$ and any sequence of strictly positive real numbers $a_n$.
Therefore, the asymptotics stated in \eqref{claim 1, lemma: cluster size, asymptotics, N i | n delta, l j} and \eqref{claim 2, lemma: cluster size, asymptotics, N i | n delta, l j} follow from Claims~\eqref{claim 3, lemma: cluster size, asymptotics, N i | n delta, l j} and \eqref{claim 3, 2, lemma: cluster size, asymptotics, N i | n delta, l j}.
Next, we prove these two claims.

\medskip
\noindent
\textbf{Proof of Claim~\eqref{claim 3, lemma: cluster size, asymptotics, N i | n delta, l j}}.
By definitions in \eqref{def: branching process X n leq M}, \eqref{def: pruned tree S i leq M}, \eqref{def, N i | M l j, cluster size}, \eqref{def: cluster size, N i | M cdot j},
$N^{>}_{i;j \leftarrow l}(M)$ counts the number of type-$l$ nodes with pruned type-$j$ children under threshold $M$ 
in the branching process $\bm X^\leqslant_j(t;M)$.
Therefore, the $N^{>}_{i;j \leftarrow l}(M)$'s solve the fixed-point equations
\begin{align*}
     N^{>}_{i;j \leftarrow l}(M)
     \distequal
        \mathbbm{I}\{
            i = l,\ B_{j \leftarrow l} > M
        \}
    +
        \sum_{k \in [d]} \sum_{m = 1}^{ B_{k \leftarrow i}\mathbbm{I}\{ B_{k \leftarrow i} \leq M\} }
        N^{>,(m)}_{k;j \leftarrow l}(M),
    \qquad 
    i,j,l \in [d],
\end{align*}
where the $N^{>,(m)}_{k;j \leftarrow l}(M)$'s are independent copies of $N^{>}_{k;j \leftarrow l}(M)$.
Let $\bar{\textbf B}^{\leqslant M} = (\bar b^{\leqslant M}_{j \leftarrow i})_{i,j \in [d]}$:
that is, the element on the $i^\text{th}$ row and $j^\text{th}$ column is
$\bar b^{\leqslant M}_{j\leftarrow i} = \E B_{j \leftarrow i}\mathbbm{I}\{B_{j \leftarrow i} \leq M\}$.
Provided that the spectral radius of $\bar{\textbf B}^{\leqslant M}$ is strictly less than 1,
we can
apply Proposition~1 of \cite{Asmussen_Foss_2018} and get
$
\bar{\bm n}^{j \leftarrow l} = \bar{\bm q}^{j \leftarrow l} +\bar{\textbf B}^{\leqslant M} \bar{\bm n}^{j \leftarrow l}, 
$
where the vectors $\bar{\bm n}^{j \leftarrow l} = (\bar n^{j \leftarrow l}_1,\ldots,\bar n^{j \leftarrow l}_d)^\top,\ \bar{\bm q}^{j \leftarrow l} = (\bar q^{j \leftarrow l}_1,\ldots,\bar q^{j \leftarrow l}_d)^\top$ are defined by
$
{\bar n}^{j \leftarrow l}_k = 
        \E\big[ N^{>}_{k;j \leftarrow l}(M) \big]
$
and
$
\bar q^{j \leftarrow l}_k = \mathbbm{I}\{k = l\}\P(B_{j \leftarrow l} > M).
$
This implies
$
\bar{\bm n}^{j\leftarrow l} = (\textbf I - \bar{\textbf B}^{\leqslant M})^{-1}\bar{\bm q}^{j\leftarrow l},
$
and hence
\begin{align}
    \E\big[  N^{>}_{i;j \leftarrow l}(M) \big] 
    = \E\big[ S^{\leqslant}_{i,l}(M)\big]\P(B_{j \leftarrow l} > M),\qquad \forall i,l,j \in [d].
    \label{proof, part i, claim 3, expectation of N i l to j M}
\end{align}
In particular,
due to Assumption~\ref{assumption: subcriticality} and monotone convergence,
it holds for any $M$ large enough that $\bar{\textbf B}^{\leqslant M}$ has a spectral radius less than $1$. 
Besides,
applying monotone convergence
to $\sum_{t \geq 0}\bm X^\leqslant_i(t;M) \distequal \bm S^\leqslant_i(M)$
and
$
\sum_{t \geq 0}\bm X_i(t) \distequal \bm S_i
$
(see \eqref{def, proof strategy, collection of B i j copies}--\eqref{def: pruned tree S i leq M}),
we get
$
\lim_{M \to \infty}\E S^{\leqslant}_{i,l}(M) = \E S_{i,l} = \bar s_{i,l}.
$
By setting $M = n\delta$ in \eqref{proof, part i, claim 3, expectation of N i l to j M} and sending $n \to \infty$,
we conclude the proof of Claim~\eqref{claim 3, lemma: cluster size, asymptotics, N i | n delta, l j},
where we must have $\bar s_{i,l} > 0$ under Assumption~\ref{assumption: regularity condition 1, cluster size, July 2024}.

\medskip
\noindent
\textbf{Proof of Claim~\eqref{claim 3, 2, lemma: cluster size, asymptotics, N i | n delta, l j}}.
Combining \eqref{claim 3, lemma: cluster size, asymptotics, N i | n delta, l j} with Markov inequality, we are able to obtain the upper bound
$
\limsup_{n \to \infty}
\P\big(N^{>}_{i;j \leftarrow l}(n\delta)\geq 1\big)
\big/ \P(B_{j \leftarrow l} > n\delta) \leq \bar s_{i,l}.
$
Now, we focus on establishing
\begin{align}
    \liminf_{n \to \infty}
    \P\big(N^{>}_{i;j \leftarrow l}(n\delta)\geq 1\big)
    \big/ \P(B_{j \leftarrow l} > n\delta) \geq \bar s_{i,l}.
    \label{proof, part i, claim 3, 2, goal LB, lemma: cluster size, asymptotics, N i | n delta, l j}
\end{align}
First, 
the definition of $N^{>}_{i;j \leftarrow l}(M)$
in \eqref{def, N i | M l j, cluster size}
is equivalent to
\begin{align}
    N^{>}_{i;j \leftarrow l}(M)
    & = 
    \#
    \Big\{
        (t,m) \in \mathbb N^2:\ 
        m \leq X^{\leqslant}_{i,l}(t - 1;M),\ B^{(t,m)}_{j \leftarrow l} > M
    \Big\}.
    \label{proof, equivalent def for N > i j l, lemma: cluster size, asymptotics, N i | n delta, l j}
\end{align}
Next, given $M^\prime > M > 0$,
the stochastic comparison in \eqref{property: stochastic comparison, general, glaton watson trees}, \eqref{property: stochastic comparison, S and pruned S} implies
\begin{align}
    N^{>}_{i;j \leftarrow l}(M^\prime)
    \geq 
    \underbrace{ \#
    \Big\{
        (t,m) \in \mathbb N^2:\ 
        m \leq X^{\leqslant}_{i,l}(t - 1;M),\ B^{(t,m)}_{j \leftarrow l} > M^\prime
    \Big\}
    }_{ \delequal \hat N_{i;j \leftarrow l}(M,M^\prime)  }.
    \label{proof, part i, claim 3, 2, ineq 1, lemma: cluster size, asymptotics, N i | n delta, l j}
\end{align}
Furthermore, the branching process $(\bm X^\leqslant_i(t;M))_{t \geq 0}$ is independent 
from the actual value of any $B_{j \leftarrow l}^{(t,m)}$ if $B_{j \leftarrow l}^{(t,m)} \in \{0\} \cup (M,\infty)$:
indeed, the pruning mechanism in \eqref{def: branching process X n leq M} would always result in 
$
B_{j \leftarrow l}^{(t,m)}\mathbbm{I}\{B_{j \leftarrow l}^{(t,m)} \leq M\} = 0
$
in such cases.
This leads to a coupling between $(\bm X^\leqslant_i(t;M))_{t \geq 0}$ and the $B^{(t,m)}_{j \leftarrow l}$'s,
where we first generate the branching process $(\bm X^\leqslant_i(t;M))_{t \geq 0}$ under offspring counts
$\hat B_{j \leftarrow l}^{(t,m)}(M) \distequal B_{j \leftarrow l}^{(t,m)}\mathbbm{I}\{B_{j \leftarrow l}^{(t,m)} \leq M\}$,
and then, independently for each $(t,m,j,l)$, recover $B_{j \leftarrow l}^{(t,m)}$ based on the value of $\hat B_{j \leftarrow l}^{(t,m)}$.
More specifically,
given $M^\prime > M > 0$,
the term
$\hat N^{>}_{i;j \leftarrow l}(M,M^\prime)$ in \eqref{proof, part i, claim 3, 2, ineq 1, lemma: cluster size, asymptotics, N i | n delta, l j} can be generated as follows:
\begin{enumerate}[$(1)$]
    \item 
         first, we generate $(\bm X^\leqslant_i(t;M))_{t \geq 0}$ as a branching process under offspring counts $\hat B_{j \leftarrow l}^{(t,m)}$, which are independent copies of $B_{j \leftarrow l}\mathbbm{I}\{B_{j \leftarrow l} \leq M\}$;

    \item 
        next, independently for any $(t,m) \in \mathbb N^{2}$ with $m \leq X^{\leqslant}_{i,l}(t - 1;M)$ and $\hat B_{j \leftarrow l}^{(t,m)}(M) = 0$ 
        (that is, the $m^\text{th}$ type-$l$ node in the $(t-1)^\text{th}$ generation of the branching process $(\bm X^\leqslant_i(t;M))_{t \geq 0}$ did not give birth to any type-$j$ child in the $t^\text{th}$ generation),
        we sample $B^{(t,m)}_{j \leftarrow l}$ under the conditional law
        $
        \P\big(B_{j \leftarrow l} \in \cdot\ \big|\ B_{j \leftarrow l} \in \{0\} \cup (M,\infty)\big);
        $
    
    \item 
        lastly, we count the number of pairs $(t,m)$ in step $(2)$ with $B^{(t,m)}_{j \leftarrow l} > M^\prime$.
\end{enumerate}
In particular, by setting
\begin{align}
    Z_M \delequal 
    \#\Big\{
        (t,m) \in \mathbb N^2:\ 
        m \leq X^{\leqslant}_{i,l}(t - 1;M),\ \hat B^{(t,m)}_{j \leftarrow l}(M) = 0
    \Big\},
    \nonumber
\end{align}
the coupling described above and
\eqref{proof, part i, claim 3, 2, ineq 1, lemma: cluster size, asymptotics, N i | n delta, l j} and imply that (for any $M,\delta$, and any $n$ large enough with $n\delta > M$)
\begin{align*}
    & \P\big( N^{>}_{i;j \leftarrow l}(n\delta) \geq 1 \big) 
    \\ 
    &
    \geq 
    \P\big( \hat  N^{>}_{i;j \leftarrow l}(M,n\delta) \geq 1  \big)
    \geq 
    \P\big( \hat  N^{>}_{i;j \leftarrow l}(M,n\delta) = 1  \big)
    \\ 
    & = 
    \sum_{k \geq 1}\P\big( Z_M = k \big)
    \cdot
     \binom{k}{1} \cdot 
            \frac{\P(B_{j\leftarrow l} > n\delta)}{\P\big(
                B_{j\leftarrow l} \in \{0\} \cup (M,\infty) 
                \big)}
        \cdot 
            \Bigg( \frac{
                \P\big( B_{j\leftarrow l} \in \{0\} \cup (M,n\delta]  \big)
                }{\P\big(
                    B_{j\leftarrow l} \in \{0\} \cup (M,\infty) 
                \big)}\Bigg)^{k - 1},
\end{align*}
and hence
\begin{equation} \label{proof: eq 1, claim 2, part i, lemma: cluster size, asymptotics, N i | n delta, l j}
    \begin{aligned}
        \frac{ \P\big( N^{>}_{i;j\leftarrow l}(n\delta) \geq 1 \big) }{ \P(B_{j\leftarrow l} > n\delta)  }
        &
        \geq
        \sum_{k \geq 1}  \frac{ k
            \P(Z_M = k)
        }{
            \P\big(B_{j\leftarrow l} \in \{0\} \cup (M,\infty) \big)
        }
            \Bigg( \frac{
                \P\big( B_{j\leftarrow l} \in \{0\} \cup (M,n\delta]  \big)
                }{\P\big(B_{j\leftarrow l} \in \{0\} \cup (M,\infty) \big)}\Bigg)^{k-1}.
    \end{aligned}
\end{equation}
By the regularly varying conditions in Assumption~\ref{assumption: heavy tails in B i j},
we have $\P(B_{j\leftarrow l} > M) > 0$ for any $M > 0$.
Also, we obviously have $\lim_{n \to \infty}\P(B_{j \leftarrow l} > n\delta) = 0$.
Consequently, 
given $\rho \in (0,1)$ and $\delta, M > 0$, 
in \eqref{proof: eq 1, claim 2, part i, lemma: cluster size, asymptotics, N i | n delta, l j}
it holds for any $n$ large enough that
\begin{align}
    \frac{ \P\big( N^{>}_{i;j\leftarrow l}(n\delta) \geq 1 \big) }{ \P(B_{j\leftarrow l} > n\delta)  }
        &
    \geq 
    \sum_{k \geq 1}
    \frac{
            k\P(Z_M
        = k            
        )
        }{
            \P\big( B_{j\leftarrow l}  \in \{0\} \cup (M,\infty) \big)
        } \cdot \rho^{k-1}
    =
    \frac{
        \E\big[ Z_M\rho^{  Z_M - 1 } \big]
    }{
        \P\big( B_{j\leftarrow l}  \in \{0\} \cup (M,\infty) \big)
    }.
    \label{proof, LB, claim 1, lemma: cluster size, asymptotics, N i | n delta, l j}
\end{align}
Note that $Z_M\rho^{Z_M - 1} \leq Z_M$ for any $\rho \in (0,1)$.
By monotone convergence, we get
\begin{align}
   \liminf_{n \to \infty}
   \frac{ \P\big( N^{>}_{i;j\leftarrow l}(n\delta) \geq 1 \big) }{ \P(B_{l,j} > n\delta)  }
        &
    \geq 
    \lim_{\rho \uparrow 1}
    \frac{
        \E\big[ Z_M\rho^{  Z_M - 1 } \big]
    }{
        \P\big( B_{j\leftarrow l}  \in \{0\} \cup (M,\infty) \big)
    }
    =
    \frac{\E Z_M}{\P\big( B_{j\leftarrow l}  \in \{0\} \cup (M,\infty) \big)}
    \label{proof, LB, intermediate, claim 1, lemma: cluster size, asymptotics, N i | n delta, l j}
\end{align}
for any $M,\delta > 0$.
Moreover, by repeating the arguments in \eqref{proof, part i, claim 3, expectation of N i l to j M} based on Proposition~1 of \cite{Asmussen_Foss_2018},
we get
$
\E Z_M = \E\big[ S^\leqslant_{i,l}(M) \big]\cdot \P\big(B_{j \leftarrow l} \in \{0\}\cup (M,\infty)\big). 
$
Then, in \eqref{proof, LB, intermediate, claim 1, lemma: cluster size, asymptotics, N i | n delta, l j}, we have
$
\liminf_{n \to \infty}
   \frac{ \P( N^{>}_{i;j\leftarrow l}(n\delta) \geq 1) }{ \P(B_{j \leftarrow l} > n\delta)  }
   \geq 
   \E\big[ S^\leqslant_{i,l}(M) \big]
$
for any $\delta, M > 0$.
Lastly, we have established earlier that
$
\lim_{M \to \infty}\E[S^{\leqslant}_{i,l}(M)] = \bar s_{i,l}.
$
Sending $M \to \infty$, we verify \eqref{proof, part i, claim 3, 2, goal LB, lemma: cluster size, asymptotics, N i | n delta, l j}.

\medskip
$(ii)$ 
Since there are only finitely many possible choices for such $\mathcal T$,
it suffices to fix some $\mathcal T \subseteq [d]^2$
with $|\mathcal T| \geq 2$ and prove the claim.
For clarity of the proof, we focus on the case where $|\mathcal T| = 2$.
That is, we fix some $(l,j)\neq(l^\prime,j^\prime)$ and show that, for all $\delta > 0$ small enough,
\begin{align*}
            \P\Big(
                N^{>}_{i;j \leftarrow l}(n\delta) \geq 1,\ 
                N^{>}_{i;j^\prime \leftarrow l^\prime}(n\delta) \geq 1\Big)
            =
            \lo 
                \Big(
                    n\cdot\P(B_{j \leftarrow l} > n\delta)\P(B_{j^\prime \leftarrow l^\prime} > n\delta)
                \Big)
\end{align*}
as $n \to \infty$.
However, we stress that this approach can be easily applied to more general cases,
at the cost of more involved notations.
Also, since each $B_{j \leftarrow l}$ is a non-negative integer-valued random variable, 
the essential lower bound
$
\underline{b}_{j \leftarrow l} \delequal \min\big\{ k \geq 0:\ \P(B_{j \leftarrow l} = k)> 0  \big\}
$
is  well-defined for each pair $(l,j)$.
We first consider the case where $\underline b_{j \leftarrow l} = 0$ and $\underline b_{j^\prime \leftarrow l^\prime} = 0$, i.e.,
\begin{align}
    \P(B_{j \leftarrow l} = 0) > 0,\qquad
    \P(B_{j^\prime \leftarrow l^\prime} = 0) > 0.
    \label{proof, part ii, condition on essential lower bound b i j, lemma: cluster size, asymptotics, N i | n delta, l j}
\end{align}
Towards the end of this proof, we address the cases where \eqref{proof, part ii, condition on essential lower bound b i j, lemma: cluster size, asymptotics, N i | n delta, l j} does not hold.

Let
\begin{align}
    Z_n(\delta)
    & \delequal
    \#\Big\{
    (t,m) \in \mathbb N^2:\
        t \geq 1,\
            m \leq X^{\leqslant}_{i,l}(t - 1;n\delta),\ 
            B^{(t,m)}_{j \leftarrow l} \in \{0\}\cup(n\delta,\infty)
                    \Big\}, \nonumber
    \\ 
    Z^\prime_n(\delta)
    & \delequal
    \#\Big\{
    (t,m) \in \mathbb N^2:\
        t \geq 1,\
            m \leq X^{\leqslant}_{i,l^\prime}(t - 1;n\delta),\ 
            B^{(t,m)}_{j^\prime \leftarrow l^\prime} \in \{0\}\cup(n\delta,\infty)
                    \Big\}. 
    \nonumber
\end{align}
Take $\Delta > 0$.
Using the coupling constructed in the proof of Claim~\eqref{claim 3, 2, lemma: cluster size, asymptotics, N i | n delta, l j} in part $(i)$,
we have
\begin{align}
    &\P\big(
                N^{>}_{i;j \leftarrow l}(n\delta) \geq 1,\ 
                N^{>}_{i;j^\prime \leftarrow l^\prime}(n\delta) \geq 1\big)
    \label{proof, part ii, math display 1, lemma: cluster size, asymptotics, N i | n delta, l j}
    \\
    = &  \sum_{k \geq 1}\sum_{k^\prime \geq 1}\sum_{s \geq 1} \P\Big(
                    Z_n(\delta) = k,\ 
        Z^\prime_n(\delta) = k^\prime,\ 
        \norm{\bm S^{\leqslant}_i(n\delta)} = s
        \Big)
    \nonumber
   \\ 
    & \cdot 
        \sum_{p = 1}^k
        \binom{k}{p} \cdot 
            \Bigg(\frac{\P(B_{j \leftarrow l} > n\delta)}{\P\big(B_{j \leftarrow l} \in \{0\} \cup (n\delta,\infty)  \big)}\Bigg)^{p}
        \cdot 
            \Bigg( \frac{\P(B_{j \leftarrow l} = 0)}{\P\big(B_{j \leftarrow l} \in \{0\} \cup (n\delta,\infty)  \big)}\Bigg)^{k - p}
    \nonumber
    \\ 
    &\cdot 
                \sum_{p^\prime = 1}^{k^\prime}
        \binom{k^\prime}{p^\prime} \cdot 
            \Bigg(\frac{\P(B_{j^\prime \leftarrow l^\prime } > n\delta)}{
                \P\big(B_{j^\prime \leftarrow l^\prime} \in \{0\} \cup (n\delta,\infty)  \big)
            }\Bigg)^{p^\prime}
        \cdot 
            \Bigg( \frac{\P(B_{j^\prime \leftarrow l^\prime} = 0)}{
                 \P\big(B_{j^\prime \leftarrow l^\prime} \in \{0\} \cup (n\delta,\infty)  \big)
                }\Bigg)^{k^\prime - p^\prime}
    \nonumber
    \\
\leq &
        \P\Big(\norm{\bm S^{\leqslant}_i(n\delta)} > \floor{n\Delta}\Big)
    \nonumber
    \\ 
+ &
    \sum_{k \geq 1}\sum_{k^\prime \geq 1}\sum_{s \leq \floor{n\Delta} } \P\Big(
                    Z_n(\delta) = k,\ 
        Z^\prime_n(\delta) = k^\prime,\ 
        \norm{\bm S^{\leqslant}_i(n\delta)} = s
        \Big)
    \nonumber
    \\ 
    & \cdot 
        \P\Bigg(
        \text{Binomial}\bigg( k,\ 
        \frac{\P(B_{j \leftarrow l} > n\delta)}{\P\big(B_{j \leftarrow l} \in \{0\} \cup (n\delta,\infty)  \big)}
        \bigg) \geq 1
        \Bigg)
    \nonumber
    \\ & 
    \cdot 
        \P\Bigg(
        \text{Binomial}\bigg( k^\prime,\ 
        \frac{\P(B_{j^\prime \leftarrow l^\prime } > n\delta)}{\P\big(B_{ j^\prime \leftarrow l^\prime } \in \{0\} \cup (n\delta,\infty)  \big)}
        \bigg) \geq 1
        \Bigg)
    \nonumber
    \\ 
    \leq &
        \P\Big(\norm{\bm S^{\leqslant}_i(n\delta)} > \floor{ n\Delta } \Big)
    \nonumber
    \\ 
    + &
    \sum_{k \geq 1}\sum_{k^\prime \geq 1}\sum_{s \leq \floor{ n\Delta } } \P\Big(
                    Z_n(\delta)
        = k,\ 
        Z^\prime_n(\delta)
        = k^\prime,\ 
        \norm{\bm S^{\leqslant}_i(n\delta)} = s
        \Big)
    \nonumber
    \\ 
    &\cdot 
        k \cdot 
        \frac{\P(B_{j\leftarrow l} > n\delta)}{\P\big(B_{j\leftarrow l} \in \{0\} \cup (n\delta,\infty) \big)}
        \cdot 
        k^\prime \cdot 
        \frac{\P(B_{j^\prime \leftarrow l^\prime} > n\delta)}{\P\big(B_{j^\prime\leftarrow l^\prime} \in \{0\} \cup (n\delta,\infty) \big)}
    \nonumber
    \\ 
    &\qquad\qquad
    \text{ by the preliminary bound }
    \P\big(\text{Binomial}(k,p) \geq 1\big) \leq \E\big[\text{Binomial}(k,p)\big] = kp
    \nonumber
    \\ 
     \leq &
        \P\Big(\norm{\bm S^{\leqslant}_i(n\delta)} > n\Delta\Big)
    \nonumber
    \\
    + &
    \underbrace{
        \E\bigg[ \norm{\bm S^{\leqslant}_i(n\delta)}^2 \mathbbm{I}\Big\{ \norm{\bm S^{\leqslant}_i(n\delta)} \leq n\Delta  \Big\} \bigg]
    \cdot 
    \prod_{ (p,q) = (l,j) \text{ or }(l^\prime,j^\prime) }
     \frac{\P(B_{q\leftarrow p} > n\delta)}{\P\big(B_{q\leftarrow p} \in \{0\} \cup (n\delta,\infty) \big)}
    }_{\delequal I(n,\Delta,\delta)}.
    \nonumber
\end{align}
The last inequality follows from $Z_n(\delta) \leq \norm{\bm S_i^\leqslant(n\delta)}$ and 
$Z_n^\prime(\delta) \leq \norm{\bm S_i^\leqslant(n\delta)}$.
Applying Lemma~\ref{lemma: tail bound, pruned cluster size S i leq n delta},
we fix some $\delta_0 = \delta_0(\Delta) > 0$ such that for any $\delta \in (0,\delta_0)$,
\begin{align}
    \P\Big(\norm{\bm S^{\leqslant}_i(n\delta)} > n\Delta\Big) = 
    \lo 
                \Big(
                    n\cdot\P(B_{j \leftarrow l} > n\delta)\P(B_{j^\prime \leftarrow l^\prime} > n\delta)
                \Big).
    \label{proof, bound, part ii, lemma: cluster size, asymptotics, N i | n delta, l j}
\end{align}
Meanwhile,
by our running assumption \eqref{proof, part ii, condition on essential lower bound b i j, lemma: cluster size, asymptotics, N i | n delta, l j},
there exists $C \in (0,\infty)$ such that for any $\delta > 0$ and any $n \geq 1$,
\begin{align}
    I(n,\Delta,\delta) 
    & \leq 
        C  \cdot \E\bigg[ \norm{\bm S^{\leqslant}_i(n\delta)}^2 \mathbbm{I}\Big\{ \norm{\bm S^{\leqslant}_i(n\delta)} \leq n\Delta  \Big\} \bigg]
        \cdot 
        \P(B_{j \leftarrow l} > n\delta)
        \P(B_{j^\prime \leftarrow l^\prime } > n\delta).
    \label{proof, part ii, ineq for I n Delta delta, lemma: cluster size, asymptotics, N i | n delta, l j}
\end{align}
Let $\alpha^* = \min\{\alpha_{q \leftarrow p}:\ p,q \in [d]\}$.
Under Assumption~\ref{assumption: heavy tails in B i j}, we have $\alpha^* > 1$.
Then, by Theorem~2 of \cite{Asmussen_Foss_2018},
we get $\P(\norm{\bm S_i} > x) \in \RV_{-\alpha^*}(x)$ as $x \to \infty$.
Now, we consider two different cases.
If $\alpha^* > 2$, then
\begin{align*}
    \E\bigg[ \norm{\bm S^{\leqslant}_i(n\delta)}^2 \mathbbm{I}\Big\{ \norm{\bm S^{\leqslant}_i(n\delta)} \leq n\Delta  \Big\} \bigg]
    & \leq 
    \E[ \norm{\bm S_i}^2]
    < \infty \quad
    \text{due to \eqref{property: stochastic comparison, S and pruned S} and $\alpha^* > 2$}.
\end{align*}
Plugging this bound into \eqref{proof, part ii, ineq for I n Delta delta, lemma: cluster size, asymptotics, N i | n delta, l j}, we verify that
\begin{align}
    I(n,\Delta,\delta)
    =
    \lo 
                \Big(
                    n\cdot\P(B_{j \leftarrow l} > n\delta)\P(B_{j^\prime \leftarrow l^\prime} > n\delta)
                \Big)
    \qquad\text{when }\alpha^* > 2.
    \label{proof, bound term I n Delta delta, 1, lemma: cluster size, asymptotics, N i | n delta, l j}
\end{align}
If $\alpha^* \in (1,2]$,
we obtain
$
\E\Big[ \norm{\bm S^{\leqslant}_i(n\delta)}^2 \mathbbm{I}\Big\{ \norm{\bm S^{\leqslant}_i(n\delta)} \leq n\Delta  \Big\} \Big]
     \leq 
    \int_0^{n\Delta} 2x\P(\norm{\bm S_i} >x)dx \in \RV_{2 - \alpha^*}(n)
$
using \eqref{property: stochastic comparison, S and pruned S} and Karamata's Theorem
(see, e.g., Theorem 2.1 of \cite{resnick2007heavy}).
Due to $\alpha^* > 1$, 
any $\RV_{2 - \alpha^*}(n)$ function is of order $\lo(n)$.
Plugging this into  \eqref{proof, part ii, ineq for I n Delta delta, lemma: cluster size, asymptotics, N i | n delta, l j}, we get
\begin{align}
    I(n,\Delta,\delta)
    =
    \lo 
                \Big(
                    n\cdot\P(B_{j \leftarrow l} > n\delta)\P(B_{j^\prime \leftarrow l^\prime} > n\delta)
                \Big)
    \qquad\text{when }\alpha^* \in (1,2].
    \label{proof, bound term I n Delta delta, 2, lemma: cluster size, asymptotics, N i | n delta, l j}
\end{align}
Plugging \eqref{proof, bound, part ii, lemma: cluster size, asymptotics, N i | n delta, l j}, \eqref{proof, bound term I n Delta delta, 1, lemma: cluster size, asymptotics, N i | n delta, l j}, and \eqref{proof, bound term I n Delta delta, 2, lemma: cluster size, asymptotics, N i | n delta, l j} into \eqref{proof, part ii, math display 1, lemma: cluster size, asymptotics, N i | n delta, l j}, we conclude the proof of part $(ii)$ under condition \eqref{proof, part ii, condition on essential lower bound b i j, lemma: cluster size, asymptotics, N i | n delta, l j}.

Lastly, we explain how to extend the proof to the cases where the condition \eqref{proof, part ii, condition on essential lower bound b i j, lemma: cluster size, asymptotics, N i | n delta, l j} does not hold.
Recall the definition of the essential lower bounds 
$
\underline{b}_{j \leftarrow l} = \min\big\{ k \geq 0:\ \P(B_{j \leftarrow l} = k)> 0  \big\},
$
and consider the following branching process 
\begin{align}
    \tilde{\bm X}_i(t;M)
    = \sum_{j \in [d]} \sum_{m = 1}^{ \tilde X_{i,j}(t - 1;M) }\tilde{\bm B}^{(t,m)}_{\bcdot \leftarrow j}(M),
    \qquad \forall  t \geq 1,
    \nonumber
\end{align}
under initial values $\tilde{\bm X}_{i}(0) = \bm e_i$, where
\begin{align}
    \tilde B_{l \leftarrow j}^{(t,m)}
    =
     \underline b_{l \leftarrow j} \vee \big( B^{(t,m)}_{l \leftarrow j}\mathbbm{I}\{  B^{(t,m)}_{l \leftarrow j} \leq M \} \big),
     \quad 
     \tilde{\bm B}_{\bcdot \leftarrow j}^{(t,m)}
     =
     ( \tilde B_{1 \leftarrow j}^{(t,m)},\ldots, \tilde B_{d \leftarrow j}^{(t,m)})^\top.
    \nonumber
\end{align}
That is, $\tilde{\bm X}_i(t;M)$ modifies the process $\bm X^\leqslant_i(t;M)$ defined in \eqref{def: branching process X n leq M} by pruning down to the essential lower bound of each $B_{l\leftarrow j}$ instead of $0$.
Obviously, $\bm X^\leqslant_i(t;n\delta) \leq \tilde{\bm X}_i(t;n\delta)$ for each $t,n$.
Then, from the definition of $N_{i;j \leftarrow l}^{>}(n\delta)$ in \eqref{proof, equivalent def for N > i j l, lemma: cluster size, asymptotics, N i | n delta, l j}, we get
\begin{align*}
    N_{i;j \leftarrow l}^{>}(n\delta)
    \leq 
    \#\Big\{
        (t,m) \in \mathbb N^2:\ 
        t \geq 1,\
            m \leq \tilde X_{i,l}(t - 1;n\delta),\ 
            B^{(t,m)}_{j \leftarrow l}(M) > n\delta
    \Big\}.
\end{align*}
Using the coupling constructed when proving Claim~\eqref{claim 3, 2, lemma: cluster size, asymptotics, N i | n delta, l j} in part $(i)$,
we arrive at upper bounds analogous to those in the display~\eqref{proof, part ii, math display 1, lemma: cluster size, asymptotics, N i | n delta, l j},
with the key difference being that the terms  $\P(B_{j \leftarrow l} \in \{0\} \cup (n\delta,\infty))$ and $\P(B_{j^\prime \leftarrow l^\prime} \in \{0\} \cup (n\delta,\infty))$ in the denominators
are substituted by 
$\P(B_{j \leftarrow l} \in \{ \underline b_{j \leftarrow l} \} \cup (n\delta,\infty))$ and $\P(B_{j^\prime \leftarrow l^\prime} \in \{ \underline b_{j^\prime \leftarrow l^\prime} \} \cup (n\delta,\infty))$.
In particular, by the definition of the essential lower bounds, we must have $\P(B_{j\leftarrow l} = \underline b_{j\leftarrow l}) > 0$ and 
$\P(B_{j^\prime\leftarrow l^\prime} = \underline b_{j^\prime\leftarrow l^\prime}) > 0$,
so an upper bound of the form \eqref{proof, part ii, ineq for I n Delta delta, lemma: cluster size, asymptotics, N i | n delta, l j} would still hold, and the subsequent calculations would follow.
We omit the details here to avoid repetition.
\end{proof}

Utilizing Lemma~\ref{lemma: cluster size, asymptotics, N i | n delta, l j},
we provide the proof of Lemma~\ref{lemma: cluster size, asymptotics, N i | n delta, cdot j, crude estimate}.

\begin{proof}[Proof of Lemma~\ref{lemma: cluster size, asymptotics, N i | n delta, cdot j, crude estimate}]
\linksinpf{lemma: cluster size, asymptotics, N i | n delta, cdot j, crude estimate}
(i)
By the definition of $N^{>}_{i;j}(n\delta) = \sum_{l \in [d]}N^{>}_{i;j \leftarrow l}(n\delta)$  in \eqref{def: cluster size, N i | M cdot j},
we have
$
 \P\big(N^{>}_{i;j}(n\delta) = 1\big)
 \leq 
 \sum_{l \in [d]}\P\big(N^{>}_{i;j \leftarrow l}(n\delta) = 1\big).
$
By \eqref{claim 1, lemma: cluster size, asymptotics, N i | n delta, l j} in part (i) of Lemma~\ref{lemma: cluster size, asymptotics, N i | n delta, l j},
\begin{align}
    \P\big(N^{>}_{i;j \leftarrow l}(n\delta) = 1\big)
    \sim 
    \bar s_{i,l}\cdot \P(B_{j \leftarrow l} > n\delta)
    \in 
    \RV_{ -\alpha_{j \leftarrow l} }(n),
    \qquad
    \forall l \in [d],
    \label{proof: bound 1, part i, lemma: cluster size, asymptotics, N i | n delta, cdot j, crude estimate}
\end{align}
 where $\bar s_{i,l} > 0\ \forall l \in [d]$; see Assumptions~\ref{assumption: heavy tails in B i j} and \ref{assumption: regularity condition 1, cluster size, July 2024}.
Also, 
under Assumption~\ref{assumption: regularity condition 2, cluster size, July 2024},
the argument minimum $l^*(j)$ in \eqref{def: cluster size, alpha * l * j} is uniquely defined for each $j \in [d]$, and we have
 $
\alpha^*(j) = \alpha_{j \leftarrow l^*(j)},
 $
 $\alpha^*(j)< \alpha_{j \leftarrow l}$ for any $l \neq l^*(j)$.
This leads to
\begin{align}
    \limsup_{n \to \infty}
    {\P\big(N^{>}_{i;j}(n\delta) = 1\big)}\Big/{\Big( \bar s_{i,l^*(j)}\cdot \P(B_{j \leftarrow l^*(j) } > n\delta) \Big)} \leq 1.
    \label{proof: intermediate results 1, part i, lemma: cluster size, asymptotics, N i | n delta, cdot j, crude estimate}
\end{align}
On the other hand, observe the lower bound
\begin{align*}
  & \P\big(N^{>}_{i;j}(n\delta) = 1\big)
    \geq 
    \P\Big(
    N^{>}_{i;j \leftarrow l^*(j) }(n\delta) = 1,\ 
    N^{>}_{i;j \leftarrow l}(n\delta) = 0\ \forall l \neq l^*(j)
    \Big)
    \\ 
    & \geq 
    \underbrace{\P\big(
      N^{>}_{i;j \leftarrow l^*(j) }(n\delta) = 1
     \big)
     }_{\delequal p_1(n)}
     -
    \underbrace{ \sum_{l \in [d]:\ l \neq l^*(j)}\P\big(
    N^{>}_{i;j \leftarrow l^*(j)}(n\delta) = 1,\ N^{>}_{i;j \leftarrow l}(n\delta) \geq 1
     \big)
     }_{
     \delequal p_2(n)
     }.
\end{align*}
For the term $p_1(n)$,
it follows from
\eqref{proof: bound 1, part i, lemma: cluster size, asymptotics, N i | n delta, cdot j, crude estimate} that
$p_1(n) \sim \bar s_{i,l^*(j)}\P(B_{j \leftarrow l^*(j) } > n\delta)$
as $n \to \infty$.
As for the term $p_2(n)$,
we apply part (ii) of Lemma~\ref{lemma: cluster size, asymptotics, N i | n delta, l j} and get
(for any $\delta > 0$ small enough)
\begin{align}
    p_2(n) & = \sum_{l \in [d]:\ l \neq l^*(j)}\lo \Big( n\P(B_{j \leftarrow l^*(j)} > n\delta)\P(B_{j \leftarrow l} > n\delta) \Big)
= \lo \Big(\P(B_{j \leftarrow l^*(j)} > n\delta)\Big).
\label{proof: intermediate results 2, part i, lemma: cluster size, asymptotics, N i | n delta, cdot j, crude estimate}
\end{align}
The last equality follows from $\alpha_{j \leftarrow l} > 1\ \forall l \in [d]$; see Assumption~\ref{assumption: heavy tails in B i j}.
In summary, we have
\begin{align}
    \liminf_{n \to \infty}
    {\P\big(N^{>}_{i;j}(n\delta) = 1\big)}\Big/{\Big(  \bar s_{i,l^*(j)}\cdot\P(B_{j \leftarrow l^*(j)} > n\delta) \Big)} \geq 1.
    \label{proof: intermediate results 3, part i, lemma: cluster size, asymptotics, N i | n delta, cdot j, crude estimate}
\end{align}
Combining \eqref{proof: intermediate results 1, part i, lemma: cluster size, asymptotics, N i | n delta, cdot j, crude estimate} and \eqref{proof: intermediate results 3, part i, lemma: cluster size, asymptotics, N i | n delta, cdot j, crude estimate},
we conclude the proof of Claim~\eqref{claim 1, lemma: cluster size, asymptotics, N i | n delta, cdot j, crude estimate}.
Next, observe that
\begin{equation}
    \begin{aligned}
        \P\big(N^{>}_{i;j}(n\delta) \geq 2\big)
    & \leq 
    \sum_{l \in [d]}\P\big(N^{>}_{i;j \leftarrow l}(n\delta) \geq 2\big)
    \\ 
    &
    +
    \sum_{l,l^\prime \in [d]:\ l \neq l^\prime}
    \P\big(N^{>}_{i;j \leftarrow l}(n\delta) \geq 1,\ N^{>}_{i;j \leftarrow l^\prime}(n\delta) \geq 1\big).
    \label{proof: intermediate results 4, part i, lemma: cluster size, asymptotics, N i | n delta, cdot j, crude estimate}
    \end{aligned}
\end{equation}
Claim~\eqref{claim 2, lemma: cluster size, asymptotics, N i | n delta, cdot j, crude estimate} then follows from part (i), Claim \eqref{claim 2, lemma: cluster size, asymptotics, N i | n delta, l j} and part (ii) of Lemma~\ref{lemma: cluster size, asymptotics, N i | n delta, l j}.

To prove Claims~\eqref{claim 1, part iii, lemma: cluster size, asymptotics, N i | n delta, cdot j, refined estimates} and \eqref{claim 2, part iii, lemma: cluster size, asymptotics, N i | n delta, cdot j, refined estimates},
we
define the event
$
A(n,\delta) \delequal \big\{
                N^{>}_{i;j\leftarrow l^*(j)}(n\delta) = 1;\ 
                N^{>}_{i;j\leftarrow l}(n\delta) = 0\ \forall l \neq l^*(j)\big\}.
$
By  \eqref{def: W i M j, pruned cluster, 1, cluster size}--\eqref{def: cluster size, N i | M cdot j},
the law of $W^{>}_{i;j}(n\delta)$ conditioned on the event $A(n,\delta)$
is the same as
$
\P\big( B_{ j \leftarrow l^*(j) }\in \ \cdot\ \big| B_{ j \leftarrow l^*(j) } > n\delta  \big).
$
As a result,
\begin{equation}\label{proof: equality for conditional law of W, part i, lemma: cluster size, asymptotics, N i | n delta, cdot j, refined estimate}
    \begin{aligned}
        & \P\Big(
               W^{>}_{i;j}(n\delta) > nx\ 
                \Big|\
                A(n,\delta)
            \Big)
        =\frac{\P(B_{j \leftarrow l^*(j)} > nx)}{\P(B_{j \leftarrow l^*(j)} > n\delta)},
        \qquad\forall x \geq \delta.
    \end{aligned}
\end{equation}
Next, given $\delta \in (0,c)$ and $x \geq \delta$,
by conditioning on $A(n,\delta)$ or $\big(A(n,\delta)\big)^\complement$,
we get
\begin{align*}
    & \lim_{n \to \infty}
            \sup_{ x \in [c,C] }
            \Bigg| 
                \frac{ 
                    \P\big(
                W^{>}_{i;j}(n\delta) > nx\ 
                \big|\ 
                N^{>}_{i;j}(n\delta)  \geq 1
            \big)
                }{
                    ({\delta}/{x})^{\alpha^*(j)}
                }
                - 1
            \Bigg|
    \\ 
    & \leq 
    \lim_{n \to \infty}
            \sup_{ x \in [c,C] }
            \Bigg| 
                \frac{ 
                    \P(B_{ j \leftarrow  l^*(j)} > nx) \big/ \P(B_{ j \leftarrow  l^*(j)} > n\delta)
                }{
                    ({\delta}/{x})^{\alpha^*(j)}
                }
            \cdot 
            \P\big(
                A(n,\delta)\ 
                \big|\ 
                N^{>}_{i;j}(n\delta)  \geq 1
            \big)
                - 1
            \Bigg|
    \\ 
    & \quad  + 
    \lim_{n \to \infty} \bigg( \frac{C}{\delta} \bigg)^{ \alpha^*(j) } 
        \cdot 
    \P\Big( \big(A(n,\delta)\big)^\complement \ \Big|\ N^{>}_{i;j}(n\delta)  \geq 1 \Big)
    \qquad
    \text{due to \eqref{proof: equality for conditional law of W, part i, lemma: cluster size, asymptotics, N i | n delta, cdot j, refined estimate}}.
\end{align*}
Suppose that 
Claim~\eqref{claim 1, part iii, lemma: cluster size, asymptotics, N i | n delta, cdot j, refined estimates} holds for any $\delta > 0$ small enough:
that is, $\P\big( A(n,\delta) \big| N_{i;j}(n\delta) \geq 1\big) \to 1$ as $n \to \infty$.
Then, 
by applying uniform convergence theorem (e.g., Proposition~2.4 of \cite{resnick2007heavy}) 
to $\P(B_{j \leftarrow l^*(j)}> x) \in \RV_{ -\alpha^*(j) }(x)$
in the display above, 
we verify  Claim~\eqref{claim 2, part iii, lemma: cluster size, asymptotics, N i | n delta, cdot j, refined estimates} for any $\delta > 0$.
Now, it only remains to prove Claim~\eqref{claim 1, part iii, lemma: cluster size, asymptotics, N i | n delta, cdot j, refined estimates}.
In particular,
note that
\begin{align*}
    \P\big( N^{>}_{i;j}(n\delta)  \geq 1 \big)
    & \geq 
    \underbrace{ 
        \P\big(
            N^{>}_{i;j\leftarrow l^*(j)}(n\delta) = 1;\ 
                N^{>}_{i;j\leftarrow l}(n\delta) = 0\ \forall l \neq l^*(j)
            \big)
    }_{ \delequal \underline{p}(n,\delta) }
    ,
    \\ 
    \P\big( 
            N^{>}_{i;j}(n\delta)  \geq 1
        \big)
    & \leq 
    \underbrace{
        \P\big(  N^{>}_{i;j\leftarrow l^*(j)}(n\delta) \geq 1\big)
    }_{ \delequal \bar p_*(n,\delta) }
    +
    \sum_{ l \in [d]:\ l\neq l^*(j)  }
        \underbrace{
            \P\big( N^{>}_{i;j\leftarrow l}(n\delta)\geq 1\big)
        }_{ \delequal \bar p_l (n,\delta) }.
\end{align*}
Repeating the calculations in \eqref{proof: intermediate results 1, part i, lemma: cluster size, asymptotics, N i | n delta, cdot j, crude estimate}--\eqref{proof: intermediate results 4, part i, lemma: cluster size, asymptotics, N i | n delta, cdot j, crude estimate},
we can show that
$
 \lim_{n \to \infty} \underline{p}(n,\delta)\big/ \bar p_*(n,\delta)  = 1
$
and
$
\lim_{n \to \infty} \bar p_l(n,\delta)/\bar p_*(n,\delta) = 0
$
(for each $l \in [d],\ l \neq l^*(j)$)
under any $\delta > 0$ small enough.
This concludes the proof of Claim \eqref{claim 1, part iii, lemma: cluster size, asymptotics, N i | n delta, cdot j, refined estimates}.

\smallskip
(ii) 
By the definition of $N^{>}_{i;j}(n\delta) = \sum_{l \in [d]}N^{>}_{i;j \leftarrow l}(n\delta)$,
$$
 \P\big(
                N^{>}_{i;j}(n\delta) \geq 1\ \forall j \in \mathcal J
            \big)
    \leq 
    \sum_{
        l_j \in [d]\ \forall j \in \mathcal J
    }
    \P\big(
                N^{>}_{i;j \leftarrow l_j }(n\delta) \geq 1\ \forall j \in \mathcal J
            \big).
$$
Applying part (ii) of Lemma~\ref{lemma: cluster size, asymptotics, N i | n delta, l j},
for each $(l_j)_{j \in \mathcal J} \in [d]^{|\mathcal J|}$
we have
$$
\P\big(
                N^{>}_{i;j \leftarrow l_j }(n\delta) \geq 1\ \forall  j \in \mathcal J
        \big)
    = 
    \lo 
    \bigg(
        n^{|\mathcal J| - 1}
        \prod_{j \in \mathcal J}\P(B_{j \leftarrow l_j} > n\delta)
    \bigg)\text{ as }n \to \infty,
$$
under any $\delta > 0$ small enough.
Lastly, by Assumption~\ref{assumption: regularity condition 2, cluster size, July 2024} and the definitions in   \eqref{def: cluster size, alpha * l * j},
we have 
$
\lo 
    \big(
        n^{|\mathcal J| - 1}
        \prod_{j \in \mathcal J}\P(B_{j \leftarrow l_j} > n\delta)
    \big)
=
\lo 
    \big(
        n^{|\mathcal J| - 1}
        \prod_{j \in \mathcal J}\P(B_{j \leftarrow l^*(j)} > n\delta)
    \big).
$
This establishes part (ii).

\smallskip
(iii)
Note that it suffices to prove the claim for the case of
$|\mathcal I| = 1$, i.e., $\mathcal I = \{i\}$ for some $i \in [d]$.
Specifically,
let $\delta_0 > 0$ be characterized as in part (ii).
It suffices to show that
\begin{align}
    \limsup_{ n \to \infty}
    \sup_{ 
        T \geq nc
    }
    \frac{
        \P\big(
            \sum_{m = 1}^{ T }N^{>,(m)}_{i;j}(\delta T)  \geq 1\text{ iff }j \in \mathcal J
        \big)
    }{
    \prod_{j \in \mathcal J}n\P( B_{j \leftarrow l^*(j)} > n\delta)
    }
    < \infty,
    \quad
    \forall \delta \in (0,\delta_0),\ c > 0.
    \label{proof: goal, part 3, lemma: cluster size, asymptotics, N i | n delta, cdot j, crude estimate}
\end{align}
To see why \eqref{proof: goal, part 3, lemma: cluster size, asymptotics, N i | n delta, cdot j, crude estimate} implies \eqref{claim, part iii, lemma: cluster size, asymptotics, N i | n delta, cdot j, crude estimate},
we use $\mathfrak T_{\mathcal I \leftarrow \mathcal J}$
to denote the set of all assignment from $\mathcal J$ to $\mathcal I$, \emph{allowing for replacements}:
that is,
$\mathfrak T_{\mathcal I \leftarrow \mathcal J}$
contains all $\{ \mathcal J(i) \subseteq \mathcal J:\ i \in \mathcal I \}$
satisfying $\bigcup_{i \in \mathcal I}\mathcal J(i) = \mathcal J$.
Observe that
\begin{align}
    & \P\Big(
            {N^{>|\delta}_{\bm t(\mathcal I);j}} \geq 1\text{ iff }j \in \mathcal J
        \Big)
        \label{proof: decomp of events, N geq 1, part ii, lemma: cluster size, asymptotics, N i | n delta, cdot j, crude estimate}
    = 
    \P\Bigg(
        \sum_{i \in \mathcal I} \sum_{m = 1}^{ t_i }N^{>,(m)}_{i;j}(\delta t_i) \geq 1
        \text{ iff }j \in \mathcal J
    \Bigg)
    \quad
    \text{by \eqref{def, proof cluster size, N W mathcal I mathcal J bcdot j}}
    \\
    & = 
    \sum_{ 
        \{ \mathcal J(i):\ i \in \mathcal I \} \in \mathfrak T_{ \mathcal I \leftarrow \mathcal J }
    }
    \prod_{ i \in \mathcal I}
    \P\Bigg(
            \sum_{m = 1}^{ t_i }N^{>,(m)}_{i;j}(\delta t_i)  \geq 1\text{ iff }j \in \mathcal J(i)
        \Bigg).
        \nonumber
\end{align}
The last equality follows from the independence of the random vectors
$
\big\{  \big( N^{>,(m)}_{i;j}(M) \big)_{j \in [d]}:\ m \geq 1  \big\}
$
across $i \in [d]$; see \eqref{proof: def copies of W > and N > vectors}.
Applying \eqref{proof: goal, part 3, lemma: cluster size, asymptotics, N i | n delta, cdot j, crude estimate} to each term
$
\P\big(
            \sum_{m = 1}^{ t_i }N^{>,(m)}_{i;j}(\delta t_i)  \geq 1\text{ iff }j \in \mathcal J(i)
        \big)
$
in \eqref{proof: decomp of events, N geq 1, part ii, lemma: cluster size, asymptotics, N i | n delta, cdot j, crude estimate},
we verify Claim \eqref{claim, part iii, lemma: cluster size, asymptotics, N i | n delta, cdot j, crude estimate} for any $|\mathcal I| \geq 2$.

Now, it only remains to prove \eqref{proof: goal, part 3, lemma: cluster size, asymptotics, N i | n delta, cdot j, crude estimate} (i.e., part (iii) with $\mathcal I = \{i\}$).
To proceed, 
we say that $\mathscr J =  \{\mathcal J_1,\ldots,\mathcal J_k\}$ is a partition of $\mathcal J$ if: 
       (i) $\emptyset \neq \mathcal J_l \subseteq \mathcal J$ for each $l \in [k]$, and
        $\cup_{l \in [k]}\mathcal J_l = \mathcal J$; (ii)
        $\mathcal J_p \cap \mathcal J_q = \empty$ for any $p \neq q$ (that is,  $\mathcal J_l$'s are disjoint).
Let ${\mathbb J}$ be the set of all partitions of $\mathcal J$,
and note that $|\mathbb J| < \infty$.
Given partition $\mathscr J = \{ \mathcal J_1,\ldots,\mathcal J_k \}$
and some
$T \in \mathbb N$,
define the event
\begin{align}
    A^\mathscr{J}_n(T,\delta)
    &\delequal 
    \Big\{
    \exists \{m_1,\cdots,m_k\} \subseteq [T]
    \text{ such that }
    N^{>,(m_l)}_{i;j}(\delta T) \geq 1\ \forall l \in [k],\  j \in \mathcal{J}_l
    \Big\}.
    \label{proof: def event A T partition J, part iii, lemma: cluster size, asymptotics, N i | n delta, cdot j, crude estimate}
\end{align}
First, note that
for any $T \in \mathbb N$ and $\delta > 0$,
\begin{align}
    \Bigg\{
        \sum_{m = 1}^{ T }N^{>,(m)}_{i;j}(\delta T)  \geq 1\text{ iff }j \in \mathcal J
    \Bigg\}
    \subseteq
    \bigcup_{ \mathscr J \in \mathbb J }A_n^\mathscr{J}(T,\delta).
    \label{proof, part i, decomp, upper bound, lemma: cluster size, asymptotics, N i | n delta, cdot j, multiple ancsetor}
\end{align}
Next, given $T \in \mathbb N$ and some partition $\mathscr J = \{ \mathcal J_1,\ldots,\mathcal J_k \}$, note that
(in the display below we write $p(i,M,\mathcal T)
 \delequal 
 \P\big(
N^{>}_{i;j}(M) \geq 1\ \forall j \in \mathcal{T}
    \big)$)
\begin{align}
\P\Big(A_n^\mathscr{J}(T,\delta)\Big)
& \leq 
 \prod_{l \in [k]}
 \P\Big(
    \text{Binomial}\big(T,p(i,\delta T,\mathcal J_l) \big) \geq 1
 \Big)
    \nonumber
\\
& \leq 
\prod_{l \in [k]}
\E\Big[  \text{Binomial}\big(T,p(i,\delta T,\mathcal J_l) \big)  \Big]
%
=
    \prod_{l \in [k]} T \cdot p(i,\delta T,\mathcal J_l).
    \label{proof, part i, event A i mathcal T upper bound, lemma: cluster size, asymptotics, N i | n delta, cdot j, multiple ancsetor}
\end{align}
Furthermore, 
by applying either part (i), Claims \eqref{claim 1, lemma: cluster size, asymptotics, N i | n delta, cdot j, crude estimate}--\eqref{claim 2, lemma: cluster size, asymptotics, N i | n delta, cdot j, crude estimate} (if $|\mathcal J_l| = 1$) or part (ii) (if $|\mathcal J_l| \geq 2$)
of Lemma~\ref{lemma: cluster size, asymptotics, N i | n delta, cdot j, crude estimate}
for each $l \in [k]$,
we identify some $\delta_0 > 0$ such that, given any $\delta \in (0,\delta_0)$,
there exists $\bar n = \bar n(\delta) \in (0,\infty)$ such that 
\begin{align}
    p(i,n\delta,\mathcal J_l)
    \leq 
    \underbrace{ \max_{ q,q^\prime \in [d] }2\bar s_{q,q^\prime} }_{ \delequal \bar C  }
    \cdot 
    n^{ |\mathcal J_l| - 1 }
    \prod_{  j \in \mathcal J_l }\P(B_{j \leftarrow l^*(j)} > n\delta),
    \quad
    \forall n \geq \bar n,\ l \in [k].
    \label{proof: part iii, bound for term p i, lemma: cluster size, asymptotics, N i | n delta, cdot j, crude estimate}
\end{align}
Recall that $c > 0$ is the constant fixed in \eqref{proof: goal, part 3, lemma: cluster size, asymptotics, N i | n delta, cdot j, crude estimate}.
Given $\delta \in (0,\delta_0)$ and any $n$ with $nc > \bar n(\delta)$,
by \eqref{proof, part i, event A i mathcal T upper bound, lemma: cluster size, asymptotics, N i | n delta, cdot j, multiple ancsetor} and \eqref{proof: part iii, bound for term p i, lemma: cluster size, asymptotics, N i | n delta, cdot j, crude estimate},
it holds for each $T \geq nc$ that 
\begin{align}
  \P\Big(A_n^\mathscr{J}(T,\delta)\Big) 
  & \leq 
  \bar C^{ k }\prod_{l \in [k]} T^{ |\mathcal J_l| }\prod_{j \in \mathcal J_l}\P(B_{j \leftarrow l^*(j)} > T\delta)
    \nonumber
  \\ 
  & = 
  \bar C^k T^{ |\mathcal J| }\prod_{l \in [k]}\prod_{j \in \mathcal J_l}\P(B_{j \leftarrow l^*(j)} > T\delta)
  =
  \bar C^k \prod_{ j \in \mathcal J } T \cdot \P(B_{j \leftarrow l^*(j)} > T\delta).
  \label{proof: part iii, bound for event A T J n delta, lemma: cluster size, asymptotics, N i | n delta, cdot j, crude estimate}
\end{align}
The last line follows from
the definition of the partition
$\mathscr J = \{ \mathcal J_1,\ldots \mathcal J_k \}$.
Furthermore, for each $j \in [d]$,
note that
$$
f_{j,\delta}(x) \delequal x \cdot \P(B_{j \leftarrow l^*(j)} >  x\delta) \in \RV_{ -(\alpha^*(j) - 1) }(x)
$$
with $\alpha^*(j) > 1$.
Using Potter's bound, we have (by picking a larger $\bar n = \bar n(\delta)$ if necessary)
$
 f_{j,\delta}(y) \leq 2 f_{j,\delta}(x)
$
for any $y \geq x \geq \bar n(\delta),\ j \in [d].$
Then, in \eqref{proof: part iii, bound for event A T J n delta, lemma: cluster size, asymptotics, N i | n delta, cdot j, crude estimate},
it holds for any $\delta \in (0,\delta_0)$ and any $n$ with $nc \geq \bar n(\delta)$ that
$
\sup_{T \geq nc}\P\big(A_n^\mathscr{J}(T,\delta)\big) 
=
     \bo 
     \big(
        \prod_{j \in \mathcal J}
        n \cdot \P(B_{j \leftarrow l^*(j)} > n\delta)
     \big).
$
Applying this bound for any partition $\mathscr J \in \mathbb J$
in \eqref{proof, part i, decomp, upper bound, lemma: cluster size, asymptotics, N i | n delta, cdot j, multiple ancsetor},
we conclude the proof of Claim  \eqref{proof: goal, part 3, lemma: cluster size, asymptotics, N i | n delta, cdot j, crude estimate}.
\end{proof}

\begin{appendix}
\section{Additional Auxiliary Results}
\label{sec: appendix, lemmas}

For completeness, we collect in this section the proofs of several useful results.
The first lemma provides concentration inequalities for truncated regularly varying random vectors,
and the proof is similar to that of Lemma~3.1 in \cite{wang2023large}.
Recall that, throughout this paper, we consider the $L_1$ norm $\norm{\bm x} = \sum_{i = 1}^k|x_i|$ for any vector $\bm x \in \R^k$. 
For any $c > 0$ and $x \in \R$, let $\phi_c(x) \delequal x \wedge c$,
and 
$\psi_c(x) = (x \wedge c) \vee (-c)$.
That is, $\psi_c$ is the projection mapping onto the interval $[-c,c]$,
and $\phi_c(x)$ truncates $x$ under threshold $c$.
For any $\bm x = (x_1,\ldots,x_k) \in \R^k$, let 
\begin{align*}
    \phi^{(k)}_c(\bm x) \delequal \big( \phi_c(x_1),\ldots,\phi_c(x_k)\big),
    \quad 
    \psi^{(k)}_c(\bm x) \delequal \big( \psi_c(x_1),\ldots,\psi_c(x_k)\big).
\end{align*}
Under any $c > 0$, note that
\begin{align}
    \psi^{(k)}_c(\bm x) = \phi^{(k)}_c(\bm x),
    \qquad
    \forall \bm x \in [0,\infty)^k.
    \label{property, appendix, psi to phi, truncation mapping}
\end{align}


\begin{lemma}
\label{lemma: concentration ineq, truncated heavy tailed RV in Rd}
\linksinthm{lemma: concentration ineq, truncated heavy tailed RV in Rd}
Let $\bm Z_i$'s be independent copies of a random vector $\bm Z$ in $\R^k$.
Suppose that 
$\P(\norm{\bm Z} > x) \in \RV_{-\alpha}(x)$ as $x \to \infty$ for some $\alpha > 1$.
Given any $\epsilon,\ \gamma \in (0,\infty)$, there exists $\delta_0 = \delta_0(\epsilon,\gamma) > 0$ such that
for all $\delta \in (0,\delta_0)$,
\begin{align}
    \lim_{n \to \infty}n^{\gamma}\cdot
    \P\Bigg(\max_{t \leq n}
            \norm{
                \frac{1}{n}\sum_{i = 1}^t \bm Z_i\mathbbm{I}\{\norm{\bm Z_i} \leq n\delta \} 
                - \E\bm Z
            }
        > \epsilon
    \Bigg)
    & = 0,
    \label{claim 1, lemma: concentration ineq, truncated heavy tailed RV in Rd}
    \\ 
    \lim_{n \to \infty}n^{\gamma}\cdot
    \P\Bigg(\max_{t \leq n}
            \norm{
                \frac{1}{n}\sum_{i = 1}^t \psi^{(k)}_{n\delta}(\bm Z_i)
                - \E\bm Z
            }
        > \epsilon
    \Bigg)
    & = 0.
    \label{claim 2, lemma: concentration ineq, truncated heavy tailed RV in Rd}
\end{align}
\end{lemma}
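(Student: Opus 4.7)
The plan, building on the template of Lemma~3.1 in \cite{wang2023large}, is a bias-fluctuation split, a maximal inequality, and a moment bound obtained via sub-truncation. I will first reduce Claim~\eqref{claim 2, lemma: concentration ineq, truncated heavy tailed RV in Rd} to Claim~\eqref{claim 1, lemma: concentration ineq, truncated heavy tailed RV in Rd} and then prove the latter.

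For the reduction, the discrepancy $\bm D_i := \psi^{(k)}_{n\delta}(\bm Z_i) - \bm Z_i\mathbbm{I}\{\norm{\bm Z_i}\leq n\delta\}$ vanishes on $\{\norm{\bm Z_i}\leq n\delta\}$ (since each coordinate then lies in $[-n\delta,n\delta]$) and has norm at most $2kn\delta$ otherwise, so $\max_{t\leq n}\norm{\sum_{i=1}^t \bm D_i}\leq 2kn\delta\cdot N$ with $N := \#\{i\leq n:\ \norm{\bm Z_i}>n\delta\}$ Binomial with mean in $\RV_{1-\alpha}(n)$. A Chernoff bound gives $\P(N\geq K)=\bo(n^{(1-\alpha)K})$ for every integer $K\geq 1$; taking $K>\gamma/(\alpha-1)$ makes this $\lo(n^{-\gamma})$, while on $\{N\leq K\}$ the contribution to $\frac{1}{n}\sum_{i=1}^t \bm D_i$ is at most $2k\delta K$, hence $<\epsilon/2$ once $\delta$ is small.

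For Claim~\eqref{claim 1, lemma: concentration ineq, truncated heavy tailed RV in Rd}, set $\bm\mu_n := \E[\bm Z\mathbbm{I}\{\norm{\bm Z}\leq n\delta\}]$ and $\bm Y_i := \bm Z_i\mathbbm{I}\{\norm{\bm Z_i}\leq n\delta\} - \bm\mu_n$. Since $\alpha>1$, Karamata's theorem gives $\norm{\bm\mu_n-\E\bm Z}\leq \E[\norm{\bm Z}\mathbbm{I}\{\norm{\bm Z}>n\delta\}]\in\RV_{1-\alpha}(n)$, which is $\leq \epsilon/2$ for all $n$ large enough, uniformly in $\delta\in(0,\delta_0)$. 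It therefore suffices to show $n^\gamma \P\bigl(\max_{t\leq n}\bigl\|\sum_{i=1}^t\bm Y_i\bigr\|>n\epsilon/2\bigr)\to 0$. Because $\bigl(\sum_{i=1}^t \bm Y_i\bigr)_{t\leq n}$ is an $\R^k$-valued martingale and $\norm{\cdot}^{2p}$ is convex, Doob's maximal inequality combined with Markov's inequality yields, for every integer $p\geq 1$,
$$\P\Bigl(\max_{t\leq n}\Bigl\|\sum_{i=1}^t\bm Y_i\Bigr\|>n\epsilon/2\Bigr)\leq C_p\,\frac{\E\bigl\|\sum_{i=1}^n \bm Y_i\bigr\|^{2p}}{(n\epsilon)^{2p}},$$
reducing the task to a single-endpoint $2p$-th moment bound.

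For this moment bound, introduce a finer threshold $n\delta_1$ (with $\delta_1\in(0,\delta)$, possibly $n$-dependent) and split $\bm Y_i = \bm Y_i^{(1)} + \bm Y_i^{(2)}$, where $\bm Y_i^{(1)}$ is the centered truncation at $n\delta_1$ and $\bm Y_i^{(2)}$ is supported on $\{n\delta_1<\norm{\bm Z_i}\leq n\delta\}$. Rosenthal's inequality applied to the bulk gives
$$\E\Bigl\|\sum_{i=1}^n \bm Y_i^{(1)}\Bigr\|^{2p}\leq C_p\bigl[(n\sigma_{n,\delta_1}^2)^p + n\,\E\norm{\bm Y_1^{(1)}}^{2p}\bigr],$$
with both moments controlled by Karamata's theorem as regularly varying functions of $n\delta_1$; the moderate part $\sum \bm Y_i^{(2)}$ is handled deterministically on the event $\{N'\leq K\}$, $N':=\#\{i\leq n:\norm{\bm Z_i}>n\delta_1\}$, via another Chernoff bound analogous to the one used in the reduction of Claim~\eqref{claim 2, lemma: concentration ineq, truncated heavy tailed RV in Rd}. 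The main obstacle is the regime $\alpha\in(1,2]$, where $\sigma_{n,\delta_1}^2$ grows polynomially in $n$ for fixed $\delta_1$; this forces an $n$-dependent choice such as $\delta_1 = n^{-\beta}$ for a small $\beta>0$ (depending on $\gamma,\alpha$), together with a moment order $p$ large enough so that both Rosenthal terms and the Binomial Chernoff tail simultaneously decay faster than $n^{-\gamma}$, while $2k\delta K$ stays below $\epsilon$.
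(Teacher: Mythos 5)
Your plan is essentially the paper's proof with one tool swapped out. The paper also (i) takes $\E\bm Z=\bm 0$, sub-truncates at an intermediate level $n^{\beta'}$ with $\beta'\in\big(\tfrac{1}{2\wedge\alpha},1\big)$ (your $n\delta_1=n^{1-\beta}$ is the same device under $\beta'=1-\beta$, and your requirement $\alpha(1-\beta)>1$ is exactly the paper's $\alpha\beta'>1$), (ii) controls the bias of the truncated mean by Karamata, (iii) handles the centered bulk via a maximal inequality for the partial-sum martingale -- the paper bounds the $p$-th moment of the endpoint by integrating a coordinatewise Bernstein tail and then applies Doob, where you invoke Rosenthal's inequality instead, and (iv) disposes of the moderate/large summands by counting exceedances of the intermediate threshold: at most $J=\ceil{\gamma/(\alpha\beta'-1)}+1$ of them occur except on an event of probability $\bo\big(n^{J(1-\alpha\beta')}\big)=\lo(n^{-\gamma})$, and each contributes at most order $\delta$ after the $n^{-1}$ scaling, which is where the smallness of $\delta_0(\epsilon,\gamma)$ enters. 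Your reduction of \eqref{claim 2, lemma: concentration ineq, truncated heavy tailed RV in Rd} to \eqref{claim 1, lemma: concentration ineq, truncated heavy tailed RV in Rd} via the discrepancy $\bm D_i$ and a binomial exceedance count is sound (the paper instead proves \eqref{claim 2, lemma: concentration ineq, truncated heavy tailed RV in Rd} directly and asserts the other claim is identical); Bernstein versus Rosenthal is a matter of taste, with Rosenthal giving slightly cleaner bookkeeping and Bernstein avoiding high-moment computations.

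Two small repairs are needed in your write-up, neither affecting the substance. First, you cannot handle the moderate part ``deterministically on the event $\{N'\leq K\}$'' \emph{inside} the bound for $\E\big\|\sum_{i=1}^n\bm Y_i\big\|^{2p}$: conditioning on an event does not bound an unconditional moment. Split the probability first, as the paper does: bound $\P(N'\geq K)$ separately by the binomial estimate, bound the moderate contribution deterministically by $K\delta$ (plus its small centering correction) on $\{N'<K\}$, and apply Doob/Rosenthal only to the bulk martingale $\sum_i\bm Y_i^{(1)}$. Second, the bias bound $\norm{\bm\mu_n-\E\bm Z}\leq\E[\norm{\bm Z}\mathbbm{I}\{\norm{\bm Z}>n\delta\}]$ is \emph{not} uniform over $\delta\in(0,\delta_0)$ for fixed $n$ (as $\delta\downarrow 0$ it tends to $\E\norm{\bm Z}$); but uniformity is not needed, since the lemma fixes $\delta$ and sends $n\to\infty$, under which the bound vanishes by Karamata. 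With these adjustments your argument goes through.
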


\begin{proof}\linksinpf{lemma: concentration ineq, truncated heavy tailed RV in Rd}
Without loss of generality, we take $\E\bm Z = \bm 0$.
Also, the proof of Claim~\eqref{claim 1, lemma: concentration ineq, truncated heavy tailed RV in Rd}
is a rather straightforward adaptation of the proof of Lemma~3.1 in \cite{wang2023large},
and is almost identical to the proof of Claim~\eqref{claim 2, lemma: concentration ineq, truncated heavy tailed RV in Rd} given below.
To avoid repetition, in this proof we focus on establishing Claim~\eqref{claim 2, lemma: concentration ineq, truncated heavy tailed RV in Rd}.

Take $\beta$ such that $\frac{1}{2 \wedge \alpha} < \beta < 1$. Let
\begin{align*}
    \bm Z^{(1)}_i \delequal \psi^{(k)}_{n\delta}(\bm Z_i) \mathbbm{I}\{ \norm{\bm Z_i} \leq n^\beta\},
    \quad
    \hat{\bm Z}^{(1)}_i \delequal \bm Z^{(1)}_i - \E \bm Z^{(1)}_i,
    \quad
    \bm Z^{(2)}_i \delequal 
        \psi^{(k)}_{n\delta}(\bm Z_i) \mathbbm{I}\{ \norm{\bm Z_i} > n^\beta \}.
\end{align*}
Due to
\begin{align*}
  \norm{
       \frac{1}{n} \sum_{i = 1}^t \psi^{(k)}_{n\delta}(\bm Z_i)
            }
 \leq 
 \norm{ \frac{1}{n} \sum_{i = 1}^t \hat{\bm Z}^{(1)}_i }
 +
 \norm{\frac{1}{n}  \sum_{i = 1}^t {\bm Z}^{(2)}_i }
 +
  \frac{t}{n}\norm{\E \bm Z^{(1)}_i},
\end{align*}
it suffices to find $\delta_0 > 0$ such that for all $\delta \in (0,\delta_0)$,
\begin{align}
    \lim_{n \to \infty} \norm{\E \bm Z^{(1)}_i} & < \frac{\epsilon}{3},
    \label{proof: goal 1, lemma: concentration ineq, truncated heavy tailed RV in Rd}
    \\ 
    \lim_{n \to \infty} n^\gamma \cdot 
        \P\Bigg(\max_{t\leq n}
            \norm{ \frac{1}{n}\sum_{i = 1}^t \hat{\bm Z}^{(1)}_i } > \frac{\epsilon}{3}
        \Bigg) 
        & = 0,
    \label{proof: goal 2, lemma: concentration ineq, truncated heavy tailed RV in Rd}
    \\ 
    \lim_{n \to \infty} n^\gamma \cdot 
        \P\Bigg(\max_{t\leq n}
            \norm{ \frac{1}{n}\sum_{i = 1}^t {\bm Z}^{(2)}_i } > \frac{\epsilon}{3}
        \Bigg)
        & = 0.
    \label{proof: goal 3, lemma: concentration ineq, truncated heavy tailed RV in Rd}
\end{align}

We show that Claim \eqref{proof: goal 1, lemma: concentration ineq, truncated heavy tailed RV in Rd} holds for any $\delta > 0$.
To this end, we make a few observations.
First, given $\delta > 0$, it holds for any $n$ large enough such that $n\delta > n^\beta$ due to our choice of $\beta < 1$.
For such $n$, note that any vector $\bm x = (x_1,\ldots,x_k)$, $\norm{\bm x} \leq n^{\beta}$ implies that
$
|x_j| \leq n^\beta < n\delta
$
for each $j \in [k]$. 
Therefore,
\begin{align}
    \bm Z^{(1)}_i
    =
    \psi^{(k)}_{n\delta}(\bm Z_i) \mathbbm{I}\{ \norm{\bm Z_i} \leq n^\beta\}
    =
    \bm Z_i \mathbbm{I}\{ \norm{\bm Z_i} \leq n^\beta\},
    \quad
    \text{whenever }n^\beta < n\delta.
    \label{proof, property, term Z 1, lemma: concentration ineq, truncated heavy tailed RV in Rd}
\end{align}
Then, for such large $n$,
\begin{align}
    \norm{\E \bm Z^{(1)}_i}  
    & = 
    \norm{
         \E\big[ \bm Z_i \mathbbm{I}\{ \norm{\bm Z_i} \leq n^\beta\} \big]
    }
    \nonumber
    \\ 
    & = \norm{
         \E\big[ \bm Z_i \mathbbm{I}\{ \norm{\bm Z_i} > n^\beta\} \big]
    }
    \leq \E\Big[\norm{\bm Z_i}\mathbbm{I}\{ \norm{\bm Z_i} > n^\beta \}\Big]
    \quad\text{due to }\E\bm Z = \bm 0
        \nonumber
    \\ 
    & = \int^{\infty}_{n^\beta}\P(\norm{\bm Z_i} > x)dx + n^\beta\cdot\P( \norm{\bm Z_i}> n^\beta ) \in \RV_{-(\alpha - 1)\beta}(n).
    \label{proof: term E Z 1, lemma: concentration ineq, truncated heavy tailed RV in Rd}
\end{align}
The last inequality follows from $\P(\norm{\bm Z} > x) \in \RV_{-\alpha}(x)$ and Karamata's Theorem.
Due to $\alpha > 1$, we have $(\alpha-1)\beta > 0$ in \eqref{proof: term E Z 1, lemma: concentration ineq, truncated heavy tailed RV in Rd}, which verifies Claim~\eqref{proof: goal 1, lemma: concentration ineq, truncated heavy tailed RV in Rd}.
Also, by \eqref{proof, property, term Z 1, lemma: concentration ineq, truncated heavy tailed RV in Rd},
under any $n$ sufficiently large
we must have $\norm{\bm Z^{(1)}_i} < n^\beta$.
As a result, given $\delta > 0$,
it holds for all $n$ large enough that $\norm{\hat{\bm Z}^{(1)}_i} < 2n^\beta$.
Henceforth in this proof, we only consider such large $n$.

Next, we show that Claim \eqref{proof: goal 2, lemma: concentration ineq, truncated heavy tailed RV in Rd} holds for any $\delta > 0$.
Fix some $p$ such that
\begin{align}
    p\geq 1,\quad \ p > \frac{2\gamma}{\beta},\quad \ p > \frac{2\gamma}{1 - \beta},\quad \ p > \frac{2\gamma}{(\alpha - 1)\beta} > \frac{2\gamma}{(2\alpha - 1)\beta}. \label{proof: choose p, lemma: concentration ineq, truncated heavy tailed RV in Rd}
\end{align}
We write $\hat{\bm Z}^{(1)}_i = (\hat{Z}^{(1)}_{i,j})_{j \in [k]}$,
and note that
under $L_1$ norm, we have
$
\norm{ \frac{1}{n}\sum_{i = 1}^n \hat{\bm Z}^{(1)}_i } 
=
\sum_{j = 1}^k|\frac{1}{n}\sum_{i = 1}^n \hat{Z}^{(1)}_{i,j}|.
$
Furthermore, for each $j \in [k]$, $n \geq 1$, and $y \geq 1$,
\begin{align}
    & \P\Bigg( \bigg|\sum_{i = 1}^{ n }\frac{1}{n} \hat{Z}^{(1)}_{i,j} \bigg|^p > \frac{y}{ n^{2\gamma} } \Bigg)
    \nonumber
    \\
    &
    = 
    \P\Bigg( \bigg|\sum_{i = 1}^{ n }\frac{1}{n} \hat{Z}^{(1)}_{i,j} \bigg| > \frac{y^{1/p}}{ n^{2\gamma/p}} \Bigg)
    \nonumber
    \\
    &
    \leq
    2\exp\Bigg( - \frac{ 
        \frac{1}{2}y^{2/p}\cdot n^{-4\gamma/p}
        }{ 
            \frac{2}{3}{y^{1/p}} \cdot { n^{ - (1  - \beta + 2\gamma/p )  } }
                + 
            n \cdot \frac{1}{n^2} \cdot \E\big[|\hat{Z}^{(1)}_{i,j}|^2\big]  
        } \Bigg)
        \quad 
        \text{by Bernstein's inequality and }\norm{\hat{\bm Z}^{(1)}_i} < 2n^\beta
    \nonumber
    \\ 
    & \leq 
    2\exp\Bigg( - \frac{ 
        \frac{1}{2}y^{2/p}\cdot n^{-4\gamma/p}
        }{ 
            \frac{2}{3}{y^{1/p}} \cdot { n^{ - (1  - \beta + 2\gamma/p )  } }
                + 
            \frac{1}{n} \cdot \E\Big[\norm{\hat{\bm Z}^{(1)}_{i}}^2\Big]  
        } \Bigg)
        \qquad 
        \text{due to }|\hat{Z}^{(1)}_{i,j}| \leq \norm{\hat{\bm Z}^{(1)}_{i}}.
    \label{proof: applying berstein ineq, lemma: concentration ineq, truncated heavy tailed RV in Rd}
\end{align}
Our next goal is to show that 
$
\frac{1}{n} \cdot \E\Big[\norm{\hat{\bm Z}^{(1)}_{i}}^2\Big]
< \frac{1}{3}{ n^{ - (1  - \beta + 2\gamma/p )  } }
$
for all $n$ large enough.
First, due to $(a+b)^2 \leq 2a^2 + 2b^2$,
\begin{align*}
\E\Big[\norm{\hat{\bm Z}^{(1)}_{i}}^2\Big]
& = 
\E\Big[\norm{\bm Z^{(1)}_i - \E \bm Z^{(1)}_i}^2\Big]
\leq 
2\E\Big[\norm{\bm Z^{(1)}_i}^2\Big] + 2\norm{\E\bm Z^{(1)}_i}^2.
\end{align*}
Also, as established in \eqref{proof: term E Z 1, lemma: concentration ineq, truncated heavy tailed RV in Rd},
$\norm{\E\bm Z^{(1)}_i}^2$ is upper bounded by some $\RV_{-2(\alpha - 1)\beta}(n)$ function.
By the choice of $p$ in \eqref{proof: choose p, lemma: concentration ineq, truncated heavy tailed RV in Rd} that $p > \frac{2\gamma}{(2\alpha - 1)\beta}$,
we have 
$
1 + 2(\alpha - 1)\beta > 1 - \beta + \frac{2\gamma}{p},
$
and hence
$$
\frac{2}{n}\norm{\E\bm Z^{(1)}_i}^2 < \frac{1}{6}n^{-(1 - \beta + \frac{2\gamma}{p})},
\qquad 
\text{ for any $n$ large enough}.
$$ 
Next, using \eqref{proof, property, term Z 1, lemma: concentration ineq, truncated heavy tailed RV in Rd},
for any $n$ large enough we have
$$
\E\Big[\norm{\bm Z^{(1)}_i}^2\Big]= \int^{\infty}_0 2x \P\Big(\norm{\bm Z^{(1)}_i} > x\Big)dx \leq
\int^{n^\beta}_0 2x  \P(\norm{\bm Z_i} > x)dx.
$$ 
If $\alpha \in (1,2]$, Karamata's theorem gives
$\int^{n^\beta}_0 2x  \P(\norm{\bm Z_i} > x)dx \in \RV_{ (2-\alpha)\beta }(n)$.
In \eqref{proof: choose p, lemma: concentration ineq, truncated heavy tailed RV in Rd},
we have chosen $p$ large enough such that
$
p > \frac{2\gamma}{(\alpha-1)\beta},
$
and hence
$1 - (2-\alpha)\beta > 1 - \beta + \frac{2\gamma}{p}$.
As a result, 
for all $n$ large enough
we have
$\frac{2}{n}\E\Big[\norm{\bm Z^{(1)}_i}^2\Big] < \frac{1}{6}n^{-(1-\beta + \frac{2\gamma}{p})}$.
If $\alpha > 2$, we have
$\lim_{n \to \infty}\int^{n^\beta}_0 2x  \P(\norm{\bm Z_i} > x)dx
=  
\int^{\infty}_0 2x  \P(\norm{\bm Z_i} > x)dx < \infty$.
Also,  \eqref{proof: choose p, lemma: concentration ineq, truncated heavy tailed RV in Rd} implies that $1 - \beta + \frac{2\gamma}{p} < 1$.
Again, for any $n$ large enough
we have
$\frac{2}{n}\E\Big[\norm{\bm Z^{(1)}_i}^2\Big] < \frac{1}{6}n^{-(1-\beta + \frac{2\gamma}{p})}$.
In summary, we have shown that
\begin{align}
    \frac{1}{n} \cdot \E\Big[\norm{\hat{\bm Z}^{(1)}_{i}}^2\Big]
< \frac{1}{3}{ n^{ - (1  - \beta + 2\gamma/p )  } },
\qquad 
\text{ for all $n$ large enough}.
    \label{term second order moment hat Z 1, lemma LDP, small jump perturbation}
\end{align}
Along with \eqref{proof: applying berstein ineq, lemma: concentration ineq, truncated heavy tailed RV in Rd},
we obtain that for all $n$ large enough,
\begin{align*}
     \P\Bigg( \bigg|\sum_{i = 1}^{ n }\frac{1}{n} \hat{Z}^{(1)}_{i,j} \bigg|^p > \frac{y}{ n^{2\gamma} } \Bigg)
     \leq 
     2\exp\Bigg( 
        { -\frac{1}{2}{y^{1/p}} } \cdot { n^{ 1 - \beta - \frac{2\gamma}{p} } } 
        \Bigg)
     \leq 
     2\exp\Big( -\frac{1}{2}{y^{1/p}} \Big),
     \qquad \forall y \geq 1,\ j \in [k].
\end{align*}
Here, the last inequality follows from our choice of $p$ in \eqref{proof: choose p, lemma: concentration ineq, truncated heavy tailed RV in Rd}
with $p > \frac{2\gamma}{1 - \beta}$,
and hence $1 - \beta - \frac{2\gamma}{p} > 0$.
Moreover, since $C^{(1)}_p \delequal \int_0^\infty\exp( -\frac{1}{2}{y^{1/p}})dy < \infty$,
the display above implies
\begin{align*}
    \max_{j \in [k]} n^{2\gamma}\cdot \E\Bigg[\bigg|\sum_{i = 1}^{ n }\frac{1}{n} \hat{Z}^{(1)}_{i,j} \bigg|^p \Bigg] 
    < C^{(1)}_p < \infty,
    \qquad 
    \text{ for all $n$ large enough}.
\end{align*}
Therefore, for such large $n$,
\begin{align*}
    \P\Bigg(\max_{t\leq n}
            \norm{ \frac{1}{n}\sum_{i = 1}^t \hat{\bm Z}^{(1)}_i } > \frac{\epsilon}{3}
        \Bigg)
    & \leq 
    \sum_{j \in [k]}
    \P\Bigg( \max_{t\leq n} \bigg|\sum_{i = 1}^{ t}\frac{1}{n} \hat{Z}^{(1)}_{i,j} \bigg| > \frac{\epsilon}{3k} \Bigg)
    \\ 
    & \leq 
    \sum_{j \in [k]}
    \frac{
        \E\Bigg[\bigg|\sum_{i = 1}^{ n }\frac{1}{n} \hat{Z}^{(1)}_{i,j} \bigg|^p \Bigg] 
    }{
    (\epsilon/3k)^p
    }
    \qquad \text{by Doob's Inequality}
    \\ 
    & \leq \frac{k}{(\epsilon/3k)^p}C^{(1)}_p \cdot \frac{1}{n^{2\gamma}}.
\end{align*}
This concludes the proof of Claim \eqref{proof: goal 2, lemma: concentration ineq, truncated heavy tailed RV in Rd} (under any $\delta > 0$).

Finally, for Claim \eqref{proof: goal 3, lemma: concentration ineq, truncated heavy tailed RV in Rd}, recall that we have chosen $\beta$ in such a way that $\alpha\beta - 1 > 0$.
Fix a constant $J = \ceil{\frac{\gamma}{\alpha\beta - 1}} + 1$,
and define
$I(n) =\#\big\{ i \leq n:\ \bm Z^{(2)}_i \neq \bm 0 \big\}$.
Besides, fix $\delta_0 = \frac{\epsilon}{3Jk}$.
For any $\delta \in (0,\delta_0)$,
by the definition of the projection mapping $\psi_c^{(k)}$,
we have
\begin{align*}
    \norm{\bm Z^{(2)}_i} \leq nk\delta < n \cdot \frac{\epsilon}{3J}.
\end{align*}
Then, for any $\delta \in (0,\delta_0)$,
on the event $\{I(n) < J\}$,
we  have
$\max_{t\leq n}
            \norm{ \frac{1}{n}\sum_{i = 1}^t {\bm Z}^{(2)}_i }
< \frac{1}{n} \cdot J \cdot \frac{\epsilon}{3J} < \epsilon/3$.
On the other hand, (let $H(x) = \P(\norm{\bm Z} > x)$)
\begin{align*}
    \P\big( I(n) \geq J\big)
    \leq 
    \binom{n}{J}\cdot \Big(H(n^\beta)\Big)^J
    \leq 
    n^J \cdot \Big(H(n^\beta)\Big)^J \in \RV_{ - J(\alpha\beta - 1) }(n)\text{ as }n \to\infty.
\end{align*}
Our choice of $J = \ceil{\frac{\gamma}{\alpha\beta - 1}} + 1$ guarantees that $J(\alpha\beta - 1) > \gamma$,
and hence,
$$
\lim_{n \to \infty} n^\gamma \cdot 
        \P\Bigg(\max_{t\leq n}
            \norm{ \frac{1}{n}\sum_{i = 1}^t {\bm Z}^{(2)}_i } > \frac{\epsilon}{3}
        \Bigg)
\leq 
\lim_{n \to \infty}{ n^\gamma \cdot 
\P( I(n) \geq J )  } = 0.
$$
This concludes the proof.
\end{proof}

The next lemma verifies equality  \eqref{proof, equality, from S i truncated to S i} regarding $\bm S^{\leqslant}_j(M)$ and $\bm S_j$.

\begin{lemma}\label{lemma: fixed point equation for pruned clusters}
Let $W^>_{j;i}(M)$ be defined as in \eqref{def: W i M j, pruned cluster, 1, cluster size}--\eqref{def: W i M j, pruned cluster, 2, cluster size}
on the probability space supporting the collection of independent random vectors $\bm B^{(t,m)}_{\bcdot \leftarrow j}$ in \eqref{def, proof strategy, collection of B i j copies},
and let $\bm S_j^\leqslant(M)$ be defined as in \eqref{def: pruned tree S i leq M}.
Under Assumption~\ref{assumption: subcriticality},
it holds for each $j \in [d]$ and $M > 0$ that
\begin{align*}
    \bm S_j \distequal \bm S^{\leqslant}_j(M) + \sum_{i \in [d]}\sum_{m = 1}^{ W^>_{j;i}(M) }\bm S^{(m)}_i,
\end{align*}
where, for each $i \in [d]$, the $\bm S^{(m)}_i$'s are i.i.d.\ copies of $\bm{S}_i$ and are independent from the random vector  $\big(\bm S^{\leqslant}_j(M), W^>_{j;1}(M),\ldots,W^>_{j;d}(M)\big)$.
\end{lemma}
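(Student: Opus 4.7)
The plan is to couple both sides of the claimed identity on a single probability space in which the full branching tree underlying $\bm S_j$ is built explicitly, and then to read off the decomposition directly from the tree. Concretely, I would realize $\bm X_j(\cdot)$ as a random rooted tree $\mathcal T_j$ whose root has type $j$, attaching to each node $u$ of type $l$ an offspring vector $\bm B_u \sim \bm B_{\bcdot \leftarrow l}$ drawn independently across nodes; node $u$ then produces $B_{u,i}$ children of type $i$ for each $i \in [d]$. The coupling with the family in \eqref{def, proof strategy, collection of B i j copies} can be arranged so that the node-count vector of $\mathcal T_j$ coincides with $\bm S_j = \sum_{t \geq 0} \bm X_j(t)$, which is a.s.\ finite by Assumption~\ref{assumption: subcriticality}.

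Next, I would inductively carve out a pruned sub-tree $\mathcal T^\leqslant_j(M) \subseteq \mathcal T_j$ by the rule: the root is included, and for any node $u$ of type $l$ already in $\mathcal T^\leqslant_j(M)$ and any $i \in [d]$, all $B_{u,i}$ type-$i$ children of $u$ are jointly included in $\mathcal T^\leqslant_j(M)$ if $B_{u,i} \leq M$, and jointly placed in the dropped set $D_j(M)$ if $B_{u,i} > M$. This matches the truncation encoded by $\bm B^{\leqslant,(t,m)}_{\bcdot \leftarrow l}(M)$ in \eqref{def: truncated offspring count,B leq M j k}--\eqref{def: branching process X n leq M}, so $\bm S^\leqslant_j(M)$ equals the node-count vector of $\mathcal T^\leqslant_j(M)$ and, by \eqref{def: W i M j, pruned cluster, 1, cluster size}--\eqref{def: W i M j, pruned cluster, 2, cluster size}, $W^>_{j;i}(M)$ equals the number of type-$i$ nodes in $D_j(M)$. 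Letting $\bm T_v$ denote the node-count vector of the sub-tree of $\mathcal T_j$ rooted at $v$, the almost-sure decomposition
\begin{equation*}
\bm S_j \;=\; \bm S^\leqslant_j(M) \;+\; \sum_{v \in D_j(M)} \bm T_v
\end{equation*}
then follows by construction, the sum being a.s.\ finite.

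Finally, the distributional identity reduces to a measurability and independence argument. Let $\mathscr F^\leqslant \delequal \sigma\big(\{\bm B_u : u \in \mathcal T^\leqslant_j(M)\}\big)$: this $\sigma$-algebra determines $\mathcal T^\leqslant_j(M)$, $D_j(M)$, $\bm S^\leqslant_j(M)$ and $(W^>_{j;i}(M))_{i \in [d]}$. Each $\bm T_v$ with $v \in D_j(M)$ is a function only of the offspring vectors attached to $v$ and its strict descendants in $\mathcal T_j$; since no $v \in D_j(M)$ lies in $\mathcal T^\leqslant_j(M)$, this collection of offspring vectors is disjoint from the family generating $\mathscr F^\leqslant$. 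By mutual independence of the $\bm B_u$'s, conditional on $\mathscr F^\leqslant$ the family $\{\bm T_v\}_{v \in D_j(M)}$ is therefore mutually independent, and each $\bm T_v$ is the total progeny of a branching process rooted at a node of type $\mathrm{type}(v)$, hence distributed as $\bm S_{\mathrm{type}(v)}$. Re-indexing the $\bm T_v$'s by type and birth order yields, for each $i \in [d]$, an i.i.d.\ sequence $(\bm S^{(m)}_i)_{m \geq 1}$ of copies of $\bm S_i$, jointly independent of $(\bm S^\leqslant_j(M), W^>_{j;1}(M), \ldots, W^>_{j;d}(M))$, which is the claim.

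The main obstacle is purely bookkeeping: one must carefully partition the (countable) collection of offspring vectors into the part generating the pruned data and the disjoint parts generating the sub-trees rooted at dropped nodes. Once this partition is made precise, no additional analytic ingredient is needed, and the independence and distributional equality are immediate consequences of the joint independence of the offspring vectors across distinct nodes.
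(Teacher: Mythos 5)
Your proposal is correct, but it takes a genuinely different route from the paper. You build the full tree on one probability space, carve out the pruned subtree $\mathcal T^\leqslant_j(M)$, obtain the \emph{almost-sure} decomposition $\bm S_j = \bm S^\leqslant_j(M) + \sum_{v \in D_j(M)}\bm T_v$, and then invoke the branching property along the stopping line of dropped nodes to get the conditional i.i.d.\ structure of the subtrees $\bm T_v$. The paper instead argues purely distributionally and by induction on generations: it repeatedly applies the fixed-point equation \eqref{def: fixed point equation for cluster S i} to write $\bm S_j \distequal I_1(T)+I_2(T)+I_3(T)$, where $I_1(T)$ is the pruned tree up to generation $T-1$, $I_3(T)$ collects the subtrees hanging off big jumps up to generation $T$, and $I_2(T)$ is the remainder attached to generation $T-1$; it then lets $T\to\infty$, using sub-criticality to show $I_2(T)\Rightarrow \bm 0$ and monotone convergence to identify the limit of $I_1(T)+I_3(T)$, with independence of the $\bm S^{(m)}_i$'s built in by injecting fresh independent copies $B^{(t,m,k)}_{l\leftarrow i}$, $\bm S^{(t,m,k)}_i$ at each step. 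Your approach is closer to the heuristic description in Section~\ref{subsec: proof methodology, cluster size} and yields a pathwise identity rather than only a limit of distributional identities; what it costs is exactly the bookkeeping you flag: $\mathscr F^\leqslant$ as written is a $\sigma$-algebra generated by a random collection of variables, so to make the conditional independence rigorous you need a fixed index set (e.g.\ Ulam--Harris labels or the paper's $(t,m)$-indexing) and an exploration argument establishing the branching property at the stopping line $D_j(M)$. One small point of phrasing: the re-indexed family $(\bm T_v)_{v\in D_j(M)}$ has random finite length and cannot literally be ``independent of'' the pruned data; the precise statement is that its conditional law given $\mathscr F^\leqslant$ is that of $W^>_{j;i}(M)$-many i.i.d.\ copies of the $\bm S_i$'s, which (after padding with fresh independent copies for indices beyond $W^>_{j;i}(M)$, as the paper effectively does) is exactly the formulation in the lemma. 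The paper's induction sidesteps this stopping-line formalization at the price of a less transparent limit argument; both proofs rely on sub-criticality only through a.s.\ finiteness of the tree.
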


\begin{proof}
Throughout this proof, we fix some $M \in (0,\infty)$,
and lighten the notations by writing 
$
\bm S^\leqslant_j = \bm S^\leqslant_j(M),
$
$
W^{>}_{j;i} = W^{>}_{j;i}(M),
$
$
W^{>}_{j;i \leftarrow l} = W^{>}_{j;i \leftarrow l}(M),
$
and $\bm X^\leqslant_j(t) = \bm X^\leqslant_j(t;M)$,
$
X^\leqslant_{j,i}(t)=X^\leqslant_{j,i}(t;M);
$
see \eqref{def: branching process X n leq M} and \eqref{def: W i M j, pruned cluster, 1, cluster size}--\eqref{def: W i M j, pruned cluster, 2, cluster size}.
Besides, henceforth in this proof,
notations
$B^{(t,m,k)}_{l\leftarrow i}$
are saved for i.i.d.\ copies of $B_{l \leftarrow i}$
that are also independent from the $B^{(t,m)}_{i\leftarrow j}$'s,
and notations
$\bm S^{(t,m,k)}_i$, $\tilde{\bm S}_i^{(t,m,k)}$, and $\bm S_i^{(t,m,k,k^\prime)}$, $\tilde{\bm S}_i^{(t,m,k,k^\prime)}$ are for i.i.d.\ copies of $\bm S_i$ whose law is independent from that of the $B^{(t,m)}_{i \leftarrow j}$'s and $B^{(t,m,k)}_{i \leftarrow j}$'s.
This is made rigorous through proper augmentation of the underlying probability space.
In particular, we note that:
(i)
the vector
$\big(\bm S^{\leqslant}_j, W^>_{j;1},\ldots,W^>_{j;d}\big)$
and the variables $X^\leqslant_{j,i}(t)$ are measurable w.r.t.\ the $\sigma$-algebra generated by the 
$B^{(t,m)}_{l \leftarrow i}$'s in \eqref{def, proof strategy, collection of B i j copies};
and
(ii)
since the
$B^{(t,m,k)}_{l \leftarrow i}$'s
and
$\bm S^{(t,m,k)}_i$, $\tilde{\bm S}_i^{(t,m,k)}$, $\bm S_i^{(t,m,k,k^\prime)}$, $\tilde{\bm S}_i^{(t,m,k,k^\prime)}$
are independent from the $B^{(t,m)}_{l \leftarrow i}$'s,
they are
also independent from 
the vector
$\big(\bm S^{\leqslant}_j, W^>_{j;1},\ldots,W^>_{j;d}\big)$
and $X^\leqslant_{j,i}(t)$.

\smallskip
By \eqref{def: fixed point equation for cluster S i},
\begin{align*}
    \bm S_j
    & \distequal \bm e_j + \sum_{i \in [d]}\sum_{ m = 1 }^{ B^{ (1,1) }_{i \leftarrow j}  }\bm S_i^{(1,1,m)}
    \\ 
    & \distequal 
     \bm e_j 
     + \sum_{i \in [d]}\sum_{ m = 1 }^{ B^{ (1,1) }_{i \leftarrow j}\mathbbm{I}\{ B^{ (1,1) }_{i \leftarrow j} \leq M  \} }\tilde{\bm S}_i^{ (1,1,m) }
     +
      \sum_{i \in [d]}\sum_{ m = 1 }^{ B^{ (1,1) }_{i \leftarrow j}\mathbbm{I}\{ B^{ (1,1) }_{i \leftarrow j} > M  \} }{\bm S}_i^{ (1,1,m) }.
\end{align*}
Furthermore, for each $T \geq 1$ we define
\begin{align*}
    I_1(T) 
    & \delequal
    \sum_{t = 0}^{T - 1}\bm X^\leqslant_j(t),
    \\ 
    I_2(T)
    & \delequal 
    \sum_{i \in [d]}
    \sum_{ m = 1 }^{ X^\leqslant_{j,i}(T-1)  }
    \sum_{l \in [d]}
    \sum_{k = 1}^{ B^{ (T,m)  }_{ l \leftarrow i  }\mathbbm{I}\{  B^{ (T,m)  }_{ l \leftarrow i  } \leq M   \}   }
    \tilde{\bm S}_l^{ (T,m,k)  },
    \\ 
    I_3(T) & \delequal 
    \sum_{t = 1}^T
    \sum_{i \in [d]}
    \sum_{ m = 1 }^{ X^\leqslant_{j,i}(t-1)  }
    \sum_{l \in [d]}
    \sum_{k = 1}^{ B^{ (t,m)  }_{ l \leftarrow i  }\mathbbm{I}\{  B^{ (t,m)  }_{ l \leftarrow i  } > M   \}   }
    \bm S_l^{ (t,m,k) }.
\end{align*}
Due to $\bm X^\leqslant_{j}(0) = \bm e_j$,
we have
$I_1(1) = \bm X^\leqslant_j(0) = \bm e_j$,
$
I_2(1) = \sum_{i \in [d]}\sum_{ k = 1 }^{ B^{ (1,1) }_{i \leftarrow j}\mathbbm{I}\{ B^{ (1,1) }_{i \leftarrow j} \leq M  \} }\tilde{\bm S}_i^{ (1,1,k) },
$
and 
$
I_3(1) = 
 \sum_{i \in [d]}\sum_{ k  = 1 }^{ B^{ (1,1) }_{i \leftarrow j}\mathbbm{I}\{ B^{ (1,1) }_{i \leftarrow j} > M  \} }{\bm S}_i^{ (1,1,k) }.
$
This confirms that 
$
\bm S_j \distequal I_1(1) + I_2(1) + I_3(1).
$
Next, we consider an inductive argument, and
suppose that 
$
\bm S_j \distequal I_1(T) + I_2(T) + I_3(T)
$
for some positive integer $T$.
Then, 
using \eqref{def: fixed point equation for cluster S i} again,
we get
\begin{align*}
    \bm S_j & \distequal
    I_1(T) + I_3(T)
    + 
    \sum_{i \in [d]}
    \sum_{ m = 1 }^{ X^\leqslant_{j,i}(T-1)  }
    \sum_{l \in [d]}
    \sum_{k = 1}^{ B^{ (T,m)  }_{ l \leftarrow i  }\mathbbm{I}\{  B^{ (T,m)  }_{ l \leftarrow i  } \leq M   \}   }
    \Bigg(
        \bm e_l + \sum_{ l^\prime \in [d]  }\sum_{ k^\prime = 1  }^{ 
            B^{ (T+1,m,k)  }_{ l^\prime \leftarrow l  }
        }
        \tilde{\bm S}_{l^\prime}^{ (T+1,m,k,k^\prime)  }
    \Bigg)
    \\ 
    & = 
    I_1(T) + I_3(T) +  
    \sum_{i \in [d]}
    \sum_{ m = 1 }^{ X^\leqslant_{j,i}(T-1)  }
    \sum_{l \in [d]}
    \sum_{k = 1}^{ B^{ (T,m)  }_{ l \leftarrow i  }\mathbbm{I}\{  B^{ (T,m)  }_{ l \leftarrow i  } \leq M   \}   }
    \bm e_l
    \\
    & \quad + 
    \sum_{i \in [d]}
    \sum_{ m = 1 }^{ X^\leqslant_{j,i}(T-1)  }
    \sum_{l \in [d]}
    \sum_{k = 1}^{ B^{ (T,m)  }_{ l \leftarrow i  }\mathbbm{I}\{  B^{ (T,m)  }_{ l \leftarrow i  } \leq M   \}   }
    \sum_{ l^\prime \in [d]  }\sum_{ k^\prime = 1  }^{ 
            B^{ (T+1,m,k)  }_{ l^\prime \leftarrow l  }
            }
    \tilde{\bm S}_{l^\prime}^{ (T+1,m,k,k^\prime)  }.
\end{align*}
By \eqref{def: branching process X n leq M},
we have 
$
\sum_{i \in [d]}
    \sum_{ m = 1 }^{ X^\leqslant_{j,i}(T-1)  }
    \sum_{l \in [d]}
    \sum_{k = 1}^{ B^{ (T,m)  }_{ l \leftarrow i  }\mathbbm{I}\{  B^{ (T,m)  }_{ l \leftarrow i  } \leq M   \}   }
    \bm e_l
    = \bm X^\leqslant_j(T),
$
Also, 
in  the display above,
note that:
(i) the $\tilde{\bm S}_{l^\prime}^{ (T+1,m,k,k^\prime)  }$'s are independent from the $\bm S^{(t,m,k)}_l$'s
and the variables $X^\leqslant_{j,i}(t)$ and $B^{(t,m)}_{l \leftarrow i}$;
(ii) the sequence $\big(B^{(T+1,m)}_{l \leftarrow i}\big)_{l,i,m}$ is independent from $I_1(T)$ and $I_3(T)$.
Therefore,
\begin{align*}
    \bm S_j & \distequal 
    I_3(T) + I_1(T) + \bm X^\leqslant_j(T)
    + 
    \sum_{i \in [d]}\sum_{m = 1}^{ X^\leqslant_{j,i}(T)  }
    \sum_{l \in [d]}
    \sum_{k = 1}^{ B^{ (T+1,m)  }_{ l \leftarrow i  }\mathbbm{I}\{  B^{ (T+1,m)  }_{ l \leftarrow i  } \leq M   \}   }
    \tilde{\bm S}_{l}^{ (T+1,m,k)  }
    \\ 
    & = I_3(T) + I_1(T+1) + 
    \sum_{i \in [d]}\sum_{m = 1}^{ X^\leqslant_{j,i}(T)  }
    \sum_{l \in [d]}
    \sum_{k = 1}^{ B^{ (T+1,m)  }_{ l \leftarrow i  }  }
    \tilde{\bm S}_{l}^{ (T+1,m,k)  }
    \\ 
    & \distequal
    I_1(T+1) + 
    I_3(T) + 
    \sum_{i \in [d]}\sum_{m = 1}^{ X^\leqslant_{j,i}(T)  }
    \sum_{l \in [d]}
    \sum_{k = 1}^{ B^{ (T+1,m)  }_{ l \leftarrow i  }\mathbbm{I}\{ B^{ (T+1,m)  }_{ l \leftarrow i  } > M \}  }
    {\bm S}_{l}^{ (T+1,m,k)  }
    \\ 
    & 
    \qquad 
    + 
    \sum_{i \in [d]}\sum_{m = 1}^{ X^\leqslant_{j,i}(T)  }
    \sum_{l \in [d]}
    \sum_{k = 1}^{ B^{ (T+1,m)  }_{ l \leftarrow i  }\mathbbm{I}\{ B^{ (T+1,m)  }_{ l \leftarrow i  } \leqslant M \}  }
    \tilde{\bm S}_{l}^{ (T+1,m,k)  }
    \\ 
    & = I_1(T+1) + I_3(T+1) + I_2(T+1).
\end{align*}
Proceeding inductively, we conclude that 
$
\bm S_j \distequal I_1(T) + I_2(T) + I_3(T)
$
hold for any $T \geq 1$.
Now, it suffices to show that 
$
I_2(T) \Rightarrow \bm 0
$
and 
$
I_1(T) + I_3(T) \Rightarrow \bm S^{\leqslant}_j + \sum_{i \in [d]}\sum_{m = 1}^{ W^>_{j;i} }\bm S^{(m)}_i
$
as $T \to \infty$

\smallskip
\noindent
\textbf{Proof of $I_2(T) \Rightarrow \bm 0$}.
We  prove the claim in terms of convergence in probability. 
The sub-criticality condition in Assumption~\ref{assumption: subcriticality} implies $\bm S_j<\infty$ almost surely.
Then, due to $\bm X_j^\leqslant(t) \leq \bm X_j(t)$ for each $t$ (see \eqref{def: branching process X n}--\eqref{def: branching process X n leq M}),
almost surely we have $\bm X^\leqslant_j(T) = \bm 0$ eventually for any $T$ large enough, and hence
\begin{align*}
    \lim_{T \to \infty}
    \P\big( I_2(T) = \bm 0  \big)
    \geq 
    \lim_{T \to \infty}
    \P\big( \bm X^\leqslant_j(T-1) = \bm 0  \big) = 1.
\end{align*}

\smallskip
\noindent
\textbf{Proof of $
I_1(T) + I_3(T) \Rightarrow \bm S^{\leqslant}_j + \sum_{i \in [d]}\sum_{m = 1}^{ W^>_{j;i} }\bm S^{(m)}_i.
$}
Applying monotone convergence theorem along each of the $d$ dimensions and by the definition in \eqref{def: pruned tree S i leq M},
we get 
\begin{align*}
    I_1(T) + I_3(T)
    \rightarrow
    \bm S_j^\leqslant
    + 
    \sum_{t \geq 1}
    \sum_{i \in [d]}
    \sum_{ m = 1 }^{ X^\leqslant_{j,i}(t-1)  }
    \sum_{l \in [d]}
    \sum_{k = 1}^{ B^{ (t,m)  }_{ l \leftarrow i  }\mathbbm{I}\{  B^{ (t,m)  }_{ l \leftarrow i  } > M   \}   }
    \bm S_l^{ (t,m,k) },
    \qquad
    \text{as }T \to \infty
\end{align*}
almost surely.
By the definitions in \eqref{def: W i M j, pruned cluster, 1, cluster size}--\eqref{def: W i M j, pruned cluster, 2, cluster size},
it holds for each $l \in [d]$ that
$$
W^>_{j;l}
=
\sum_{t \geq 1}
    \sum_{i \in [d]}\sum_{m = 1}^{ X^{\leqslant}_{j,i}(t-1) }
            B^{(t,m)}_{l \leftarrow i}\mathbbm{I}\big\{
                B^{(t,m)}_{l \leftarrow i} > M
            \big\}.
$$
Then, since the $\bm S^{(t,m,k)}_l$'s are independent from the $B^{(t,m)}_{l \leftarrow i}$'s 
(and hence the $\bm S^\leqslant_j$ and  $W^>_{j;i}$'s),
we get
\begin{align*}
    I_1(T) + I_3(T)
    & \rightarrow
    \bm S_j^\leqslant
    + 
    \sum_{t \geq 1}
    \sum_{i \in [d]}
    \sum_{ m = 1 }^{ X^\leqslant_{j,i}(t-1)  }
    \sum_{l \in [d]}
    \sum_{k = 1}^{ B^{ (t,m)  }_{ l \leftarrow i  }\mathbbm{I}\{  B^{ (t,m)  }_{ l \leftarrow i  } > M   \}   }
    \bm S_l^{ (t,m,k) }
    \\ 
    & \distequal 
    \bm S_j^\leqslant + 
    \sum_{l \in [d]}\sum_{ m = 1  }^{ W^>_{j;l}  }\bm S^{(m)}_l.
\end{align*}
This concludes the proof.
\end{proof}

\section{Counterexample}
\label{sec: counterexamples}
This section presents an example to illustrate that in Theorem~\ref{theorem: main result, cluster size}, 
it is not trivial to uplift
the condition of $A$ being bounded away from $\bar\R^d_\leqslant(\bm j,\epsilon)$ for some $\epsilon > 0$ (i.e., $\mathbb M$-convergence under polar transform, as shown in Lemma~\ref{lemma: M convergence for MRV}) to $A$ being bounded away from $\R^d_\leqslant(\bm j)$ (i.e., $\mathbb M$-convergence under Cartesian coordinates).

Specifically, we assume $d=2$ and
impose Assumptions~\ref{assumption: subcriticality}--\ref{assumption: regularity condition 2, cluster size, July 2024}.
Also, for clarity of the presentation, we consider a strict power-law version of Assumption~\ref{assumption: heavy tails in B i j}:
\begin{align}
    \lim_{x \to \infty}\P(B_{j \leftarrow i} > x)\cdot x^{ \alpha_{j \leftarrow i} } = c_{i,j} \in (0,\infty),
    \qquad
    \text{for each index pair $(i,j)$,}
    \label{example, power law assumtpion, tail asymptotics without polar transform}
\end{align}
and assume that
        $\alpha_{1 \leftarrow 2} > \alpha_{1 \leftarrow 1} > 2$,
        $\alpha_{2 \leftarrow 1} \wedge \alpha_{2 \leftarrow 2} > 2\alpha_{1 \leftarrow 1}$,
and
        $\P(B_{2 \leftarrow 1} = 0) > 0$.
By the definitions in \eqref{def: cluster size, alpha * l * j}, we have
\begin{align}
    \alpha^*(1) = \alpha_{1 \leftarrow 1} > 2,\quad \alpha^*(2) > 2\alpha^*(1).
    \label{example, index alpha *, tail asymptotics without polar transform}
\end{align}
We are interested in the asymptotics of $\P(n^{-1}\bm S_1 \in A)$,
where $A = A(1)$ with
\begin{align}
    A(r) \delequal \big\{
        (x_1,x_2)^\top \in \R^2_+:\ 
        \exists w \geq 0\text{ s.t. }x_1 = w\bar s_{1,1},\ |x_2 - w\bar s_{1,2}| > r
    \big\},
    \label{example, def, target set A, tail asymptotics without polar transform}
\end{align}
with $\bar{\bm s}_i = (\bar s_{i,1},\bar s_{i,2})^\top = \E\bm S_i$.
That is, the set $A(r)$ is the tube around the ray $\R^2(\{1\}) =  \{ w \bar{\bm s}_1:\ w \geq 0 \}$ with a (vertical) radius $r$,
restricted in $\R^2_+$.
We stress that this is almost equivalent to considering
\begin{align*}
    \breve A(r) \delequal \bigg\{ \bm x \in \R^2_+: \inf_{ \bm y \in \R^2(\{1\})   }\norm{\bm x - \bm y} > r  \bigg\},
    \quad r > 0.
\end{align*}
In particular,
given any $r > 0$, one can find $r_1,r_2 > 0$ such that
$
A(r_1) \subseteq \breve A(r) \subseteq A(r_2).
$
This will allow us to apply the subsequent analysis onto $\breve A(r)$.

For clarity, we focus on the case with $r = 1$ in \eqref{example, def, target set A, tail asymptotics without polar transform} (i.e., with $A = A(1)$).
Under Assumption~\ref{assumption: subcriticality}, it is easy to verify that $\bar{\bm s}_1$ and $\bar{\bm s}_2$ are linearly independent.
By the definition in \eqref{example, def, target set A, tail asymptotics without polar transform}, we must have $A \cap \R^2(\{2\}) \neq \emptyset$, where $\R^2(\{i\}) = \{ w \bar{\bm s}_i:\ w \geq 0 \}$.
Also, by \eqref{example, index alpha *, tail asymptotics without polar transform}, we have $\R^2_\leqslant(\{2\}) = \R^2(\{1\})$, which is bounded away from $A$.
Therefore, suppose that the asymptotics \eqref{claim, theorem: main result, cluster size} stated in Theorem~\ref{theorem: main result, cluster size}  hold for sets bounded away from $\R^d(\bm j)$ (instead of $\bar\R^d(\bm j,\epsilon)$), then we are led to believe that
\begin{align}
    \P(n^{-1}\bm S_1 \in A) \sim n^{  -\alpha^*(2) },
    \quad \text{ as }n \to \infty.
    \label{example, the wrong statement, tail asymptotics without polar transform}
\end{align}
However, our analysis below disproves \eqref{example, the wrong statement, tail asymptotics without polar transform}, indicating that it is  non-trivial  to relax in Theorem~\ref{theorem: main result, cluster size} the condition that $A$ needs to be bounded away from $\bar\R^d(\bm j,\epsilon)$ for some $\epsilon > 0$.

For the type-$1$ ancestor of $\bm S_1$, we use $B_{j \leftarrow 1}$ to denote the count of its type-$j$ children.
Conditioned on the event
\begin{align*}
    E \delequal \big\{
        B_{1 \leftarrow 1} \in (n^2,2n^2];\ 
        B_{2 \leftarrow 1} = 0
    \big\},
\end{align*}
$\bm S_1$ admits the law of
\begin{align}
    \begin{pmatrix}
        1 \\ 0
    \end{pmatrix}
    +
    \sum_{k = 1}^{ B_{1\leftarrow 1} }\bm S^{(k)}_1,
    \label{example, conditional law of population, tail asymptotics without polar transform}
\end{align}
where the $\bm S^{(k)}_1$'s are i.i.d.\ copies of $\bm S_1$.
Furthermore, 
let $\hat A \delequal \big\{
        \bm x \in \R^2_+:\ \norm{ \bm x - w \bar{\bm s}_1 } > 2\ \forall w \geq 0
    \big\}$.
Obviously, for each $n \geq 1$ we have
 $n^{-1}\norm{ (1,0)^\top  } = 1/n \leq 1$.
Then, on the event 
\begin{align*}
    E \cap 
    \bigg\{
        n^{-1}\sum_{k = 1}^{ B_{1\leftarrow 1} }\bm S^{(k)}_1 \in \hat A
    \bigg\} 
    ,
\end{align*}
by \eqref{example, conditional law of population, tail asymptotics without polar transform} we must have 
\begin{align*}
    n^{-1}\Bigg(
    (1,0)^\top
    +
    \sum_{k = 1}^{ B_{1\leftarrow 1} }\bm S^{(k)}_1
    \Bigg)
    \in 
    \big\{
        \bm x \in \R^2_+:\ \norm{ \bm x - w \bar{\bm s}_1 } > 1\ \forall w \geq 0
    \big\},
\end{align*}
and hence
$
n^{-1}\bm S_1\in A.
$
In summary,
\begin{align}
    \P(n^{-1}\bm S_1 \in A)
    \geq 
    \P\big(
         B_{1 \leftarrow 1} \in (n^2,2n^2];\ 
        B_{2 \leftarrow 1} = 0
    \big)
    \cdot 
    \P\Bigg(
        n^{-1}\sum_{k = 1}^{ B_{1 \leftarrow 1} }\bm S^{(k)}_1 \in \hat A
        \ \bigg|\ B_{1 \leftarrow 1} \in (n^2,2n^2]
    \Bigg).
    \label{example, lower bound 1, tail asymptotics without polar transform}
\end{align}
To proceed, we make a few observations.
First,
\begin{align*}
    & \P\big(
         B_{1 \leftarrow 1} \in (n^2,2n^2];\ 
        B_{2 \leftarrow 1} = 0
    \big)
    \\
    & = 
    \P\big(
         B_{1 \leftarrow 1} \in (n^2,2n^2]\big) \cdot 
    \P(B_{2 \leftarrow 1} = 0
    )
    \qquad
    \text{by Assumption~\ref{assumption: heavy tails in B i j}}
    \\ 
    & =
    c \cdot 
    \P\big(
         B_{1 \leftarrow 1} \in (n^2,2n^2]
    \big)
    \qquad
    \text{for some $c > 0$ due to }\P(B_{2 \leftarrow 1 } = 0) > 0,
\end{align*}
which implies
\begin{align*}
        & 
    \lim_{n \to \infty}
    n^{ 2\alpha^*(1)  }\cdot 
    {
         \P\big(
         B_{1 \leftarrow 1} \in (n^2,2n^2];\ 
        B_{2 \leftarrow 1} = 0
    \big)
    }
    \\ 
    & = c \cdot 
    \lim_{n \to \infty}
     n^{ 2\alpha^*(1)  }\cdot \P\big(
         B_{1 \leftarrow 1} \in (n^2,2n^2]
    \big)
     = c  \cdot
    \Bigg(
        c_{1 \leftarrow 1} - \frac{ c_{1 \leftarrow 1} }{ 2^{ \alpha^*(1) } }
    \Bigg) > 0
    \qquad
    \text{by \eqref{example, power law assumtpion, tail asymptotics without polar transform} and \eqref{example, index alpha *, tail asymptotics without polar transform}.}
\end{align*}
Second, under the tail indices specified in \eqref{example, index alpha *, tail asymptotics without polar transform}, 
Theorem~2 of \cite{Asmussen_Foss_2018} confirms that
$
\P(\norm{\bm S_1} > n) = \bo (n^{ -\alpha^*(1)  }).
$
Since $\alpha^*(1) > 2$, we have $\E\norm{\bm S_1}^2 < \infty$, hence the covariance matrix for the random vector $\bm S_1$ is a well-defined symmetric and positive semi-definite matrix, which we denote by $\bm \Sigma$.
Obviously, our heavy-tailed assumption \eqref{example, power law assumtpion, tail asymptotics without polar transform} prevents the trivial case of $\bm\Sigma = \textbf{0}$.
Now,
let
\begin{align*}
    A^* \delequal
    \big\{
        \bm y \in \R^2_+:\ \bm y + \bar{\bm s}_1 \in \hat A
    \big\}.
\end{align*}
Note that $A^*$ is open and non-empty.
Furthermore, 
we write $\bm x + E = \{ \bm x + \bm y:\ \bm y \in E  \}$ for any set $E \subseteq \R^2$ and vector $\bm x$, and note the following:
due to $\bm x + \R^2_+ \subseteq \bm y + \R^2_+$ for any $\bm x \leq \bm y$ (i.e., $x_1\leq y_1$ and $x_2 \leq y_2$),
we have $ \bm y \in A^* \Longrightarrow \bm y + w\bar{\bm s}_1 \in \hat A\ \forall w \geq 1$.
As a result,
\begin{align}
    \big\{ 
        B_{1 \leftarrow 1} \in (n^2,2n^2]
    \big\} \cap 
    \bigg\{n^{-1}\sum_{ k = 1  }^{ \floor{m}  }\big( \bm S^{(k)}_1 - \bar{\bm s}_1\big) \in A^*\ \forall m \in (n^2,2n^2]\bigg\}
    \subseteq
    \bigg\{
        n^{-1}\sum_{k = 1}^{ B_{1 \leftarrow 1} }\bm S^{(k)}_1 \in \hat A
    \bigg\}.
    \nonumber
\end{align}
Therefore,
\begin{align*}
    & \liminf_{n \to \infty}\P\Bigg(
        n^{-1}\sum_{k = 1}^{ B_{1 \leftarrow 1} }\bm S^{(k)}_1 \in \hat A
        \ \bigg|\ B_{1 \leftarrow 1} \in (n^2,2n^2]
    \Bigg)
    \\ 
    & \geq 
    \liminf_{n \to \infty}\P\Bigg(
        n^{-1}\sum_{ k = 1  }^{ \floor{m}  }\big( \bm S^{(k)}_1 - \bar{\bm s}_1\big) \in A^*\ \forall m \in (n^2,2n^2]
    \Bigg)
    \\ 
    & \geq 
    \P\Big(
        \bm B(t)\bm \Sigma^{1/2}  \in A^*\ \forall t \in [1,2]
    \Big)
    \ \ 
    \text{by multivariate Donsker's theorem; see, e.g., Theorem 4.3.5 of \cite{whitt2002stochastic}}
    \\ 
    & > 0
    \quad 
    \text{since $\bm \Sigma^{1/2} \neq \textbf{0}$ and $A^*$ is non-empty and open}.
\end{align*}
In summary, from \eqref{example, lower bound 1, tail asymptotics without polar transform} we get
\begin{align*}
    \liminf_{n \to \infty}n^{ 2\alpha^*(1) } \cdot \P(n^{-1}\bm S_1 \in A) > 0.
\end{align*}
In light of the condition $2\alpha^*(1) < \alpha^*(2)$ in \eqref{example, index alpha *, tail asymptotics without polar transform},
we arrive at a contradiction to Claim \eqref{example, the wrong statement, tail asymptotics without polar transform}.
This concludes the example and confirms that the asymptotics \eqref{claim, theorem: main result, cluster size} in Theorem~\ref{theorem: main result, cluster size} generally fails when relaxing the bounded-away condition.

\section{Proofs for $\mathbb M$-Convergence and Asymptotic Equivalence}
\label{subsec: proof, M convergence and asymptotic equivalence}

This section collects the proof of Lemmas~\ref{lemma: asymptotic equivalence, MRV in Rd} and \ref{lemma: M convergence for MRV}.

\begin{proof}[Proof of Lemma~\ref{lemma: asymptotic equivalence, MRV in Rd}]
\linksinpf{lemma: asymptotic equivalence, MRV in Rd}
Throughout this proof, we write $\mathbb S = [0,\infty) \times \mathfrak N^d_+$.
We arbitrarily pick some Borel measurable $B \subseteq \mathbb S$ that is bounded away from $\mathbb C$ under $\bm d_\textbf{U}$.
This allows us to fix some $\bar\epsilon \in (0,1)$
such that 
$
\bm d_\textbf{U}(B,\mathbb C) > \bar\epsilon.
$
Let 
\begin{align}
B_\theta =
    \big\{
        \bm w \in \mathfrak N^d_+:\ (r,\bm w) \in B\text{ for some } r > 0
    \big\},\quad
    \mathbb C_\theta =
    \big\{
        \bm w \in \mathfrak N^d_+:\ (r,\bm w) \in \mathbb C\text{ for some } r > 0
    \big\}.
    \label{proof: projection of B and C, lemma: asymptotic equivalence, MRV in Rd}
\end{align}
We must have 
\begin{align}
    \inf_{ \bm w \in B_\theta,\ \bm w^\prime \in \mathbb C_\theta }\norm{\bm w - \bm w^\prime} \geq \bar\epsilon.
    \label{proof: distance between projections of B and C, lemma: asymptotic equivalence, MRV in Rd}
\end{align}
Otherwise, there exist $(r, \bm w) \in B$ and $(r^\prime, \bm w^\prime) \in \mathbb C$ such that 
$
r, r^\prime > 0
$
yet
$
\norm{\bm w - \bm w^\prime} < \bar\epsilon.
$
By condition \eqref{condition: C is a cone, lemma: asymptotic equivalence, MRV in Rd},
we also have $(r, \bm w^\prime) \in \mathbb C$, and hence
$
\bm d_\textbf{U}\big( (r,\bm w),\  (r, \bm w^\prime) \big)
    =
    \norm{\bm w - \bm w^\prime} < \bar\epsilon,
$
which contradicts 
$
\bm d_\textbf{U}(B,\mathbb C) > \bar\epsilon.
$
Also, since $(0,\bm w) \in \mathbb C$ for any $\bm w \in \mathfrak N^d_+$,
by 
$
\bm d_\textbf{U}(B,\mathbb C) > \bar\epsilon
$
we have
\begin{align}
    (r,\bm w) \in B
    \quad \Longrightarrow\quad 
    r > \bar\epsilon.
    \label{proof, lower bound for r in B, lemma: asymptotic equivalence, MRV in Rd}
\end{align}

For any $M \in (0,\infty)$, let
$
B(M) = 
\big\{  (r,\bm w) \in B:\ r \leq M \big\}.
$
For any $\Delta,M,n,\delta > 0$, observe that 
\begin{align*}
    \Big\{ (R_n,\Theta_n) \in B\Big\}
    & \supseteq
     \Big\{  (R_n,\Theta_n) \in B(M) \Big\}
     \\ 
     & \supseteq
     \Big\{ (R_n,\Theta_n) \in B(M);\ 
        \hat R^\delta_n \in \big[(1 - \Delta)R_n, (1 + \Delta)R_n\big],\ 
        \norm{\hat \Theta^\delta_n - \Theta_n} \leq \Delta 
     \Big\}.
\end{align*}
Furthermore, for any $\bar\Delta > 0$, and any $\Delta > 0$, $M \geq 1$ satisfying $M\Delta < \bar\Delta$,
\begin{align}
    {\hat r}/{r} \in [1-\Delta, 1+ \Delta],\ r \in [0,M],\ \norm{\bm w - \hat{\bm w} } \leq \Delta 
    \
    \Longrightarrow
    \
    |r - \hat r| \vee \norm{\bm w - \hat{\bm w}} \leq M\Delta < \bar\Delta.
    \label{proof: result 1, lemma: asymptotic equivalence, MRV in Rd}
\end{align}
Therefore, for any $\bar\Delta \in (0,\bar\epsilon)$, and any $\Delta \in (0,1)$, $M \geq 1$ such that $M\Delta < \bar\Delta$,
\begin{align*}
        & \P\Big( (R_n,\Theta_n) \in B \Big)
    \\ 
    & \geq 
    \P\Big( (R_n,\Theta_n) \in B(M);\ 
        \hat R^\delta_n \in \big[(1 - \Delta)R_n, (1 + \Delta)R_n\big],\ 
        \norm{\hat \Theta^\delta_n - \Theta_n} \leq \Delta
     \Big)
     \\ 
     & = 
     \P\Big( (R_n,\Theta_n) \in B(M),\ R_n \leq M/(1-\Delta);\ 
     \hat R^\delta_n \in \big[(1 - \Delta)R_n, (1 + \Delta)R_n\big],\ 
        \norm{\hat \Theta^\delta_n - \Theta_n} \leq \Delta
    \Big)
    \\ 
    &\qquad\qquad\qquad\qquad\qquad\qquad\qquad\qquad\qquad\qquad\text{by the definition of }B(M)
     \\ 
    & = 
     \P\Big( (R_n,\Theta_n) \in B(M),\ R_n \leq M/(1-\Delta);\ 
     \\
     &\qquad\quad 
        \hat R^\delta_n \in \big[(1 - \Delta)R_n, (1 + \Delta)R_n\big],\ 
        \norm{\hat \Theta^\delta_n - \Theta_n} \leq \Delta,\ 
        \bm d_\textbf{U}\big( (R_n,\Theta_n),\ (\hat R^\delta_n,\hat \Theta^\delta_n)  \big) \leq \bar\Delta
     \Big)
     \quad 
     \text{by \eqref{proof: result 1, lemma: asymptotic equivalence, MRV in Rd}}.
\end{align*}
Also, recall that for any metric space $(\mathbb S, \bm d)$ and $r > 0$, we use
${E^r} =
\{ y \in \mathbb{S}:\ \bm{d}(E,y)\leq r\}$ to denote the $r$-enlargement of the set $E$,
and
$
{E_{r}} =
((E^c)^r)^\complement
=
\{
y \in \mathbb S:\ \bm d(E^c,y) > r
\}
$
for the $r$-shrinkage
of $E$.
Given any $\bar\Delta \in (0,\bar\epsilon)$, and any $\Delta \in (0,1)$, $M \geq 1$ such that $M\Delta/(1 - \Delta) < \bar\Delta$, we then have
\begin{align*}
    & \P\Big( (R_n,\Theta_n) \in B \Big)
    \\ 
     & \geq 
     \P\Bigg(
        (\hat R^\delta_n,\hat \Theta^\delta_n) \in \big( B(M)\big)_{ \bar\Delta },\ R_n \leq \frac{M}{1 - \Delta};
        \\ 
        &\qquad\qquad
        \hat R^\delta_n \in \big[(1 - \Delta)R_n, (1 + \Delta)R_n\big],\ 
        \norm{\hat \Theta^\delta_n - \Theta_n} \leq \Delta,\ 
        \bm d_\textbf{U}\big( (R_n,\Theta_n),\ (\hat R^\delta_n,\hat \Theta^\delta_n)  \big) \leq \bar\Delta
     \Bigg)
     \\ 
     & =
     \P\Bigg(
        (\hat R^\delta_n,\hat \Theta^\delta_n) \in \big( B(M)\big)_{ \bar\Delta };\ R_n \leq \frac{M}{1 - \Delta},\ 
        \hat R^\delta_n \in \big[(1 - \Delta)R_n, (1 + \Delta)R_n\big],\ 
        \norm{\hat \Theta^\delta_n - \Theta_n} \leq \Delta
     \Bigg)
     \\ 
     &
     \qquad\qquad\qquad\qquad\qquad\qquad\qquad\qquad\qquad\qquad\qquad
     \text{by \eqref{proof: result 1, lemma: asymptotic equivalence, MRV in Rd} and }M\Delta/(1-\Delta) < \bar\Delta
     \\ 
     & =
     \P\Big(
        (\hat R^\delta_n,\hat \Theta^\delta_n) \in \big( B(M)\big)_{ \bar\Delta }
        \Big)
    \\ 
    &\quad
    - 
    \P\Bigg(
        (\hat R^\delta_n,\hat \Theta^\delta_n) \in \big( B(M)\big)_{ \bar\Delta };\ 
        R_n > \frac{M}{1 - \Delta} \text{ or }
        \hat R^\delta_n \notin \big[(1 - \Delta)R_n, (1 + \Delta)R_n\big]\text{ or }
        \norm{\hat \Theta^\delta_n - \Theta_n} > \Delta
     \Bigg)
     \\ 
     & \stackrel{(*)}{\geq}  
     \P\Big(
        (\hat R^\delta_n,\hat \Theta^\delta_n) \in \big( B(M)\big)_{ \bar\Delta }
        \Big)
    \\ 
    & \quad 
    - 
    \P\Bigg(
        \hat R_n^\delta \in (\Delta,M];\ 
         R_n > \frac{M}{1 - \Delta}\text{ or }
        \hat R^\delta_n \notin \big[(1 - \Delta)R_n, (1 + \Delta)R_n\big]\text{ or }
        \norm{\hat \Theta^\delta_n - \Theta_n} > \Delta
     \Bigg)
     \\ 
     & \stackrel{(\dagger)}{=} 
     \P\Big(
        (\hat R^\delta_n,\hat \Theta^\delta_n) \in \big( B(M)\big)_{ \bar\Delta }
        \Big)
    - 
    \P\Big(
        \hat R_n^\delta \in (\Delta,M];\ 
        \hat R^\delta_n \notin \big[(1 - \Delta)R_n, (1 + \Delta)R_n\big]\text{ or }
        \norm{\hat \Theta^\delta_n - \Theta_n} > \Delta
     \Big).
\end{align*}
Here, the step $(*)$ follows from $\text{\eqref{proof, lower bound for r in B, lemma: asymptotic equivalence, MRV in Rd} and }\Delta < \bar\epsilon$,
and the step $(\dagger)$ follows from
$$
\big\{
     \hat R_n^\delta \in (\Delta,M],\ R_n > \frac{M}{1 - \Delta}
\big\}
\subseteq 
\big\{
    \hat R_n^\delta \in (\Delta,M],\ \hat R^\delta_n \notin \big[(1 - \Delta)R_n, (1 + \Delta)R_n\big]
\big\}.
$$
Then, by condition (i), for any $M \geq 1$ and $\bar\Delta > 0$,
\begin{align}
    \liminf_{n \to\infty}
    \epsilon^{-1}_n
    \P\Big( (R_n,\Theta_n) \in B \Big)
    \geq 
    \liminf_{n \to\infty}
    \epsilon^{-1}_n
    \P\Big(
        (\hat R^\delta_n,\hat \Theta^\delta_n) \in \big( B(M)\big)_{ \bar\Delta }
        \Big),
    \qquad
    \forall \delta > 0\text{ small enough}.
    \nonumber
\end{align}
By condition (ii),  given $M \geq 1$ and $\bar\Delta > 0$ it holds for any $\delta > 0$ small enough that 
\begin{align}
    \liminf_{n \to\infty}
    \epsilon^{-1}_n
    \P\Big( (R_n,\Theta_n) \in B \Big)
    \geq 
    \liminf_{n \to\infty}
    \epsilon^{-1}_n
    \P\Big(
        (\hat R^\delta_n,\hat \Theta^\delta_n) \in \big( B(M)\big)_{ \bar\Delta }
        \Big)
    \geq 
    -|\mathcal V|\bar\Delta + 
     \sum_{v \in \mathcal V}\mu_v\Big( \big( B(M)\big)_{2 \bar\Delta } \Big).
    \label{proof: ineq for lower bound, lemma: asymptotic equivalence, MRV in Rd}
\end{align}
Furthermore, note that
$
\bigcup_{ M > 0 }B(M) = B
$
and $|\mathcal V| < \infty$.
By sending $M \to \infty$ and then $\bar\Delta \to 0$, we get
\begin{align}
    \liminf_{n \to \infty}\epsilon^{-1}_n
    \P\Big( (R_n,\Theta_n) \in B \Big)
    \geq 
    \sum_{v \in \mathcal V}\mu_v(B^\circ).
    \label{proof: lower bound, lemma: asymptotic equivalence, MRV in Rd}
\end{align}
Meanwhile, for any $\Delta \in (0, \bar\epsilon)$, we have the upper bound
\begin{align*}
    & \P\Big( (R_n,\Theta_n) \in B \Big)
    \\ 
    & = 
    \P\Big( (R_n,\Theta_n) \in B;\ 
        \hat R^\delta_n \in \big[(1 - \Delta)R_n, (1 + \Delta)R_n\big],\ 
        \norm{\hat \Theta^\delta_n - \Theta_n} \leq \Delta
     \Big)
     \\ 
     & \qquad 
     + 
     \P\Big( (R_n,\Theta_n) \in B;\ 
        \hat R^\delta_n \notin \big[(1 - \Delta)R_n, (1 + \Delta)R_n\big]\text{ or }
        \norm{\hat \Theta^\delta_n - \Theta_n} > \Delta
     \Big)
     \\ 
     & \leq 
     \P\Big( (R_n,\Theta_n) \in B;\ 
        \hat R^\delta_n \in \big[(1 - \Delta)R_n, (1 + \Delta)R_n\big],\ 
        \norm{\hat \Theta^\delta_n - \Theta_n} \leq \Delta
     \Big)
     \\ 
     &\qquad
     +
     \P\Big( R_n > \Delta;\ 
        \hat R^\delta_n \notin \big[(1 - \Delta)R_n, (1 + \Delta)R_n\big]\text{ or }
        \norm{\hat \Theta^\delta_n - \Theta_n} > \Delta
     \Big)
     \quad
     \text{by \eqref{proof, lower bound for r in B, lemma: asymptotic equivalence, MRV in Rd} and }\Delta < \bar\epsilon.
\end{align*}
By condition (i), 
\begin{align*}
    & \limsup_{n \to \infty}
    \epsilon^{-1}_n \P\Big( (R_n,\Theta_n) \in B \Big)
    \\ 
    & \leq  
    \limsup_{n \to \infty}
    \epsilon^{-1}_n
    \P\Big( (R_n,\Theta_n) \in B;\ 
        \hat R^\delta_n \in \big[(1 - \Delta)R_n, (1 + \Delta)R_n\big],\ 
        \norm{\hat \Theta^\delta_n - \Theta_n} \leq \Delta
     \Big).
\end{align*}
On the other hand, 
recall the definition of $B_\theta$ in \eqref{proof: projection of B and C, lemma: asymptotic equivalence, MRV in Rd},
and
let
\begin{align}
    \breve B(M,\delta) = 
     \Big\{
        (r,\bm w) \in [0,\infty) \times \mathfrak N^d_+:\ 
        r \geq M,\ \norm{\bm w - \bm w^\prime} \leq \delta \text{ for some }\bm w^\prime \in B_\theta
     \Big\}.
    \nonumber
\end{align}
Also, recall that we picked $\bar\epsilon > 0$ such that $
\bm d_\textbf{U}(B,\mathbb C) > \bar\epsilon.
$
For any $\bar\Delta \in (0, \frac{\bar\epsilon}{2}\wedge\frac{1}{2} )$ and all $M \geq 1, \Delta > 0$ with $M\Delta < \bar\Delta$, note that
\begin{align*}
   & \P\Big( (R_n,\Theta_n) \in B;\ 
        \hat R^\delta_n \in \big[(1 - \Delta)R_n, (1 + \Delta)R_n\big],\ 
        \norm{\hat \Theta^\delta_n - \Theta_n} \leq \Delta
     \Big) 
    \\ 
    & = 
    \P\Big( (R_n,\Theta_n) \in B(M);\ 
        \hat R^\delta_n \in \big[(1 - \Delta)R_n, (1 + \Delta)R_n\big],\ 
        \norm{\hat \Theta^\delta_n - \Theta_n} \leq \Delta
     \Big)
     \\ 
     & \qquad +
     \P\Big( (R_n,\Theta_n) \in B \setminus B(M);\ 
        \hat R^\delta_n \in \big[(1 - \Delta)R_n, (1 + \Delta)R_n\big],\ 
        \norm{\hat \Theta^\delta_n - \Theta_n} \leq \Delta
     \Big)
     \\ 
     & \stackrel{(\diamond)}{\leq} 
     \P\bigg( (\hat R^\delta_n, \hat \Theta^\delta_n) \in \big( B(M) \big)^{ \bar\Delta } \bigg)
     +
     \P\bigg( \hat R^\delta_n \geq (1-\Delta)M,\ \norm{\hat \Theta^\delta_n - \Theta_n} \leq \Delta  \bigg)
     \\ 
     & \leq
    \P\bigg(  \underbrace{ (\hat R^\delta_n, \hat \Theta^\delta_n) \in \big( B(M) \big)^{ \bar\Delta }  }_{ = \text{(I)} }\bigg)
     +
     \P\bigg(\underbrace{  (\hat R^\delta_n, \hat \Theta^\delta_n) \in \breve B\big( (1 - \bar\Delta)M,\bar\Delta\big)  }_{ = \text{(II)} }\bigg).
\end{align*}
Here, the step $(\diamond)$ follows from \eqref{proof: result 1, lemma: asymptotic equivalence, MRV in Rd} and the definition of $B(M)$.
For the event (I), 
by our choice of $\bar\Delta < \bar\epsilon/2$,
it follows from
$
\bm d_\textbf{U}(B,\mathbb C) > \bar\epsilon
$
that $\big(B(M)\big)^{2\bar\Delta} \subseteq B^{2\bar\Delta}$ is still bounded away from $\mathbb C$ under $\bm d_\textbf{U}$;
then by condition (ii), it holds for any $\delta > 0$ small enough that 
\begin{align}
    \limsup_{ n \to \infty }\epsilon^{-1}_n\P\big(\text{(I)}\big) 
    \leq |\mathcal V|\bar\Delta +  \sum_{v \in \mathcal V}\mu_v\bigg( \big(B(M)\big)^{2\bar\Delta} \bigg).
    \nonumber
\end{align}
Analogously, for the event (II),
note that \eqref{proof: distance between projections of B and C, lemma: asymptotic equivalence, MRV in Rd} implies that 
$
\breve B(M,\Delta)
$
is bounded away from $\mathbb C$ under $\bm d_\textbf{U}$ for any $M > 0$ and $\Delta < \bar\epsilon$.
Then by condition (ii), it holds for any $\delta > 0$ small enough that
\begin{align}
    \limsup_{ n \to \infty }\epsilon^{-1}_n\P\big(\text{(II)}\big) 
    \leq |\mathcal V|\bar\Delta +  \sum_{v \in \mathcal V}\mu_v\bigg( \Big( \breve B\big(  (1-\bar\Delta)M,\ \bar\Delta   \big)\Big)^{2\bar\Delta} \bigg).
    \nonumber
\end{align}
Note that 
$
\bigcap_{M > 0}\breve B(M,\bar\Delta) = \emptyset
$
and $|\mathcal V| < \infty$.
By sending $M \to \infty$ and then $\bar\Delta \to 0$,
we get
\begin{align}
    \limsup_{n \to \infty}\epsilon^{-1}_n
    \P\Big( (R_n,\Theta_n) \in B \Big)
    \leq
    \sum_{v \in \mathcal V}\mu_v(B^-).
    \label{proof: upper bound, lemma: asymptotic equivalence, MRV in Rd}
\end{align}
In light of Theorem~\ref{portmanteau theorem M convergence}---the Portmanteau theorem for $\mathbb M$-convergence---and the arbitrariness in our choice of $B$,
we combine \eqref{proof: lower bound, lemma: asymptotic equivalence, MRV in Rd} and \eqref{proof: upper bound, lemma: asymptotic equivalence, MRV in Rd},
concluding the proof.
\end{proof}

Next, to prove Lemma~\ref{lemma: M convergence for MRV}, we recall the definition of $\Phi$ in \eqref{def: Phi, polar transform}.
In particular, given $A\subseteq \R^d_+$ that does not contain the origin, note that
\begin{align}
    \bm x \in A\qquad \Longleftrightarrow\qquad \Phi(\bm x) \in \Phi(A).
    \label{property: polar transform, 1}
\end{align}
In addition, the following properties follow from the fact that the polar transform is a homeomorphism between $\R^d_+\setminus\{\bm 0\}$ and $(0,\infty) \times \mathfrak N^d_+$:
given $A \subseteq \R^d_+$ that is bounded away from $\bm 0$ (i.e., $\inf_{\bm x \in A}\norm{\bm x} > 0$),
\begin{align}
    \text{$A$ is open }\iff\text{ $\Phi(A)$ is open},
    \qquad
    \text{$A$ is closed }\iff\text{ $\Phi(A)$ is closed}.
    \label{property: polar transform, 2}
\end{align}
We prepare the following lemma.

\begin{lemma}\label{lemma: equivalence for bounded away condition, polar coordinates}
\linksinthm{lemma: equivalence for bounded away condition, polar coordinates}
    Let $\mathbb C$ be a closed cone in $\R^d_+$.
    Let $
\mathbb C_\Phi \delequal
    \big\{
        (r,\bm \theta) \in [0,\infty) \times \mathfrak N^d_+ :\ r\bm \theta \in \mathbb C
    \big\},
$
and let $ \bar{\mathbb C}(\epsilon)$ be defined as in \eqref{def: cone C enlarged by angles}.
For any Borel set $B \subseteq \R^d_+$,
the following two conditions are equivalent:
\begin{enumerate}[(i)]
    \item 
        $B$ is bounded away from $\bar{\mathbb C}(\epsilon)$ for some (and hence all) $\epsilon > 0$ small enough;

    \item 
        $\Phi(B)$ is bounded away from $\mathbb C_\Phi$ under $\bm d_\textbf{U}$.
\end{enumerate}
\end{lemma}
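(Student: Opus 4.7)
The plan is to prove both implications by contradiction, leveraging two crucial consequences of $\mathbb C$ being a cone: (a) $\bm 0 \in \bar{\mathbb C}(\epsilon)$ for every $\epsilon \geq 0$ (take $w = 0$ in \eqref{def: cone C enlarged by angles}); and (b) whenever $r'\bm w' \in \mathbb C$ with $r' > 0$ and $\bm w' \in \mathfrak N^d_+$, the cone property upgrades this to $\bm w' \in \mathbb C \cap \mathfrak N^d_+$. Compactness of $\mathfrak N^d_+$ combined with closedness of $\mathbb C$ will additionally ensure that the infimum defining $\bar{\mathbb C}(\epsilon)$ is attained. The monotonicity $\bar{\mathbb C}(\epsilon) \subseteq \bar{\mathbb C}(\epsilon')$ for $\epsilon \leq \epsilon'$ immediately accounts for the equivalence of ``some'' and ``all small enough'' $\epsilon$ in (i).

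For (i) $\Rightarrow$ (ii): Suppose $\bm d(B, \bar{\mathbb C}(\epsilon_0)) = d_1 > 0$. Property (a) gives $\|\bm x\| \geq d_1$ for every $\bm x \in B$. If $\Phi(B)$ were not bounded away from $\mathbb C_\Phi$ under $\bm d_\textbf{U}$, I would select sequences $\bm x_n \in B$ with polar coordinates $(r_n,\bm w_n) = \Phi(\bm x_n)$ and $(r'_n, \bm w'_n) \in \mathbb C_\Phi$ satisfying $|r_n - r'_n| \vee \|\bm w_n - \bm w'_n\| \to 0$. Then $r_n \geq d_1$ forces $r'_n \geq d_1/2$ eventually, so property (b) places $\bm w'_n \in \mathbb C \cap \mathfrak N^d_+$. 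For $n$ large enough that $\|\bm w_n - \bm w'_n\| \leq \epsilon_0$, the vector $\bm w_n$ lies within angular distance $\epsilon_0$ of $\mathbb C \cap \mathfrak N^d_+$, forcing $\bm x_n = r_n\bm w_n \in \bar{\mathbb C}(\epsilon_0)$ and contradicting $d_1 > 0$.

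For (ii) $\Rightarrow$ (i): Let $\bm d_\textbf{U}(\Phi(B), \mathbb C_\Phi) = d_0 > 0$. Since $(0,\bm\theta) \in \mathbb C_\Phi$ for every $\bm\theta \in \mathfrak N^d_+$, comparing with $(r,\bm w) \in \Phi(B)$ and the choice $\bm\theta = \bm w$ yields $r \geq d_0$, i.e., $\|\bm x\| \geq d_0$ for all $\bm x \in B$. I claim (i) then holds for $\epsilon = d_0/2$. If not, approximating sequences $\bm x_n \in B$ and $\bm y_n = w_n \bm s_n \in \bar{\mathbb C}(d_0/2)$ with $\|\bm x_n - \bm y_n\| \to 0$ would satisfy $|r_n - w_n| = \big|\|\bm x_n\| - \|\bm y_n\|\big| \leq \|\bm x_n - \bm y_n\| \to 0$; combined with $r_n \geq d_0$ and the elementary estimate $\|r_n\bm w_n - w_n\bm s_n\| \geq r_n\|\bm w_n - \bm s_n\| - |r_n - w_n|$, this forces $\|\bm w_n - \bm s_n\| \to 0$. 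Picking $\bm\theta_n \in \mathbb C \cap \mathfrak N^d_+$ (attained by closedness) with $\|\bm s_n - \bm\theta_n\| \leq d_0/2$, the triangle inequality yields $\|\bm w_n - \bm\theta_n\| < d_0$ eventually, contradicting $\bm d_\textbf{U}\big((r_n,\bm w_n),(r_n,\bm\theta_n)\big) \geq d_0$ (note $(r_n,\bm\theta_n) \in \mathbb C_\Phi$ by the cone property).

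No serious obstacle is expected; the main technical ingredient is the elementary $L_1$-inequality $\|r\bm w - w\bm s\| \geq r\|\bm w - \bm s\| - |r - w|$, which transfers convergence in the ambient metric to convergence of the angular component once the radial coordinate is bounded below. Care must be taken to handle the possibility $r'_n = 0$ in (i) $\Rightarrow$ (ii), which is ruled out using property (a) and the lower bound $\|\bm x\| \geq d_1$.
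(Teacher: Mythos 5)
Your proof is correct and follows essentially the same route as the paper: both directions exploit the cone property to compare angular components, use $(0,\bm\theta)\in\mathbb C_\Phi$ (respectively $\bm 0\in\bar{\mathbb C}(\epsilon)$) to bound the radial coordinate of points of $B$ away from zero, and derive contradictions from approximating sequences. The only cosmetic difference is in (ii) $\Rightarrow$ (i), where you argue bounded-awayness from $\bar{\mathbb C}(d_0/2)$ directly via the quantitative estimate $\norm{r\bm w - w\bm s}\geq r\norm{\bm w-\bm s}-|r-w|$, whereas the paper first proves the disjointness $B\cap\bar{\mathbb C}(\epsilon)=\emptyset$ together with a uniform angular separation and then passes to $\bar{\mathbb C}(\epsilon/2)$; both are valid and of comparable length.
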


\begin{proof}
\linksinpf{lemma: equivalence for bounded away condition, polar coordinates}
\textbf{Proof of $(i) \Rightarrow (ii)$}.
Fix some $\epsilon,\Delta > 0$ such that $\inf\{ \norm{\bm x - \bm y}:\ \bm x \in B,\ \bm y \in \bar{\mathbb C}(\epsilon)   \} > \Delta$.
Since $\mathbb C$ is a cone, we have $\bm 0 \in \mathbb C \subseteq \bar{\mathbb C}(\epsilon)$, and hence $\inf_{\bm x \in B}\norm{\bm x} > \Delta$.
Next, we consider a proof by contradiction.
Suppose there are sequences $(r^{\bm x}_n,\theta^{\bm x}_n) \in \Phi(B)$ and $(r^{\bm y}_n,\theta^{\bm y}_n) \in \mathbb C_\Phi$ such that 
\begin{align}
    \bm d_\textbf{U}\big( (r^{\bm x}_n,\theta^{\bm x}_n),  (r^{\bm y}_n,\theta^{\bm y}_n)  \big)
    =
    \norm{ r^{\bm x}_n - r^{\bm y}_n   } \vee \norm{ \theta^{\bm x}_n - \theta^{\bm y}_n   }
    \to 0
    \quad \text{as }n \to \infty.
    \label{proof, property 1, lemma: equivalence for bounded away condition, polar coordinates}
\end{align}
By property \eqref{property: polar transform, 1}, there exists a sequence $\bm x_n \in B$ such that $(r^{\bm x}_n,\theta^{\bm x}_n) = \Phi(\bm x_n)$ for each $n \geq 1$,
and hence $\bm x_n = r^{\bm x}_n \theta^{\bm x}_n$ due to $\norm{\bm x_n} > \Delta$.
By \eqref{proof, property 1, lemma: equivalence for bounded away condition, polar coordinates}, for any $n$ large enough we have 
$
\norm{\theta^{\bm x}_n - \theta^{\bm y}_n } < \epsilon.
$
Since $\mathbb C$ is a cone, by the definition in \eqref{def: cone C enlarged by angles} we arrive at the contradiction $\bm x_n =  r^{\bm x}_n \theta^{\bm x}_n \in \bar{\mathbb C}(\epsilon)$ for all $n$ large enough.
This concludes the proof of $(i) \Rightarrow (ii)$.

\medskip
\noindent
\textbf{Proof of $(ii) \Rightarrow (i)$}.
Fix some $\Delta > 0$ such that 
\begin{align}
    \inf\big\{
        \norm{ r^{\bm x} - r^{\bm y} } \vee \norm{ \theta^{\bm x} - \theta^{\bm y}   }:\ 
        (r^{\bm x},\theta^{\bm x}) \in \Phi(B),\ (r^{\bm y},\theta^{\bm y}) \in \mathbb C_\Phi
    \big\} > \Delta.
    \label{proof, property 2, lemma: equivalence for bounded away condition, polar coordinates}
\end{align}
First, note that $r^{\bm x} > \Delta$ for any $(r^{\bm x},\theta^{\bm x}) \in \Phi(B)$.
To see why, simply note that $\bm 0 \in \mathbb C$, and hence $(0,\theta) \in \mathbb C_\Phi$ for any $r \in \mathfrak N^d_+$.
As a result, we have $\inf_{\bm x\in B}\norm{\bm x} > \Delta$.
Furthermore, note that
\begin{align}
    \inf\big\{
       \norm{ \theta^{\bm x} - \theta^{\bm y}   }:\ 
        (r^{\bm x},\theta^{\bm x}) \in \Phi(B),\ (r^{\bm y},\theta^{\bm y}) \in \mathbb C_\Phi,\ r^{\bm y} > 0
    \big\} > \Delta.
    \label{proof, property 3, lemma: equivalence for bounded away condition, polar coordinates}
\end{align}
To see why, note that for any
$(r^{\bm x},\theta^{\bm x}) \in \Phi(B)$ and $(r^{\bm y},\theta^{\bm y}) \in \mathbb C_\Phi$ with $r^{\bm y} > 0$,
we have $(r^{\bm x},\theta^{\bm y}) \in \mathbb C_\Phi$ since $\mathbb C$ is a cone.
Claim~\eqref{proof, property 3, lemma: equivalence for bounded away condition, polar coordinates} then follows from \eqref{proof, property 2, lemma: equivalence for bounded away condition, polar coordinates}.
On the other hand, by the definition of $\bar{\mathbb C}(\epsilon)$, for any $\epsilon,\delta > 0$ and any $B \subseteq \R^d_+$ with $\inf_{\bm x \in B}\norm{\bm x} > \delta$,
the claim $B \cap \bar{\mathbb C}(\epsilon) = \emptyset$ would imply that $B$ is bounded away from $\bar{\mathbb C}(\epsilon/2)$.
Indeed, $\bar{\mathbb C}(\epsilon/2) \cap \{ \bm x \in \R^d_+:\ \norm{\bm x} < \delta / 2 \}$ is clearly bounded away from $B$ due to 
$\inf_{\bm x \in B}\norm{\bm x} > \delta$;
as for $\bar{\mathbb C}(\epsilon/2) \cap \{ \bm x \in \R^d_+:\ \norm{\bm x} \geq \delta / 2 \}$,
one only needs to note that this set is bounded away from $\big(\bar{\mathbb C}(\epsilon)\big)^\complement$.
In summary, it suffices to find some $\epsilon > 0$ such that 
\begin{align}
    B \cap \bar{\mathbb C}(\epsilon) = \emptyset.
    \nonumber
\end{align}
To this end, we fix some $\epsilon\in (0,\Delta)$.
Since $\inf_{\bm x \in B}\norm{\bm x} > \Delta$, it suffices to consider some $\bm y \in \bar{\mathbb C}(\epsilon)$ with $\bm y \neq \bm 0$.
Let $(r,\theta^\prime) = \Phi(\bm y)$. Note that $r > 0$ due to $\bm y \neq \bm 0$.
Besides,
by the definition of $\bar{\mathbb C}(\epsilon)$, there exists some $\theta \in \mathfrak N^d_+$ such that $\norm{\theta - \theta^\prime} \leq \epsilon < \Delta$
and $(r,\theta) \in \C_\Phi$.
Then, by the property~\eqref{proof, property 3, lemma: equivalence for bounded away condition, polar coordinates} and our choice of $\epsilon \in (0,\Delta)$, we must have $\bm y \notin B$.
By the arbitrariness of $\bm y \in \bar{\mathbb C}(\epsilon) \setminus \{\bm 0\}$, 
we yield $B \cap \bar{\mathbb C}(\epsilon) = \emptyset$ and conclude the proof of $(ii) \Rightarrow (i)$.
\end{proof}

Next, we state the proof of Lemma~\ref{lemma: M convergence for MRV}.

\begin{proof}[Proof of Lemma~\ref{lemma: M convergence for MRV}]
\linksinpf{lemma: M convergence for MRV}
To prove $(i)\Rightarrow (ii)$,
we fix some
closed $F \subset \R^d_+$ and open $O \subset \R^d_+$ such that $F$ and $O$ are both bounded away from $\bar{\mathbb C}(\epsilon)$ for some $\epsilon > 0$.
Due to $\bm 0\in \mathbb C$, we must have that $\bm 0$ is bounded away from both $F$ and $O$.
Furthermore, by Lemma~\ref{lemma: equivalence for bounded away condition, polar coordinates}, we get
\begin{align}
    \bm d_\textbf{U}\big( \Phi(F), \C_\Phi\big) > 0,
    \qquad
    \bm d_\textbf{U}\big( \Phi(O), \C_\Phi\big) > 0.
    \label{proof: bounded away condition under d U, lemma: M convergence for MRV}
\end{align}
Now, observe that
\begin{align*}
    \P(X_n \in F)
    & = 
    \P\big( (R_n,\Theta_n) \in \Phi(F)\big)
    \qquad
    \text{by \eqref{property: polar transform, 1}},
    \\ 
    \Longrightarrow
    \limsup_{n \to \infty}\epsilon^{-1}_n
    {
        \P( X_n \in F)
    }
    & \leq 
    \mu \circ \Phi^{-1}\Big((\Phi(F))^-\Big)
    \qquad
    \text{
        by \eqref{condition: M convergence for polar coordinates, lemma: M convergence for MRV} and \eqref{proof: bounded away condition under d U, lemma: M convergence for MRV}
    }
    \\ 
    & = 
    \mu \circ \Phi^{-1}\big(\Phi(F)\big)
    \qquad
    \text{by \eqref{property: polar transform, 2}}
    \\ 
    & =  \mu(F)
    \qquad
    \text{by the definitions in \eqref{def: mu composition Phi inverse measure}}.
\end{align*}
Furthermore, condition~\eqref{condition: M convergence for polar coordinates, lemma: M convergence for MRV} implies that $\mu \circ \Phi^{-1} \in \M\big( [0,\infty) \times \mathfrak N^d_+ \setminus \mathbb C_\Phi\big)$,
and hence $\mu\circ \Phi^{-1}(E) < \infty$ for any Borel set $E \subseteq [0,\infty) \times \mathfrak N^d_+$ that is bounded away from $\mathbb C_\Phi$.
Since  $\Phi(F)$ is bounded away from $\mathbb C_\Phi$, we verify that
$
 \mu \circ \Phi^{-1}\big(\Phi(F)\big) = \mu(F) < \infty.
$
Analogously, one can show that
$
\liminf_{n \to \infty}\epsilon^{-1}_n \P(X_n \in O) \geq \mu(O).
$
To conclude the proof of  $(i)\Rightarrow (ii)$, we pick $O = A^\circ$ and $F = A^-$ in \eqref{claim, lemma: M convergence for MRV}.
Lastly, we note that the proof of $(ii)\Rightarrow (i)$ is almost identical and follows from a reverse applicaton of Lemma~\ref{lemma: equivalence for bounded away condition, polar coordinates}.
We omit the details here to avoid repetition.
\end{proof}

\section{Proofs of Technical Lemmas}
\label{subsec: proof, technical lemmas, cluster size}

\subsection{Proofs of Lemmas~\ref{lemma: tail bound, pruned cluster size S i leq n delta} and \ref{lemma: concentration ineq for pruned cluster S i}}
\label{subsubsec, proof, concentration inequalities for S leq M}

Recall the definition of ${\bar b_{j\leftarrow i}} = \E B_{j\leftarrow i}$,
as well as the mean offspring matrix ${\bar{\textbf B}} = (\bar b_{j\leftarrow i})_{j,i\in [d]}$.
We adopt the operator norm
$
\norm{\textbf A} = \sup_{ \norm{\bm x} = 1  }\norm{\textbf A\bm x}
$
for any $d \times d$ real-valued matrix under the $L_1$ norm for vectors in $\R^d$.
We first provide the proofs of
Lemmas~\ref{lemma: tail bound, pruned cluster size S i leq n delta} and \ref{lemma: concentration ineq for pruned cluster S i} under the condition that $\norm{\bar{\textbf B}} < 1$.
Then, inspired by the approach in \cite{KEVEI2021109067} based on Gelfand's formula, we extend the proof to general cases.

\begin{proof}[Proof of Lemma~\ref{lemma: tail bound, pruned cluster size S i leq n delta} ($\norm{\bar{\textbf B}} < 1$)]
\linksinpf{lemma: tail bound, pruned cluster size S i leq n delta}
By considering the transform $N = n\Delta$ (and hence $n\delta = N \frac{\delta}{\Delta}$), it suffices to prove the claim for $\Delta = 1$.
Besides, since the index $i$ takes finitely many possible values from $[d] = \{1,2,\ldots,d\}$, we only need to fix some $i \in [d]$ in this proof and and show the existence of some $\delta_0 = \delta_0(\gamma) > 0$ such that
$
\P\Big(
        \norm{ \bm S^{\leqslant}_i(n\delta) } > n
    \Big) = \lo(n^{-\gamma})
$
for any $\delta \in (0,\delta_0)$.
Also, recall that we work with the condition that
$\rho \delequal \norm{\bar{\textbf B}} < 1$.
We fix some $\epsilon > 0$ small enough such that
\begin{align}
    2d\epsilon + \rho(1+2d\epsilon) + d\epsilon(1 + 2d\epsilon) < 1.
    \label{proof: pick epsilon, lemma: tail bound, pruned cluster size S i leq n delta}
\end{align}
Henceforth in the proof, we only consider $n$ large enough such that $n\epsilon > 1$.
Now,
we are able to fix some integer $K_\epsilon$ and a collection of vectors 
$\{ \bm z(k) = (z_{1}(k),\ldots,z_{d}(k))^\top:\ k \in [K_\epsilon] \}$
such that the following claims hold:
$(i)$ for each $k \in [K_\epsilon]$, we have $z_{j}(k) \geq 0\ \forall j \in [d]$ and $\sum_{j = 1}^d z_{j}(k) = 1$;
$(ii)$ given any $\bm z = (z_1,\cdots,z_d)^\top \in [0,\infty)^d$ with $\sum_{j = 1}^d z_j = 1$,
there exists some $k \in [K_\epsilon]$ such that
\begin{align}
    | z_j - z_j(k) | < \epsilon,\qquad \forall j \in [d].
    \label{proof: property of e k, lemma: tail bound, pruned cluster size S i leq n delta}
\end{align}
The vectors $\big(\bm z(k)\big)_{k \in [K_\epsilon]}$ provide a finite covering of
\begin{align}
    \mathcal Z \delequal \Bigg\{ (z_1,\ldots,z_d)^\top \in [0,\infty)^d:\ \sum_{j =1 }^d z_j = 1 \Bigg\}
    \label{proof: def line segment E, lemma: tail bound, pruned cluster size S i leq n delta}
\end{align}
with resolution $\epsilon$.

For each $j \in [d]$, let $\{ {\bm B}^{(m)}_{\bcdot \leftarrow j}:\  m \geq 1 \}$
be i.i.d.\ copies of
$
\bm B_{\bcdot \leftarrow j},
$
which will be interpreted as the offspring count of the $m^\text{th}$ type-$j$ individual in the branching tree of $\bm S_i$.
More precisely, 
in this proof we order nodes in a multi-type branching tree using a standard rule:
given $j \in [d]$, type-$j$ nodes are numbered left to right, starting from generation 0, then continuing similarly in each subsequent generation.
 For instance, (i) in the branching tree for $\bm S_i$, the first type-$i$ node will always be the type-$i$ root node in the $0^\text{th}$ generation; 
 and
 (ii) if there are $n$ type-$j$ nodes in the first $k$ generations, the numbering in the $(k+1)^\text{th}$ generation starts from $n+1$.
In doing so, the underlying branching processes (and hence the total progeny $\bm S_i$) are measurable functions of $({\bm B}^{(m)}_{\bcdot \leftarrow j})_{j \in [d], m \geq 1}$.
Next, we set 
\begin{align}
\bm B^{\leqslant,(m)}_{ \bcdot \leftarrow j  }(M)
=
\big( B^{\leqslant,(m)}_{i \leftarrow j}(M)  \big)_{i \in [d]},
\quad\text{where }
B^{\leqslant,(m)}_{i \leftarrow j}(M) = B^{(m)}_{i \leftarrow j}\mathbbm{I}\{ B^{(m)}_{i \leftarrow j} \leq M \}.
    \nonumber
\end{align}
For each $M > 0$, we consider a similar coupling between $({\bm B}^{\leqslant,(m)}_{\bcdot \leftarrow j}(M))_{j \in [d], m \geq 1}$ and the branching tree for $\bm S^\leqslant_i(M)$,
such that ${\bm B}^{\leqslant,(m)}_{\bcdot \leftarrow j}(M)$ is the offspring count for the $m^\text{th}$ type-$j$ node in the branching tree for $\bm S^\leqslant_i(M)$.
Now, observe the following
on the event $\Big\{ \norm{ \bm S^{\leqslant}_i(n\delta) } > n \Big\}$:
by considering the first $n$ nodes in the tree\footnote{
The exact counting of the first $n$ nodes, across the $d$ types, can be made precise by assuming the following: 
within each generation, type-1 nodes reproduce first, followed by type-2, and so on; similarly, each node gives birth in order, first to type-1 children, then type-2, and so forth.
}
as well as their children,
we can find some $(n_1,\ldots,n_d)^\top \in \mathbb Z_+^d$ with $\sum_{j = 1}^d n_j =n$
such that
$
 n_j \leq \mathbbm{I}\{j = i\} + \sum_{l \in [d]} \sum_{m = 1}^{n_l}B^{\leqslant,(m)}_{ j \leftarrow l }(n\delta)
$
holds for each $j \in [d]$.
%
%
Also, we fix the $\bm z \in \mathcal Z$ (see \eqref{proof: def line segment E, lemma: tail bound, pruned cluster size S i leq n delta}) such that $(n_1,\ldots,n_d)^\top = n \bm z$,
and
recall that we only consider $n$ with $n\epsilon > 1$.
By our choice of $\bm z(k)$'s in \eqref{proof: property of e k, lemma: tail bound, pruned cluster size S i leq n delta}, there exists some $k \in [K_\epsilon]$ such that
\begin{align*}
    n z_j(k) - n\epsilon \leq n\epsilon + \sum_{l \in [d]} \sum_{m = 1}^{ \ceil{ n z_l(k) + n\epsilon } }B^{\leqslant,(m)}_{j \leftarrow l}(n\delta),
    \qquad
    \forall j \in [d].
\end{align*}
In summary, we obtain
\begin{align}
   \P\Big(
        \norm{ \bm S^{\leqslant}_i(n\delta) } > n
    \Big)
    & \leq 
    \sum_{ k \in [K_\epsilon] }
    \P\Bigg(
        nz_j(k) \leq 2n\epsilon + \sum_{l \in [d]} \sum_{m = 1}^{ \ceil{ nz_l(k) + n\epsilon } }B^{\leqslant,(m)}_{ j \leftarrow l }(n\delta)\ \forall j \in [d]
    \Bigg).
    \label{proof: ineq S i leq n delta, lemma: tail bound, pruned cluster size S i leq n delta}
\end{align}
Furthermore, 
suppose that for each $\bm z = (z_1,\ldots,z_d)^\top \in \mathcal Z$, we have
(for any $\delta > 0$ small enough)
\begin{align}
    \P\Bigg(
        \underbrace{ n z_j \leq 2n\epsilon + \sum_{l \in [d]} \sum_{m = 1}^{ \ceil{ nz_l + n\epsilon } }B^{\leqslant,(m)}_{j \leftarrow l}(n\delta)\ \forall j \in [d]
        }_{ \delequal A(n,\delta,\bm z) }
    \Bigg) = \lo(n^{-\gamma}),
    \quad\text{as }n \to \infty.
    \label{proof: goal 1, lemma: tail bound, pruned cluster size S i leq n delta}
\end{align}
Then, by applying \eqref{proof: goal 1, lemma: tail bound, pruned cluster size S i leq n delta} for the finitely many $\bm z(k)$'s identified in \eqref{proof: property of e k, lemma: tail bound, pruned cluster size S i leq n delta},
we can find some $\delta_0 > 0$---depending only on $\epsilon$ and $\gamma$---such that in \eqref{proof: ineq S i leq n delta, lemma: tail bound, pruned cluster size S i leq n delta},
we have
$
 \P\Big(
        \norm{ \bm S^{\leqslant}_i(n\delta) } > n
    \Big)
    \leq K_\epsilon \cdot o(n^{-\gamma}) = \lo (n^{-\gamma})
$
for any $\delta \in (0,\delta_0)$.
Now, it only remains to prove Claim~\eqref{proof: goal 1, lemma: tail bound, pruned cluster size S i leq n delta}.

\medskip
\noindent
\textbf{Proof of Claim~\eqref{proof: goal 1, lemma: tail bound, pruned cluster size S i leq n delta}}.
 Note that $\bm z \in \mathcal Z$ implies $\sum_{j \in [d]} z_j = 1$.
Also, recall that $\bar b_{j \leftarrow l} = \E B_{j \leftarrow l} \geq \E B^{\leqslant,(m)}_{j \leftarrow l}(n\delta)$.
Define the event
\begin{align*}
F(n,\delta,\bm z) \delequal
    \bigcap_{l \in [d],\ j \in [d]}
    \Bigg\{
        \sum_{m = 1}^{ \ceil{ n(z_l + \epsilon) } }B^{\leqslant,(m)}_{j \leftarrow l}(n\delta) 
        \leq 
        \ceil{ n(z_l + \epsilon) }\cdot (\bar b_{j \leftarrow l} + \epsilon)
    \Bigg\}.
\end{align*}
We first show that on the event $F(n,\delta,\bm z)$,
we have 
\begin{align*}
    \norm{n\bm z} > \norm{ \bigg( 
        2n\epsilon +   \sum_{l \in [d]} \sum_{m = 1}^{ \ceil{ nz_l + n\epsilon } }B^{\leqslant, (m)}_{1 \leftarrow l}( n\delta),
        \ldots, 
        2n\epsilon +   \sum_{l \in [d]} \sum_{m = 1}^{ \ceil{ nz_l + n\epsilon } }B^{\leqslant, (m)}_{d \leftarrow l}( n\delta)
    \bigg)^\top  },
\end{align*}
and hence $F(n,\delta,\bm z) \cap A(n,\delta,\bm z) = \emptyset$.
To see why, note that
on $F(n,\delta,\bm z)$, we have
\begin{align}
    2n\epsilon + \sum_{l \in [d]} \sum_{m = 1}^{ \ceil{ nz_l + n\epsilon } }B^{\leqslant,(m)}_{j \leftarrow l}(n\delta)
    \leq 
    2n\epsilon + \sum_{l \in [d]} \ceil{ n(z_l + \epsilon) }\cdot (\bar b_{j \leftarrow l} + \epsilon),
    \quad \forall j \in [d].
    \label{proof, implication of event F, lemma: tail bound, pruned cluster size S i leq n delta}
\end{align}
To describe the implications of \eqref{proof, implication of event F, lemma: tail bound, pruned cluster size S i leq n delta},
we first recall the notational conventions
$(x_1,\ldots,x_d)^\top \leq (y_1,\ldots,y_d)^\top$ if $x_j \leq y_j\ \forall j \in [d]$,
$\bm 1 = (1,\ldots,1)^\top$,
and
$\textbf X + c = (x_{l,j}+c)_{l,j}$ for a matrix $\textbf X = (x_{l,j})_{l,j}$.
For any $n$ large enough such that $n\epsilon > 1$ (with $\epsilon$ specified in \eqref{proof: pick epsilon, lemma: tail bound, pruned cluster size S i leq n delta}),
the vectorized version of the RHS of
Claim~\eqref{proof, implication of event F, lemma: tail bound, pruned cluster size S i leq n delta} is upper bounded by
\begin{align}
    & 2n\epsilon \cdot \bm 1 +  
     (\bar{\textbf B} + \epsilon)\big(\ceil{ n(z_1 + \epsilon) },\ldots,\ceil{ n(z_d + \epsilon) } \big)^\top
    \label{proof, ineq 1, goal 1, lemma: tail bound, pruned cluster size S i leq n delta}
    \\
    \leq &
        2n\epsilon \cdot \bm 1
            +
        (\bar{\textbf B} + \epsilon)\big( n(z_1 + 2\epsilon),\ldots, n(z_d + 2\epsilon)\big)^\top
    \quad 
    \text{ due to $n\epsilon > 1$}
    \nonumber
    \\ 
    =  & 
    2n\epsilon \cdot \bm 1
    +
    \bar{\textbf B}\big( n(z_1 + 2\epsilon),\ldots, n(z_d + 2\epsilon)\big)^\top
    +
    n\epsilon(1 + 2d\epsilon) \cdot \bm 1
    \
    \text{ since }\sum_{j \in [d]}z_j = 1.
    \nonumber
\end{align}
Due to $\rho = \norm{\bar{\textbf B}} < 1$
and  $\sum_{j \in [d]} z_j = 1$,
we get
$
 \norm{  \bar{\textbf B}\big( n(z_1 + 2\epsilon),\ldots, n(z_d + 2\epsilon)\big)^\top }
   \leq 
   \rho n \cdot \sum_{j \in [d]}(z_j + 2\epsilon)
   =
   \rho n (1 + 2d\epsilon).
$
Combining this bound with \eqref{proof, ineq 1, goal 1, lemma: tail bound, pruned cluster size S i leq n delta}, we get
\begin{align*}
   & 
   \norm{ 
    2n\epsilon \cdot \bm 1 + 
     (\bar{\textbf B} + \epsilon)\big(\ceil{ n(z_1 + \epsilon) },\ldots,\ceil{ n(z_d + \epsilon) } \big)^\top
   }
   \\ 
   \leq &
   n \cdot \Big( 2d\epsilon + \rho(1+2d\epsilon) + d\epsilon(1 + 2d\epsilon) \Big)
   < n = \norm{ n \bm z}
   \qquad
   \text{by \eqref{proof: pick epsilon, lemma: tail bound, pruned cluster size S i leq n delta}.}
\end{align*}
In summary, $F(n,\delta,\bm z) \cap A(n,\delta,\bm z) = \emptyset$ holds for any $n$ large enough.
This implies
\begin{align}
    \label{proof, ineq 2, goal 1, lemma: tail bound, pruned cluster size S i leq n delta}
    \P\big( A(n,\delta,\bm z)\big)
    \leq 
    \sum_{ l \in [d],j \in [d] }
    \P\Bigg(
        \frac{1}{\ceil{ n(z_l + \epsilon) }}
        \sum_{m = 1}^{ \ceil{ n(z_l + \epsilon) } }B^{\leqslant,(m)}_{j \leftarrow l}(n\delta) 
        >
        (1 + \epsilon) \bar b_{j \leftarrow l}
    \Bigg).
\end{align}
Recall the $B^{\leqslant,(m)}_{j \leftarrow l}(n\delta) $'s are i.i.d.\ copies of $B_{j \leftarrow l}\mathbbm{I}\{B_{j \leftarrow l} \leq n\delta\}$.
By Assumption~\ref{assumption: heavy tails in B i j},
we have $\P(B_{j \leftarrow l} > x) \in \RV_{-\alpha_{j \leftarrow l}}(x)$ with $\alpha_{j \leftarrow l} > 1$.
Applying Lemma~\ref{lemma: concentration ineq, truncated heavy tailed RV in Rd},
we confirm that for any $\delta > 0$ small enough,
the RHS of \eqref{proof, ineq 2, goal 1, lemma: tail bound, pruned cluster size S i leq n delta}
is upper bounded by an $\lo(n^{-\gamma})$ term.
This concludes the proof of Claim~\eqref{proof: goal 1, lemma: tail bound, pruned cluster size S i leq n delta} for the case of $\norm{\bar{\textbf B}} < 1$.
\end{proof}

\begin{proof}[Proof of Lemma~\ref{lemma: concentration ineq for pruned cluster S i}]
\linksinpf{lemma: concentration ineq for pruned cluster S i}
We first note that
this proof does not explicitly require the condition $\norm{\bar{\textbf B}} < 1$.
That is, once we establish  Lemma~\ref{lemma: tail bound, pruned cluster size S i leq n delta} for the case of $\norm{\bar{\textbf B}} \geq 1$,
the same proof below will follow, so there is no need to distinguish these two cases for the proof of Lemma~\ref{lemma: concentration ineq for pruned cluster S i}.
In addition,
it suffices to fix some $i,j \in [d]$ and $\epsilon,\gamma > 0$, and then prove the existence of $\delta_0 = \delta_0(\epsilon,\gamma) > 0$ such that the claims
\begin{align}
 \lim_{n \to \infty} n^\gamma\cdot 
    \P\Bigg(
        \frac{1}{n}\sum_{m = 1}^n S^{\leqslant, (m)}_{i,j}(n\delta) <  \bar s_{i,j} - \epsilon
    \Bigg) & = 0,
    \label{proof: goal lower bound, lemma: concentration ineq for pruned cluster S i}
    \\
    \lim_{n \to \infty} n^\gamma\cdot
    \P\Bigg(
        \frac{1}{n}\sum_{m = 1}^n S^{\leqslant, (m)}_{i,j}(n\delta) >  \bar s_{i,j} + \epsilon
    \Bigg) & = 0
    \label{proof: goal upper bound, lemma: concentration ineq for pruned cluster S i}
\end{align}
hold for any $\delta \in (0,\delta_0)$,
where we write $\bm S^{\leqslant,(m)}_i(M) = \big( S^{\leqslant,(m)}_{i,j}(M)\big)_{j \in [d]}$.

\medskip
\noindent\textbf{Proof of Claim \eqref{proof: goal lower bound, lemma: concentration ineq for pruned cluster S i}}.
Take any $\delta > 0$.
Monotone convergence implies $\lim_{M \to \infty}\E \bm S^{\leqslant}_{i}(M) = \E \bm S_{i} = \bar{\bm s}_{i}$,
thus allowing us to fix $M > 0$ such that
$
\bar{\bm s}_{i} - \epsilon \bm 1 < \E \bm S^{\leqslant}_{i}(M).
$
Furthermore, monotone convergence implies that for any $M^\prime$ large enough, we have
$
\bar{\bm s}_{i} - \epsilon \bm 1 < \E\Big[ \bm S^{\leqslant}_{i}(M)\mathbbm{I}\Big\{ \norm{\bm S^{\leqslant}_{i}(M)} \leq M^\prime \Big\}\Big].
$
The stochastic comparison property  \eqref{property: stochastic comparison, S and pruned S} then implies
$
\bm S^{\leqslant}_{i}(M)\mathbbm{I}\Big\{ \norm{\bm S^{\leqslant}_{i}(M)} \leq M^\prime \Big\}
\leq 
\bm S^{\leqslant}_{i}(M) \stleq \bm S^{\leqslant}_i(n\delta)
$
for any $n$ large enough such that $n\delta \geq M$.
Therefore, it suffices to prove
\begin{align}
    \P\bigg(
        \frac{1}{n}\sum_{m = 1}^n S^{\leqslant, (m)}_{i,j}(M)\mathbbm{I}\Big\{
         \norm{\bm S^{\leqslant, (m)}_{i}(M)} \leq M^\prime
        \Big\}
        < \bar s_{i,j} - \epsilon
    \bigg) = \lo(n^{-\gamma}).
    \nonumber
\end{align}
In particular, note that the i.i.d.\ copies $S^{\leqslant, (m)}_{i,j}(M)\mathbbm{I}\Big\{
         \norm{\bm S^{\leqslant, (m)}_{i}(M)} \leq M^\prime
        \Big\}$
have finite moment generating functions due to the truncation under $M^\prime$.
This allows us to apply Cram\`er's Theorem to conclude the proof of Claim~\eqref{proof: goal lower bound, lemma: concentration ineq for pruned cluster S i}.

\medskip
\noindent\textbf{Proof of Claim \eqref{proof: goal upper bound, lemma: concentration ineq for pruned cluster S i}}.
Take any $\Delta > 0$.
For any $x,c \in \R$, let $\phi_c(x) = x \wedge c$.
Observe that
\begin{equation} \label{proof, decomp of events, goal upper bound, lemma: concentration ineq for pruned cluster S i}
    \begin{aligned}
       &  \bigg\{
    \frac{1}{n}\sum_{m = 1}^n S^{\leqslant, (m)}_{i,j}(n\delta) >  \bar s_{i,j} +  \epsilon
  \bigg\}  
  \\
    & \subseteq
  \Big\{
  \norm{ \bm S^{\leqslant, (m)}_i(n\delta) } > n\Delta\text{ for some }m \in [n]
  \Big\}
  \cup 
  \bigg\{
    \frac{1}{n}\sum_{m = 1}^n \phi_{n\Delta}\big(S^{\leqslant, (m)}_{i,j}(n\delta)\big) >  \bar s_{i,j} +  \epsilon
  \bigg\}.
    \end{aligned}
\end{equation}
On the one hand, given any $\Delta > 0$, there exists $\delta_0(\Delta,\gamma) > 0$ such that 
  $\forall \delta \in (0,\delta_0)$,
\begin{align}
   \P\Big(
  \norm{ \bm S^{\leqslant, (m)}_i(n\delta) } > n\Delta\text{ for some }m \in [n]
  \Big) 
  \leq n \cdot \P\Big(
  \norm{ \bm S^{\leqslant}_i(n\delta) } > n\Delta
  \Big)
  = \lo(n^{-\gamma})
  \label{proof: intermediate upper bound, lemma: concentration ineq for pruned cluster S i}
\end{align}
cf.\ Lemma~\ref{lemma: tail bound, pruned cluster size S i leq n delta}.
On the other hand,
by the stochastic comparison in \eqref{property: stochastic comparison, S and pruned S},
\begin{align*}
\P\bigg(
    \frac{1}{n}\sum_{m = 1}^n \phi_{n\Delta}\big(S^{\leqslant, (m)}_{i,j}(n\delta)\big) >  \bar s_{i,j} +  \epsilon
  \bigg)
& \leq 
\underbrace{ \P\bigg(
    \frac{1}{n}\sum_{m = 1}^n \phi_{n\Delta}(S_{i,j}^{(m)}) >  \bar s_{i,j} +  \epsilon
  \bigg)
  }_{ = p(n,\Delta) },
\end{align*}
with the $S^{(m)}_{i,j}$'s being i.i.d.\ copies of 
$S_{i,j}$.
Suppose we can show that
    $\P(S_{i,j} > x) \in \RV_{-\alpha}(x)$ for some $\alpha > 1$.
Then, by Claim~\eqref{claim 2, lemma: concentration ineq, truncated heavy tailed RV in Rd} in Lemma~\ref{lemma: concentration ineq, truncated heavy tailed RV in Rd} and property \eqref{property, appendix, psi to phi, truncation mapping},
we fix some $\Delta > 0$ small enough such that
$
p(n,\Delta) = \lo(n^{-\gamma})
$
as $n \to \infty$.
Plugging this bound and \eqref{proof: intermediate upper bound, lemma: concentration ineq for pruned cluster S i} into \eqref{proof, decomp of events, goal upper bound, lemma: concentration ineq for pruned cluster S i}, we conclude the proof of Claim~\eqref{proof: goal upper bound, lemma: concentration ineq for pruned cluster S i}.
Now, it only remains to verify the regular variation of $S_{i,j}$.
By Assumptions~\ref{assumption: heavy tails in B i j} and \ref{assumption: regularity condition 2, cluster size, July 2024},
there uniquely exists a pair $(l^*,k^*) \in [d]^2$ such that 
$
\alpha_{l^* \leftarrow k^*} = \alpha^* \delequal \min_{l.k \in [d]}\alpha_{l \leftarrow k},
$
and $\alpha^* > 1$, $\P(B_{l^*\leftarrow k^*}>x)\in\RV_{-\alpha^*}(x)$.
By Theorem~2 of \cite{Asmussen_Foss_2018} (under the choice of $Q(k) = \mathbbm{I}\{k = j\}$ in Equation (6) of \cite{Asmussen_Foss_2018}), 
there exists a constant $c^*_{i,j} > 0$ such that 
$
\P( S_{i,j} > x) \sim c^*_{i,j} \P(B_{l^* \leftarrow k^*} > x)
$
as $x \to \infty$.
In particular, 
translating our Assumption~\ref{assumption: regularity condition 1, cluster size, July 2024} into the context of \cite{Asmussen_Foss_2018},
we have $m_{ik} > 0$ for any $i,k$ in Equation (15) of \cite{Asmussen_Foss_2018},
thus implying $d_i > 0$ for any $i$ in Equation (15) of \cite{Asmussen_Foss_2018}.
Equivalently, this confirms $c^*_{i,j} > 0$.
\end{proof}

\subsection{Proof of Lemma~\ref{lemma: tail bound, pruned cluster size S i leq n delta}: General Case}
Recall the definition of ${\bar b_{j\leftarrow i}} = \E B_{j\leftarrow i}$,
 the mean offspring matrix ${\bar{\textbf B}} = (\bar b_{j\leftarrow i})_{j,i \in [d]}$,
and the operator norm
$
\norm{\textbf A} = \sup_{ \norm{\bm x} = 1  }\norm{\textbf A\bm x}
$
for matrix $\textbf A \in \R^{d \times d}$ under the $L_1$ norm for vectors in $\R^d$.
We provide the proof of Lemma~\ref{lemma: tail bound, pruned cluster size S i leq n delta} without the additional assumption that $\norm{\bar{\textbf B}} < 1$.
We first prepare the following lemma.

\begin{lemma} \label{lemma: tail asymptotics for truncated GW, at generation t}
\linksinthm{lemma: tail asymptotics for truncated GW, at generation t}
    Let Assumption \ref{assumption: heavy tails in B i j} hold.
    Let $\bm X^\leqslant_j(t;M)$ be defined as in \eqref{def: branching process X n leq M}.
    Given $t \geq 1$, $j \in [d]$, $\Delta > 0$, and $\gamma > 0$,
    \begin{align}
        \lim_{n \to \infty}n^\gamma\cdot \P\bigg(\norm{ \bm X^\leqslant_j(t;n\delta) } > n\Delta  \bigg) = 0,
        \qquad
        \forall \delta > 0\text{ sufficiently small.}
        \label{claim, lemma: tail asymptotics for truncated GW, at generation t}
    \end{align}
\end{lemma}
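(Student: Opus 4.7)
The plan is to prove the claim by induction on the generation $t \geq 1$, applying Lemma~\ref{lemma: concentration ineq, truncated heavy tailed RV in Rd} at each step to control the new generation given the previous one.

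For the base case $t = 1$, by definition $\bm X^\leqslant_j(1; n\delta) = \bm B^{\leqslant,(1,1)}_{\bcdot \leftarrow j}(n\delta)$, whose coordinates are each at most $n\delta$ by the truncation in \eqref{def: truncated offspring count,B leq M j k}. Hence $\norm{\bm X^\leqslant_j(1;n\delta)} \leq d n\delta$, and for any $\delta < \Delta/d$ the probability in \eqref{claim, lemma: tail asymptotics for truncated GW, at generation t} is exactly $0$ for all $n$.

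For the inductive step, assume the claim at generation $t-1$. Fix an auxiliary $\Delta' > 0$ (to be chosen) and apply the induction hypothesis to find $\delta_1 > 0$ such that $\P(\norm{\bm X^\leqslant_j(t-1;n\delta)} > n\Delta') = \lo(n^{-\gamma})$ for every $\delta \in (0,\delta_1)$. On the complementary good event $G_n$, each $N_{i} \delequal X^\leqslant_{j,i}(t-1;n\delta)$ satisfies $N_i \leq n\Delta'$, and by \eqref{def: branching process X n leq M} and the Markov property, conditional on $\mathcal F_{t-1}$ (which contains the $(N_i)_{i\in [d]}$ and is independent of the fresh copies $\bm B^{(t,m)}_{\bcdot \leftarrow i}$),
\[
\bm X^\leqslant_j(t; n\delta) \;=\; \sum_{i \in [d]} \sum_{m=1}^{N_i} \bm B^{\leqslant,(t,m)}_{\bcdot \leftarrow i}(n\delta).
\]
The mean of each summand is componentwise bounded by $\bar{\bm b}_i \delequal (\bar b_{k \leftarrow i})_{k \in [d]}$, so on $G_n$ the ``deterministic'' part of the norm is at most $n \Delta' \cdot \sum_{i \in [d]} \norm{\bar{\bm b}_i}$. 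We then pick $\Delta'$ small enough that $d \Delta' \max_i \norm{\bar{\bm b}_i} < \Delta/2$, reducing the task to showing that the aggregate centered fluctuation exceeds $n\Delta/2$ only with probability $\lo(n^{-\gamma})$.

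For this last step, under Assumption~\ref{assumption: heavy tails in B i j} the coordinates of $\bm B_{\bcdot \leftarrow i}$ are independent and each marginal is regularly varying with index $\alpha_{k \leftarrow i} > 1$. This lets us apply Claim~\eqref{claim 1, lemma: concentration ineq, truncated heavy tailed RV in Rd} of Lemma~\ref{lemma: concentration ineq, truncated heavy tailed RV in Rd} componentwise (in dimension one) to the running sums $\sum_{m=1}^{s} B^{(t,m)}_{k \leftarrow i}\mathbbm{I}\{B^{(t,m)}_{k \leftarrow i} \leq n\delta\}$ taken over $s \leq \lceil n \Delta' \rceil = \bo(n)$: for any target threshold $\epsilon > 0$, there exists $\delta_2 > 0$ such that, for every $\delta \in (0,\delta_2)$, the maximum over $s \leq \lceil n\Delta'\rceil$ of the centered running sum exceeds $n\epsilon$ with probability $\lo(n^{-\gamma})$. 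Taking $\epsilon = \Delta/(4 d^2)$, summing over the finitely many $(k,i)$ pairs, intersecting with $G_n$, and combining with the $\lo(n^{-\gamma})$ bound on $G_n^{\complement}$, we obtain \eqref{claim, lemma: tail asymptotics for truncated GW, at generation t} for $\delta$ smaller than $\delta_0 \delequal \delta_1 \wedge \delta_2$. The main subtlety is the random stopping: it is handled by the $\max_{s}$ form of Lemma~\ref{lemma: concentration ineq, truncated heavy tailed RV in Rd}, which allows us to bypass the need to analyze $N_i$ jointly with the fresh offspring variables and still obtain a uniform $\lo(n^{-\gamma})$ bound. Since $t$ is a fixed integer, only finitely many iterations of this induction occur, so the ultimate $\delta_0$ is strictly positive.
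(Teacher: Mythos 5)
Your proposal is correct and follows essentially the same route as the paper: induction on the generation, with the base case killed by the truncation bound $\norm{\bm X^\leqslant_j(1;n\delta)}\leq dn\delta$, and the inductive step handled by splitting off the event $\{\norm{\bm X^\leqslant_j(t-1;n\delta)}>n\Delta'\}$ and controlling the new generation's truncated sums via Claim~\eqref{claim 1, lemma: concentration ineq, truncated heavy tailed RV in Rd} of Lemma~\ref{lemma: concentration ineq, truncated heavy tailed RV in Rd}. The only cosmetic difference is that you absorb the random counts $N_i$ through the $\max_s$ form of that lemma (after a mean/fluctuation split), whereas the paper simply uses non-negativity of the summands to dominate the sum over $N_i\leq n\Delta'$ terms by the sum over $\floor{n\Delta'}$ terms; both are valid.
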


\begin{proof}\linksinpf{lemma: tail asymptotics for truncated GW, at generation t}
We first consider the case of $t = 1$.
By definitions in \eqref{def: branching process X n leq M} and that $\bm X_j^\leqslant(0;n\delta) = \bm e_j$,
we have
$
\bm X^\leqslant_j(1;n\delta) = \big( B^{(1,1)}_{ i \leftarrow j }\mathbbm{I}\{ B^{(1,1)}_{ i \leftarrow j }  \leq n\delta \} \big)_{i \in [d]}.
$
By picking $\delta \in (0,\Delta/d)$, we must have $\norm{\bm X^\leqslant_j(1;n\delta)} \leq d \cdot n\delta < n\Delta$.
Next, we proceed inductively.
Specifically, we fix some $\gamma > 0$, $j \in [d]$,
and suppose that there exists some positive integer $T$ such that Claim~\eqref{claim, lemma: tail asymptotics for truncated GW, at generation t} holds for any $t \in [T]$ and $\Delta > 0$.
Then, given $\Delta,\Delta^\prime > 0$, by the definitions in \eqref{def: branching process X n leq M} we have
\begin{align*}
    & \bigg\{
        \norm{ \bm X^\leqslant_j(T+1;n\delta) } > n\Delta
    \bigg\}
    \\ 
    & \subseteq
    \underbrace{ \bigg\{
        \norm{ \bm X^\leqslant_j(T;n\delta) } > n\Delta^\prime
    \bigg\}
    }_{ \delequal \text{(I)}  }
    \cup 
    \underbrace{ \Bigg\{
        \norm{
             \sum_{i \in [d]} \sum_{m = 1}^{ \floor{n\Delta^\prime} }{\bm B}^{\leqslant,(T+1,m)}_{ \bcdot \leftarrow i}(n\delta)
        } > n\Delta
    \Bigg\} }_{ \delequal \text{(II)}  }.
\end{align*}
In particular, 
recall that $\bar b_{l \leftarrow i} = \E B_{l \leftarrow i}$.
Given $\Delta > 0$, we pick $\Delta^\prime > 0$ small enough such that
\begin{align}
    \Delta^\prime \cdot \max_{ l \in [d],i \in [d] }\bar b_{l \leftarrow i} < \Delta/d^2.
    \label{proof, choice of constant Delta prime, lemma: tail asymptotics for truncated GW, at generation t}
\end{align}
On the one hand, by our assumption for the inductive argument, we have
$
\P\big( \text{(I)} \big) = \lo(n^{-\gamma})
$
under any $\delta > 0$ small enough.
On the other hand, using $B^{(m)}_{ l \leftarrow i  }$ to denote generic i.i.d.\ copies of $B_{ l \leftarrow i  }$,
we have
\begin{align*}
    \P\big(\text{(II)}\big)
    & \leq 
    \sum_{i \in [d]}\sum_{l \in [d]}
    \P\Bigg(
        \Bigg|
            \sum_{m = 1}^{ \floor{n\Delta^\prime} }{B}^{(m)}_{ l \leftarrow i}\mathbbm{I}\Big\{ {B}^{(m)}_{ l \leftarrow i} \leq n\delta  \Big\}
        \Bigg| > \frac{n\Delta}{d^2}
    \Bigg).
\end{align*}
Assumption~\ref{assumption: heavy tails in B i j} dictates that 
$
\P(B_{l \leftarrow i} > x) \in \RV_{-\alpha_{l \leftarrow i}}(x) 
$
with $\alpha_{l \leftarrow i} > 1$.
With $\Delta^\prime$ fixed in \eqref{proof, choice of constant Delta prime, lemma: tail asymptotics for truncated GW, at generation t}, we apply Claim~\eqref{claim 1, lemma: concentration ineq, truncated heavy tailed RV in Rd} in Lemma~\ref{lemma: concentration ineq, truncated heavy tailed RV in Rd}
for each pair $(l,i) \in [d]^2$ to obtain $\P\big(\text{(II)}\big) = \lo(n^{-\gamma})$ under any $\delta > 0$ small enough.
This confirms that, given $\Delta > 0$, the claim
$
\P\Big(\norm{ \bm X^\leqslant_j(T+1;n\delta) } > n\Delta \Big) = \lo(n^{-\gamma})
$
holds
for any $\delta >0$ small enough.
By proceeding inductively, we conclude the proof.
\end{proof}

Our proof of Lemma~\ref{lemma: tail bound, pruned cluster size S i leq n delta} (in the general case) is inspired by the strategy in 
 \cite{KEVEI2021109067}.
 In particular, we show that, for some positive integer $r$, results analogous to Lemma~\ref{lemma: tail bound, pruned cluster size S i leq n delta} hold for the $r$-step sub-sampled verison $\bm X^\leqslant_i(t;n\delta)$,
 and we apply the bounds for each sub-tree.
To this end, we first precisely define the total progeny of the sub-sampled branching process (for every $r$ generations):
\begin{align}
    \bm S^{[r],\leqslant}_j(M)
    \delequal
    \sum_{k \geq 0}\bm X_j^\leqslant(kr;M),
    \qquad j \in [d],\ M > 0,\ r \in \mathbb N,
    \label{proof, def, law of r step sub tree size}
\end{align}
with the multi-type branching process $\bm X_j^\leqslant(t;M)$ defined in \eqref{def: branching process X n leq M}.
That is, we only inspect the original branching process for every $r$ generations,
and use $\bm S^{[r],\leqslant}_j(M)$ to denote the total progeny of this $r$-step sub-sampled branching process.
Furthermore, let the random vectors $\bm B^{ [r],\leqslant  }_{ \bcdot \leftarrow j  }(M) = \big( B^{ [r],\leqslant  }_{ i \leftarrow j  }(M)\big)_{i \in [d]}$ have law
\begin{align}
    \mathscr L\Big(
        \bm B^{ [r],\leqslant  }_{ \bcdot \leftarrow j  }(M)
    \Big)
    = 
    \mathscr L\Big(  \bm X_j^\leqslant(r;M)  \Big),
    \qquad j \in [d],
    \label{def, law of offspring in r step sub sampled tree}
\end{align}
and note that (with the $ \bm S^{[r],\leqslant;(k)}_{i}(M)$'s being i.i.d.\ copies of $ \bm S^{[r],\leqslant}_{i}(M)$)
\begin{align}
    \bm S^{[r],\leqslant}_j(M)
    \distequal 
    \bm e_j + 
    \sum_{i \in [d]}\sum^{ B^{[r],\leqslant}_{i \leftarrow j}(M)   }_{k = 1}
    \bm S^{[r],\leqslant;(k)}_{i}(M),
    \qquad j \in [d].
    \nonumber
\end{align}
In other words, $\bm S^{[r],\leqslant}_j(M)$ also represents the total progeny of a branching process, whose offspring distribution admits the law in \eqref{def, law of offspring in r step sub sampled tree} and coincides with the $r^\text{th}$ generation offspring from a type-$j$ ancestor in the branching process $\big(\bm X_j^\leqslant(t;M)\big)_{ t \geq 0}$.

We use $\textbf{A}^k$ to denote the $k$-fold product of $\textbf{A}$ under matrix multiplication.
The next result establishes claims analogous to those in Lemma~\ref{lemma: tail bound, pruned cluster size S i leq n delta}, but for the sub-sampled $\bm S^{[r],\leqslant}_i(n\delta)$.

\begin{lemma} \label{lemma, tail bound, pruned cluster sub sampled size S i r leq n delta}
\linksinthm{lemma, tail bound, pruned cluster sub sampled size S i r leq n delta}
    Let Assumptions~\ref{assumption: heavy tails in B i j}--\ref{assumption: regularity condition 2, cluster size, July 2024} hold,
    and suppose that $\norm{\bar{\textbf{B}}^r} < 1$ holds for some positive integer $r$.
    Given any $\Delta,\ \gamma \in (0,\infty)$,
    there exists $\delta_0 = \delta_0(\Delta,\gamma,r) > 0$ such that
    \begin{align*}
    \lim_{n \to \infty}
    n^\gamma \cdot 
    \P\Big(
        \norm{ \bm S^{[r],\leqslant}_i(n\delta) } > n\Delta
    \Big) = 0,
    \qquad 
    \forall \delta \in (0,\delta_0),\ i \in [d].
\end{align*}
\end{lemma}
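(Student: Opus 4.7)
The plan is to mirror the proof given for the $\|\bar{\mathbf{B}}\| < 1$ case, but applied to the $r$-step sub-sampled branching process, whose offspring distribution is the law of $\bm X^\leqslant_j(r;n\delta)$. The key observation is that the mean offspring matrix for this sub-sampled tree, call it $\bar{\mathbf{B}}^{[r],\leqslant M}$ with $\big(\bar{\mathbf{B}}^{[r],\leqslant M}\big)_{i \leftarrow j} \delequal \E\big[X^\leqslant_{j,i}(r;M)\big]$, converges entrywise to $\bar{\mathbf{B}}^r$ as $M \to \infty$ by monotone convergence (since $\bm X^\leqslant_j(r;M) \uparrow \bm X_j(r)$ as $M \uparrow \infty$). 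Combined with the hypothesis $\|\bar{\mathbf{B}}^r\| < 1$, this gives $\|\bar{\mathbf{B}}^{[r],\leqslant M}\| < 1$ for all $M$ sufficiently large, so in particular $\|\bar{\mathbf{B}}^{[r],\leqslant n\delta}\| < 1$ uniformly in $n$ large.

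First, I would fix $\epsilon > 0$ small enough that $2d\epsilon + \rho_r(1+2d\epsilon) + d\epsilon(1+2d\epsilon) < 1$ where $\rho_r < 1$ is an upper bound on $\|\bar{\mathbf{B}}^{[r],\leqslant n\delta}\|$ for all large $n$, and construct the same finite $\epsilon$-covering $\{\bm z(k):\ k \in [K_\epsilon]\}$ of $\mathcal Z = \{\bm z \in [0,\infty)^d:\ \sum_j z_j = 1\}$ used in the proof of Lemma~\ref{lemma: tail bound, pruned cluster size S i leq n delta}. By the same argument based on inspecting the first $n$ nodes in the sub-sampled tree (now using the offspring vectors $\bm B^{[r],\leqslant,(m)}_{\bcdot \leftarrow j}(n\delta)$ defined through \eqref{def, law of offspring in r step sub sampled tree}), the event $\{\|\bm S^{[r],\leqslant}_i(n\delta)\| > n\}$ is contained in the union over $k \in [K_\epsilon]$ of events of the form $A^{[r]}(n,\delta,\bm z(k))$ analogous to \eqref{proof: goal 1, lemma: tail bound, pruned cluster size S i leq n delta}. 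Reproducing the computation in \eqref{proof, ineq 1, goal 1, lemma: tail bound, pruned cluster size S i leq n delta} with $\bar{\mathbf{B}}$ replaced by $\bar{\mathbf{B}}^{[r],\leqslant n\delta}$ shows that these events are disjoint from the analog of $F(n,\delta,\bm z)$.

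The main obstacle — and what distinguishes this case — is establishing the concentration inequality
\begin{align*}
    \P\Bigg(\frac{1}{\ceil{n(z_l + \epsilon)}}\sum_{m=1}^{\ceil{n(z_l+\epsilon)}} B^{[r],\leqslant,(m)}_{j \leftarrow l}(n\delta) > \big(\bar{\mathbf{B}}^{[r],\leqslant n\delta}\big)_{j \leftarrow l} + \epsilon\Bigg) = \lo(n^{-\gamma})
\end{align*}
for $\delta$ small enough, since the summands are not directly truncated heavy-tailed variables but components of $\bm X^\leqslant_l(r;n\delta)$, which has a more complex distribution. To handle this, I would leverage Lemma~\ref{lemma: tail asymptotics for truncated GW, at generation t}: for any $\Delta > 0$ there exists $\delta_0 > 0$ such that $\P(\|\bm X^\leqslant_l(r;n\delta)\| > n\Delta) = \lo(n^{-\gamma'})$ for any $\gamma' > 0$ and any $\delta \in (0,\delta_0)$. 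Consequently, truncating each summand as $\tilde B^{(m)} = B^{[r],\leqslant,(m)}_{j \leftarrow l}(n\delta) \mathbbm{I}\{\|\bm X^{\leqslant,(m)}_l(r;n\delta)\| \leq n\Delta\}$ changes the sum only on an event of probability $\lo(n^{-\gamma})$ (after a union bound over $m \leq n$), and the $\tilde B^{(m)}$ are i.i.d., uniformly bounded by $n\Delta$, with mean differing from $\big(\bar{\mathbf{B}}^{[r],\leqslant n\delta}\big)_{j \leftarrow l}$ by at most $\E[\|\bm X_l(r)\| \mathbbm{I}\{\|\bm X_l(r)\| > n\Delta\}]$, which vanishes as $n \to \infty$. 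A Bernstein-type inequality applied to the $\tilde B^{(m)}$, with the variance term controlled by $\E[\|\bm X_l(r)\|^2 \wedge (n\Delta)^2]$ and Karamata's theorem (since $\P(\|\bm X_l(r)\| > x)$ is regularly varying by Theorem~2 of \cite{Asmussen_Foss_2018} applied to the finite-generation total progeny), then delivers the required $\lo(n^{-\gamma})$ bound, in complete analogy with the proof of Lemma~\ref{lemma: concentration ineq, truncated heavy tailed RV in Rd}.

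Putting the three pieces together — the spectral-radius condition $\|\bar{\mathbf{B}}^{[r],\leqslant n\delta}\| < 1$, the finite covering argument with the geometric contraction under $\bar{\mathbf{B}}^{[r],\leqslant n\delta}$, and the concentration bound above — yields $\P(\|\bm S^{[r],\leqslant}_i(n\delta)\| > n) = \lo(n^{-\gamma})$ for $\delta$ sufficiently small, and the reduction $N = n\Delta$ at the start of the argument extends this to arbitrary $\Delta > 0$. Lemma~\ref{lemma: tail bound, pruned cluster size S i leq n delta} in the general case then follows by comparing $\bm S^\leqslant_i(n\delta)$ with $\bm S^{[r],\leqslant}_i(n\delta)$ applied at each of the $r$ possible generation offsets, i.e., using that each node in the original branching tree belongs to a sub-tree sampled every $r$ generations and that within $r$ generations the total progeny is again controlled via Lemma~\ref{lemma: tail asymptotics for truncated GW, at generation t}.
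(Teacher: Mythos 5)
Your overall architecture matches the paper's: reduce to the covering argument of Lemma~\ref{lemma: tail bound, pruned cluster size S i leq n delta} with $\bar{\textbf B}$ replaced by the $r$-step mean matrix, reduce further to a concentration inequality for the sub-sampled offspring counts $B^{[r],\leqslant;(m)}_{j \leftarrow l}(n\delta)$, and use Lemma~\ref{lemma: tail asymptotics for truncated GW, at generation t} to discard the event that some summand exceeds $n\Delta$. The difference, and the gap, lies in how you handle the concentration of the remaining truncated summands. You claim that after the indicator truncation at level $n\Delta$, ``a Bernstein-type inequality \ldots with the variance term controlled by $\E[\norm{\bm X_l(r)}^2 \wedge (n\Delta)^2]$ and Karamata's theorem \ldots delivers the required $\lo(n^{-\gamma})$ bound.'' Read literally, this fails: with $n$ i.i.d.\ summands each bounded by $n\Delta$ and a deviation of order $n\epsilon$, the Bernstein exponent is
\[
\frac{(n\epsilon)^2/2}{n\sigma^2 + \tfrac{1}{3}(n\Delta)(n\epsilon)} = O(1),
\]
since the linear (bounded-range) term in the denominator is already of order $n^2$; no choice of $\Delta$ makes this decay in $n$, let alone polynomially fast. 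The proof of Lemma~\ref{lemma: concentration ineq, truncated heavy tailed RV in Rd}, which you invoke ``in complete analogy,'' avoids this precisely by splitting at an intermediate scale $n^\beta$ with $\beta<1$: Bernstein is applied only to the part bounded by $n^\beta$ (giving a stretched-exponential bound), while the summands in $(n^\beta, n\delta]$ are handled by a separate counting argument showing that with probability $1-\lo(n^{-\gamma})$ there are at most $J$ of them, each contributing at most a small multiple of $n$ because the truncation level $\delta$ can be chosen small relative to $\epsilon/J$. Your sketch compresses away exactly this decomposition, which is the non-trivial part; without it the step does not go through. (Your route is repairable---the truncation level $n\Delta'$ coming from Lemma~\ref{lemma: tail asymptotics for truncated GW, at generation t} is a free parameter that can be taken arbitrarily small, so one could rerun the full $\bm Z^{(1)}/\bm Z^{(2)}$ argument of that lemma's proof---but you have not done so.)

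The paper takes a cleaner path at this point: it constructs an $n$-independent random variable $\tilde B$ that stochastically dominates $B^{[r],\leqslant}_{j\leftarrow l}(M)$ for every $M$, has a genuinely regularly varying tail of index $\alpha\in(1,\alpha^*)$ (by splicing a power law above a level $L$ onto the tail of $X_{l,j}(r)$, justified via $X_{l,j}(r)\stleq S_{l,j}$, Theorem~2 of \cite{Asmussen_Foss_2018} and Potter's bound), and mean below $(1+\epsilon)\bar b^{[r]}_{j\leftarrow l}$ for $L$ large; then Lemma~\ref{lemma: concentration ineq, truncated heavy tailed RV in Rd} applies as a black box. This also sidesteps your imprecise claim that $\P(\norm{\bm X_l(r)}>x)$ is itself regularly varying by Theorem~2 of \cite{Asmussen_Foss_2018}: that theorem concerns the total progeny, not the generation-$r$ population, and only the domination $X_{l,j}(r)\stleq S_{l,j}$ is actually needed. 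Your use of the truncated mean matrix $\bar{\mathbf B}^{[r],\leqslant M}$ with norm $<1$ for large $M$ is correct but unnecessary; comparing against $(1+\epsilon)\bar b^{[r]}_{j\leftarrow l}$ with $\epsilon$ fixed by $\norm{\bar{\textbf B}^r}<1$ suffices.
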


\begin{proof}\linksinpf{lemma, tail bound, pruned cluster sub sampled size S i r leq n delta}
Repeating the arguments in the proof of Lemma~\ref{lemma: tail bound, pruned cluster size S i leq n delta} under the additional condition $\norm{\bar{\textbf B}} < 1$
(in particular, the derivation of the bound \eqref{proof, ineq 2, goal 1, lemma: tail bound, pruned cluster size S i leq n delta})
in Section~\ref{subsubsec, proof, concentration inequalities for S leq M},
it suffices to show that given $\epsilon > 0$ and a vector $\bm z = (z_1,\ldots,z_d)^\top \in [0,\infty)^d$ with $\sum_{j \in [d]} z_j = 1$,
the claim
\begin{align}
    \sum_{j \in [d],\ l \in [d]}
    \P\Bigg(
        \frac{1}{\ceil{ n(z_l + \epsilon) }}
        \sum_{m = 1}^{ \ceil{ n(z_l + \epsilon) } }B^{[r],\leqslant;(m)}_{j \leftarrow l}(n\delta) 
        >
        (1 + \epsilon) \bar b^{[r]}_{j \leftarrow l}
    \Bigg)
    =
    \lo(n^{-\gamma})
    \nonumber
\end{align}
holds for any $\delta > 0$ small enough.
Here, 
$
\bm B^{[r],\leqslant;(m)}_{\bcdot \leftarrow l}
=
\big(
    B^{[r],\leqslant;(m)}_{j \leftarrow l}
\big)_{j \in [d]}
$
are i.i.d.\ copies of $\bm B^{[r],\leqslant}_{\bcdot \leftarrow l}$  under the law stated in \eqref{def, law of offspring in r step sub sampled tree},
and $\bar b^{[r]}_{j \leftarrow l}$ is the element on the $j^\text{th}$ row and $l^\text{th}$ column of the matrix $\bar{\textbf B}^r$,
meaning that $\bar b^{[r]}_{j \leftarrow l} = \E\big[ X_{l,j}(r) \big]$
for the branching process $\big(\bm X_j(t)\big)_{t \geq 0}$ defined in \eqref{def: branching process X n}.
Now, let $\phi_{c}(x) = x \wedge c$, and note that 
\begin{align*}
    & \Bigg\{
        \frac{1}{\ceil{ n(z_l + \epsilon) }}
        \sum_{m = 1}^{ \ceil{ n(z_l + \epsilon) } }B^{[r],\leqslant;(m)}_{j \leftarrow l}(n\delta) 
        >
        (1 + \epsilon) \bar b^{[r]}_{j \leftarrow l}
    \Bigg\}
    \\ 
    & \subseteq 
    \underbrace{ \bigg\{
        \Big|
            B^{[r],\leqslant;(m)}_{j \leftarrow l}(n\delta)
        \Big| > n\Delta
        \text{ for some }
        m \leq  \ceil{ n(z_l + \epsilon) }
    \bigg\} }_{ \delequal \text{(I)}  }
    \\ 
    & \qquad
    \cup 
    \underbrace{ \Bigg\{
        \frac{1}{\ceil{ n(z_l + \epsilon) }}
        \sum_{m = 1}^{ \ceil{ n(z_l + \epsilon) } }
        \phi_{n\Delta}\Big( B^{[r],\leqslant;(m)}_{j \leftarrow l}(n\delta)  \Big)
        >
        (1 + \epsilon) \bar b^{[r]}_{j \leftarrow l}
    \Bigg\} }_{ \delequal \text{(II)} }.
\end{align*}
As a result, it suffices to fix a pair $(l,j) \in [d]^2$ and find some $\Delta > 0$ such that $\P\big(\text{(I)}\big) = \lo(n^{-\gamma})$ and 
$\P\big(\text{(II)}\big) = \lo(n^{-\gamma})$
hold
under any $\delta > 0$ small enough.

\medskip
\noindent
\textbf{Proof of $\P\big(\text{(I)}\big) = \lo(n^{-\gamma})$}.
This claim holds for any $\Delta > 0$, due to
the law stated in \eqref{def, law of offspring in r step sub sampled tree} and Lemma~\ref{lemma: tail asymptotics for truncated GW, at generation t}.

\medskip
\noindent
\textbf{Proof of $\P\big(\text{(II)}\big) = \lo(n^{-\gamma})$}.
Suppose that we can find some random variable $\tilde B$ such that $B^{[r],\leqslant}_{j \leftarrow l}(M) \stleq \tilde B$ for any $M > 0$,
$\P(\tilde B > x) \in \RV_{-\alpha}(x)$ for some $\alpha > 1$, and 
$
\E\tilde B < (1+\epsilon) \bar b^{[r]}_{j \leftarrow l}.
$
Then, by combining
$
\sum_{m = 1}^k
 \phi_{n\Delta}\big( B^{[r],\leqslant;(m)}_{j \leftarrow l}(n\delta)  \big)
 \stleq 
\sum_{m = 1}^k
\phi_{n\Delta}\big( \tilde B^{(m)}  \big)
$
(with the $\tilde B^{(m)}$'s being independent copies of $\tilde B$)
with Claim~\eqref{claim 2, lemma: concentration ineq, truncated heavy tailed RV in Rd} in Lemma~\ref{lemma: concentration ineq, truncated heavy tailed RV in Rd} (applied onto $\phi_{n\Delta}\big( \tilde B^{(m)}  \big)$) and property \eqref{property, appendix, psi to phi, truncation mapping},
we get  $\P\big(\text{(II)}\big) = \lo(n^{-\gamma})$ for any $\Delta > 0$ small enough.

Now, it only remains to construct such $\tilde B$.
By \eqref{def, law of offspring in r step sub sampled tree} and the stochastic comparison stated in \eqref{property: stochastic comparison, general, glaton watson trees},
we have 
$
B^{[r],\leqslant}_{j \leftarrow l}(M)
\distequal 
X_{l,j}^\leqslant(r;M)
\stleq
X_{l,j}(r)
$
for each $M > 0$.
Also, we obviously have $X_{l,j}(r) \stleq S_{l,j}$ (since $\sum_{t \geq 0} \bm X_l(t) \distequal \bm S_l$).
By Theorem~2 of \cite{Asmussen_Foss_2018},
we have
$\P(S_{l,j} > x) \in \RV_{-\alpha^*}(x)$ for some $\alpha^* > 1$
(in fact, this has already been established at the end of the proof of Lemma~\ref{lemma: concentration ineq for pruned cluster S i}).
To proceed, let $\bar F(x) \delequal \P(X_{l,j}(r) > x)$, and pick some $\alpha \in (1,\alpha^*)$.
We consider some random variable $\tilde B$ with tail cdf $\tilde F(x) \delequal \P(\tilde B > x)$
with some parameter $L > 0$:
\begin{align}
    \tilde F(x) =
    \begin{cases}
        \bar F(x)\qquad &\text{ if }x \leq L
        \\ 
        \bar F(x) \vee
        \frac{L^\alpha \bar F(L)}{x^\alpha} \qquad &\text{ if }x > L
    \end{cases}.
    \label{proof, def law of tilde B, lemma, tail bound, pruned cluster sub sampled size S i r leq n delta}
\end{align}
To conclude the proof, we only need to note the following:
(i) by definition, we have $\tilde F(x) \geq \bar F(x)$ for any $x \in \R$, which implies $X_{l,j}(r) \stleq \tilde B$;
(ii) by Assumption~\ref{assumption: heavy tails in B i j}, the support of $X_{l,j}(r)$ is unbounded, so $\bar F(L) > 0$ for any $L \in (0,\infty)$;
(iii) due to $X_{l,j}(r) \stleq S_{l,j}$, $\P(S_{l,j} > x) \in \RV_{-\alpha^*}(x)$, and our choice of $\alpha \in (1,\alpha^*)$,
it follows from Potter's bound (see, e.g., Proposition~2.6 of \cite{resnick2007heavy}) that
$
\P(X_{l,j}(r) > x) = \bar F(x) <  \frac{L^\alpha \bar F(L)}{x^\alpha}
$
eventually for any $x$ large enough, 
meaning that under the law specified in \eqref{proof, def law of tilde B, lemma, tail bound, pruned cluster sub sampled size S i r leq n delta},
$\tilde B$ has a 
power-law tail with index $\alpha > 1$;
and
(iv) since the expectation of $\tilde B$ converges to $\E X_{l,j}(r) = \bar b^{[r]}_{ j \leftarrow l  }$ as $L \to \infty$,
by picking $L$ large enough we ensure that $\E\tilde B < (1+\epsilon)\bar b^{[r]}_{ j \leftarrow l  }$.
\end{proof}

The next result is in the same spirit of Lemma~\ref{lemma: concentration ineq for pruned cluster S i}, but focuses on $\bm S^{[r],\leqslant}_i(n\delta)$ in \eqref{proof, def, law of r step sub tree size}.

\begin{lemma} \label{lemma: concentration ineq for r step sub tree}
\linksinthm{lemma: concentration ineq for r step sub tree}
    Let Assumptions~\ref{assumption: heavy tails in B i j}--\ref{assumption: regularity condition 2, cluster size, July 2024} hold,
    and suppose that $\norm{\bar{\textbf{B}}^r} < 1$ holds for some positive integer $r$.    
    Given $\gamma > 0$, there exist $\delta_0 > 0$ and $C_0 > 0$ such that
\begin{align}
    \label{claim, lemma: concentration ineq for r step sub tree}
    \lim_{n \to \infty} 
        n^\gamma \cdot 
        \P\Bigg(
        \norm{
            \frac{1}{n}\sum_{m = 1}^n \bm S^{[r],\leqslant; (m)}_i(n\delta)
        }
        > C_0
        \Bigg) = 0,
        \qquad \forall \delta \in (0,\delta_0),\ i \in [d],
\end{align}
where the $\bm S^{[r],\leqslant; (m)}_i(M)$'s are independent copies of $\bm S^{[r],\leqslant}_i(M)$ defined in \eqref{proof, def, law of r step sub tree size}.
\end{lemma}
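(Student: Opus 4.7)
The plan is to follow the same truncation-and-concentration pattern used for the upper-bound portion of Lemma~\ref{lemma: concentration ineq for pruned cluster S i} (its Claim~\eqref{proof: goal upper bound, lemma: concentration ineq for pruned cluster S i}), with two adaptations: I work with the $r$-step sub-sampled tree instead of the one-step tree, and I only need a uniform constant bound $C_0$ rather than tight concentration around a specific mean. As a preliminary step, note that the assumption $\norm{\bar{\textbf{B}}^r} < 1$ and Gelfand's formula yield $\|\bar{\textbf{B}}^r\|^{1/r} \geq \text{spectral radius}(\bar{\textbf{B}})$, so Assumption~\ref{assumption: subcriticality} is implicitly in force, and Proposition~1 of \cite{Asmussen_Foss_2018} gives $\E\norm{\bm S_i} < \infty$ for each $i \in [d]$. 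Set $C_0 \delequal 1 + \max_{i \in [d]}\norm{\E\bm S_i}$; since $[d]$ is finite, the same $C_0$ works for every $i$.

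Next, for each pair of parameters $\delta,\Delta > 0$ (to be chosen), I would split
\begin{align*}
\bigg\{ \bigg\|\frac{1}{n}\sum_{m=1}^n \bm S^{[r],\leqslant;(m)}_i(n\delta) \bigg\| > C_0 \bigg\}
 \subseteq\ & \underbrace{\Big\{\norm{\bm S^{[r],\leqslant;(m)}_i(n\delta)} > n\Delta\text{ for some }m \in [n]\Big\}}_{ \delequal\ E_1^n} \\
 & \cup \underbrace{\bigg\{\bigg\|\frac{1}{n}\sum_{m=1}^n \phi^{(d)}_{n\Delta}\big(\bm S^{[r],\leqslant;(m)}_i(n\delta)\big)\bigg\| > C_0\bigg\}}_{\delequal\ E_2^n}.
\end{align*}
A union bound and Lemma~\ref{lemma, tail bound, pruned cluster sub sampled size S i r leq n delta} (applied with threshold $\gamma+1$ in place of $\gamma$) give $\P(E_1^n) \leq n\cdot \P(\norm{\bm S^{[r],\leqslant}_i(n\delta)} > n\Delta) = \lo(n^{-\gamma})$ for any $\delta$ small enough. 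For $E_2^n$, I would invoke the natural coupling in which $\bm S^{[r],\leqslant;(m)}_i(n\delta) \leq \bm S^{(m)}_i$ componentwise (this follows from $\bm X^{\leqslant}_i(t;n\delta)\leq \bm X_i(t)$ for all $t$, see \eqref{property: stochastic comparison, S and pruned S}, so summing over the sub-sampled generations $kr$ remains dominated by the full total progeny $\bm S_i$). Because $\phi^{(d)}_{n\Delta}$ is coordinatewise monotone, this yields
\[
\bigg\|\frac{1}{n}\sum_{m=1}^n \phi^{(d)}_{n\Delta}\big(\bm S^{[r],\leqslant;(m)}_i(n\delta)\big)\bigg\| \leq \bigg\|\frac{1}{n}\sum_{m=1}^n \phi^{(d)}_{n\Delta}\big(\bm S^{(m)}_i\big)\bigg\|
\]
in the coupling; by Theorem~2 of \cite{Asmussen_Foss_2018}, $\P(\norm{\bm S_i} > x) \in \RV_{-\alpha^*}(x)$ for some $\alpha^* > 1$, so Claim~\eqref{claim 2, lemma: concentration ineq, truncated heavy tailed RV in Rd} of Lemma~\ref{lemma: concentration ineq, truncated heavy tailed RV in Rd} (applied with $\bm Z_m = \bm S^{(m)}_i$, $\epsilon = 1$, and using property~\eqref{property, appendix, psi to phi, truncation mapping}) produces some $\Delta_0 = \Delta_0(\gamma) > 0$ such that whenever $\Delta \in (0,\Delta_0)$ the truncated sum concentrates around $\E\bm S_i$ at rate $\lo(n^{-\gamma})$, which by our choice of $C_0$ forces $\P(E_2^n) = \lo(n^{-\gamma})$.

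The main technical subtlety is the order in which parameters must be selected, since $\Delta$ appears both as the truncation level feeding into Lemma~\ref{lemma: concentration ineq, truncated heavy tailed RV in Rd} and as the level that Lemma~\ref{lemma, tail bound, pruned cluster sub sampled size S i r leq n delta} uses to guarantee $\lo(n^{-\gamma-1})$ decay of $\P(\norm{\bm S^{[r],\leqslant}_i(n\delta)} > n\Delta)$. The resolution is to first pick $\Delta \in (0,\Delta_0(\gamma))$ to control $E_2^n$; then, with this $\Delta$ fixed, invoke Lemma~\ref{lemma, tail bound, pruned cluster sub sampled size S i r leq n delta} to extract $\delta_0 = \delta_0(\Delta, \gamma+1, r) > 0$, so that for every $\delta \in (0,\delta_0)$ the $E_1^n$ bound kicks in. Taking $\delta_0$ to be the minimum over $i \in [d]$ (which is still positive) of the resulting thresholds then gives the uniform statement claimed. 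I do not anticipate any genuine obstacles beyond this careful bookkeeping — the main content of the argument is contained in the already-established Lemmas~\ref{lemma: concentration ineq, truncated heavy tailed RV in Rd} and \ref{lemma, tail bound, pruned cluster sub sampled size S i r leq n delta}.
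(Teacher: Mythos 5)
Your proposal is correct and follows essentially the same route as the paper's proof: the same split into the event that some sample of $\bm S^{[r],\leqslant}_i(n\delta)$ exceeds $n\Delta$ (handled via Lemma~\ref{lemma, tail bound, pruned cluster sub sampled size S i r leq n delta} and a union bound) and the event that the truncated sum is large (handled via the domination $\bm S^{[r],\leqslant}_i(n\delta) \stleq \bm S_i$, the regular variation of $\norm{\bm S_i}$ from Theorem~2 of \cite{Asmussen_Foss_2018}, and Claim~\eqref{claim 2, lemma: concentration ineq, truncated heavy tailed RV in Rd} of Lemma~\ref{lemma: concentration ineq, truncated heavy tailed RV in Rd}). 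The only cosmetic differences are your choice of $C_0 = 1 + \max_{i}\norm{\E\bm S_i}$ versus the paper's $d + \sum_j \bar s_{i,j}$ and working with the vector-valued truncation rather than coordinatewise events, neither of which changes the argument.
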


\begin{proof}\linksinpf{lemma: concentration ineq for r step sub tree}
It suffices to fix some $i \in [d]$ and show the existence of $C_0> 0$, $\delta_0 > 0$
such that \eqref{claim, lemma: concentration ineq for r step sub tree} holds.
The proof is almost identical to that of Claim~\eqref{proof: goal upper bound, lemma: concentration ineq for pruned cluster S i} in the proof of Lemma~\ref{lemma: concentration ineq for pruned cluster S i}.
Specifically, we set
$
C_0 = d + \sum_{j \in [d]}\bar s_{i,j}, 
$
let $\phi_c(x) = x \wedge c$,
and observe that 
\begin{align*}
    & \Bigg\{
        \norm{
            \frac{1}{n}\sum_{m = 1}^n \bm S^{[r],\leqslant; (m)}_i(n\delta)
        }
        > C_0
    \Bigg\}
    \\ 
    & \subseteq 
    \underbrace{ \bigg\{
        \norm{ \bm S^{[r],\leqslant; (m)}_i(n\delta)  } > n\Delta
        \text{ for some }m \in [n]
    \bigg\} }_{ \delequal \text{(I)} }
    \cup 
    \Bigg(
        \bigcup_{ j \in [d] }
        \underbrace{ \Bigg\{
            \frac{1}{n}\sum_{m = 1}^n \phi_{n\Delta}\Big(  S^{[r],\leqslant;(m)}_{i,j}(n\delta)  \Big) > \bar s_{i,j} + 1
        \Bigg\}
        }_{ \delequal \text{(II:$j$)} }
    \Bigg).
\end{align*}
Therefore, it suffices to find some $\Delta > 0$
such that, under any $\delta > 0$ small enough,
the terms
$
\P\big(\text{(I)}\big)
$
and (for each $j \in [d]$)
$
\P\big(\text{(II:$j$)}\big)
$
are of order $\lo(n^{-\gamma})$.

\smallskip
\noindent
\textbf{Proof of $\P\big(\text{(I)}\big) = \lo(n^{-\gamma})$}.
Applying Lemma~\ref{lemma, tail bound, pruned cluster sub sampled size S i r leq n delta},
we know that given any $\Delta > 0$, this claim holds for all $\delta > 0$ sufficiently small.

\smallskip
\noindent
\textbf{Proof of $\P\big(\text{(II:$j$)}\big) = \lo(n^{-\gamma})$ and the choice of $\Delta$}.
By definitions in \eqref{proof, def, law of r step sub tree size} and the stochastic comparison in \eqref{property: stochastic comparison, S and pruned S},
we have 
$
S^{[r],\leqslant}_{i,j}(n\delta) \stleq S^{\leqslant}_{i,j}(n\delta) \stleq S_{i,j}.
$
Using $S^{(m)}_{i,j}$
to denote independent copies of $S_{i,j}$,
it suffices to find some $\Delta > 0$ such that 
\begin{align}
    \P\Bigg(
        \frac{1}{n}\sum_{m = 1}^n \phi_{n\Delta}\Big(  S^{(m)}_{i,j} \Big) > \bar s_{i,j} + 1
    \Bigg)
    = \lo(n^{-\gamma}),
    \qquad
    \forall j \in [d].
    \label{proof, goal 2, lemma: concentration ineq for r step sub tree}
\end{align}
Again, by Theorem~2 of \cite{Asmussen_Foss_2018}, we get
$\P(S_{i,j} > x) \in \RV_{-\alpha^*}(x)$ for some $\alpha^* > 1$.
By Claim~\eqref{claim 2, lemma: concentration ineq, truncated heavy tailed RV in Rd} of Lemma~\ref{lemma: concentration ineq, truncated heavy tailed RV in Rd} and property \eqref{property, appendix, psi to phi, truncation mapping}, we conclude that \eqref{proof, goal 2, lemma: concentration ineq for r step sub tree} holds for any $\Delta > 0$ small enough.
\end{proof}

Now, we are ready to prove Lemma~\ref{lemma: tail bound, pruned cluster size S i leq n delta} for  the general case.

\begin{proof}[Proof of Lemma~\ref{lemma: tail bound, pruned cluster size S i leq n delta} (General Case)]
Under Assumption~\ref{assumption: subcriticality},
we are able to apply Gelfand's formula (see, e.g., p.\ 195 of \cite{lax2014functional}) and identify some positive integer $r$ such that 
 $\norm{\bar{\textbf{B}}^r} < 1$.
Also, in this proof we adopt the same labeling rule considered in Section~\ref{subsubsec, proof, concentration inequalities for S leq M} for multi-type branching trees:
that is, 
given $j \in [d]$, all type-$j$ nodes are numbered left to right, starting from generation 0, then continuing similarly in each subsequent generation.
To proceed, we make a few observations regarding the branching tree for $\bm S^\leqslant_i(n\delta)$.
\begin{enumerate}[(i)]
    \item 
        Given a positive integer $k$ and $q = 0,1,\ldots,r-1$,
        any node in the $(kr + q)^\text{th}$ generation uniquely belongs to the sub-tree rooted at one of the nodes at the $q^\text{th}$ generation.
        This is equivalent to saying that each node in the $(kr + q)^\text{th}$ has exactly one (grand)parent in the 
        $q^\text{th}$ generation.
        As a convention, we also say that any node belongs to the sub-tree rooted at itself.
        
    \item 
       Let $S_{j,l}^{ [r;t,m],\leqslant }(n\delta)$ be the count of type-$l$ nodes at generation $t, r+t, 2r+t, 3r+t,\ldots$ that belong to the sub-tree rooted at the $m^\text{th}$ type-$j$ node in the $t^\text{th}$ generation.
       Let
       $
       \bm S_{j}^{ [r;t,m],\leqslant }(n\delta) = \big( S_{j,l}^{ [r;t,m],\leqslant }(n\delta) \big)_{l \in [d]}.
       $
       A direct consequence of the previous bullet point is that
       \begin{align}
           \bm S^\leqslant_i(n\delta) = \bm S_i^{[r;0,1],\leqslant}(n\delta) +  \sum_{t = 1}^{r-1}\sum_{j \in [d]}\sum_{m = 1}^{ X^\leqslant_{i,j}(t-1;n\delta)   }\bm S^{[r;t,m],\leqslant}_j(n\delta).
           \label{proof, decomp of S leq i using r step sub trees, lemma: tail bound, pruned cluster size S i leq n delta}
       \end{align}
       Also, by definitions in \eqref{proof, def, law of r step sub tree size},
       we have $\bm S_i^{[r;0,1],\leqslant}(n\delta) = \bm S_i^{[r],\leqslant}(n\delta)$.

    \item 
        The next fact follows from the independence of the offspring counts across different nodes:
        for each $t = 1,2,\ldots,r-1$ and $j \in [d]$,
        the sequence $\big(\bm S_{j}^{ [r;t,m],\leqslant }(n\delta)\big)_{m \leq X^{\leqslant}_{i,j}(t-1;n\delta) }$
        are independent copies of $\bm S^{[r],\leqslant}_j(n\delta)$ defined in \eqref{proof, def, law of r step sub tree size}.
        Henceforth in this proof, for each $m > X^{\leqslant}_{i,j}(t-1;n\delta) $ we  independently generate $\bm S_{j}^{ [r;t,m],\leqslant }(n\delta)$ as a generic copy of $\bm S^{[r],\leqslant}_j(n\delta)$,
        so that the infinite sequence $\big(\bm S_{j}^{ [r;t,m],\leqslant }(n\delta)\big)_{m \geq 1}$ is well-defined
        for each $j \in [d]$ and $t = 1,2,\ldots,r-1$.

\end{enumerate}
Now,
take any $\Delta^\prime, C_0 > 0$.
        On the event
        \begin{align}
            & \underbrace{
                \bigg\{ \norm{ \bm X^\leqslant_i(t;n\delta) } \leq n\Delta^\prime\ \forall t = 1,2,\ldots,r-1 \bigg\}
            }_{ \delequal \text{(I)} }
            \nonumber
            \\ 
            &
            \cap 
            \underbrace{ \bigg\{
                \norm{
                    \bm S_i^{[r;0,1],\leqslant}(n\delta)
                } \leq n\Delta^\prime
            \bigg\}
            }_{ \delequal \text{(II)}  }
            \cap \Bigg( 
            \bigcap_{ t \in [r-1],\ j \in [d]  }
            \underbrace{ \Bigg\{
                \norm{
                    \frac{1}{ \floor{n\Delta^\prime} }\sum_{m = 1}^{ \floor{n\Delta^\prime}  }\bm S^{[r;t,m],\leqslant}_j(n\delta)
                } \leq C_0
            \Bigg\} }_{ \delequal (\text{III}:t,j)  }
            \Bigg),
            \nonumber
        \end{align}
        it follows from \eqref{proof, decomp of S leq i using r step sub trees, lemma: tail bound, pruned cluster size S i leq n delta} that 
        \begin{align}
             \norm{\bm S_{j}^{\leqslant }(n\delta)} \leq 
             n\Delta^\prime + (r-1) \cdot d \cdot n\Delta^\prime \cdot C_0
             =
             n\Delta^\prime \cdot \big[
                1 + (r-1) d \cdot C_0
            \big].
            \label{proof, ineq for norm of S leq j n delta after decomp, lemma: tail bound, pruned cluster size S i leq n delta, general cases}
        \end{align}
Therefore,
to prove Claim \eqref{claim, lemma: tail bound, pruned cluster size S i leq n delta} given $i \in [d]$ and $\Delta > 0$,
it suffices to find $C_0, \Delta^\prime > 0$ such that 
\begin{itemize}
    \item 
        $
\Delta^\prime \cdot 
\big[
                1 + (r-1) d \cdot C_0
            \big]
            < \Delta 
$
(so the RHS of \eqref{proof, ineq for norm of S leq j n delta after decomp, lemma: tail bound, pruned cluster size S i leq n delta, general cases} is upper bounded by $n\Delta$);

    \item 
        for any $\delta > 0$ small enough,
the terms
$
\P\big(  (\text{(I)})^\complement  \big),
$
$
\P\big(  (\text{(II)})^\complement  \big),
$
and (for each $t \in [r-1]$, $j \in [d]$)
$
\P\big(  (\text{(III:$t,j$)})^\complement  \big)
$
are of order $\lo(n^{-\gamma})$.
\end{itemize}

\smallskip
\noindent
\textbf{Proof of 
$
\P\big(  (\text{(III:$t,j$)})^\complement  \big) = \lo(n^{-\gamma})
$
and the choice of $C_0$, $\Delta^\prime$}.
Let $C_0$ be characterized as in Lemma~\ref{lemma: concentration ineq for r step sub tree},
based on which fix some $\Delta^\prime > 0$ small enough such that 
$
\Delta^\prime \cdot 
\big[
                1 + (r-1) d \cdot C_0
            \big]
            < \Delta.
$
By Lemma~\ref{lemma: concentration ineq for r step sub tree}
and the observation (iii) above,
we have
$
\P\big(  (\text{(III:$t,j$)})^\complement  \big) = \lo(n^{-\gamma})
$
under any $\delta > 0$ small enough.

\smallskip
\noindent
\textbf{Proof of
$
\P\big(  (\text{(I)})^\complement  \big) = \lo(n^{-\gamma})$}. This follows from Lemma~\ref{lemma: tail asymptotics for truncated GW, at generation t}.

\smallskip
\noindent
\textbf{Proof of
$
\P\big(  (\text{(II)})^\complement  \big) = \lo(n^{-\gamma})$}.
This follows from Lemma~\ref{lemma, tail bound, pruned cluster sub sampled size S i r leq n delta}.
\end{proof}

\subsection{Proofs of Lemmas~\ref{lemma: type and generalized type}--\ref{lemma: limit theorem for n tau to hat C I, cluster size}}

Next, we provide the proofs of Lemmas~\ref{lemma: type and generalized type} and \ref{lemma: choice of bar epsilon bar delta, cluster size}.

\begin{proof}[Proof of Lemma~\ref{lemma: type and generalized type}]
\linksinpf{lemma: type and generalized type}
Part (i) is an immediate consequence of $\mathscr I \subseteq \widetilde{\mathscr I}$ and property \eqref{property: cost of j I and type I, 2, cluster size}.
%
Next, we prove part (ii):
that is, given $\bm I \in \widetilde{\mathscr I}(\bm j) \setminus \mathscr I(\bm j)$,
we must have $|\bm j^{\bm I}_1| \geq 2$.
By \eqref{property: cost of j I and type I, 2, cluster size},
$
\mathscr I(\bm j) = 
    \{\bm I \in \mathscr I:\ \bm j^{\bm I} = \bm j  \}
    =
    \{\bm I \in \mathscr I:\ \bm j^{\bm I} = \bm j,\ \tilde{\alpha}(\bm I) = \alpha(\bm j)  \}.
$
Then, due to $\mathscr I \subseteq \widetilde{\mathscr I}$,
for any $\bm I = (I_{k,j})_{k \geq 1, j \in [d]} \in \widetilde{\mathscr I}(\bm j) \setminus \mathscr I(\bm j)$ we must have $\bm I \in \widetilde{\mathscr I}\setminus \mathscr I$.
Due to $\bm j \neq \emptyset$,
by comparing Definition~\ref{def: cluster, type} with Definition~\ref{def: cluster, generalized type},
at least one of the following two cases must occur:
\begin{enumerate}[(a)]
    \item the set $\{j \in [d]:\ I_{1,j} = 1\}$ contains at least two elements;

    \item there exists $j \in \bm j$ such that $|\{k \geq 1:\ I_{k,j} = 1\}| \geq 2$.
\end{enumerate}
To prove part (ii), it suffices to show that case (b) cannot occur for any $\bm I \in \widetilde{\mathscr I}(\bm j)$.
Specifically,
suppose that $|\{ k \geq 1:\ I_{k,j^*} = 1 \}| \geq 2$ for some $j^* \in \bm j$.
Then, by \eqref{def: tilde c I, cost for generalized type, cluster size},
\begin{align*}
    \tilde{\alpha}(\bm I)
    & =
    1 + \sum_{ j \in [d] }\sum_{ k \geq 1 }I_{k,j}\cdot \big(\alpha^*(j)-1\big)
    \\ 
    & 
    =
    1 + \sum_{j \in \bm j}\big(\alpha^*(j)-1\big) \cdot \big| \{ k \geq 1:\ I_{k,j} = 1\} \big|
    \qquad
    \text{due to }\bm j^{\bm I} = \bm j
    \\ 
    & >
    1 + \sum_{j \in \bm j}\big(\alpha^*(j) - 1\big) 
    \qquad
    \text{due to $|\{ k \geq 1:\ I_{k,j^*} = 1 \}| \geq 2$ and $\alpha^*(j) > 1\ \forall j \in [d]$}
    \\
    & = \alpha(\bm j)
    \qquad
    \text{by definitions in \eqref{def: cost function, cone, cluster}}.
\end{align*}
However, this leads to the contradiction $\bm I \notin \widetilde{\mathscr I}(\bm j)$.
In summary, case (a) must occur for any $\bm I \in \widetilde{\mathscr I}(\bm j) \setminus \mathscr I(\bm j)$, which verifies part (ii) of this lemma.
\end{proof}

\begin{proof}[Proof of Lemma~\ref{lemma: choice of bar epsilon bar delta, cluster size}]
\linksinpf{lemma: choice of bar epsilon bar delta, cluster size}
(a) The claims are equivalent to the following: there exist $\bar\epsilon > 0$ and $\bar\delta > 0$ such that for any 
$\bm x = \sum_{i \in \bm j}w_i\bar{\bm s}_i$ with $w_i \geq 0\ \forall i \in \bm j$ and $\Phi(\bm x) \in B$, we must have
\begin{align}
    \sum_{j \in \bm j}w_j & > \bar\epsilon,
    \label{proof: goal 1, part a, lemma: choice of bar epsilon bar delta, cluster size}
    \\ 
    \min_{j \in \bm j}\frac{w_j}{  \sum_{ i \in \bm j }w_i } & > \bar\delta.
    \label{proof: goal 2, part a, lemma: choice of bar epsilon bar delta, cluster size}
\end{align}

First, since $B$ is bounded away from $\C^d_\leqslant(\bm j)$ under $\bm d_\textbf{U}$,
there exists $r_0 > 0$ such that 
\begin{align}
    (r,\theta) \in B
    \qquad \Longrightarrow \qquad
    r > r_0.
    \label{proof: constant r 0, part a, lemma: choice of bar epsilon bar delta, cluster size}
\end{align}
Next, consider some $\bm x = \sum_{i \in \bm j}w_i\bar{\bm s}_i$ with $w_i \geq 0\ \forall i \in \bm j$ and $\Phi(\bm x) \in B$.
By \eqref{proof: constant r 0, part a, lemma: choice of bar epsilon bar delta, cluster size}, we must have $\norm{\bm x} > r_0$.
On the other hand, for the $L_1$ norm $\norm{\bm x}$, we have 
$
\norm{\bm x}
    \leq 
    \max_{i \in \bm j}\norm{ \bar{\bm s}_i } \cdot \sum_{j \in \bm j} w_j,
$
and hence 
$
    \frac{r_0}{  \max_{ j \in \bm j }\norm{\bar{\bm s}_j}  }  < 
    \sum_{j \in \bm j} w_j.
$
In summary, Claim~\eqref{proof: goal 1, part a, lemma: choice of bar epsilon bar delta, cluster size} holds for any $\bar\epsilon > 0$ small enough such that $\bar\epsilon < \frac{r_0}{  \max_{ j \in \bm j }\norm{\bar{\bm s}_j}  }$.

Next, since $B$ is bounded away from $\C^d_\leqslant(\bm j)$ under $\bm d_\textbf{U}$, there exists some $\Delta > 0$ such that 
\begin{align}
    \bm d_\textbf{U}\big(B, \mathbb C^d_\leqslant(\bm j)\big) > \Delta.
    \label{proof: constant Delta, part a, lemma: choice of bar epsilon bar delta, cluster size}
\end{align}
We show that Claim~\eqref{proof: goal 2, part a, lemma: choice of bar epsilon bar delta, cluster size} holds for any $\bar\delta > 0$ small enough that satisfies
\begin{align}
    \bar\delta \cdot 
    \frac{
        \max_{j \in \bm j}\norm{ \bar{\bm s}_j  }
    }{
        \min_{j \in \bm j}\norm{ \bar{\bm s}_j  }
    }
    < \frac{\Delta}{2}, \qquad
    \bar\delta < 1.
    \label{proof: choice of bar delta, part a, lemma: choice of bar epsilon bar delta, cluster size}
\end{align}
To proceed, 
we consider a proof by contradiction. 
Suppose that for some $\bm x = \sum_{i \in \bm j}w_i\bar{\bm s}_i$ with $w_i \geq 0\ \forall i \in \bm j$ and $\Phi(\bm x) \in B$,
there exists $j^* \in \bm j$ such that Claim~\eqref{proof: goal 2, part a, lemma: choice of bar epsilon bar delta, cluster size} does not hold, i.e.,
\begin{align}
    \frac{w_{j^{*}} }{  \sum_{ i \in \bm j }w_i } \leq \bar\delta.
    \label{proof: assumption for proof by contradiction, part a, lemma: choice of bar epsilon bar delta, cluster size}
\end{align}
We first note  that the set $\bm j \setminus \{j^*\}$ cannot be empty;
otherwise, we have $\bm j = \{j^*\}$ and arrive at the contradiction that $\frac{w_{j^{*}} }{  \sum_{ i \in \bm j }w_i } = 1 > \bar\delta$.
Next, we define
$\bm x^* \delequal \sum_{ j \in \bm j\setminus \{ j^* \}  }w_j \bar{\bm s}_j$,
and note that $\norm{ \bm x^* } > 0$.
Let $(r,\theta) = \Phi(\bm x)$ and $(r^*,\theta^*) = \Phi(\bm x^*)$, and observe that 
\begin{align*}
    \norm{ \theta - \theta^* }
    & = 
    \norm{
        \frac{
            \bm x
        }{
            \norm{ \bm x }
        }
        - \frac{\bm x^*}{ \norm{\bm x^*} }
    }
    \leq 
    \norm{
        \frac{
            \bm x
        }{
            \norm{ \bm x }
        }
        - \frac{\bm x^*}{ \norm{\bm x} }
    }
    +
    \norm{
        \frac{
            \bm x^*
        }{
            \norm{ \bm x }
        }
        - \frac{\bm x^*}{ \norm{\bm x^*} }
    }
   \leq 2 \cdot
    \frac{
        \norm{ \bm x - \bm x^* }
    }{
        \norm{\bm x}
    }
    \\ 
    & = 
    2 \cdot 
    \frac{
        w_{ j^* }\norm{ \bar{ \bm{s} }_{ j^* } }
    }{
        \sum_{j \in \bm j}w_j \norm{ \bar{\bm s}_j  }
    }
    \leq 
    2\cdot 
    \frac{
        w_{j^*}
    }{
        \sum_{j \in \bm j}w_j
    }
    \cdot 
    \frac{
        \max_{j \in \bm j}\norm{ \bar{\bm s}_j  }
    }{
        \min_{j \in \bm j}\norm{ \bar{\bm s}_j  }
    }
   \leq 2 \cdot \frac{\Delta}{2} = \Delta.
\end{align*}
The last inequality in the display above follows from our choice of $\bar\delta$ in \eqref{proof: choice of bar delta, part a, lemma: choice of bar epsilon bar delta, cluster size}
and the condition \eqref{proof: assumption for proof by contradiction, part a, lemma: choice of bar epsilon bar delta, cluster size} for the proof by contraction.
Now, consider $\bm x^* \cdot \frac{ \norm{\bm x}}{ \norm{\bm x^*} }$, i.e., a stretched version of the vector $\bm x^*$ with $L_1$ norm matching $\norm{\bm x}$.
Due to $\bm x^* = \sum_{ j \in \bm j\setminus \{ j^* \}  }w_j \bar{\bm s}_j$, we have $\bm x^* \in \R^d_\leqslant(\bm j)$ 
and $\Phi(\bm x^*) \in \C^d_\leqslant(\bm j)$,
thus implying
$
\Phi\big(\bm x^* \cdot \frac{ \norm{\bm x}}{ \norm{\bm x^*} }\big) \in \C^d_\leqslant(\bm j);
$ see  \eqref{def: cone C d leq j, cluster size}.
However, due to 
$
\Phi(\bm x^* \cdot \frac{ \norm{\bm x}}{ \norm{\bm x^*} }) = (r, \theta^*),
$
we arrive at
$
\bm d_\textbf{U}\big( \Phi(\bm x), \Phi(\bm x^* \cdot \frac{ \norm{\bm x}}{ \norm{\bm x^*} }) \big)
=
\norm{ \theta - \theta^* } \leq \Delta,
$
which contradicts \eqref{proof: constant Delta, part a, lemma: choice of bar epsilon bar delta, cluster size} since $\Phi(\bm x) \in B$.3
This concludes the proof of Claim~\eqref{proof: goal 2, part a, lemma: choice of bar epsilon bar delta, cluster size}.

\medskip
\noindent
(b) We fix some type $\bm I \in \mathscr I$ with active index set $\bm j^{\bm I} = \bm j$.
Due to $\bm j \neq \emptyset$, we have $\mathcal K^{\bm I} \geq 1$; see Definition~\ref{def: cluster, type} and Remark~\ref{remark: def of type}.
Henceforth in this proof, 
we write $\bm w_k = (w_{k,j})_{j \in \bm j^{\bm I}_k}$ and $\bm w = (\bm w_k)_{ k \in [\mathcal K^{\bm I}] }$.
Using results in part (a), one can fix some constants $\bar\epsilon > 0$ and $\bar\delta \in (0,1)$ such that the following holds:
for any $\bm x = \sum_{ k \in [\mathcal K^{\bm I}] }\sum_{j \in \bm j^{\bm I}_k}w_{k,j}\bar{\bm s}_j$ with $w_{k,j} \geq 0$ and $\bm x \in \Phi^{-1}(B)$,
we must have $\bm w \in B(\bar\epsilon,\bar\delta)$ where
\begin{align*}
    B(\bar\epsilon,\bar\delta) \delequal
    \left\{\rule{0cm}{0.9cm}
        \bm w \in [0,\infty)^{ |\bm j| }:\ 
        \min_{ k \in [\mathcal K^{\bm I}],\ j \in \bm j^{\bm I}_k }w_{k,j} \geq \bar\epsilon;\ 
        \min_{ \substack{  
            k \in [\mathcal K^{\bm I}],\ j \in \bm j^{\bm I}_k
            \\ 
            k^\prime \in [\mathcal K^{\bm I}],\ j^\prime \in \bm j^{\bm I}_{k^\prime} 
            } }\frac{w_{k,j}}{w_{k^\prime,j^\prime}} \geq \bar\delta
    \right\}.
\end{align*}
Then, by the definition of $\mathbf C^{\bm I}$ in \eqref{def: measure C I, cluster},
\begin{align*}
    \mathbf C^{\bm I}\circ \Phi^{-1}(B)
    & = 
    \mathbf C^{\bm I}\big(\Phi^{-1}(B)\big)
    \leq 
    \int \mathbbm I\Big\{
        \bm w \in B(\bar\epsilon,\bar\delta)
    \Big\}
    \cdot \Bigg(
    \prod_{ k = 1 }^{ \mathcal K^{\bm I} - 1}
        g_{ \bm j^{\bm I}_k \leftarrow \bm j^{\bm I}_{k+1} }(\bm w_k)
    \Bigg)
    \nu^{\bm I}(d \bm w).
\end{align*}
Note also that there uniquely exists some $j^{\bm I}_1 \in [d]$ such that $\bm j^{\bm I}_1 = \{j^{\bm I}_1\}$;
see Remark~\ref{remark: def of type}.
We fix some $\rho \in (1,\infty)$, and let
\begin{align}
    B_0(\rho,\bar\delta)
    & \delequal
    \left\{\rule{0cm}{0.9cm}
        \bm w \in [0,\infty)^{|\bm j|}:\ 
        w_{1, j^{\bm I}_1} \in [1,\rho),\ 
        \min_{ \substack{  
            k \in [\mathcal K^{\bm I}],\ j \in \bm j^{\bm I}_k
            \\ 
            k^\prime \in [\mathcal K^{\bm I}],\ j^\prime \in \bm j^{\bm I}_{k^\prime} 
            } }\frac{w_{k,j}}{w_{k^\prime,j^\prime}} \geq \bar\delta
    \right\},
    \nonumber
    \\
    B_n(\rho,\delta)
     \delequal \rho^n B_0(\rho,\bar\delta)
    & =
    \left\{\rule{0cm}{0.9cm}
        \bm w \in [0,\infty)^{|\bm j|}:\ 
        w_{1, j^{\bm I}_1} \in [\rho^n, \rho^{n+1}),\ 
        \min_{ \substack{  
            k \in [\mathcal K^{\bm I}],\ j \in \bm j^{\bm I}_k
            \\ 
            k^\prime \in [\mathcal K^{\bm I}],\ j^\prime \in \bm j^{\bm I}_{k^\prime} 
            } }\frac{w_{k,j}}{w_{k^\prime,j^\prime}} \geq \bar\delta
    \right\},
    \quad n \in \mathbb Z.
    \nonumber
\end{align}
For any $N$ large enough we have  $\rho^{-N} < \bar\epsilon$. 
This leads to $B(\bar\epsilon,\bar\delta) \subseteq \bigcup_{ n = -N }^\infty B_n(\rho,\delta)$,
and
\begin{align*}
    \mathbf C^{\bm I} \circ \Phi^{-1}(B) 
    & \leq 
    \sum_{n = -N }^{\infty}
    \underbrace{ \int \mathbbm I\Big\{
        \bm w \in B_n(\rho,\bar\delta)
    \Big\}
    \cdot \Bigg(
    \prod_{ k = 1 }^{ \mathcal K^{\bm I} - 1}
        g_{ \bm j^{\bm I}_k \leftarrow \bm j^{\bm I}_{k+1} }(\bm w_k)
    \Bigg)
    \nu^{\bm I}(d \bm w) 
    }_{ \delequal c_n  }.
\end{align*}
Therefore, it suffices to show that
\begin{align}
    c_0 < \infty,
    \label{proof: goal 1, part b, lemma: choice of bar epsilon bar delta, cluster size}
\end{align}
and that there exists some $\hat \rho \in (1,\infty)$ such that 
\begin{align}
    c_n \leq \hat \rho^{-n} \cdot  c_0,\qquad\forall n \in \mathbb Z.
    \label{proof: goal 2, part b, lemma: choice of bar epsilon bar delta, cluster size}
\end{align}

\smallskip
\noindent
\textbf{Proof of Claim~\eqref{proof: goal 1, part b, lemma: choice of bar epsilon bar delta, cluster size}}.
Note that 
$
B_0(\rho,\bar\delta)
    \subseteq
    \big\{
        \bm w \in [0,\infty)^{|\bm j|}:
        w_{k,j} \in [\bar\delta, \rho/\bar\delta]\ 
        \forall k \in [\mathcal K^{\bm I}], j \in \bm j^{\bm I}_k
    \big\}.
$
Also, by the continuity of $g_{\mathcal I \leftarrow \mathcal J}(\cdot)$ (see \eqref{def: function g mathcal I mathcal J, for measure C i bm I, cluster size}),
we can fix some $M \in (0,\infty)$ such that
$
0 \leq g_{ \bm j^{\bm I}_k \leftarrow \bm j^{\bm I}_{k+1} }(\bm w_k) \leq M
$
for any $k \in [\mathcal K^{\bm I}]$ and $\bm w_k = (w_{k,j})_{ j \in \bm j^{\bm I}_k }$ with $w_{k,j} \in [\bar\delta,\rho/\bar\delta]$ for all $j$.
Therefore,
\begin{align*}
    c_0 & \leq 
    M^{ | \mathcal K^{\bm I} | - 1}
    \int 
    \mathbbm I\Big\{
        \bm w \in [0,\infty)^{|\bm j|}:
        w_{k,j} \in [\bar\delta, \rho/\bar\delta]\ 
        \forall k \in [\mathcal K^{\bm I}], j \in \bm j^{\bm I}_k
    \Big\}
    \nu^{\bm I}(d\bm w)
    \\
    &
 = 
    M^{ | \mathcal K^{\bm I} | - 1}
    \prod_{ k \in [ \mathcal K^{\bm I} ] }
    \prod_{ j \in \bm j^{\bm I}_k  }
    \Big(
        { \bar\delta^{ -\alpha^*(j)  }  } - { (\rho/\bar\delta)^{ -\alpha^*(j)  }  }
    \Big) < \infty.
\end{align*}

\smallskip
\noindent
\textbf{Proof of Claim~\eqref{proof: goal 2, part b, lemma: choice of bar epsilon bar delta, cluster size}}.
By \eqref{def: function g mathcal I mathcal J, for measure C i bm I, cluster size},
for any $a > 0$ we have
$
g_{ \mathcal I \leftarrow \mathcal J}(a \bm w)
    =
    g_{ \mathcal I \leftarrow \mathcal J}(\bm w) \cdot a^{|\mathcal J|}.
$
Therefore,
\begin{align*}
    c_n & = 
    \int_{ \bm w \in B_n(\rho,\bar\delta)  }
    \Bigg(
    \prod_{ k = 1 }^{ \mathcal K^{\bm I} - 1}
        g_{ \bm j^{\bm I}_k \leftarrow \bm j^{\bm I}_{k+1} }(\bm w_k)
    \Bigg)
    \nu^{\bm I}(d \bm w) 
    =
    \int_{ \bm w \in \rho^n B_0(\rho,\bar\delta)  }
    \Bigg(
    \prod_{ k = 1 }^{ \mathcal K^{\bm I} - 1}
        g_{ \bm j^{\bm I}_k \leftarrow \bm j^{\bm I}_{k+1} }(\bm w_k)
    \Bigg)
    \nu^{\bm I}(d \bm w) 
    \\ 
    & = 
    \int_{ \bm x \in B_0(\rho,\bar\delta)  }
    \Bigg(
    \prod_{ k = 1 }^{ \mathcal K^{\bm I} - 1}
        g_{ \bm j^{\bm I}_k \leftarrow \bm j^{\bm I}_{k+1} }( \rho^n  \bm x_k)
    \Bigg)
    \nu^{\bm I}(d \rho^n\bm x)
    \qquad
    \text{by setting }\bm w = \rho^n \bm x
    \\ 
    & = 
    \int_{ \bm x \in B_0(\rho,\bar\delta)  }
    \Bigg(
    \prod_{ k = 1 }^{ \mathcal K^{\bm I} - 1}
    \rho^{ n| \bm j^{\bm I}_{k+1} | }\cdot 
        g_{ \bm j^{\bm I}_k \leftarrow \bm j^{\bm I}_{k+1} }( \bm x_k)
    \Bigg)
    \nu^{\bm I}(d \rho^n\bm x)
    \\ 
    & = 
    \int_{ \bm x \in B_0(\rho,\bar\delta)  }
    \Bigg[
    \bigtimes_{ k = 1 }^{ \mathcal K^{\bm I} - 1  }
    \Bigg(
        \rho^{ n| \bm j^{\bm I}_{k+1} | }\cdot 
        g_{ \bm j^{\bm I}_k \leftarrow \bm j^{\bm I}_{k+1} }( \bm x_k)
        \bigtimes_{  j \in \bm j^{\bm I}_k }
        \frac{
            \alpha^*(j) \rho^n dx_{k,j}
        }{
            (\rho^n x_{k,j})^{ \alpha^*(j) + 1 }
        }
    \Bigg)
    \Bigg] 
    \cdot 
    \Bigg(
        \bigtimes_{ j \in \bm j^{\bm I}_{ \mathcal K^{\bm I} } }
        \frac{
            \alpha^*(j) \rho^n dx_{\mathcal K^{\bm I} ,j}
        }{
            (\rho^n x_{\mathcal K^{\bm I} ,j})^{ \alpha^*(j) + 1 }
        }
    \Bigg)
    \\
    &\qquad\qquad\qquad\qquad\qquad\qquad\qquad\qquad\qquad
    \text{by definitions in \eqref{def, power law measure nu beta} and \eqref{def, measure nu type I, cluster size}}
    \\ 
    & = 
    \Bigg(
        \prod_{ j \in \bm j^{\bm I}_{ 1 } }\rho^{ -n\alpha^*(j)  } 
    \Bigg)
    \cdot
    \Bigg(
        \prod_{  k = 2}^{\mathcal K^{\bm I} }\prod_{ j \in \bm j^{\bm I}_k } \rho^{ -n(\alpha^*(j) - 1)  }
    \Bigg)
    \\
    &\qquad\qquad
    \cdot
    \underbrace{ \int_{ \bm x \in B_0(\rho,\bar\delta)  }
    \Bigg(
    \bigtimes_{ k = 1 }^{ \mathcal K^{\bm I} - 1  }
    g_{ \bm j^{\bm I}_k \leftarrow \bm j^{\bm I}_{k+1} }( \bm x_k)
        \bigtimes_{  j \in \bm j^{\bm I}_k }
        \frac{
            \alpha^*(j) dx_{k,j}
        }{
            x_{k,j}^{ \alpha^*(j) + 1 }
        }
    \Bigg)
    \cdot 
    \Bigg(
        \bigtimes_{ j \in \bm j^{\bm I}_{ \mathcal K^{\bm I} } }
        \frac{
            \alpha^*(j) dx_{\mathcal K^{\bm I} ,j}
        }{
            x_{\mathcal K^{\bm I} ,j}^{ \alpha^*(j) + 1 }
        }
    \Bigg) }_{ = c_0}.
\end{align*}
Therefore, to conclude the proof of Claim~\eqref{proof: goal 2, part b, lemma: choice of bar epsilon bar delta, cluster size}, one only needs to pick
\begin{align*}
    \hat \rho \delequal
    \Bigg(
        \prod_{ j \in \bm j^{\bm I}_{ 1 } }\rho^{ \alpha^*(j)  } 
    \Bigg)
    \cdot
    \Bigg(
        \prod_{  k = 2 }^{\mathcal K^{\bm I} }\prod_{ j \in \bm j^{\bm I}_k } \rho^{ (\alpha^*(j) - 1)  }
    \Bigg).
\end{align*}
In particular, recall that $\alpha^*(j) > 1\ \forall j \in [d]$; see Assumption~\ref{assumption: heavy tails in B i j} and \eqref{def: cluster size, alpha * l * j}.
Then, by our choice of $\rho \in (1,\infty)$, we get  $\hat \rho \in (1,\infty)$.
\end{proof}

For the proofs of 
 Lemmas~\ref{lemma: prob of type I, tail prob of tau, cluster size} and \ref{lemma: limit theorem for n tau to hat C I, cluster size}, we prepare one more result.
Recall the definitions of $N^{>|\delta}_{\bm t(\mathcal I);j}$ and $W^{>|\delta}_{\bm t(\mathcal I);j}$   in \eqref{def, proof cluster size, N W mathcal I mathcal J bcdot j},
which are sums of i.i.d.\ copies of $N^{>}_{i;j}(\cdot)$ and $W^{>}_{i;j}(\cdot)$ defined in \eqref{def: W i M j, pruned cluster, 1, cluster size}--\eqref{def: cluster size, N i | M cdot j}.
Besides, recall the definition of the assignments of $\mathcal J$ to $\mathcal I$ in \eqref{def: assignment from mathcal J to mathcal I}, and that we use ${\mathbb T_{ \mathcal I \leftarrow \mathcal J }}$ to denote the set of all assignments of $\mathcal{J}$ to $\mathcal I$.
For any non-empty $\mathcal I\subseteq [d]$, $\mathcal J \subseteq [d]$, we define
\begin{align}
    \notationdef{notation-C-J-assigned-to-I}{
    C_{ \mathcal I \leftarrow \mathcal J }\big( (t_i)_{ i \in \mathcal I } \big)}
    \delequal 
    \sum_{ 
        \{ \mathcal J(i):\ i \in \mathcal I \} \in \mathbb T_{ \mathcal I \leftarrow \mathcal J }
    }
    \prod_{i \in \mathcal I}\prod_{ j \in \mathcal J(i) }
        \bar s_{ i, l^*(j) } \cdot t_i^{ 1 - \alpha^*(j)  }.
        \label{def: function C J assigned to I, cluster size}
\end{align}
Under Assumption~\ref{assumption: heavy tails in B i j}, we have $\alpha^*(j) > 1$ for each $j \in [d]$ (see \eqref{def: cluster size, alpha * l * j}),
so $C_{ \mathcal I \leftarrow \mathcal J }\big( (t_i)_{ i \in \mathcal I } \big)$ is monotone decreasing w.r.t.\ each $t_i$.
If $\mathcal J = \emptyset$,
we adopt the convention that 
$
C_{ \mathcal I \leftarrow \emptyset }\big( (t_i)_{ i \in \mathcal I } \big) \equiv 1.
$
Likewise, for the function $g_{\mathcal I \leftarrow \mathcal J}$ defined in \eqref{def: function g mathcal I mathcal J, for measure C i bm I, cluster size},
we adopt the convention that 
$
g_{\mathcal I \leftarrow \emptyset}(\bm w)\equiv 1.
$
Also, we use $\mathfrak T_{\mathcal I \leftarrow \mathcal J}$
to denote the set of all assignment from $\mathcal J$ to $\mathcal I$, \emph{allowing for replacements}:
that is,
$\mathfrak T_{\mathcal I \leftarrow \mathcal J}$
contains all $\{ \mathcal J(i) \subseteq \mathcal J:\ i \in \mathcal I \}$
satisfying $\bigcup_{i \in \mathcal I}\mathcal J(i) = \mathcal J$.
Note that $\mathbb T_{\mathcal I \leftarrow \mathcal J} \subset \mathfrak T_{\mathcal I \leftarrow \mathcal J}$
and $|\mathfrak T_{\mathcal I \leftarrow \mathcal J}| < \infty$ given  $\mathcal I, \mathcal J \subseteq [d]$.
In the next result, we write $\bm t(\mathcal I) = (t_i)_{i \in \mathcal I}$.

\begin{lemma}
\label{lemma: cluster size, asymptotics, N i | n delta, cdot j, refined estimate}
\linksinthm{lemma: cluster size, asymptotics, N i | n delta, cdot j, refined estimate}
Let Assumptions~\ref{assumption: subcriticality}--\ref{assumption: regularity condition 2, cluster size, July 2024} hold.
    Let $0 < c < C < \infty$.
    Let
    $\mathcal I \subseteq \{1,2,\ldots,d\}$ be non-empty,
        and let
        $\mathcal J \subseteq \{1,2,\ldots,d\}$.
    There exists some $\delta_0 > 0$ such that
        for any $\delta \in (0,\delta_0)$,
    \begin{align}
        \limsup_{n \to \infty}
        \sup_{\bm t(\mathcal I):\ t_i \geq nc\ \forall i \in \mathcal I  }
        \Bigg|
        \frac{
        \P\big( N^{>|\delta}_{\bm t(\mathcal I);j} \geq 1\text{ iff } j \in \mathcal J \big)
        }{
        C_{\mathcal I \leftarrow \mathcal J}\big( n^{-1}\bm t(\mathcal I)\big)
        \cdot 
        \prod_{j \in \mathcal J}
        n\P(B_{j \leftarrow l^*(j)} > n\delta)
        }
        - 1
        \Bigg|
        & = 0,
        \label{claim weak convergence for N, part (ii), N related, lemma: cluster size, asymptotics, N i | n delta, cdot j, multiple ancsetor, refined estimates}
    \end{align}
        where $C_{\mathcal I \leftarrow \mathcal J}(\bm t)$ is defined in \eqref{def: function C J assigned to I, cluster size},
        and $l^*(j)$ is defined in \eqref{def: cluster size, alpha * l * j}.
        Furthermore, 
        if $\mathcal J \neq \emptyset$, for any $\delta \in (0,\delta_0)$,
        \begin{align}
            \lim_{n \to \infty} 
            \sup_{t_i \in [nc,nC]\ \forall i \in \mathcal I }\ 
            \sup_{{nx_j}/{t_i} \in [c,C]\ \forall i \in \mathcal I,j\in\mathcal J }
            & 
            \Bigg|
            \frac{
                \P\big(
                    W^{>|\delta}_{\bm t(\mathcal I);j}  > nx_j\ \forall j \in \mathcal J\ \big|\ 
                    N^{>|\delta}_{\bm t(\mathcal I);j} \geq 1\text{ iff } j \in \mathcal J
                    \big)
            }{
                \frac{
                    g_{ \mathcal I \leftarrow \mathcal J }( n^{-1}\bm t(\mathcal I))
                        }{
                    C_{\mathcal I \leftarrow \mathcal J}( n^{-1}\bm t(\mathcal I))
                    }
                 \cdot 
                \prod_{ j \in \mathcal J }({\delta}/{x_j})^{\alpha^*(j)}
            }
            \
            -  1\Bigg| 
            \label{claim 2, part (ii), N related, lemma: cluster size, asymptotics, N i | n delta, cdot j, multiple ancsetor, refined estimates}
            \\
            &
            = 0,
        \nonumber
        \\
         \limsup_{n \to \infty} 
            \sup_{t_i \geq nc\ \forall i \in \mathcal I }\ 
            \sup_{{nx_j}/{t_i} \in [c,C]\ \forall i \in \mathcal I,j\in\mathcal J }
            & 
            \frac{
                \P\big(
                    W^{>|\delta}_{\bm t(\mathcal I);j}  > nx_j\ \forall j \in \mathcal J\ \big|\ 
                    N^{>|\delta}_{\bm t(\mathcal I);j} \geq 1\text{ iff } j \in \mathcal J
                    \big)
            }{
                \sum_{ 
                \{ \mathcal J(i):\ i \in \mathcal I \} \in \mathbb T_{ \mathcal I \leftarrow \mathcal J }
                }
                 \prod_{ i \in \mathcal I}
                \prod_{ j \in \mathcal J(i) }({\delta t_i}/{n x_j})^{\alpha^*(j)}
            }
            \leq 1
        \label{claim 2, crude upper bound, part (ii), N related, lemma: cluster size, asymptotics, N i | n delta, cdot j, multiple ancsetor, refined estimates}
        \end{align}
        where $g_{\mathcal I \leftarrow \mathcal J}(\cdot)$ is defined in \eqref{def: function g mathcal I mathcal J, for measure C i bm I, cluster size}.
\end{lemma}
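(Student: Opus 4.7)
The plan is to sharpen the strategy used for part (iii) of Lemma~\ref{lemma: cluster size, asymptotics, N i | n delta, cdot j, crude estimate} by keeping track not only of whether a pruning event occurs along each $j \in \mathcal J$, but of which ancestor copies do the triggering. Exploiting the independence across $i \in \mathcal I$ of the sequences in \eqref{proof: def copies of W > and N > vectors}, I would write, with $T_i \delequal \{j \in [d] : \sum_{m = 1}^{t_i} N^{>,(m)}_{i;j}(\delta t_i) \geq 1\}$,
\begin{align*}
    \P\big(N^{>|\delta}_{\bm t(\mathcal I);j} \geq 1 \text{ iff } j \in \mathcal J\big)
    = \sum_{\substack{(\mathcal J(i))_{i \in \mathcal I}:\ \mathcal J(i) \subseteq \mathcal J,\\ \bigcup_i \mathcal J(i) = \mathcal J}} \prod_{i \in \mathcal I} \P\big(T_i = \mathcal J(i)\big),
\end{align*}
reducing everything to a factor-by-factor study of $\P(T_i = \mathcal J(i))$.

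For each $i$, I would analyze $\P(T_i = \mathcal J(i))$ by further decomposing on the identity of the triggering copies $m \leq t_i$; contributions from copies that simultaneously trigger two distinct $j$'s are controlled by part (ii) of Lemma~\ref{lemma: cluster size, asymptotics, N i | n delta, cdot j, crude estimate}, and configurations with strictly more than $|\mathcal J(i)|$ triggering copies by part (iii). The refined single-copy asymptotics \eqref{claim 1, lemma: cluster size, asymptotics, N i | n delta, cdot j, crude estimate}, \eqref{claim 2, lemma: cluster size, asymptotics, N i | n delta, cdot j, crude estimate}, and \eqref{claim 1, part iii, lemma: cluster size, asymptotics, N i | n delta, cdot j, refined estimates} then yield
\begin{align*}
    \P\big(T_i = \mathcal J(i)\big) \sim \frac{t_i!}{(t_i - |\mathcal J(i)|)!} \prod_{j \in \mathcal J(i)} \bar s_{i, l^*(j)} \P\big(B_{j \leftarrow l^*(j)} > \delta t_i\big),
\end{align*}
uniformly over $t_i \geq nc$ by Potter's bound. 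When assembled, collections $(\mathcal J(i))$ that fail to be disjoint (so some $j \in \mathcal J(i) \cap \mathcal J(i')$ for $i \neq i'$) pick up at least one extra factor $\P(B_{j \leftarrow l^*(j)} > n\delta) \to 0$ and are negligible relative to the proper partitions in $\mathbb T_{\mathcal I \leftarrow \mathcal J}$. For partitions, the homogeneity identity $C_{\mathcal I \leftarrow \mathcal J}(c \bm t) = c^{|\mathcal J| - \sum_{j \in \mathcal J}\alpha^*(j)} C_{\mathcal I \leftarrow \mathcal J}(\bm t)$ together with the regular variation of $\P(B_{j \leftarrow l^*(j)} > \,\cdot\,)$ lets one repackage the resulting sum as $C_{\mathcal I \leftarrow \mathcal J}(n^{-1} \bm t(\mathcal I)) \prod_{j \in \mathcal J} n \P(B_{j \leftarrow l^*(j)} > n\delta)$, yielding \eqref{claim weak convergence for N, part (ii), N related, lemma: cluster size, asymptotics, N i | n delta, cdot j, multiple ancsetor, refined estimates}.

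For \eqref{claim 2, part (ii), N related, lemma: cluster size, asymptotics, N i | n delta, cdot j, multiple ancsetor, refined estimates}, I would condition further on the partition $\{\mathcal J(i)\}$ and on the identities of the trigger copies: by \eqref{claim 2, part iii, lemma: cluster size, asymptotics, N i | n delta, cdot j, refined estimates}, the conditional law of each triggered $W^{>,(m)}_{i;j}(\delta t_i)$ given $N^{>,(m)}_{i;j}(\delta t_i) \geq 1$ is asymptotically Pareto with scale $\delta t_i$ and index $\alpha^*(j)$, so the joint conditional exceedance factorizes as $\prod_i \prod_{j \in \mathcal J(i)} (\delta t_i/(n x_j))^{\alpha^*(j)}$; partition-averaging against the weights derived above replaces $C_{\mathcal I \leftarrow \mathcal J}$ in the numerator by $g_{\mathcal I \leftarrow \mathcal J}$ and reproduces the stated ratio. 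For the crude upper bound \eqref{claim 2, crude upper bound, part (ii), N related, lemma: cluster size, asymptotics, N i | n delta, cdot j, multiple ancsetor, refined estimates}, a union bound over which trigger copy enforces which exceedance, combined with a Potter-type bound on each conditional tail that is valid uniformly over $t_i \geq nc$, gives the stated $\limsup \leq 1$ once it is observed that for $t_i$ large enough relative to $n$ the denominator eventually exceeds $1$ and the bound is trivial. The principal obstacle throughout is the uniformity in $\bm t(\mathcal I)$, especially in \eqref{claim weak convergence for N, part (ii), N related, lemma: cluster size, asymptotics, N i | n delta, cdot j, multiple ancsetor, refined estimates} where $t_i$ is unbounded above: numerator and denominator must be equated with vanishing error uniformly over the polynomial decay in $t_i/n$, which requires simultaneously applying Potter's bound to each regularly varying factor $\P(B_{j \leftarrow l^*(j)} > \delta t_i)/\P(B_{j \leftarrow l^*(j)} > n\delta)$ and bounding every sub-dominant trigger configuration via parts (ii) and (iii) of the crude lemma with constants uniform in $t_i/n$.
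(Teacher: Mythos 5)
Your proposal follows essentially the same route as the paper's proof: the same decomposition of $\{N^{>|\delta}_{\bm t(\mathcal I);j}\geq 1\text{ iff }j\in\mathcal J\}$ over covers/assignments of $\mathcal J$ to $\mathcal I$ with the non-disjoint covers shown negligible, the same single-ancestor refined asymptotics $\P(T_i=\mathcal J(i))\sim t_i^{|\mathcal J(i)|}\prod_{j\in\mathcal J(i)}\bar s_{i,l^*(j)}\P(B_{j\leftarrow l^*(j)}>\delta t_i)$ built from parts (i)--(iii) of Lemma~\ref{lemma: cluster size, asymptotics, N i | n delta, cdot j, crude estimate}, the same conditional-Pareto treatment of the triggered $W$'s yielding the $g_{\mathcal I\leftarrow\mathcal J}/C_{\mathcal I\leftarrow\mathcal J}$ ratio, and the same uniform-convergence/Potter argument for uniformity in $\bm t(\mathcal I)$. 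The only slip is the aside in your treatment of \eqref{claim 2, crude upper bound, part (ii), N related, lemma: cluster size, asymptotics, N i | n delta, cdot j, multiple ancsetor, refined estimates} that "for $t_i$ large enough the denominator eventually exceeds $1$": since the constraint $nx_j/t_i\in[c,C]$ forces every factor $\delta t_i/(nx_j)$ into $[\delta/C,\delta/c]$, the denominator stays bounded (and below $1$ for small $\delta$) no matter how large $t_i$ is; this observation is, however, unnecessary, because the uniform convergence theorem applied to the conditional tails on the compact ratio range already gives the stated $\limsup\leq 1$, exactly as in the paper.
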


\begin{proof}
\linksinpf{lemma: cluster size, asymptotics, N i | n delta, cdot j, refined estimate}
First, 
we note that for the proof of Claim \eqref{claim weak convergence for N, part (ii), N related, lemma: cluster size, asymptotics, N i | n delta, cdot j, multiple ancsetor, refined estimates},
we only need to consider non-empty $\mathcal J \subseteq \{1,2,\ldots,d\}$.
To see why, note that to prove \eqref{claim weak convergence for N, part (ii), N related, lemma: cluster size, asymptotics, N i | n delta, cdot j, multiple ancsetor, refined estimates} under $\mathcal J = \emptyset$,
it suffices to show that
\begin{align}
    \lim_{n \to \infty}
    \inf_{\bm t(\mathcal I):\ t_i \geq nc\ \forall i \in \mathcal I  }
    \P\Big( N^{>|\delta}_{\bm t(\mathcal I);j}= 0 \ \forall j \in [d]\Big) = 1.
    \label{proof: goal, weak convergence for N when J is empty, part ii, lemma: cluster size, asymptotics, N i | n delta, cdot j, refined estimate}
\end{align}
Suppose that \eqref{claim weak convergence for N, part (ii), N related, lemma: cluster size, asymptotics, N i | n delta, cdot j, multiple ancsetor, refined estimates} holds for any $\delta \in (0,\delta_0)$ and any non-empty $\mathcal J \subseteq [d]$.
Then,
by Assumption~\ref{assumption: heavy tails in B i j} and definitions  in \eqref{def: cluster size, alpha * l * j},
in \eqref{claim weak convergence for N, part (ii), N related, lemma: cluster size, asymptotics, N i | n delta, cdot j, multiple ancsetor, refined estimates} it holds for any $T \geq nc$ that 
\begin{align*}
    n\P(B_{j \leftarrow l^*(j)} > T\delta)
    \leq 
    n\P(B_{j \leftarrow l^*(j)} > n\cdot c\delta)
    \in \RV_{ -(\alpha^*(j) - 1)  }(n),
    \quad
    \text{with }\alpha^*(j) > 1.
\end{align*}
Then, \eqref{claim weak convergence for N, part (ii), N related, lemma: cluster size, asymptotics, N i | n delta, cdot j, multiple ancsetor, refined estimates} implies
\begin{align*}
    \lim_{n \to \infty}\max_{\bm t(\mathcal I):\ t_i \geq nc\ \forall i \in \mathcal I  }
    \P\Big( N^{>|\delta}_{\bm t(\mathcal I);j}\geq 1 \text{ iff } j \in \mathcal J\Big) = 0,
    \qquad
    \forall \delta \in (0,\delta_0),\ \emptyset \neq \mathcal J \subseteq [d].
\end{align*}
Under any $\delta \in (0,\delta_0)$,
the Claim \eqref{proof: goal, weak convergence for N when J is empty, part ii, lemma: cluster size, asymptotics, N i | n delta, cdot j, refined estimate} then follows from the preliminary bound 
\begin{align*}
    \P\Big( N^{>|\delta}_{\bm t(\mathcal I);j}= 0 \ \forall j \in [d]\Big)
    \geq 
    1 - 
    \sum_{ \mathcal J \subseteq [d]:\ \mathcal J \neq \emptyset }
    \P\Big( N^{>|\delta}_{\bm t(\mathcal I);j}\geq 1 \text{ iff } j \in \mathcal J\Big).
\end{align*}

Next, 
we note that it suffices to prove \eqref{claim weak convergence for N, part (ii), N related, lemma: cluster size, asymptotics, N i | n delta, cdot j, multiple ancsetor, refined estimates}%
--\eqref{claim 2, crude upper bound, part (ii), N related, lemma: cluster size, asymptotics, N i | n delta, cdot j, multiple ancsetor, refined estimates}
for  $\mathcal I = \{i\}$ with $i \in [d]$ (i.e., the case of $|\mathcal I| = 1$).
In particular,
it suffices to identify $\delta_0 > 0$ such that for any $\delta \in (0,\delta_0)$,
$i \in [d]$, and non-empty $\mathcal J \subseteq [d]$,
\begin{align}
    \limsup_{n \to \infty}
    \sup_{T \geq nc } & 
    \Bigg|
        \frac{
        \P\big(
            \sum_{m = 1}^{ T }N^{>,(m)}_{i;j}(\delta T)  \geq 1\text{ iff }j \in \mathcal J
        \big)
    }{
        \prod_{j \in \mathcal J}
        (n^{-1}T)^{1 - \alpha^*(j) }\cdot 
        \bar s_{i,l^*(j)}\cdot n\P(B_{j \leftarrow l^*(j)} > n\delta)
    }
    - 1
    \Bigg|
    = 0,
    \label{proof: goal, weak convergence for N, |I| = 1, part ii, N related, lemma: cluster size, asymptotics, N i | n delta, cdot j, multiple ancsetor, refined estimates}
    \\
    \lim_{n \to \infty}
    \sup_{ T \geq nc }\  
    \sup_{ nx_j/T \in [c,C]\ \forall j \in \mathcal J }
    & 
    \Bigg|
        \frac{
        \P\big(
            \sum_{m = 1}^{ T }W^{>,(m)}_{i;j}(\delta T)  > n x_j\ \forall j \in \mathcal J
            \ \big|\ 
            \sum_{m = 1}^{ T }N^{>,(m)}_{i;j}(\delta T)  \geq 1\text{ iff }j \in \mathcal J
        \big)
    }{
        \prod_{j \in \mathcal J}
            \big(
                \frac{\delta T}{n x_j}
            \big)^{ \alpha^*(j) }
    }
    \nonumber
    \\ 
    &\qquad\qquad\qquad\qquad\qquad\qquad\qquad\qquad\qquad\qquad\qquad
    - 1
    \Bigg| = 0.
     \label{proof: goal, weak convergence for W, |I| = 1, part ii, N related, lemma: cluster size, asymptotics, N i | n delta, cdot j, multiple ancsetor, refined estimates}
\end{align}
To see how these claims
lead to 
\eqref{claim weak convergence for N, part (ii), N related, lemma: cluster size, asymptotics, N i | n delta, cdot j, multiple ancsetor, refined estimates}--\eqref{claim 2, crude upper bound, part (ii), N related, lemma: cluster size, asymptotics, N i | n delta, cdot j, multiple ancsetor, refined estimates},
recall that we use $\mathfrak T_{\mathcal I \leftarrow \mathcal J}$
to denote the set containing all $\{ \mathcal J(i) \subseteq \mathcal J:\ i \in \mathcal I \}$
satisfying $\bigcup_{i \in \mathcal I}\mathcal J(i) = \mathcal J$.
Also, 
recall that we use ${\mathbb T_{ \mathcal I \leftarrow \mathcal J }}$ to denote the set of all assignments of $\mathcal{J}$ to $\mathcal I$.
By definitions in \eqref{def: assignment from mathcal J to mathcal I}, 
we have
$\mathbb T_{\mathcal I \leftarrow \mathcal J} \subset \mathfrak T_{\mathcal I \leftarrow \mathcal J}$
and $|\mathfrak T_{\mathcal I \leftarrow \mathcal J}| < \infty$ given  $\mathcal I, \mathcal J \subseteq [d]$.
Next, observe that
\begin{align}
    \label{proof: decomp of events, N geq 1, lemma: cluster size, asymptotics, N i | n delta, cdot j, refined estimate}
    & \P\Big(
            {N^{>|\delta}_{\bm t(\mathcal I);j}} \geq 1\text{ iff }j \in \mathcal J
        \Big)
    = 
    \P\Bigg(
        \sum_{i \in \mathcal I} \sum_{m = 1}^{ t_i }N^{>,(m)}_{i;j}(\delta t_i) \geq 1
        \text{ iff }j \in \mathcal J
    \Bigg)
    \quad
    \text{by \eqref{def, proof cluster size, N W mathcal I mathcal J bcdot j}}
    \\ 
    & = 
    \sum_{ 
        \{ \mathcal J(i):\ i \in \mathcal I \} \in \mathfrak T_{ \mathcal I \leftarrow \mathcal J }
    }
    \prod_{ i \in \mathcal I}
    \P\Bigg(
            \sum_{m = 1}^{ t_i }N^{>,(m)}_{i;j}(\delta t_i)  \geq 1\text{ iff }j \in \mathcal J(i)
        \Bigg)
        \nonumber
    \\ 
    & = 
    \underbrace{ 
        \sum_{ 
        \{ \mathcal J(i):\ i \in \mathcal I \} \in \mathbb T_{ \mathcal I \leftarrow \mathcal J }
    }
    \prod_{ i \in \mathcal I}
    \P\Bigg(
            \sum_{m = 1}^{ t_i }N^{>,(m)}_{i;j}(\delta t_i)  \geq 1\text{ iff }j \in \mathcal J(i)
        \Bigg)
    }_{ \delequal \text{(I)}  }
    \nonumber
    \\ 
    & \quad + 
    \underbrace{ 
        \sum_{ 
        \{ \mathcal J(i):\ i \in \mathcal I \} \in \mathfrak T_{ \mathcal I \leftarrow \mathcal J }\setminus \mathbb T_{ \mathcal I \leftarrow \mathcal J }
    }
    \prod_{ i \in \mathcal I}
    \P\Bigg(
            \sum_{m = 1}^{ t_i }N^{>,(m)}_{i;j}(\delta t_i)  \geq 1\text{ iff }j \in \mathcal J(i)
        \Bigg)
    }_{ \delequal \text{(II)}  }.
    \nonumber
\end{align}
Given $\{ \mathcal J(i):\ i \in \mathcal I \} \in \mathbb T_{ \mathcal I \leftarrow \mathcal J }$,
by the definition of partitions (i.e., the $\mathcal J(i)$'s are mutually disjoint, and $\bigcup_{i \in \mathcal I} \mathcal J(i) = \mathcal J$), we have
\begin{align}
    \label{proof, section D, property of partition}
    & \prod_{i \in \mathcal I}\prod_{j \in \mathcal J(i)}
        t_i^{1 - \alpha^*(j) }\cdot 
        \bar s_{i,l^*(j)}\cdot n\P(B_{j \leftarrow l^*(j)} > n\delta)
    \\ 
    \nonumber
    &
        = 
        \Bigg( \prod_{i \in \mathcal I}\prod_{j \in \mathcal J(i)}
        t_i^{1 - \alpha^*(j) }\cdot 
        \bar s_{i,l^*(j)}
        \Bigg)
        \cdot 
        \Bigg( \prod_{j \in \mathcal J}n\P(B_{j \leftarrow l^*(j)} > n\delta) \Bigg).
\end{align}
By the definition in \eqref{def: function C J assigned to I, cluster size},
\begin{align*}
    & \sum_{ 
        \{ \mathcal J(i):\ i \in \mathcal I \} \in \mathbb T_{ \mathcal I \leftarrow \mathcal J }
    }
    \prod_{i \in \mathcal I}\prod_{j \in \mathcal J(i)}
        t_i^{1 - \alpha^*(j) }\cdot 
        \bar s_{i,l^*(j)}\cdot n\P(B_{j \leftarrow l^*(j)} > n\delta)
        \\ 
        &
     =
     \mathcal C_{\mathcal I \leftarrow \mathcal J}\big( (t_i)_{i \in \mathcal I}\big) \cdot \prod_{j \in \mathcal J}n\P(B_{j \leftarrow l^*(j)} > n\delta).
\end{align*}
Then, applying the uniform convergence \eqref{proof: goal, weak convergence for N, |I| = 1, part ii, N related, lemma: cluster size, asymptotics, N i | n delta, cdot j, multiple ancsetor, refined estimates}
for each  $\P\big(
            \sum_{m = 1}^{ t_i }N^{>,(m)}_{i;j}(\delta t_i)  \geq 1\text{ iff }j \in \mathcal J(i)
        \big)$
in term (I) of the display \eqref{proof: decomp of events, N geq 1, lemma: cluster size, asymptotics, N i | n delta, cdot j, refined estimate},
we get
\begin{align}
    \lim_{n \to \infty}
    \sup_{\bm t(\mathcal I):\ t_i \geq nc\ \forall i \in \mathcal I}
    \Bigg|
        \frac{
            \text{(I)}
        }{
            \mathcal C_{\mathcal I \leftarrow \mathcal J}\big( n^{-1} \bm t(\mathcal I)\big) \cdot \prod_{j \in \mathcal J}n\P(B_{j \leftarrow l^*(j)} > n\delta)
        } 
        - 1
    \Bigg|
    =0.
    \label{proof, bound for term I, decomp of events, N geq 1, lemma: cluster size, asymptotics, N i | n delta, cdot j, refined estimate}
\end{align}
Next, to bound the term (II), we note that for each $\{ \mathcal J(i):\ i \in \mathcal I \} \in \mathfrak T_{ \mathcal I \leftarrow \mathcal J } \setminus \mathbb T_{ \mathcal I \leftarrow \mathcal J }$,
we must have $\mathcal J(i) \cap \mathcal J(i^\prime) \neq \emptyset$ for some $i,i^\prime \in [d]$ with $i\neq i^\prime$:
this is because $\{ \mathcal J(i):\ i \in \mathcal I \}$ is \emph{not} a partition of $\mathcal J$
but still satisfies $\bigcup_{i \in \mathcal I}\mathcal J(i) = \mathcal J$ and $\mathcal J(i) \subseteq \mathcal J\ \forall i \in \mathcal I$.
This has two useful implications.
First, due to  $\mathcal J(i) \cap \mathcal J(i^\prime) \neq \emptyset$ for some $i \neq i^\prime$,
\begin{align}
    \prod_{i \in \mathcal I}\prod_{j \in \mathcal J(i)}
         n\P(B_{j \leftarrow l^*(j)} > n\delta)
    =
    \lo\Bigg( \prod_{j \in \mathcal J}n\P(B_{j \leftarrow l^*(j)} > n\delta) \Bigg),
    \quad\text{as }n \to \infty.
    \label{proof, section D, bound 1}
\end{align}
Second,
for each $\{ \mathcal J(i):\ i \in \mathcal I \} \in \mathfrak T_{ \mathcal I \leftarrow \mathcal J } \setminus \mathbb T_{ \mathcal I \leftarrow \mathcal J }$,
we can find some 
$\{ \hat{\mathcal J}(i):\ i \in \mathcal I \} \in \mathbb T_{ \mathcal I \leftarrow \mathcal J }$
such that $\hat{\mathcal J}(i) \subseteq \mathcal J(i)\ \forall i \in \mathcal I$.
In particular, there exists some $\hat i \in \mathcal I$ and $\hat j \in \mathcal J(\hat i)$ such that $\hat j \notin \hat{\mathcal J}(\hat i)$.
As a result,
for each $n$ and each $\bm t(\mathcal I) = (t_i)_{i \in \mathcal I}$ with $t_i \geq nc\ \forall i \in \mathcal I$,
\begin{align}
    &
    \frac{
        \prod_{i \in \mathcal I}\prod_{j \in  \mathcal J(i)}(n^{-1}t_i)^{ 1 - \alpha^*(j)  }\cdot\bar s_{i,l^*(j)}
    }{
        \mathcal C_{\mathcal I \leftarrow \mathcal J}\big( n^{-1}\bm t(\mathcal I)\big)
    }
     \label{proof, section D, bound 2}
    \\ 
    &
    \leq
    \frac{
        \prod_{i \in \mathcal I}\prod_{j \in  \mathcal J(i)}(n^{-1}t_i)^{ 1 - \alpha^*(j)  }\cdot\bar s_{i,l^*(j)}
    }{
        \prod_{i \in \mathcal I}\prod_{j \in  \hat{\mathcal J}(i)}(n^{-1}t_i)^{ 1 - \alpha^*(j)  }\cdot\bar s_{i,l^*(j)}
    }
    \quad \text{by definitions in \eqref{def: function C J assigned to I, cluster size}}
    \nonumber
    \\ 
    & = 
    \prod_{i \in \mathcal I}\prod_{j \in  \mathcal J(i) \setminus \hat{\mathcal J}(i) }(n^{-1}t_i)^{ 1 - \alpha^*(j)  }\cdot\bar s_{i,l^*(j)}
    \leq 
    \prod_{i \in \mathcal I}\prod_{j \in  \mathcal J(i) \setminus \hat{\mathcal J}(i) }c^{ 1 - \alpha^*(j)  }\cdot\bar s_{i,l^*(j)} < \infty,
    \nonumber
\end{align}
where in the last line we applied $\alpha^*(j) > 1\ \forall j \in [d]$.
Applying the uniform convergence \eqref{proof: goal, weak convergence for N, |I| = 1, part ii, N related, lemma: cluster size, asymptotics, N i | n delta, cdot j, multiple ancsetor, refined estimates} 
for each  $\P\big(
            \sum_{m = 1}^{ t_i }N^{>,(m)}_{i;j}(\delta t_i)  \geq 1\text{ iff }j \in \mathcal J(i)
        \big)$
in term (II) of the display \eqref{proof: decomp of events, N geq 1, lemma: cluster size, asymptotics, N i | n delta, cdot j, refined estimate},
it follows from \eqref{proof, section D, bound 1} and \eqref{proof, section D, bound 2} that
\begin{align}
    \lim_{n \to \infty}
    \sup_{\bm t(\mathcal I):\ t_i \geq nc\ \forall i \in \mathcal I}
        \frac{
            \text{(II)}
        }{
            \mathcal C_{\mathcal I \leftarrow \mathcal J}\big( n^{-1}\bm t(\mathcal I)\big) \cdot \prod_{j \in \mathcal J}n\P(B_{j \leftarrow l^*(j)} > n\delta)
        }  = 0.
    \label{proof, bound for term II, decomp of events, N geq 1, lemma: cluster size, asymptotics, N i | n delta, cdot j, refined estimate}
\end{align}
Combining \eqref{proof, bound for term I, decomp of events, N geq 1, lemma: cluster size, asymptotics, N i | n delta, cdot j, refined estimate} and \eqref{proof, bound for term II, decomp of events, N geq 1, lemma: cluster size, asymptotics, N i | n delta, cdot j, refined estimate},
we establish \eqref{claim weak convergence for N, part (ii), N related, lemma: cluster size, asymptotics, N i | n delta, cdot j, multiple ancsetor, refined estimates}.

To proceed,  we
define the event
\begin{align}
    E_{\mathcal I \leftarrow \mathcal J}\big(n,\delta, \bm t(\mathcal I) \big)
    \delequal
    \bigcup_{ \{\mathcal J(i):\ i \in \mathcal I\} \in \mathbb T_{\mathcal I \leftarrow \mathcal J} }
    \Bigg\{
        \text{for each $i \in \mathcal I$},\ 
            \sum_{m = 1}^{ t_i }N^{>,(m)}_{i;j}(\delta t_i)  \geq 1\text{ iff }j \in \mathcal J(i)
    \Bigg\},
    \nonumber
\end{align}
and
note that our analysis above for terms (I) and (II) in display \eqref{proof: decomp of events, N geq 1, lemma: cluster size, asymptotics, N i | n delta, cdot j, refined estimate}
implies
\begin{align*}
    \lim_{n \to \infty}\inf_{ \bm t(\mathcal I):\  t_i \geq nc\ \forall i \in \mathcal I}
    \P\Big( 
        E_{\mathcal I \leftarrow \mathcal J}\big(n,\delta,\bm t(\mathcal I)\big)
        \ \Big|\
        {N^{>|\delta}_{\bm t(\mathcal I);j}} \geq 1\text{ iff }j \in \mathcal J
    \Big) = 1.
    \nonumber
\end{align*}
Therefore, it is equivalent to prove a modified version of Claims \eqref{claim 2, part (ii), N related, lemma: cluster size, asymptotics, N i | n delta, cdot j, multiple ancsetor, refined estimates} and \eqref{claim 2, crude upper bound, part (ii), N related, lemma: cluster size, asymptotics, N i | n delta, cdot j, multiple ancsetor, refined estimates},
where we condition on the event $E_{\mathcal I \leftarrow \mathcal J}\big(n,\delta,\bm t(\mathcal I)\big)$
instead of $\{{N^{>|\delta}_{\bm t(\mathcal I);j}} \geq 1\text{ iff }j \in \mathcal J\}$.
For Claim~\eqref{claim 2, part (ii), N related, lemma: cluster size, asymptotics, N i | n delta, cdot j, multiple ancsetor, refined estimates},
we have
\begin{align}
    & \P\bigg(
                    W^{>|\delta}_{\bm t(\mathcal I);j}  > nx_j\ \forall j \in \mathcal J\ \bigg|\ 
                    E_{\mathcal I \leftarrow \mathcal J}\big(n,\delta,\bm t(\mathcal I)\big)
                    \bigg)
    \label{proof: equality, decomposition of the W event, part ii, lemma: cluster size, asymptotics, N i | n delta, cdot j, refined estimate}
    \\ 
    & = 
    \sum_{ 
        \{ \mathcal J(i):\ i \in \mathcal I \} \in \mathbb T_{ \mathcal I \leftarrow \mathcal J }
    }
    \P\Bigg(
                    W^{>|\delta}_{\bm t(\mathcal I);j}  > nx_j\ \forall j \in \mathcal J\ \Bigg|\
                    \text{for each $i \in \mathcal I$},\ 
            \sum_{m = 1}^{ t_i }N^{>,(m)}_{i;j}(\delta t_i)  \geq 1\text{ iff }j \in \mathcal J(i)
            \Bigg)
        \nonumber
    \\ 
    &\qquad\qquad\qquad\qquad
            \cdot
    \P\Bigg(
        \text{for each $i \in \mathcal I$},\ 
            \sum_{m = 1}^{ t_i }N^{>,(m)}_{i;j}(\delta t_i)  \geq 1\text{ iff }j \in \mathcal J(i)
            \ \Bigg|\ 
                    E_{\mathcal I \leftarrow \mathcal J}\big(n,\delta,\bm t(\mathcal I)\big)
                    \Bigg)
                \nonumber
    \\ 
    & \stackrel{(*)}{=} 
    \sum_{ 
        \{ \mathcal J(i):\ i \in \mathcal I \} \in \mathbb T_{ \mathcal I \leftarrow \mathcal J }
    }
    \left[\rule{0cm}{0.9cm}
    \prod_{ i \in \mathcal I }
        \P\Bigg(
            \sum_{m = 1}^{ t_i }W^{>,(m)}_{i;j}(\delta t_i)  > n x_j\ \forall j \in \mathcal J(i)
            \ \Bigg|\ 
            \sum_{m = 1}^{ t_i }N^{>,(m)}_{i;j}(\delta t_i)  \geq 1\text{ iff }j \in \mathcal J(i)
        \Bigg)
    \right]
    \nonumber
   \\ 
    &\qquad\qquad\qquad\qquad
            \cdot
    \P\Bigg(
        \text{for each $i \in \mathcal I$},\ 
            \sum_{m = 1}^{ t_i }N^{>,(m)}_{i;j}(\delta t_i)  \geq 1\text{ iff }j \in \mathcal J(i)
            \ \Bigg|\ 
                    E_{\mathcal I \leftarrow \mathcal J}\big(n,\delta,\bm t(\mathcal I)\big)
                    \Bigg)
                \nonumber
\\ 
& = 
\sum_{ 
        \{ \mathcal J(i):\ i \in \mathcal I \} \in \mathbb T_{ \mathcal I \leftarrow \mathcal J }
    }
    \left[\rule{0cm}{0.9cm}
    \prod_{ i \in \mathcal I }
        \P\Bigg(
            \sum_{m = 1}^{ t_i }W^{>,(m)}_{i;j}(\delta t_i)  > n x_j\ \forall j \in \mathcal J(i)
            \ \Bigg|\ 
            \sum_{m = 1}^{ t_i }N^{>,(m)}_{i;j}(\delta t_i)  \geq 1\text{ iff }j \in \mathcal J(i)
        \Bigg)
    \right]
    \nonumber
\\ 
&\qquad\qquad\qquad\qquad\qquad
            \cdot
    \frac{
        \prod_{i \in \mathcal I}
            \P\big(
            \sum_{m = 1}^{ t_i }N^{>,(m)}_{i;j}(\delta t_i)  \geq 1\text{ iff }j \in \mathcal J(i)
        \big)
    }{
        \P\big( E_{\mathcal I \leftarrow \mathcal J}\big(n,\delta,\bm t(\mathcal I)\big) \big)
    }.
    \nonumber
\end{align}
Here, the step $(*)$ follows from the independence of
$
\big\{  \big( N^{>,(m)}_{i;j}(M), W^{>,(m)}_{i;j}(M) \big)_{j \in [d]}:\ m \geq 1  \big\}
$
across $i \in [d]$; see \eqref{proof: def copies of W > and N > vectors}.
Then by applying \eqref{claim weak convergence for N, part (ii), N related, lemma: cluster size, asymptotics, N i | n delta, cdot j, multiple ancsetor, refined estimates}, \eqref{proof: goal, weak convergence for N, |I| = 1, part ii, N related, lemma: cluster size, asymptotics, N i | n delta, cdot j, multiple ancsetor, refined estimates},
and
\eqref{proof: goal, weak convergence for W, |I| = 1, part ii, N related, lemma: cluster size, asymptotics, N i | n delta, cdot j, multiple ancsetor, refined estimates},
under any $\delta > 0$ small enough,
it holds
uniformly over $t_i \in [nc,nC]$ 
and $\frac{nx_j}{t_i} \in [c,C]$---in the sense of \eqref{claim 2, part (ii), N related, lemma: cluster size, asymptotics, N i | n delta, cdot j, multiple ancsetor, refined estimates}---that
\begin{align*}
    & \P\bigg(
                    W^{>|\delta}_{\bm t(\mathcal I);j}  > nx_j\ \forall j \in \mathcal J\ \bigg|\ 
                    E_{\mathcal I \leftarrow \mathcal J}\big(n,\delta,\bm t(\mathcal I)\big)
                    \bigg)
    \\ 
    & \sim 
    \sum_{ 
        \{ \mathcal J(i):\ i \in \mathcal I \} \in \mathbb T_{ \mathcal I \leftarrow \mathcal J }
    }
    \Bigg[
        \prod_{ i \in \mathcal I  }
        \prod_{ j \in \mathcal J(i) }\bigg( \frac{\delta t_i}{n x_j}\bigg)^{ \alpha^*(j) }
    \Bigg]
    \cdot 
    \frac{
        \prod_{i \in \mathcal I}\prod_{ j \in \mathcal J(i)}
            (n^{-1}t_i)^{1 - \alpha^*(j) }\cdot 
        \bar s_{i,l^*(j)}\cdot n\P(B_{ j \leftarrow l^*(j)} > n\delta)
    }{
         C_{\mathcal I \leftarrow \mathcal J}\big( n^{-1}\bm t(\mathcal I)\big)
        \cdot 
        \prod_{j \in \mathcal J}
        n\P(B_{ j \leftarrow l^*(j)} > n\delta)
    }
    \\ 
    & = 
    \Bigg[
        \prod_{ j \in \mathcal J }\bigg( \frac{\delta}{x_j}\bigg)^{ \alpha^*(j) }
    \Bigg]
        \cdot 
    \frac{
        \sum_{ 
        \{ \mathcal J(i):\ i \in \mathcal I \} \in \mathbb T_{ \mathcal I \leftarrow \mathcal J }
        }
        \prod_{i \in \mathcal I}\prod_{ j \in \mathcal J(i)}
        (n^{-1}t_i)\cdot 
        \bar s_{i,l^*(j)}
    }{
         C_{\mathcal I \leftarrow \mathcal J}\big( n^{-1}\bm t(\mathcal I)\big)
    }
    \qquad
    \text{by \eqref{proof, section D, property of partition}}
    \\
    & = 
    \Bigg[
        \prod_{ j \in \mathcal J }\bigg( \frac{\delta}{x_j}\bigg)^{ \alpha^*(j) }
    \Bigg]
        \cdot 
    \frac{
        g_{ \mathcal I \leftarrow \mathcal J }\big( n^{-1}\bm t(\mathcal I)\big)
    }{
         C_{\mathcal I \leftarrow \mathcal J}\big( n^{-1}\bm t(\mathcal I)\big)
    }
    \qquad
    \text{by the definition in \eqref{def: function g mathcal I mathcal J, for measure C i bm I, cluster size}}
\end{align*}
as $n \to \infty$.
This verifies Claim \eqref{claim 2, part (ii), N related, lemma: cluster size, asymptotics, N i | n delta, cdot j, multiple ancsetor, refined estimates}.
Furthermore, from the last line of display \eqref{proof: equality, decomposition of the W event, part ii, lemma: cluster size, asymptotics, N i | n delta, cdot j, refined estimate},
\begin{align*}
    & 
    \P\bigg(
                    W^{>|\delta}_{\bm t(\mathcal I);j}  > nx_j\ \forall j \in \mathcal J\ \bigg|\ 
                    E_{\mathcal I \leftarrow \mathcal J}\big(n,\delta,\bm t(\mathcal I)\big)
                    \bigg)
    \\ 
    & \leq 
    \sum_{ 
        \{ \mathcal J(i):\ i \in \mathcal I \} \in \mathbb T_{ \mathcal I \leftarrow \mathcal J }
    }
    \left[\rule{0cm}{0.9cm}
    \prod_{ i \in \mathcal I }
        \P\Bigg(
            \sum_{m = 1}^{ t_i }W^{>,(m)}_{i;j}(\delta t_i)  > n x_j\ \forall j \in \mathcal J(i)
            \ \Bigg|\ 
            \sum_{m = 1}^{ t_i }N^{>,(m)}_{i;j}(\delta t_i)  \geq 1\text{ iff }j \in \mathcal J(i)
        \Bigg)
    \right].
\end{align*}
Applying \eqref{proof: goal, weak convergence for W, |I| = 1, part ii, N related, lemma: cluster size, asymptotics, N i | n delta, cdot j, multiple ancsetor, refined estimates}, we verify Claim \eqref{claim 2, crude upper bound, part (ii), N related, lemma: cluster size, asymptotics, N i | n delta, cdot j, multiple ancsetor, refined estimates} for any $\delta > 0$ small enough.
In summary, we have shown that
it suffices to prove Claims~\eqref{proof: goal, weak convergence for N, |I| = 1, part ii, N related, lemma: cluster size, asymptotics, N i | n delta, cdot j, multiple ancsetor, refined estimates}
and
\eqref{proof: goal, weak convergence for W, |I| = 1, part ii, N related, lemma: cluster size, asymptotics, N i | n delta, cdot j, multiple ancsetor, refined estimates}.
In the remainder of this proof, we establish the Claims~\eqref{proof: goal, weak convergence for N, |I| = 1, part ii, N related, lemma: cluster size, asymptotics, N i | n delta, cdot j, multiple ancsetor, refined estimates}
and
\eqref{proof: goal, weak convergence for W, |I| = 1, part ii, N related, lemma: cluster size, asymptotics, N i | n delta, cdot j, multiple ancsetor, refined estimates},
i.e., addressing the case where $|\mathcal I| = 1$.

\medskip
\noindent
\textbf{Proof of Claim~\eqref{proof: goal, weak convergence for N, |I| = 1, part ii, N related, lemma: cluster size, asymptotics, N i | n delta, cdot j, multiple ancsetor, refined estimates}}.
%
Let ${\mathbb J}$ be the set of all partitions of the non-empty  $\mathcal J \subseteq [d]$.
Given any partition $\mathscr J = \{ \mathcal J_1,\ldots, \mathcal J_k \} \in \mathbb J$,
let the event $A^\mathscr{J}_n(T,\delta)$ be defined as in \eqref{proof: def event A T partition J, part iii, lemma: cluster size, asymptotics, N i | n delta, cdot j, crude estimate}.
Our proof is based on the decomposition
of events in \eqref{proof, part i, decomp, upper bound, lemma: cluster size, asymptotics, N i | n delta, cdot j, multiple ancsetor}.
We first prove an upper bound.
Let 
$$
p(i,M,\mathcal T)
 \delequal 
 \P\big(
N^{>}_{i;j}(M) \geq 1\ \forall j \in \mathcal{T}
    \big).
$$
Given any $T \in \mathbb N$ and partition $\mathscr J = \{ \mathcal J_1,\ldots,\mathcal J_k \} \in \mathbb J$,
it has been shown in \eqref{proof, part i, event A i mathcal T upper bound, lemma: cluster size, asymptotics, N i | n delta, cdot j, multiple ancsetor} that 
\begin{align}
    \P\Big(A^\mathscr{J}_n(T,\delta)\Big)
    & \leq 
    \prod_{l \in [k]} T \cdot p(i,\delta T,\mathcal J_l).
    \label{proof: upper bound event A T partition J, part ii, lemma: cluster size, asymptotics, N i | n delta, cdot j, refined estimate}
\end{align}
Specifically, consider 
the singleton-partition 
$
\mathscr J_* = \{ \{j\}:\ j \in \mathcal J \}.
$
By Lemma~\ref{lemma: cluster size, asymptotics, N i | n delta, cdot j, crude estimate} (i),
there exists $\delta_0 > 0$ such that
\begin{align*}
    p(i,n\delta,\{j\}) \sim \bar{s}_{i,l^*(j)}\P(B_{j \leftarrow l^*(j)} > n\delta)
    \text{ as }n \to \infty,
    \qquad \forall j \in [d],\ \delta \in (0,\delta_0).
\end{align*}
It then follows from \eqref{proof: upper bound event A T partition J, part ii, lemma: cluster size, asymptotics, N i | n delta, cdot j, refined estimate} that
\begin{align*}
    \limsup_{n \to \infty}
    \sup_{ T \geq nc }
    \frac{
    \P\big(A^{\mathscr{J}_*}_n(T,\delta)\big)
    }{
        (n^{-1}T)^{|\mathcal J|}\prod_{j \in \mathcal J}\bar s_{i,l^*(j)}\cdot n\P(B_{j \leftarrow l^*(j)} > T\delta)
    }
    \leq 1,
    \qquad
    \forall \delta \in (0,\delta_0).
\end{align*}

Next,
we consider some partition $\mathscr J = \{\mathcal J_1,\ldots,\mathcal J_k\} \in \mathbb J \setminus \{ \mathscr{J}_* \}$.
Due to $\mathscr J \neq \mathscr J_*$,
there must be some $l \in [k]$ such that $\mathcal J_l$ contains at least two elements.
By part (ii) of Lemma~\ref{lemma: cluster size, asymptotics, N i | n delta, cdot j, crude estimate},
(and picking a smaller $\delta_0 > 0$ if needed)
\begin{align*}
    p(i,n\delta,\widetilde{\mathcal J})
    =
    \lo 
    \Bigg( n^{ |\widetilde{\mathcal J}| - 1}\prod_{j \in \widetilde{\mathcal J} }\P\big(B_{j \leftarrow l^*(j)} > n\delta\big) \Bigg),
    \quad
    \forall \delta \in (0,\delta_0),\ 
    \widetilde{\mathcal J} \subseteq [d]\text{ with }|\widetilde{\mathcal J}| \geq 2.
\end{align*}
Therefore, for any partition $\mathscr J = \{\mathcal J_1,\ldots,\mathcal J_k\} \in \mathbb J \setminus \{ \mathscr{J}_* \}$,
we have
\begin{align*}
    \prod_{l \in [k]}
    np(i,n\delta,\mathcal J_l)
    =
    \lo \Bigg(
        n^{|\mathcal J|}
        \prod_{j \in \mathcal J}
        \P\big(B_{j \leftarrow l^*(j)} > n\delta\big)
    \Bigg),
    \quad 
    \forall \delta \in (0,\delta_0).
\end{align*}
Then, by \eqref{proof: upper bound event A T partition J, part ii, lemma: cluster size, asymptotics, N i | n delta, cdot j, refined estimate},
\begin{align*}
    \limsup_{n \to \infty}
    \sup_{ T \geq nc }
    \frac{
    \P\big(A_n^{\mathscr{J}}(T,\delta)\big)
    }{
        \prod_{j \in \mathcal J}n\P(B_{ j \leftarrow  l^*(j)} > T\delta)
    }
    =
    0,
    \quad
    \forall \delta \in (0,\delta_0),\ \mathscr J \in \mathbb J \setminus\{ \mathscr J_* \}.
\end{align*}
Using the decomposition of events in \eqref{proof, part i, decomp, upper bound, lemma: cluster size, asymptotics, N i | n delta, cdot j, multiple ancsetor},
we arrive at the upper bound
\begin{align}
    \limsup_{n \to \infty}
    \sup_{ T \geq nc }
    \frac{
        \P\big(
            \sum_{m = 1}^{ T }N^{>,(m)}_{i;j}(\delta T)  \geq 1\text{ iff }j \in \mathcal J
        \big)
    }{
        (n^{-1}T)^{|\mathcal J|}\prod_{j \in \mathcal J}\bar s_{i,l^*(j)}\cdot n\P(B_{j \leftarrow l^*(j)} > T\delta)
    }
    \leq 1,\quad \forall \delta \in (0,\delta_0).
    \label{proof, part (i), upper bound, lemma: cluster size, asymptotics, N i | n delta, cdot j, multiple ancsetor}
\end{align}
We proceed similarly for the derivation of the lower bound.
In particular, note that
$
\big\{ \sum_{m = 1}^{ T }N^{>,(m)}_{i;j}(\delta T)  \geq 1\text{ iff }j \in \mathcal J  \big\}
\supseteq \hat A(T,\delta),
$
where
\begin{align*}
   \hat A(T,\delta)
    &\delequal
    \bigg\{
        \exists \{ m_1, m_2,\ldots,m_{|\mathcal J|} \} \subseteq [T] \text{ such that }
    \\ 
    &\qquad \qquad \qquad \qquad
        N^{>,(m_j)}_{ i;j }(\delta T) = 1,\ \sum_{ l \in [d]:\ l \neq j }N^{>,(m_j)}_{ i;l }(\delta T) = 0\ \forall j = 1,\ldots,|\mathcal J|;
     \\
     & \qquad \qquad \qquad \qquad \qquad\qquad\qquad
     \sum_{l \in [d]}N^{>,(m)}_{ i;l }(\delta T) = 0\ \forall m \in [T]\setminus\{m_j:\ j \in \mathcal{J}\}
    \bigg\}.
\end{align*}
For clarity of the notations in the display below, we write 
$k = |\mathcal J|$, $\mathcal J = \{j_1,\ldots,j_k\}$,
and
\begin{align*}
    \hat p(i,M,j)
 & \delequal
 \P\big(N^>_{ i:j }(M) = 1,\ N^>_{ i:j^\prime }(M) = 0\ \forall j^\prime \neq j\big),
 \\
 \hat{p}_*(i,M) & \delequal
 \P\big(N^>_{ i:j }(M) \geq 1\text{ for some }j\in [d]\big).
\end{align*}
Since the sequence $\big( N^{>,(m)}_{i;j}(\delta T)  \big)_{m \geq 1}$ are i.i.d.\ copies, 
by the law of multinomial distributions,
it holds
for any $T \geq \floor{nc}$ that
\begin{align*}
    & 
     \P\big(
            \sum_{m = 1}^{ T }N^{>,(m)}_{i;j}(\delta T)  \geq 1\text{ iff }j \in \mathcal J
        \big)
        \geq 
    \\
 &
 \geq \P\big(\hat A(T,\delta)\big)
 =
 \frac{
    T!
 }{
    (T - k)!
 }
 \cdot
 \Bigg[\prod_{l \in [k]}
    \hat p(i,\delta T,j_l)\cdot 
    \Bigg]
\cdot 
\Big(
 1 - \hat{p}_*(i,\delta T)
 \Big)^{ T - k }
 \\
 & \geq 
 \Bigg(\frac{\floor{nc} - k}{\floor{nc}}\Bigg)^{k}
 \cdot  T^k \cdot 
\Bigg[\prod_{l \in [k]}
    \hat p(i,\delta T,j_l)\cdot 
    \Bigg]
\cdot 
\Big(
 1 - \hat{p}_*(i,\delta T)
 \Big)^{ T }
 \qquad
 \text{due to }T \geq \floor{nc}
 \\
 & =
 \Bigg(\frac{\floor{nc} - k}{\floor{nc}}\Bigg)^{k}
 \cdot 
 \Big(1 - \hat{p}_*(i,\delta T)\Big)^{ T }
    \cdot 
 (n^{-1}T)^k
 \prod_{l \in [k]}n\hat p(i,\delta T,j_l).
\end{align*}
By part (i) and part (ii) of Lemma~\ref{lemma: cluster size, asymptotics, N i | n delta, cdot j, crude estimate},
there exists $\delta_0 > 0$ such that
\begin{align*}
    \hat p(i,T\delta,j) \sim \bar s_{i,l^*(j)}\P(B_{j \leftarrow l^*(j)} > T\delta)\quad
    \text{as }T \to \infty,
    \qquad \forall j \in [d],\ \delta \in (0,\delta_0).
\end{align*}
Analogously, using part (i) of Lemma~\ref{lemma: cluster size, asymptotics, N i | n delta, cdot j, crude estimate}
(and by picking a smaller $\delta_0 > 0$ if needed),
it holds for any $\delta \in (0,\delta_0)$ that (as $T \to \infty$)
\begin{align}
    \hat p_*(i,\delta T)
    \leq 
    \sum_{ j \in [d] }\P\big( N^>_{i;j}(T\delta) \geq 1 \big)
    =
    \bo \Bigg(
        \sum_{ j \in [d] }\P(B_{ j\leftarrow   l^*(j) } > T\delta)
    \Bigg)
    = \lo \big( T^{ -\tilde \alpha  } \big),
    \quad 
    \forall \tilde\alpha \in \big(1, \min_{ i \in [d],j \in [d] }\alpha_{i \leftarrow j}\big).
    \nonumber
\end{align}
As a result, we get
$
\lim_{n \to \infty}\inf_{ T \geq \floor{nc}  } \big(1 - \hat{p}_*(i,\delta T)\big)^{ T } = 1.
$
We then arrive at the lower bound (under any $\delta \in (0,\delta_0)$)
\begin{align}
    & \liminf_{n \to \infty}
    \inf_{ T \geq  \floor{nc} }
    \frac{
        \P\big(
            \sum_{m = 1}^{ T }N^{>,(m)}_{i;j}(\delta T)  \geq 1\text{ iff }j \in \mathcal J
        \big)
    }{
        (n^{-1}T)^{|\mathcal J|}\prod_{j \in \mathcal J}\bar s_{i,l^*(j)}\cdot n\P(B_{j \leftarrow l^*(j)} > T\delta)
    }
    \geq 1.
     \label{proof, part (i), lower bound, lemma: cluster size, asymptotics, N i | n delta, cdot j, multiple ancsetor}
\end{align}
By the uniform convergence theorem (e.g., Proposition~2.4 of \cite{resnick2007heavy}),
$$
\frac{\P(B_{j \leftarrow l^*(j)} > t \cdot n\delta)}{\P(B_{j\leftarrow l^*(j)} > n\delta)} \to t^{ -\alpha^*(j) } 
\ \text{ as }n \to \infty,
\quad 
\text{uniformly over }t \in [c,\infty).
$$
Plugging such uniform convergence 
into the bounds
\eqref{proof, part (i), upper bound, lemma: cluster size, asymptotics, N i | n delta, cdot j, multiple ancsetor} and \eqref{proof, part (i), lower bound, lemma: cluster size, asymptotics, N i | n delta, cdot j, multiple ancsetor}, we conclude the proof for Claim~\eqref{proof: goal, weak convergence for N, |I| = 1, part ii, N related, lemma: cluster size, asymptotics, N i | n delta, cdot j, multiple ancsetor, refined estimates}.

\medskip
\noindent
\textbf{Proof of Claim~\eqref{proof: goal, weak convergence for W, |I| = 1, part ii, N related, lemma: cluster size, asymptotics, N i | n delta, cdot j, multiple ancsetor, refined estimates}}.
In essence, the proof above for Claim~\eqref{proof: goal, weak convergence for N, |I| = 1, part ii, N related, lemma: cluster size, asymptotics, N i | n delta, cdot j, multiple ancsetor, refined estimates} regarding the event $\hat A(T,\delta)$ has verified that 
$
\lim_{T \to \infty}
    \P\big(
       \hat A(T,\delta)
        \ \big|\ 
        \sum_{m = 1}^{ T }N^{>,(m)}_{i;j}(\delta T)  \geq 1\text{ iff }j \in \mathcal J 
    \big) = 1
$
for any $\delta > 0$ small enough.
Therefore, to prove Claim~\eqref{proof: goal, weak convergence for W, |I| = 1, part ii, N related, lemma: cluster size, asymptotics, N i | n delta, cdot j, multiple ancsetor, refined estimates},
it suffices to show that
\begin{align*}
    \lim_{n \to \infty}
    \sup_{ T \geq nc }\  
    \sup_{ \frac{nx_j}{T} \in [c,C]\ \forall j \in \mathcal J }
    & 
    \Bigg|
        \frac{
        \P\big(
            \sum_{m = 1}^{ T }W^{>,(m)}_{i;j}(\delta T)  > n x_j\ \forall j \in \mathcal J
            \ \big|\ 
            \hat A(T,\delta)
        \big)
    }{
        \prod_{j \in \mathcal J}
            \big(
                \frac{\delta T}{n x_j}
            \big)^{ \alpha^*(j) }
    }
    - 1
    \Bigg| = 0,
    \quad 
    \forall \delta > 0.
\end{align*}
By definitions in \eqref{def: W i M j, pruned cluster, 1, cluster size}--\eqref{def: cluster size, N i | M cdot j}
and the independence of $W^{>,(m)}_{i;j}(T\delta)$ across $m \geq 1$,
when conditioned on the event 
$\hat A(T,\delta)$,
the conditional law of $\big(\sum_{m = 1}^T W^{>,(m)}_{i;j}(T\delta)\big)_{j \in \mathcal J}$ are independent across $j$,
and the conditional law of each $\sum_{m = 1}^T W^{>,(m)}_{i;j}(T\delta)$
is the same as
$
\P\big( B_{ j \leftarrow l^*(j) } \in \ \cdot\ \big| B_{ j \leftarrow l^*(j) } > T\delta  \big).
$
Therefore, 
\begin{align*}
    & \P\Bigg(
            \sum_{m = 1}^{ T }W^{>,(m)}_{i;j}(\delta T)  \geq n x_j\ \forall j \in \mathcal J
            \ \Bigg|\ 
            \hat A(T,\delta)
        \Bigg)
    = 
    \prod_{j \in \mathcal J}
    \frac{\P(B_{j \leftarrow l^*(j)} > \frac{nx_j}{T\delta} \cdot T\delta )}{\P(B_{j \leftarrow l^*(j)} > T\delta)}.
\end{align*}
By the uniform convergence theorem (with $\frac{nx_j}{T\delta} \in [cT/\delta,CT/\delta]$),
we conclude the proof of
Claim~\eqref{proof: goal, weak convergence for W, |I| = 1, part ii, N related, lemma: cluster size, asymptotics, N i | n delta, cdot j, multiple ancsetor, refined estimates}.
\end{proof}

Now, we are ready to provide the proofs of Lemmas~\ref{lemma: prob of type I, tail prob of tau, cluster size} and \ref{lemma: limit theorem for n tau to hat C I, cluster size}.

\begin{proof}[Proof of Lemma~\ref{lemma: prob of type I, tail prob of tau, cluster size}]\linksinpf{lemma: prob of type I, tail prob of tau, cluster size}
Fix $M > 0$ and some $\bm I = (I_{k,j})_{ k \geq 1, j \in [d] } \in \mathscr I$.
By Remark~\ref{remark: def of type},
there uniquely exists $j_1 \in [d]$ such that $I_{1,j_1} = 1$.
Besides, by \eqref{proof: def, set B type I M c, cluster size}, it holds on the event $\{n^{-1}\bm \tau^{n|\delta}_i \in E^{\bm I}(M,c)\}$ that
$
n^{-1}\tau^{n|\delta}_{i;j}(k) > M
$
for any $k \in [\mathcal K^{\bm I}]$, $j \in \bm j^{\bm I}_k$,
and
\begin{align*}
    \frac{
        n^{-1}\tau^{n|\delta}_{i;j}(k)
    }{
        n^{-1}\tau^{n|\delta}_{i;j^\prime}(k^\prime)
    }
    \in [c,1/c],
    \qquad\forall k \in [\mathcal K^{\bm I}],\ j \in \bm j^{\bm I}_k,\ 
    k^\prime \in [\mathcal K^{\bm I}],\ j^\prime \in \bm j^{\bm I}_{k^\prime}.
\end{align*}
On the other hand, 
by the definition of
${\bm I^{n|\delta}_i} = \big(I^{n|\delta}_{i;j}(k)\big)_{k \geq 1,\ j \in [d]}$ in \eqref{def, I k M j for M type, cluster size},
\begin{align*}
\big\{
        n^{-1}\bm \tau^{n|\delta}_i \in E^{\bm I}(M,c)
    \big\}
    =
    \big\{
    \bm I^{n|\delta}_i = \bm I,\ 
        n^{-1}\bm \tau^{n|\delta}_i \in E^{\bm I}(M,c)
    \big\}.
\end{align*}
Therefore, analogous to the derivation of \eqref{proof: applying markov property, decomp of events, lemma: crude estimate, type of cluster}, we get
(henceforth in this proof, we write
$
\bm t(k-1) = (t_j)_{ j \in \bm j^{\bm I}_{k-1} }
$
and
$
\bm x(k) = (x_j)_{ j \in \bm j^{\bm I}_k }
$)
\begin{align*}
    & \P\Big(
        \bm I^{n|\delta}_i = \bm I,\ 
        n^{-1}\bm \tau^{n|\delta}_i \in E^{\bm I}(M,c)
    \Big)
    \\ 
    & \leq 
    \underbrace{ \P\Big(
        W^{>}_{ i;j_1 }(n\delta) > nM 
    \Big) 
    }_{ \delequal p_1(n,M,\delta) }
    \\ 
    & \cdot 
    \prod_{k = 2}^{\mathcal K^{\bm I}}
    \sup_{
        \substack{
            t_l \geq nM\ \forall l \in \bm j^{\bm I}_{k-1}
            \\
            t_l/t_{l^\prime} \in [c,1/c]\ \forall l,l^\prime \in \bm j^{\bm I}_{k-1}
        }
    }
    \underbrace{ 
    \P\Big(
        N^{>|\delta}_{ \bm t(k-1);j } \geq 1\text{ iff }j \in \bm j^{\bm I}_k;\ 
        W^{>|\delta}_{ \bm t(k-1);j } \geq c \cdot \max_{l \in \bm j^{\bm I}_{k-1}}t_l \ \forall j \in \bm j^{\bm I}_k
    \Big)
    }_{
        \delequal 
        p_k(n,M,\delta,\bm t(k-1))
    }.
\end{align*}
First, due to \eqref{property, W and N i M l j when positive, cluster size},
\begin{align*}
    p_1(n,M,\delta) 
    = 
     \P\Big(
        W^{>}_{ i;j_1 }(n\delta) > nM\ \Big|\  N^{>}_{ i;j_1 }(n\delta) \geq 1
    \Big) 
    \cdot 
    \P\Big( N^{>}_{ i;j_1 }(n\delta) \geq 1\Big).
\end{align*}
By part (i) of Lemma~\ref{lemma: cluster size, asymptotics, N i | n delta, cdot j, crude estimate},
there is some $\delta_0 > 0$ such that for all $\delta \in (0,\delta_0)$,
\begin{align}
    \limsup_{ n \to \infty }
    \frac{
        p_1(n,M,\delta)
    }{
        \bar s_{i,l^*(j_1)} \P(B_{ j_1 \leftarrow l^*(j_1)} > n\delta) \cdot (\delta / M)^{ \alpha^*(j_1) }
    }
    \leq 1.
    \nonumber
\end{align}
Furthermore, due to  $\P(B_{ j_1 \leftarrow l^*(j_1)} > x) \in \RV_{ -\alpha^*(j_1) }(x)$ (see Assumption~\ref{assumption: heavy tails in B i j}),
we have 
$
\P(B_{j_1 \leftarrow l^*(j_1)} > n\delta) \cdot (\delta / M)^{ \alpha^*(j_1) } \sim 
\P(B_{j_1 \leftarrow l^*(j_1)} > n) \cdot (1 / M)^{ \alpha^*(j_1) },
$
and hence
\begin{align}
    \limsup_{ n \to \infty }
    \frac{
        p_1(n,M,\delta)
    }{
        \bar s_{i,l^*(j_1)} \P(B_{ j_1 \leftarrow  l^*(j_1)} > n) \cdot (1 / M)^{ \alpha^*(j_1) }
    }
    \leq 1.
    \label{proof: bound for p1, lemma: prob of type I, tail prob of tau, cluster size}
\end{align}
Next, for each $k = 2,3,\ldots,\mathcal K^{\bm I}$,
Claim
\eqref{claim weak convergence for N, part (ii), N related, lemma: cluster size, asymptotics, N i | n delta, cdot j, multiple ancsetor, refined estimates} in Lemma~\ref{lemma: cluster size, asymptotics, N i | n delta, cdot j, refined estimate} gives an upper bound for
$
\P\big( N^{>|\delta}_{ \bm t(k);j } \geq 1\text{ iff }j \in \bm j^{\bm I}_k \big),
$
whereas Claim~\eqref{claim 2, crude upper bound, part (ii), N related, lemma: cluster size, asymptotics, N i | n delta, cdot j, multiple ancsetor, refined estimates} in 
Lemma~\ref{lemma: cluster size, asymptotics, N i | n delta, cdot j, refined estimate} provides an upper bound for 
$
\P\big(
W^{>|\delta}_{ \bm t(k);j } \geq nx\ \forall j \in \bm j^{\bm I}_k\ \big|\ 
N^{>|\delta}_{ \bm t(k);j } \geq 1\text{ iff }j \in \bm j^{\bm I}_k \big),
$
with $x = n^{-1}c \cdot \max_{l \in \bm j^{\bm I}_{k-1}}t_l$.
In particular, under the condition that 
$
t_l/t_{l^\prime} \in [c,1/c]\ \forall l,l^\prime \in \bm j^{\bm I}_{k-1},
$
we have 
$
nx/t_l \in [1,1/c]
$
for each $l \in \bm j^{\bm I}_{k-1}$,
and 
\eqref{claim 2, crude upper bound, part (ii), N related, lemma: cluster size, asymptotics, N i | n delta, cdot j, multiple ancsetor, refined estimates}
provides a bound 
that holds uniformly over  $nx /t_l \in [1,1/c]$ for each $l \in \bm j^{\bm I}_{k - 1}$.
Therefore, by picking a smaller $\delta_0 > 0$ if necessary, it holds for any $\delta \in (0,\delta_0)$ that 
(henceforth in this proof, we use $a_n \lessapprox b_n$ to denote $\limsup_{n \to \infty}a_n/b_n \leq 1$)
\begin{align}
    &
        \sup_{
        \substack{
            t_l \geq nM\ \forall l \in \bm j^{\bm I}_{k-1}
            \\
            t_l/t_{l^\prime} \in [c,1/c]\ \forall l,l^\prime \in \bm j^{\bm I}_{k-1}
        }
    }
    p_k\big(n,M,\delta,\bm t(k-1) \big)
     \nonumber
     \\ 
     & \lessapprox
     \Bigg( \sup_{
        \bm t(k-1):\ t_l \geq nM\ \forall l \in \bm j^{\bm I}_{k-1}
    }
     C_{\bm j^{\bm I}_{k-1} \leftarrow \bm j^{\bm I}_k }\big( n^{-1}\bm t(k-1)\big)
    \Bigg)
    \nonumber
    \\ 
    &\quad 
         \cdot 
        \sup_{
        \substack{
            t_l \geq nM\ \forall l \in \bm j^{\bm I}_{k-1}
            \\
            t_l/t_{l^\prime} \in [c,1/c]\ \forall l,l^\prime \in \bm j^{\bm I}_{k-1}
        }
    }
         \sum_{ 
                \{ \mathcal J(i):\ i \in \bm j^{\bm I}_{k-1} \} \in \mathbb T_{ \bm j^{\bm I}_{k-1} \leftarrow \bm j^{\bm I}_k }
                }
                 \prod_{ i \in \mathcal \bm j^{\bm I}_{k-1}}
                \prod_{ j \in \mathcal J(i) }\bigg(\frac{\delta t_i}{c \cdot \max_{l \in \bm j^{\bm I}_{k-1}}t_l }\bigg)^{\alpha^*(j)}
                \cdot n\P(B_{j \leftarrow l^*(j)} > n\delta)
    \nonumber
    \\
    & \lessapprox
\Bigg( \sup_{
        \bm t(k-1):\ t_l \geq nM\ \forall l \in \bm j^{\bm I}_{k-1}
    }
     C_{\bm j^{\bm I}_{k-1} \leftarrow \bm j^{\bm I}_k }\big( n^{-1}\bm t(k-1)\big)
    \Bigg)
    \nonumber
    \\ 
    &\qquad\quad 
         \cdot 
        \sum_{ 
                \{ \mathcal J(i):\ i \in \bm j^{\bm I}_{k-1} \} \in \mathbb T_{ \bm j^{\bm I}_{k-1} \leftarrow \bm j^{\bm I}_k }
                }
                 \prod_{ i \in \mathcal \bm j^{\bm I}_{k-1}}
                \prod_{ j \in \mathcal J(i) }\bigg(\frac{\delta}{c }\bigg)^{\alpha^*(j)}
                \cdot n\P(B_{j \leftarrow l^*(j)} > n\delta).
    \nonumber
\end{align}
By $\P(B_{j \leftarrow l^*(j)} > x) \in \RV_{ -\alpha^*(j) }(x)$,
we have
$
\delta^{\alpha^*(j)}
                \cdot \P(B_{j \leftarrow l^*(j)} > n\delta)
                \sim \P(B_{j \leftarrow l^*(j)} > n).
$
Next, 
since $ C_{\mathcal I \leftarrow \mathcal J}\big( (t_i)_{i \in \mathcal I}\big)$ defined in \eqref{def: function C J assigned to I, cluster size} 
is monotone decreasing w.r.t.\ each $t_i$,
\begin{align*}
     \sup_{
        t_l \geq nM\ \forall l \in \bm j^{\bm I}_{k-1}
    }
    C_{\bm j^{\bm I}_{k-1} \leftarrow \bm j^{\bm I}_k}\big( n^{-1}\bm t(k-1)\big)
    =
    \underbrace{ \sum_{ 
                \{ \mathcal J(i):\ i \in \bm j^{\bm I}_{k-1} \} \in \mathbb T_{ \bm j^{\bm I}_{k-1} \leftarrow \bm j^{\bm I}_k }
                }
                 \prod_{ i \in \mathcal \bm j^{\bm I}_{k-1}}
                \prod_{ j \in \mathcal J(i) }
          \frac{ \bar s_{ i, l^*(j) } }{M^{\alpha^*(j) - 1} }
    }_{  \delequal C^{(M)}_{ k }  },
\end{align*}
where $C^{(M)}_{ k }$ monotonically tends to $0$ as $M \to \infty$.
In summary, 
by setting the constant
$
\tilde c_k \delequal 
\sum_{ 
                \{ \mathcal J(i):\ i \in \bm j^{\bm I}_{k-1} \} \in \mathbb T_{ \bm j^{\bm I}_{k-1} \leftarrow \bm j^{\bm I}_k }
                }
                 \prod_{ i \in \mathcal \bm j^{\bm I}_{k-1}}
                \prod_{ j \in \mathcal J(i) }
                c^{-\alpha^*(j)} \in (0,\infty),
$
it holds for any $\delta \in (0,\delta_0)$ that
\begin{align}
    \sup_{
        \substack{
            t_l \geq nM\ \forall l \in \bm j^{\bm I}_{k-1}
            \\
            t_l/t_{l^\prime} \in [c,1/c]\ \forall l,l^\prime \in \bm j^{\bm I}_{k-1}
        }
    }
    p_k\big(n,M,\delta,\bm t(k-1), \bm x(k)\big)
    \lessapprox
    C^{(M)}_{ k } \tilde c_k \cdot \prod_{j \in \bm j^{\bm I}_k}n\P(B_{j\leftarrow l^*(j)} > n),
    \ \ 
    \forall k = 2,3,\ldots,\mathcal K^{\bm I}.
    \label{proof: bound for pk, lemma: prob of type I, tail prob of tau, cluster size}
\end{align}
Combining \eqref{proof: bound for p1, lemma: prob of type I, tail prob of tau, cluster size} and \eqref{proof: bound for pk, lemma: prob of type I, tail prob of tau, cluster size},
we obtain (for each $\delta \in (0,\delta_0)$)
\begin{align*}
    \limsup_{n \to \infty}
    \frac{
        \P\big(
        \bm I^{n|\delta}_i = \bm I,\ 
        n^{-1}\bm \tau^{n|\delta}_i \in B^{\bm I}(M,c)
        \big)
    }{
        \big( \bar s_{i,l^*(j_1)}
        \cdot \prod_{k = 2}^{\mathcal K^{\bm I}}
         C^{(M)}_{ k } \tilde c_k\big)
         \cdot n^{-1} \prod_{ k = 1 }^{ \mathcal K^{\bm I} }\prod_{j \in \bm j^{\bm I}_k}n\P(B_{j \leftarrow l^*(j)} > n)
    }
    \leq 1.
\end{align*}
In particular, recall that $\bm j^{\bm I}_1 = \{j_1\}$,
so we have
$
n^{-1}\prod_{j \in \bm j^{\bm I}_1}n\P(B_{j \leftarrow l^*(j)} > n)
=
\P(B_{j_1 \leftarrow l^*(j_1)} > n)
$
in the denominator of the display above.
By \eqref{property: rate function for type I, cluster size} and that 
$
\lim_{M \to \infty}C^{(M)}_{ k } = 0,
$
we conclude the proof by setting
$
C^{\bm I}_i(M,c)
=
 \bar s_{i,l^*(j_1)}
        \cdot \prod_{k = 2}^{\mathcal K^{\bm I}}
         C^{(M)}_{ k }\tilde c_k.
$
\end{proof}

\begin{proof}[Proof of Lemma~\ref{lemma: limit theorem for n tau to hat C I, cluster size}]
\linksinpf{lemma: limit theorem for n tau to hat C I, cluster size}
It suffices to find $\delta_0 > 0$ such that the following holds for all $\delta \in (0,\delta_0)$:
given $\epsilon \in (0,1)$, there exists $\rho = \rho(\epsilon) \in (1,\infty)$ such that the inequalities
\begin{equation}\label{proof: goal, lemma: limit theorem for n tau to hat C I, cluster size}
    \begin{aligned}
        \limsup_{n \to\infty}
    \big(\lambda_{\bm j}(n)\big)^{-1}
    \P\bigg(
        n^{-1}\bm \tau^{n|\delta}_{i;j} \in 
        A^{\bm I}(\bm x, \bm y)
    \bigg)
    & \leq 
    (1 + \epsilon) \cdot 
    \bar s_{i,l^*(j^{\bm I}_1)} \cdot 
    \widehat{\mathbf C}^{\bm I}
    \Bigg(
        \bigtimes_{ k \in [\mathcal K^{\bm I}] }\bigtimes_{ j \in \bm j^{\bm I}_k } (x_{k,j},y_{k,j}]
    \Bigg),
    \\ 
    \liminf_{n \to\infty}
    \big(\lambda_{\bm j}(n)\big)^{-1}
    \P\bigg(
        n^{-1}\bm \tau^{n|\delta}_{i;j} \in 
        A^{\bm I}(\bm x, \bm y)
    \bigg)
    & \geq 
    (1 - \epsilon) \cdot 
    \bar s_{i,l^*(j^{\bm I}_1)} \cdot 
    \widehat{\mathbf C}^{\bm I}
    \Bigg(
        \bigtimes_{ k \in [\mathcal K^{\bm I}] }\bigtimes_{ j \in \bm j^{\bm I}_k } (x_{k,j},y_{k,j}]
    \Bigg)
    \end{aligned}
\end{equation}
hold
under the condition that $c \leq x_{k,j} < y_{k,j} \leq C$ and $y_{k,j}/x_{k,j} < \rho$ for any $k \in [\mathcal K^{\bm I}], j \in \bm j^{\bm I}_k$.
To see why, note that we can always partition
the set $\bigtimes_{ k \in [\mathcal K^{\bm I}] }\bigtimes_{ j \in \bm j^{\bm I}_k } (x_{k,j},y_{k,j}]$
in
\eqref{claim, lemma: limit theorem for n tau to hat C I, cluster size} into a union of finitely many disjoint sets of the form 
$\bigtimes_{ k \in [\mathcal K^{\bm I}] }\bigtimes_{ j \in \bm j^{\bm I}_k } (x^\prime_{k,j},y^\prime_{k,j}]$,
where we have
$c \leq x^\prime_{k,j} < y^\prime_{k,j} \leq C$ and $y^\prime_{k,j}/x^\prime_{k,j} < \rho$ for each $k,j$.
Then, we obtain \eqref{claim, lemma: limit theorem for n tau to hat C I, cluster size} by applying \eqref{proof: goal, lemma: limit theorem for n tau to hat C I, cluster size} onto each of the disjoint subset and sending $\epsilon$ to $0$ in the limit.

To prove \eqref{proof: goal, lemma: limit theorem for n tau to hat C I, cluster size}, we make some observations regarding $\hat{\mathbf C}^{\bm I}$.
Consider $\bigtimes_{ k \in [\mathcal K^{\bm I}] }\bigtimes_{ j \in \bm j^{\bm I}_k } (x_{k,j},y_{k,j}]$
with $0 < x_{k,j} < y_{k,j}$ for each $k,j$.
For clarity of the displays below, we write $\bm w_k = (w_{k,j})_{ j \in \bm j^{\bm I}_k }$.
By the definition of $\hat{\mathbf C}^{\bm I}$ in \eqref{def, measure hat C i type I, cluster size} and the definition of $\nu^{\bm I}$ in \eqref{def, measure nu type I, cluster size},
\begin{align*}
    \widehat{\mathbf C}^{\bm I}
    \Bigg(
        \bigtimes_{k \in [\mathcal K^{\bm I}]}\bigtimes_{j \in \bm j_k}(x_{k,j},y_{k,j}]
    \Bigg)
    & = 
    \left[\rule{0cm}{0cm}
        \prod_{k = 1}^{ \mathcal K^{\bm I} - 1 }
        \underbrace{ \int_{ w_{k,j} \in (x_{k,j},y_{k,j}]\ \forall j \in \bm j_k^{\bm I}  }
        g_{ \bm j_{k}^{\bm I} \leftarrow \bm j^{\bm I}_{k+1} }(\bm w_k)
         \Bigg(\bigtimes_{ j \in \bm j^{\bm I}_k }\nu_{\alpha^*(j)}(dw_{k,j})\Bigg)
        }_{
            \delequal \hat c_k
        }
    \right]
    \\ 
    &\qquad\qquad\qquad\qquad
    \cdot 
    \underbrace{ \prod_{j \in \bm j_{\mathcal K^{\bm I} }^{\bm I} }
    \int_{w_{\mathcal K^{\bm I},j} \in (x_{\mathcal K^{\bm I},j}, y_{\mathcal K^{\bm I},j}]}
    \nu_{\alpha^*(j)}(dw_{\mathcal K^{\bm I},j})}_{
        \delequal \hat c_{\mathcal K^{\bm I}}
    }.
\end{align*}
By the definitions  in \eqref{def, power law measure nu beta} we get
$
 \hat c_{\mathcal K^{\bm I}}
    =
    \prod_{j \in \bm j_{\mathcal K^{\bm I}}^{\bm I}}
        \big(\frac{1}{x_{\mathcal K^{\bm I},j}}\big)^{\alpha^*(j)}
            -
        \big(\frac{1}{y_{\mathcal K^{\bm I},j}}\big)^{\alpha^*(j)}.
$
Next, for each term $\hat c_k$ with $k \in [\mathcal K^{\bm I} - 1]$,
by the intermediate value theorem (in particular, due to the continuity of the mapping $g_{\mathcal I \leftarrow \mathcal J}$ defined in \eqref{def: function g mathcal I mathcal J, for measure C i bm I, cluster size}),
there exists some $\bm z_k = (z_{k,j})_{j \in \bm j^{\bm I}_k}$
with
$
z_{k,j} \in [x_{k,j},y_{k,j}]\ \forall j \in \bm j^{\bm I}_k
$
such that 
\begin{align*}
    \hat c_k
    & = 
    g_{\bm j^{\bm I}_k \leftarrow \bm j^{\bm I}_{k+1}}(\bm z_k)
    \cdot
    \int_{ w_{k,j} \in (x_{k,j},y_{k,j}]\ \forall j \in \bm j^{\bm I}_k  }
        \Bigg( \bigtimes_{ j \in \bm j^{\bm I}_k }\nu_{\alpha^*(j)}(dw_{k,j}) \Bigg)
    \\ 
    & = 
    g_{\bm j^{\bm I}_k \leftarrow \bm j^{\bm I}_{k+1}}(\bm z_k)\cdot
    \prod_{j \in \bm j^{\bm I}_k}
        \bigg(\frac{1}{x_{k,j}}\bigg)^{\alpha^*(j)}
            -
        \bigg(\frac{1}{y_{k,j}}\bigg)^{\alpha^*(j)}.
\end{align*}
On the other hand,
due to $0 < x_{k,j} \leq z_{k,j} \leq y_{k,j}$ and the monotonicity of $g_{\mathcal I \leftarrow \mathcal J}$,
\begin{align*}
    \frac{
        g_{\bm j^{\bm I}_k \leftarrow \bm j^{\bm I}_{k+1}}(\bm x_k)
    }{
        g_{\bm j^{\bm I}_k \leftarrow \bm j^{\bm I}_{k+1}}(\bm y_k)
    }
    \cdot \hat c_k
    \leq 
    \frac{
        g_{\bm j^{\bm I}_k \leftarrow \bm j^{\bm I}_{k+1}}(\bm x_k)
    }{
        g_{\bm j^{\bm I}_k \leftarrow \bm j^{\bm I}_{k+1}}(\bm z_k)
    }
    \cdot \hat c_k
    =
    g_{\bm j^{\bm I}_k \leftarrow \bm j^{\bm I}_{k+1}}(\bm x_k)\cdot
    \prod_{j \in \bm j^{\bm I}_k}
        \bigg(\frac{1}{x_{k,j}}\bigg)^{\alpha^*(j)}
            -
        \bigg(\frac{1}{y_{k,j}}\bigg)^{\alpha^*(j)}.
\end{align*}
In addition, by the definitions in \eqref{def: function g mathcal I mathcal J, for measure C i bm I, cluster size},
\begin{align*}
    \frac{
        g_{\bm j^{\bm I}_k \leftarrow \bm j^{\bm I}_{k+1}}(\bm x_k)
    }{
        g_{\bm j^{\bm I}_k \leftarrow \bm j^{\bm I}_{k+1}}(\bm y_k)
    }
    & \geq 
    \min_{j \in \bm j^{\bm I}_k}\bigg(\frac{x_{k,j}}{y_{k,j}}\bigg)^{|\bm j^{\bm I}_{k+1}|}.
\end{align*}
In summary,
\begin{align}
    & \Bigg(
        \prod_{k \in [\mathcal K^{\bm I} - 1]}
        g_{\bm j^{\bm I}_k \leftarrow \bm j^{\bm I}_{k+1}}(\bm x_k)
    \Bigg)
    \cdot 
    \Bigg[
        \prod_{k \in [\mathcal K^{\bm I}]}\prod_{j \in \bm j^{\bm I}_k}
            \bigg(\frac{1}{x_{k,j}}\bigg)^{\alpha^*(j)}
            -
        \bigg(\frac{1}{y_{k,j}}\bigg)^{\alpha^*(j)}
    \Bigg]
    \label{proof, lower bound for hat C i type I, cluster size}
    \\ 
    & \geq 
    \Bigg[ \prod_{k \in [\mathcal K^{\bm I} - 1]}
        \min_{j \in \bm j^{\bm I}_k}\bigg(\frac{x_{k,j}}{y_{k,j}}\bigg)^{|\bm j^{\bm I}_{k+1}|}
    \Bigg]
    \cdot 
    \prod_{k = 1}^{\mathcal K^{\bm I}}\hat c_k
    = 
    \Bigg[ \prod_{k \in [\mathcal K^{\bm I} - 1]}
        \min_{j \in \bm j^{\bm I}_k}\bigg(\frac{x_{k,j}}{y_{k,j}}\bigg)^{|\bm j^{\bm I}_{k+1}|}
    \Bigg]
    \cdot 
    \widehat{\mathbf C}^{\bm I}
    \Bigg(
        \bigtimes_{k \in [\mathcal K^{\bm I}]}\bigtimes_{j \in \bm j^{\bm I}_k}(x_{k,j},y_{k,j}]
    \Bigg).
    \nonumber
    \end{align}
Similarly, one can obtain the upper bound
\begin{align}
   & \Bigg(
        \prod_{k \in [\mathcal K^{\bm I} - 1]}
        g_{\bm j^{\bm I}_k \leftarrow \bm j^{\bm I}_{k+1}}(\bm y_k)
    \Bigg)
    \cdot 
    \Bigg[
        \prod_{k \in [\mathcal K^{\bm I}]}\prod_{j \in \bm j^{\bm I}_k}
            \bigg(\frac{1}{x_{k,j}}\bigg)^{\alpha^*(j)}
            -
        \bigg(\frac{1}{y_{k,j}}\bigg)^{\alpha^*(j)}
    \Bigg]
    \label{proof, upper bound for hat C i type I, cluster size}
    \\  
    & \leq 
    \Bigg[ \prod_{k \in [\mathcal K^{\bm I} - 1]}
        \max_{j \in \bm j^{\bm I}_k}\bigg(\frac{y_{k,j}}{x_{k,j}}\bigg)^{|\bm j^{\bm I}_{k+1}|}
    \Bigg]
    \cdot 
    \widehat{\mathbf C}^{\bm I}
    \Bigg(
        \bigtimes_{k \in [\mathcal K^{\bm I}]}\bigtimes_{j \in \bm j^{\bm I}_k}(x_{k,j},y_{k,j}]
    \Bigg).
    \nonumber
\end{align}

To proceed,
for any non-empty $\mathcal I \subseteq [d]$ and any $\mathcal J \subseteq [d]$, we define
\begin{align}
    p_{ \mathcal I, \mathcal J }
    \big(\delta,\bm t(\mathcal I),\bm w(\mathcal J)\big)
    \delequal
    \P\bigg(
        W^{>|\delta}_{ \bm t(\mathcal I);j } = w_j\ \forall j \in \mathcal J,\ 
        W^{>|\delta}_{ \bm t(\mathcal I);j } = 0\ \forall j \in [d]\setminus\mathcal J
    \bigg),
    \nonumber
\end{align}
where we write
$\bm w(\mathcal J) = (w_j)_{j \in \mathcal J}$,
and the
$W^{>|\delta}_{ \bm t(\mathcal I);j }$'s are defined in \eqref{def, proof cluster size, N W mathcal I mathcal J bcdot j}.
For $\mathcal J = \emptyset$, we set
$
p_{ \mathcal I, \emptyset }
    \big(\delta,\bm t(\mathcal I)\big)
    \delequal
    \P\big(
        W^{>|\delta}_{ \bm t(\mathcal I);j } = 0\ \forall j \in [d]
    \big).
$
By the Markov property in \eqref{proof, cluster size, markov property in pruned clusters},
we get (recall that $j^{\bm I}_1$ is the unique index $j \in [d]$ such that $I_{j,1} = 1$,
and that we write $\bm w_k = (w_{k,j})_{ j \in \bm j^{\bm I}_k }$)
\begin{align*}
    & \P\bigg(
        \tau^{n|\delta}_{i;j}(k) = w_{k,j}\ \forall k \in [\mathcal K^{\bm I}], j \in \bm j^{\bm I}_k;\ 
        \tau^{n|\delta}_{i;j}(k) = 0\ \forall k \geq 1,\ j \notin \bm j^{\bm I}_k
    \bigg)
    \\ 
    & = \P\Big( 
        \tau^{n|\delta}_{i;j^{\bm I}_1}(1) = w_{1, j^{\bm I}_1};\ 
        \tau^{n|\delta}_{i;j}(1) = 0\ \forall j \neq j^{\bm I}_1
        \Big)
    \\ 
    & \quad 
    \cdot
    \prod_{ k = 1 }^{ \mathcal K^{\bm I} }
    \P\bigg(
        \tau^{n|\delta}_{i;j}(k+1) = w_{k+1,j}\ \forall j \in \bm j^{\bm I}_{k+1};\ 
        \tau^{n|\delta}_{i;j}(k+1) = 0\ \forall j \notin \bm j^{\bm I}_{k+1}
    \\ 
    &\qquad\qquad\qquad\qquad\qquad\qquad\qquad
        \ \bigg|\ 
        \tau^{n|\delta}_{i;j}(k) = w_{k,j}\ \forall j \in \bm j^{\bm I}_{k};\ 
        \tau^{n|\delta}_{i;j}(k) = 0\ \forall j \notin \bm j^{\bm I}_{k}
    \bigg)
    \\ 
    & \stackrel{(*)}{=} \P\Big(
        W^{>}_{ i;j^{\bm I}_1 }(n\delta) = w_{1,j^{\bm I}_1};\ 
        W^{>}_{ i;j }(n\delta) = 0\ \forall j \neq j^{\bm I}_1 
        \Big)
    \\
    & \qquad
    \cdot 
    \prod_{ k = 1 }^{ \mathcal K^{\bm I} }
    \P\bigg(
        W^{>|\delta}_{ \bm w_k; j} = w_{k+1,j}\ \forall j \in \bm j^{\bm I}_{k+1};\ 
        W^{>|\delta}_{ \bm w_k; j} = 0\ \forall j \notin \bm j^{\bm I}_{k+1}
    \bigg)
    \\ 
    & = 
    \P\Big(
        W^{>}_{ i;j^{\bm I}_1 }(n\delta) = w_{1,j^{\bm I}_1};\ 
        W^{>}_{ i;j }(n\delta) = 0\ \forall j \neq j^{\bm I}_1 
        \Big)
    \cdot 
    \Bigg[
    \prod_{ k = 1 }^{ \mathcal K^{\bm I} - 1 }
        p_{ \bm j^{\bm I}_k, \bm j^{\bm I}_{k+1}   }\big( \delta, \bm w_k, \bm w_{k+1} \big)
    \Bigg]
    \cdot 
     p_{ \bm j^{\bm I}_{\mathcal K^{\bm I}}, \emptyset  }\big( \delta, \bm w_{ \mathcal K^{\bm I}  }\big).
\end{align*}
Here, the step $(*)$ in the display above follows from the definition of the $\tau^{n|\delta}_{i;j}(k)$'s in \eqref{def: layer zero, from pruned cluster to full cluster} and \eqref{def: from pruned cluster to full cluster, tau and S at step k + 1}.
Then by \eqref{def: set A x y type I, lemma: limit theorem for n tau to hat C I, cluster size},
we have
(in the displays below, we interpret $\sum_{ w_{k,j} \in (a,b] }$ as the summation over all the integers in $(a,b]$ because $W^{>}_{i;j}(n\delta)$ will only take integer values by definition)
\begin{align}
    & \P\bigg(
        n^{-1}\bm \tau^{n|\delta}_{i;j} \in 
        A^{\bm I}(\bm x, \bm y)
    \bigg)
    \label{proof: applying Markov property, lemma: limit theorem for n tau to hat C I, cluster size}
    \\ 
    & = 
    \sum_{ w_{1, j^{\bm I}_1} \in (nx_{1, j^{\bm I}_1},\  ny_{1, j^{\bm I}_1} ]  }
        \P\Big(
        W^{>}_{ i;j^{\bm I}_1 }(n\delta) = w_{1,j^{\bm I}_1};\ 
        W^{>}_{ i;j }(n\delta) = 0\ \forall j \neq j^{\bm I}_1 
        \Big)
    \nonumber
    \\ 
    & \quad 
    \cdot 
    \sum_{ w_{2,j} \in (nx_{2,j}, ny_{2,j}]\ \forall j \in \bm j^{\bm I}_2    }
        p_{ \bm j^{\bm I}_1, \bm j^{\bm I}_{2}   }\big( \delta, \bm w_1, \bm w_{2} \big)
    \cdot 
    \ldots 
    \cdot 
    \sum_{ w_{ \mathcal K^{\bm I} ,j} \in (nx_{ \mathcal K^{\bm I} ,j},\ ny_{\mathcal K^{\bm I},j}]\ \forall j \in \bm j^{\bm I}_{ \mathcal K^{\bm I} }    }
        p_{ \bm j^{\bm I}_{ \mathcal K^{\bm I} - 1 }, \bm j^{\bm I}_{\mathcal K^{\bm I}}   }\big( \delta, \bm w_{ \mathcal K^{\bm I} - 1 }, \bm w_{\mathcal K^{\bm I}} \big)
    \nonumber
    \\ 
    &\quad\cdot 
    p_{ \bm j^{\bm I}_{\mathcal K^{\bm I}}, \emptyset  }\big( \delta, \bm w_{ \mathcal K^{\bm I}  }\big).
    \nonumber
\end{align}
To characterize the asymptotics of \eqref{proof: applying Markov property, lemma: limit theorem for n tau to hat C I, cluster size},
we first note that
\begin{align*}
    p_{ \bm j^{\bm I}_{\mathcal K^{\bm I}}, \emptyset  }\big( \delta, \bm w_{ \mathcal K^{\bm I}  }\big)
    & = 
    \P\Big(
        W^{>|\delta}_{ \bm w_{ \mathcal K^{\bm I}  };j } = 0\ \forall j \in [d]
    \Big)
    =
    \P\Big(
        N^{>|\delta}_{ \bm w_{ \mathcal K^{\bm I}  };j } = 0\ \forall j \in [d]
    \Big)
    \qquad\text{due to \eqref{property, sum of W and N i M l j when positive, cluster size}}.
\end{align*}
Recall that we have 
$c \leq x_{k,j} < y_{k,j} \leq C$ for each $k \in [\mathcal K^{\bm I}],j \in \bm j^{\bm I}_k$.
By Claim \eqref{claim weak convergence for N, part (ii), N related, lemma: cluster size, asymptotics, N i | n delta, cdot j, multiple ancsetor, refined estimates} of Lemma~\ref{lemma: cluster size, asymptotics, N i | n delta, cdot j, refined estimate}
under the choice of $\mathcal J = \emptyset$
(in which case we have $
C_{ \mathcal I \leftarrow \emptyset }\big( (t_i)_{ i \in \mathcal I } \big) \equiv 1
$ in \eqref{def: function C J assigned to I, cluster size}),
we can identify some $\delta_0 = \delta_0(c) > 0$ such that
\begin{align}
    \lim_{n \to \infty}
    \min_{ w_{ \mathcal K^{\bm I} ,j} \in (nx_{ \mathcal K^{\bm I} ,j},\ ny_{\mathcal K^{\bm I},j}]\ \forall j \in \bm j^{\bm I}_{ \mathcal K^{\bm I} }    }
    \P\Big(
        N^{>|\delta}_{ \bm w_{ \mathcal K^{\bm I}  };j } = 0\ \forall j \in [d]
    \Big)
    = 1,
    \qquad
    \forall \delta \in (0,\delta_0).
    \label{proof: bound 1, lemma: limit theorem for n tau to hat C I, cluster size}
\end{align}
Similarly, for each $k \in [\mathcal K^{\bm I} - 1]$,
\begin{align*}
     & p_{ \bm j^{\bm I}_k, \bm j^{\bm I}_{k+1}   }\big( \delta, \bm w_k, \bm w_{k+1} \big)
    \\
    & = 
    \P\bigg(
        W^{>|\delta}_{ \bm w_k;j } = w_{k+1,j}\ \forall j \in \bm j^{\bm I}_{k+1}
        \ \bigg|\ 
        N^{>|\delta}_{ \bm w_k;j } \geq 1\text{ iff } j \in \bm j^{\bm I}_{k+1}
    \bigg)
    \cdot 
    \P\bigg( N^{>|\delta}_{ \bm w_k;j } \geq 1\text{ iff } j \in \bm j^{\bm I}_{k+1} \bigg)
    \quad\text{by \eqref{property, sum of W and N i M l j when positive, cluster size}},
    \\ 
    \Longrightarrow
    & \sum_{ w_{k+1,j} \in (nx_{k+1,j},ny_{k+1,j}]\ \forall j \in \bm j^{\bm I}_{k+1} }
        p_{ \bm j^{\bm I}_k, \bm j^{\bm I}_{k+1}   }\big( \delta, \bm w_k, \bm w_{k+1} \big)
        \\ 
    & = 
    \P\bigg(
        W^{>|\delta}_{ \bm w_k;j } \in (n{x_{k+1,j}},n{y_{k+1,j}}] \ \forall j \in \bm j^{\bm I}_{k+1}
        \ \bigg|\ 
        N^{>|\delta}_{ \bm w_k;j } \geq 1\text{ iff } j \in \bm j^{\bm I}_{k+1}
    \bigg)
    \cdot 
    \P\bigg( N^{>|\delta}_{ \bm w_k;j } \geq 1\text{ iff } j \in \bm j^{\bm I}_{k+1} \bigg).
\end{align*}
Besides,
under the condition that $w_{l,j} \in (nx_{l,j},ny_{l,j}]$ and $c \leq x_{l,j} < y_{l,j} \leq C$
for each $l,j$, we have 
$$
\frac{n x_{k+1,j} }{ w_{k,j^\prime} } \in \bigg[ \frac{c}{C}, \frac{C}{c} \bigg],\ \ 
\frac{n y_{k+1,j} }{ w_{k,j^\prime} } \in \bigg[ \frac{c}{C}, \frac{C}{c} \bigg],
\qquad
\forall j \in \bm j^{\bm I}_{k+1},\ j^\prime \in \bm j^{\bm I}_k.
$$
This allows us to apply Claim \eqref{claim weak convergence for N, part (ii), N related, lemma: cluster size, asymptotics, N i | n delta, cdot j, multiple ancsetor, refined estimates} and \eqref{claim 2, part (ii), N related, lemma: cluster size, asymptotics, N i | n delta, cdot j, multiple ancsetor, refined estimates} in Lemma~\ref{lemma: cluster size, asymptotics, N i | n delta, cdot j, refined estimate} and obtain that (by picking a smaller $\delta_0 = \delta_0(c,C) > 0$ if needed)
for any $\delta \in (0,\delta_0)$,
\begin{align*}
    & \lim_{n \to \infty}
    \max_{ 
        \substack{
            \bm w_k:\ w_{k,j} \in (nx_{k,j},n y_{k,j}]\ \forall j \in \bm j^{\bm I}_k 
        }
    }
    \\
    &\quad
    \Bigg|
        \frac{
             \sum_{ w_{k+1,j} \in (nx_{k+1,j},ny_{k+1,j}]\ \forall j \in \bm j^{\bm I}_{k+1} }
            p_{ \bm j^{\bm I}_k, \bm j^{\bm I}_{k+1}   }\big( \delta, \bm w_k, \bm w_{k+1} \big)
        }{
            g_{ \bm j^{\bm I}_k \leftarrow \bm j^{\bm I}_{k+1}  }( n^{-1}\bm w_k)
            \prod_{j \in \bm j^{\bm I}_{k+1}}
                n \P(B_{j \leftarrow l^*(j)} > n\delta)
                \cdot 
                \big[
                    \big( \frac{\delta}{ x_{k+1,j} } \big)^{\alpha^*(j)}
                    -
                    \big( \frac{\delta}{ y_{k+1,j} } \big)^{\alpha^*(j)}
                \big]
        }
    - 1
    \Bigg|= 0.
\end{align*}
We stress that the choice of $\delta_0$ only depends on $c$ and $C$,
due to $c \leq x_{l,j} < y_{l,j} \leq C$ for each $l$ and $j$.
Furthermore, due to 
$\P(B_{j \leftarrow l^*(j)} > x) \in \RV_{ -\alpha^*(j) }(x)$ (see Assumption~\ref{assumption: heavy tails in B i j}),
for each $\delta > 0$ we have (as $n\to\infty$)
\begin{align*}
    & \P(B_{j \leftarrow l^*(j)} > n\delta)
                \Bigg[
                    \bigg( \frac{\delta}{ x_{k+1,j} } \bigg)^{\alpha^*(j)}
                    -
                    \bigg( \frac{\delta}{ y_{k+1,j} } \bigg)^{\alpha^*(j)}
                \Bigg]
    \\ & 
    \sim 
    \P(B_{j \leftarrow l^*(j)} > n)
                \Bigg[
                    \bigg( \frac{1}{ x_{k+1,j} } \bigg)^{\alpha^*(j)}
                    -
                    \bigg( \frac{1}{ y_{k+1,j} } \bigg)^{\alpha^*(j)}
                \Bigg].
\end{align*}
Also, the monotonicity of $g_{\mathcal I \leftarrow \mathcal J}$ implies
$
g_{ \bm j^{\bm I}_k \leftarrow \bm j^{\bm I}_{k+1}  }( \bm x_k)
     \leq 
     g_{ \bm j^{\bm I}_k \leftarrow \bm j^{\bm I}_{k+1}  }( n^{-1}\bm w_k)
     \leq 
     g_{ \bm j^{\bm I}_k \leftarrow \bm j^{\bm I}_{k+1}  }( \bm y_k),
$
provided that $w_{k,j} \in (n x_{k,j}, ny_{k,j}]$ for each $j \in \bm j^{\bm I}_k$.
In summary, for each $\delta \in (0,\delta_0)$,
    \begin{align}
      \limsup_{n \to \infty}
    \max_{ 
        \substack{
            w_{k,j} \in (nx_{k,j},n y_{k,j}]\ \forall j \in \bm j^{\bm I}_k 
        }
    }
    &
    \sum_{ w_{k+1,j} \in (nx_{k+1,j},ny_{k+1,j}]\ \forall j \in \bm j^{\bm I}_{k+1} }
        \frac{ 
            p_{ \bm j^{\bm I}_k, \bm j^{\bm I}_{k+1}   }\big( \delta, \bm w_k, \bm w_{k+1} \big)
        }
        {
            \prod_{j \in \bm j^{\bm I}_{k+1}}
                n \P(B_{j \leftarrow l^*(j)} > n)
        }
    \label{proof: bound 2, lemma: limit theorem for n tau to hat C I, cluster size}
    \\
    &\qquad\qquad \leq 
    g_{ \bm j^{\bm I}_k \leftarrow \bm j^{\bm I}_{k+1}  }( \bm y_k)\cdot 
    \prod_{ j \in \bm j^{\bm I}_{k+1} }
    \Bigg[
                    \bigg( \frac{1}{ x_{k+1,j} } \bigg)^{\alpha^*(j)}
                    -
                    \bigg( \frac{1}{ y_{k+1,j} } \bigg)^{\alpha^*(j)}
                \Bigg],
    \nonumber
    \\ 
        \liminf_{n \to \infty}
    \min_{ 
        \substack{
            w_{k,j} \in (nx_{k,j},n y_{k,j}]\ \forall j \in \bm j^{\bm I}_k 
        }
    }
    &
    \sum_{ w_{k+1,j} \in (nx_{k+1,j},ny_{k+1,j}]\ \forall j \in \bm j^{\bm I}_{k+1} }
        \frac{ 
            p_{ \bm j^{\bm I}_k, \bm j^{\bm I}_{k+1}   }\big( \delta, \bm w_k, \bm w_{k+1} \big)
        }
        {
            \prod_{j \in \bm j^{\bm I}_{k+1}}
                n \P(B_{j \leftarrow l^*(j)} > n)
        }
    \nonumber
    \\
    &\qquad\qquad \geq 
    g_{ \bm j^{\bm I}_k \leftarrow \bm j^{\bm I}_{k+1}  }( \bm x_k)\cdot 
    \prod_{ j \in \bm j^{\bm I}_{k+1} }
    \Bigg[
                    \bigg( \frac{1}{ x_{k+1,j} } \bigg)^{\alpha^*(j)}
                    -
                    \bigg( \frac{1}{ y_{k+1,j} } \bigg)^{\alpha^*(j)}
                \Bigg].  \nonumber
    \end{align}
Lastly, for the term
\begin{align*}
    & \sum_{ w_{1, j^{\bm I}_1} \in (nx_{1, j^{\bm I}_1},\  ny_{1, j^{\bm I}_1} ]  }
        \P\Big(
        W^{>}_{ i;j^{\bm I}_1 }(n\delta) = w_{1,j^{\bm I}_1};\ 
        W^{>}_{ i;j }(n\delta) = 0\ \forall j \neq j^{\bm I}_1 
        \Big)
    \\ 
    & 
    = 
    \P\bigg(
        W^{>}_{ i;j^{\bm I}_1 }(n\delta) \in \big(nx_{1, j^{\bm I}_1},\  ny_{1, j^{\bm I}_1} \big] ;\ 
        W^{>}_{ i;j }(n\delta) = 0\ \forall j \neq j^{\bm I}_1 
        \bigg)
\end{align*}
in the display \eqref{proof: applying Markov property, lemma: limit theorem for n tau to hat C I, cluster size},
by part (i) of Lemma~\ref{lemma: cluster size, asymptotics, N i | n delta, cdot j, crude estimate}
(pick a smaller $\delta_0 = \delta_0(c,C) > 0$ if needed),
it holds for any $\delta \in (0,\delta_0)$ that 
\begin{align}
    \lim_{n \to \infty}
    \left|\rule{0cm}{0.9cm}
        \frac{
            \sum_{ w_{1, j^{\bm I}_1} \in (nx_{1, j^{\bm I}_1},\  ny_{1, j^{\bm I}_1} ]  }
        \P\big(
        W^{>}_{ i;j^{\bm I}_1 }(n\delta) = w_{1,j^{\bm I}_1};\ 
        W^{>}_{ i;j }(n\delta) = 0\ \forall j \neq j^{\bm I}_1 
        \big)
        }{
            \bar s_{1, l^*(j^{\bm I}_1) } \cdot \P(B_{ j^{\bm I}_1 \leftarrow l^*(j^{\bm I}_1)} > n)
            \cdot 
            \big[
                \big(\frac{1}{x_{1, j^{\bm I}_1}  }\big)^{\alpha^*( j^{\bm I}_1 )}
                -
                \big(\frac{1}{y_{1, j^{\bm I}_1}  }\big)^{\alpha^*( j^{\bm I}_1 )}
            \big]
        } - 1
     \right|
    = 0. 
    \label{proof: bound 3, lemma: limit theorem for n tau to hat C I, cluster size}
\end{align}
By \eqref{property: rate function for type I, cluster size} and our assumption of $\bm j^{\bm I} = \bm j$,
\begin{align*}
    \lambda_{\bm j}(n)
    & = 
    n^{-1}\prod_{ k = 1 }^{ \mathcal K^{\bm I} }\prod_{ j \in \bm j^{\bm I} }
    n\P(B_{j \leftarrow l^*(j)} > n)
    =
    \P(B_{  j^{\bm I}_1  \leftarrow l^*( j^{\bm I}_1)} > n)
    \cdot 
    \prod_{ k = 2 }^{ \mathcal K^{\bm I} }\prod_{ j \in \bm j^{\bm I} }
    n\P(B_{j \leftarrow l^*(j)} > n).
\end{align*}
Plugging \eqref{proof: bound 1, lemma: limit theorem for n tau to hat C I, cluster size}, \eqref{proof: bound 2, lemma: limit theorem for n tau to hat C I, cluster size}, \eqref{proof: bound 3, lemma: limit theorem for n tau to hat C I, cluster size} into \eqref{proof: applying Markov property, lemma: limit theorem for n tau to hat C I, cluster size}, we obtain (for any $\delta \in (0,\delta_0)$)
\begin{align*}
    & \limsup_{n \to\infty}
    \big(\lambda_{\bm j}(n)\big)^{-1}
    \P\bigg(
        n^{-1}\bm \tau^{n|\delta}_{i;j} \in 
        A^{\bm I}(\bm x, \bm y)
    \bigg)
    \\ 
    & 
    \leq 
    \bar s_{1, l^*(j^{\bm I}_1) }
    \cdot 
    \Bigg(
        \prod_{k \in [\mathcal K^{\bm I} - 1]}
        g_{\bm j^{\bm I}_k \leftarrow \bm j^{\bm I}_{k+1}}(\bm y_k)
    \Bigg)
    \cdot 
    \Bigg[
        \prod_{k \in [\mathcal K^{\bm I}]}\prod_{j \in \bm j^{\bm I}_k}
            \bigg(\frac{1}{x_{k,j}}\bigg)^{\alpha^*(j)}
            -
        \bigg(\frac{1}{y_{k,j}}\bigg)^{\alpha^*(j)}
    \Bigg],
    \\ 
    & \liminf_{n \to\infty}
    \big(\lambda_{\bm j}(n)\big)^{-1}
    \P\bigg(
        n^{-1}\bm \tau^{n|\delta}_{i;j} \in 
        A^{\bm I}(\bm x, \bm y)
    \bigg)
    \\ 
    & \geq 
    \bar s_{1, l^*(j^{\bm I}_1) }
    \cdot 
    \Bigg(
        \prod_{k \in [\mathcal K^{\bm I} - 1]}
        g_{\bm j^{\bm I}_k \leftarrow \bm j^{\bm I}_{k+1}}(\bm x_k)
    \Bigg)
    \cdot 
    \Bigg[
        \prod_{k \in [\mathcal K^{\bm I}]}\prod_{j \in \bm j^{\bm I}_k}
            \bigg(\frac{1}{x_{k,j}}\bigg)^{\alpha^*(j)}
            -
        \bigg(\frac{1}{y_{k,j}}\bigg)^{\alpha^*(j)}
    \Bigg].
\end{align*}
Lastly, to verify Claim \eqref{proof: goal, lemma: limit theorem for n tau to hat C I, cluster size} given $\epsilon > 0$, we observe the following.
By the bounds in \eqref{proof, lower bound for hat C i type I, cluster size} and \eqref{proof, upper bound for hat C i type I, cluster size},
it suffices to pick $\rho > 1$ such that
\begin{align}
    \prod_{k \in [\mathcal K^{\bm I} - 1]}
        1/\rho^{|\bm j^{\bm I}_{k+1}|}
    > 1 - \epsilon,
    \qquad
    \prod_{k \in [\mathcal K^{\bm I} - 1]}
        \rho^{|\bm j^{\bm I}_{k+1}|}
    < 1 + \epsilon,
    \nonumber
\end{align}
In  case that $\mathcal K^{\bm I} = 1$, the display above holds trivially as the product degenerates to $1$.
In  case that $\mathcal K^{\bm I} \geq 2$, the display above holds for any $\rho > 1$ close enough to 1.
\end{proof}

\ifshowtheoremtree
\footnotesize
\newgeometry{left=1cm,right=1cm,top=0.5cm,bottom=1.5cm}

\section{Theorem Tree}\label{sec: appendix, theorem tree}
\textbf{Theorem Tree of Theorem~\ref{theorem: main result, cluster size}}
\begin{thmdependence}[leftmargin=*]
    \thmtreenode{-}{Theorem}{theorem: main result, cluster size}{}
    \begin{thmdependence}
        \thmtreenode{-}{Lemma}{lemma: M convergence for MRV}{}
            \begin{thmdependence}
                \thmtreenode{-}{Lemma}{lemma: equivalence for bounded away condition, polar coordinates}{}
            \end{thmdependence}

        \thmtreenode{-}{Lemma}{lemma: asymptotic equivalence, MRV in Rd}{} 
            \begin{thmdependence}
                \thmtreeref{Theorem}{portmanteau theorem M convergence}
            \end{thmdependence}

        \thmtreenode{-}{Proposition}{proposition: asymptotic equivalence, tail asymptotics for cluster size}{}
            \begin{thmdependence}
                \thmtreenode{-}{Lemma}{lemma: crude estimate, type of cluster}{}
                \begin{thmdependence}
                    \thmtreenode{-}{Lemma}{lemma: cluster size, asymptotics, N i | n delta, cdot j, crude estimate}{}
                \end{thmdependence}

                \thmtreenode{-}{Lemma}{lemma: distance between bar s and hat S given type, cluster size}{}
                \begin{thmdependence}
                    \thmtreenode{-}{Lemma}{lemma: tail bound, pruned cluster size S i leq n delta}{}

                    \thmtreenode{-}{Lemma}{lemma: concentration ineq for pruned cluster S i}{}
                \end{thmdependence}
            \end{thmdependence}

        \thmtreenode{-}{Proposition}{proposition, M convergence for hat S, tail asymptotics for cluster size}{}
            \begin{thmdependence}
                \thmtreenode{-}{Lemma}{lemma: type and generalized type}{}

                \thmtreenode{-}{Lemma}{lemma: choice of bar epsilon bar delta, cluster size}{}

                \thmtreenode{-}{Lemma}{lemma: prob of type I, tail prob of tau, cluster size}{}

                \thmtreenode{-}{Lemma}{lemma: limit theorem for n tau to hat C I, cluster size}{}

                \thmtreeref{Lemma}{lemma: crude estimate, type of cluster}
            \end{thmdependence}

        \thmtreenode{-}{Lemma}{lemma: cluster size, asymptotics, N i | n delta, cdot j, crude estimate}{}
            \begin{thmdependence}
                \thmtreenode{-}{Lemma}{lemma: cluster size, asymptotics, N i | n delta, l j}{}
                \begin{thmdependence}
                    \thmtreeref{Lemma}{lemma: tail bound, pruned cluster size S i leq n delta}
                \end{thmdependence}
            \end{thmdependence}

    \end{thmdependence}

\end{thmdependence}
\bigskip

\noindent
\textbf{Theorem Tree of Technical Lemmas}
\begin{thmdependence}[leftmargin=*]

\thmtreenode{-}{Lemma}{lemma: tail bound, pruned cluster size S i leq n delta}{($\norm{\bar{\textbf B}} < 1$)}
    \begin{thmdependence}
        \thmtreenode{-}{Lemma}{lemma: concentration ineq, truncated heavy tailed RV in Rd}{}
    \end{thmdependence}

    \thmtreenode{-}{Lemma}{lemma: tail bound, pruned cluster size S i leq n delta}{(General Case)}
    \begin{thmdependence}
        \thmtreenode{-}{Lemma}{lemma: tail asymptotics for truncated GW, at generation t}{}
            \begin{thmdependence}
                \thmtreeref{Lemma}{lemma: concentration ineq, truncated heavy tailed RV in Rd}
            \end{thmdependence}

        \thmtreenode{-}{Lemma}{lemma, tail bound, pruned cluster sub sampled size S i r leq n delta}{}
            \begin{thmdependence}
                \thmtreeref{Lemma}{lemma: concentration ineq, truncated heavy tailed RV in Rd}
                \thmtreeref{Lemma}{lemma: tail asymptotics for truncated GW, at generation t}
            \end{thmdependence}

        \thmtreenode{-}{Lemma}{lemma: concentration ineq for r step sub tree}{}
            \begin{thmdependence}
                \thmtreeref{Lemma}{lemma: concentration ineq, truncated heavy tailed RV in Rd}
                \thmtreeref{Lemma}{lemma, tail bound, pruned cluster sub sampled size S i r leq n delta}
            \end{thmdependence}
    \end{thmdependence}

    \thmtreenode{-}{Lemma}{lemma: concentration ineq for pruned cluster S i}{}
        \begin{thmdependence}
            \thmtreeref{Lemma}{lemma: tail bound, pruned cluster size S i leq n delta}
            \thmtreeref{Lemma}{lemma: concentration ineq, truncated heavy tailed RV in Rd}
        \end{thmdependence}

    \thmtreenode{-}{Lemma}{lemma: prob of type I, tail prob of tau, cluster size}{}
        \begin{thmdependence}
            \thmtreenode{-}{Lemma}{lemma: cluster size, asymptotics, N i | n delta, cdot j, refined estimate}{}
            \begin{thmdependence}
                \thmtreeref{Lemma}{lemma: cluster size, asymptotics, N i | n delta, cdot j, crude estimate}
            \end{thmdependence}

            \thmtreeref{Lemma}{lemma: cluster size, asymptotics, N i | n delta, cdot j, crude estimate}
        \end{thmdependence}

    \thmtreenode{-}{Lemma}{lemma: limit theorem for n tau to hat C I, cluster size}{}
        \begin{thmdependence}
            \thmtreeref{Lemma}{lemma: cluster size, asymptotics, N i | n delta, cdot j, refined estimate}
        \end{thmdependence}
\end{thmdependence}

\end{appendix}

\newpage
\bibliographystyle{abbrv} 
\bibliography{aap-bib} 






\end{document}